%paper style
\documentclass[oneside,reqno,a4paper,11pt]{amsart}
\usepackage{amsfonts}
\usepackage{amsmath,amsfonts,amssymb}
\usepackage{CJK,bbm,color,soul,supertabular,longtable,verbatim}
\usepackage[warning, easyold,math]{easyeqn}
\usepackage{graphicx}
\usepackage{color}
\usepackage{latexsym,amsfonts,mathrsfs, amsmath, amssymb, amscd, epsfig}
\usepackage{mathrsfs}
\usepackage{amsthm}
\usepackage{esint}
 \hoffset -2cm\voffset -1.5cm
\setlength{\parindent}{6mm} \setlength{\parskip}{3pt plus1pt
minus2pt} \setlength{\baselineskip}{2pt plus10pt minus10pt}
\setlength{\textheight}{23true cm} \setlength{\textwidth}{15true cm}
 %\hangjianju

\newtheorem {theorem}{Theorem}[section]
\newtheorem {lemma}[theorem]{{\bf Lemma}}

\newtheorem {prop}[theorem]{{\bf Proposition}}
%\swapnumbers
\theoremstyle{remark}
\newtheorem {remark}{{\bf Remark}}[section]

\theoremstyle{problem}

\theoremstyle{definition}
\newtheorem {definition}{{\bf Definition}}[section]
\theoremstyle{plain} \numberwithin {equation}{section}

\begin{document}
%\today
\vspace{1cm}
\title[Impinging outgoing jets]{Steady collision of two jets issuing from two axially symmetric channels$^\dag$}
\author[Lili Du, \ \ Yongfu Wang]{Lili Du$^{\lowercase {a,1}}$, \ \ Yongfu Wang$^{\lowercase {b,2}}$}
\thanks{$^\dag$ Du is supported by NSFC grant 11971331 and Sichuan Youth Science and Technology Foundation (No. 21CXTD0076). Wang is supported by NSFC grant 11801460 and the Applied Fundamental Research Plan of Sichuan Province (No.~2021YJ0520).}
\thanks{$^1$ E-Mail: dulili@scu.edu.cn. $^2$ E-mail:
wyf1247@163.com. Corresponding author}

\maketitle
\begin{center}
$^a$ Department of Mathematics, Sichuan University,

          Chengdu 610064, P. R. China.

$^b$ School of Economic Mathematics,

Southwestern University of Finance and Economics,

          Chengdu 611130, P. R. China.

\end{center}

\begin{abstract}In the classical survey (Chapter 16.2, {\it Mathematics in industrial problem}, Vol. 24, Springer-Verlag, New York, 1989), A. Friedman proposed an open problem on the collision of two incompressible jets emerging from two axially symmetric nozzles. In this paper, we concerned with the mathematical theory on this collision problem, and establish the well-posedness theory on hydrodynamic impinging outgoing jets issuing from two coaxial axially symmetric nozzles. More precisely, we showed that for any given mass fluxes $M_1>0$ and $M_2<0$ in two nozzles respectively, that there exists an incompressible, inviscid impinging outgoing jet with contact discontinuity, which issues from two given semi-infinitely long axially symmetric nozzles and extends to infinity. Moreover, the constant pressure free stream surfaces of the impinging jet initiate smoothly from the mouths of the two nozzles and shrink to some asymptotic conical surface. There exists a smooth surface separating the two incompressible fluids and the contact discontinuity occurs on the surface. Furthermore, we showed that there is no stagnation point in the flow field and its closure, except one point on the symmetric axis. Some asymptotic behavior of the impinging jet in upstream and downstream, geometric properties of the free stream surfaces are also obtained. The main results in this paper solved the open problem on the collision of two incompressible axially symmetric jets in \cite{FA3}.\end{abstract}
\

\begin{center}
\begin{minipage}{5.5in}
2010 Mathematics Subject Classification: 76B10; 76B03; 35Q31; 35J25.

\

Key words: Impinging outgoing jets, incompressible flows, free
boundary, existence and uniqueness, contact discontinuity.
\end{minipage}
\end{center}

\

\everymath{\displaystyle}
\newcommand {\eqdef }{\ensuremath {\stackrel {\mathrm {\Delta}}{=}}}

\def\Xint #1{\mathchoice
{\XXint \displaystyle \textstyle {#1}} %
{\XXint \textstyle \scriptstyle {#1}} %
{\XXint \scriptstyle \scriptscriptstyle {#1}} %
{\XXint \scriptscriptstyle \scriptscriptstyle {#1}} %
\!\int}
\def\XXint #1#2#3{{\setbox 0=\hbox {$#1{#2#3}{\int }$}
\vcenter {\hbox {$#2#3$}}\kern -.5\wd 0}}
\def\ddashint {\Xint =}
\def\dashint {\Xint -}
\def\clockint {\Xint \circlearrowright } % GOOD !
\def\counterint {\Xint \rotcirclearrowleft } % Good for Computer Modern !
\def\rotcirclearrowleft {\mathpalette {\RotLSymbol { -30}}\circlearrowleft }
\def\RotLSymbol #1#2#3{\rotatebox [ origin =c ]{#1}{$#2#3$}}

\def\aint{\dashint}

\def\arraystretch{2}
\def\eps{\varepsilon}

\def\s#1{\mathbb{#1}} % set
\def\t#1{\tilde{#1}} %new variables
\def\b#1{\overline{#1}}
\def\N{\mathcal{N}} %Nozzle
\def\M{\mathcal{M}} %Mach number
\def\R{{\mathbb{R}}}
\def\B{{\mathcal{B}}}
\def\BB{\mathfrak{B}}
\def\F{{\mathcal{F}}}
\def\G{{\mathcal{G}}}
\def\ba{\begin{array}}
\def\ea{\end{array}}
\def\be{\begin{equation}}
\def\ee{\end{equation}}

\def\bes{\begin{mysubequations}}
\def\ees{\end{mysubequations}}

\def\cz#1{\|#1\|_{C^{0,\alpha}}}
\def\ca#1{\|#1\|_{C^{1,\alpha}}}
\def\cb#1{\|#1\|_{C^{2,\alpha}}}
\def\psir{\left|\frac{\nabla\psi}{r}\right|^2}
\def\lb#1{\|#1\|_{L^2}}
\def\ha#1{\|#1\|_{H^1}}
\def\hb#1{\|#1\|_{H^2}}
\def\th{\theta}
\def\Th{\Theta}
\def\cin{\subset\subset}
\def\Ld{\Lambda}
\def\ld{\lambda}
\def\ol{{\Omega_L}}
\def\sla{{S_L^-}}
\def\slb{{S_L^+}}
\def\e{\varepsilon}
\def\C{\mathbf{C}} %%unit cylinder
\def\ra{\rightarrow}
\def\xra{\xrightarrow}
\def\g{\nabla}
\def\a{\alpha}
\def\b{\beta}
\def\d{\delta}
\def\th{\theta}
\def\fai{\varphi}
\def\O{\Omega}
\def\ol{{\Omega_L}}
\def\psirk{\left|\frac{\nabla\psi}{r+k}\right|^2}
\def\tO{\tilde{\Omega}}
\def\tu{\tilde{u}}
\def\tv{\tilde{v}}
\def\trho{\tilde{\rho}}
\def\W{\mathcal{W}}
\def\f{\frac}
\def\p{\partial}
\def\o{\omega}
\def\B{\mathcal{B}}
\def\H{\Theta}
\def\msS{\mathscr{S}}
\def\bq{\mathbf{q}}
\def\msE{\mathcal{E}}
\def\mfa{\mathfrak{a}}
\def\mfb{\mathfrak{b}}
\def\mfc{\mathfrak{c}}
\def\mfd{\mathfrak{d}}
\def\ff{\texthtbardotlessj}
\def\mcL{\mathcal{L}}
\def\mcR{\mathcal{R}}
\def\Div{\text{div}}
\section{Introduction}

The three-dimensional incompressible, stationary and inviscid flow is governed by the Euler equations
\be\label{a1} \left\{ \ba{l}
\sum_{i=1}^3\f{\p u_i}{\p x_i} =0,\\
\sum_{i=1}^3u_i\f{\p u_j}{\p x_i} + \f{1}{\rho}\f{\p P}{\p x_j}=0,\quad
\text{for}\quad j=1,2,3,
 \ea \right.\ee with the irrotational condition
$$\g \times (u_1,u_2,u_3)=0.$$
Here, $(u_1,u_2,u_3)$ is the velocity, $P$ denotes the pressure
of the flow, and positive constant $\rho$ denotes density.

In this paper, we shall be concerned with steady, irrotational
incompressible impinging jets issuing from two semi-infinitely long
axially symmetric nozzles with variable cross-section. We will investigate
the well-posedness theory of the collision problem of two jets
issuing from two general axially symmetric nozzles, and solve the
open problem (1) proposed by A. Friedman in 1989.

Consider the axially symmetric flows in this paper and
let $U(x,r)$, $V(x,r)$ and $W(x,r)$ be the axial velocity, the
radial velocity and the swirl velocity respectively, $x=x_1$ and
$r=\sqrt{x_2^2+x_3^2}$. Furthermore, we seek such an axially
symmetric flow without swirl, one has \be\label{a2} u_1=U(x,r),\quad
u_2=V(x,r)\f{x_2}{r},\quad u_3=V(x,r)\f{x_3}{r}.\ee Then, instead of
\eqref{a1}, we have
 \be\label{a3}\left\{\ba{l}
\left(r U\right)_{x}+\left(r V\right)_{r}=0,\\
 \left(r\rho U^2\right)_{x}+\left(r\rho UV\right)_{r}+rP_{x}=0,\\
 \left(r\rho UV\right)_{x}+\left(r\rho V^2\right)_{r}+rP_{r}=0.\\
\ea\right.
 \ee

Consider the flow issuing from the two semi-infinitely long nozzles
as (see Figure \ref{f01})
$$
\mathcal{N}_1=\left\{(x,r)\in \R_+^2\left|f_1(r) < x<-1,\
 r<R_1\right.\right\},$$ and
$$
\mathcal{N}_2=\left\{(x,r)\in \R_+^2\left|1<x<f_2(r),\
r<R_2\right.\right\},$$ where $\R_+^2=\R^1\times[0,+\infty)$, $R_1$, $R_2>0$,
$f_1(r)$ and $f_2(r)$ are smooth functions and satisfy that
 \be\label{a4} f_i(r)=(-1)^i\infty, \quad\quad r\leq r_i,\quad (r_i>0)\ee
and \be\label{a5} f_i(R_i)=(-1)^i, \ee for $R_i>r_i$ and $i=1,2$.
Without loss of generality, we assume that $R_1=R_2=R$.
%Additionally, we assume that
%\be\label{a03}\text{$f_1(r)\leq\f{2(r-R_1)}{R_2-R_1}-1$\quad\quad
%for\quad $R_2<r\leq R_1$}.\ee
\begin{figure}[!h]
\includegraphics[width=100mm,height=40mm]{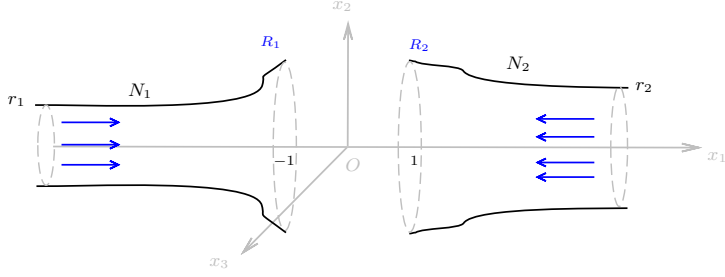}
\caption{Two axially symmetric semi-infinitely long nozzles}\label{f01}
\end{figure}

For convenience, we denote the symmetric axis
$$N_0=\{(x,r)|r=0,-\infty<x<+\infty\},$$
 the nozzle walls $$N_1=\{(x,r)|x=f_1(r),\ r_1<r<R\},\quad
N_2=\{(x,r)|x=f_2(r),\ r_2<r<R\},$$ and the edge points of the
nozzle walls $A_1=(-1,R)$ and $A_2=(1,R)$.

%We remark that the condition \eqref{a03} means that the nozzle wall $N_1$
%lies below the line segment $\overline{A_1A_2}$.

In this paper, we consider two ideal, nonmiscible, irrotational fluids $(U_1, V_1,
P_1, \rho_1)$ and $(U_2, V_2, P_2, \rho_2)$ issuing from two semi-infinite axisymmetric nozzles. We designate by $(U_1, V_1,
P_1, \rho_1)$ and $(U_2, V_2, P_2, \rho_2)$ be the axial velocity, the radial velocity and the pressure of the fluid I and the fluid II, respectively. Denote $\O_i$ as the fluid field of the $i$-th fluid for $i=1,2$, and  $$(U,V,P, \rho)=\left\{ \ba{l}
(U_1, V_1, P_1,\rho_1)\ \ \ \text{in}\ \ \O_1,\\
(U_2, V_2, P_2,\rho_2)\ \ \ \text{in}\ \ \O_2,
 \ea \right.\text{as the two-phase fluids}.$$

 In this paper, we seek a contact discontinuity $(U, V, P, \rho)$ with a smooth interface $\Gamma$: $\{x=g(r)\}$ between the two fluids. And $(U, V, P, \rho)$ is a week solution of \eqref{a3} in the distributional sense and $(U_i, V_i, P_i, \rho_i)$  solves the incompressible Euler system \eqref{a3} classically in $\O_i$ for
 $i=1,2$. (Please see Figure \ref{f11}).

\begin{figure}
\begin{minipage}[t]{0.5\linewidth}
\centering
\includegraphics[width=60mm,height=35mm]{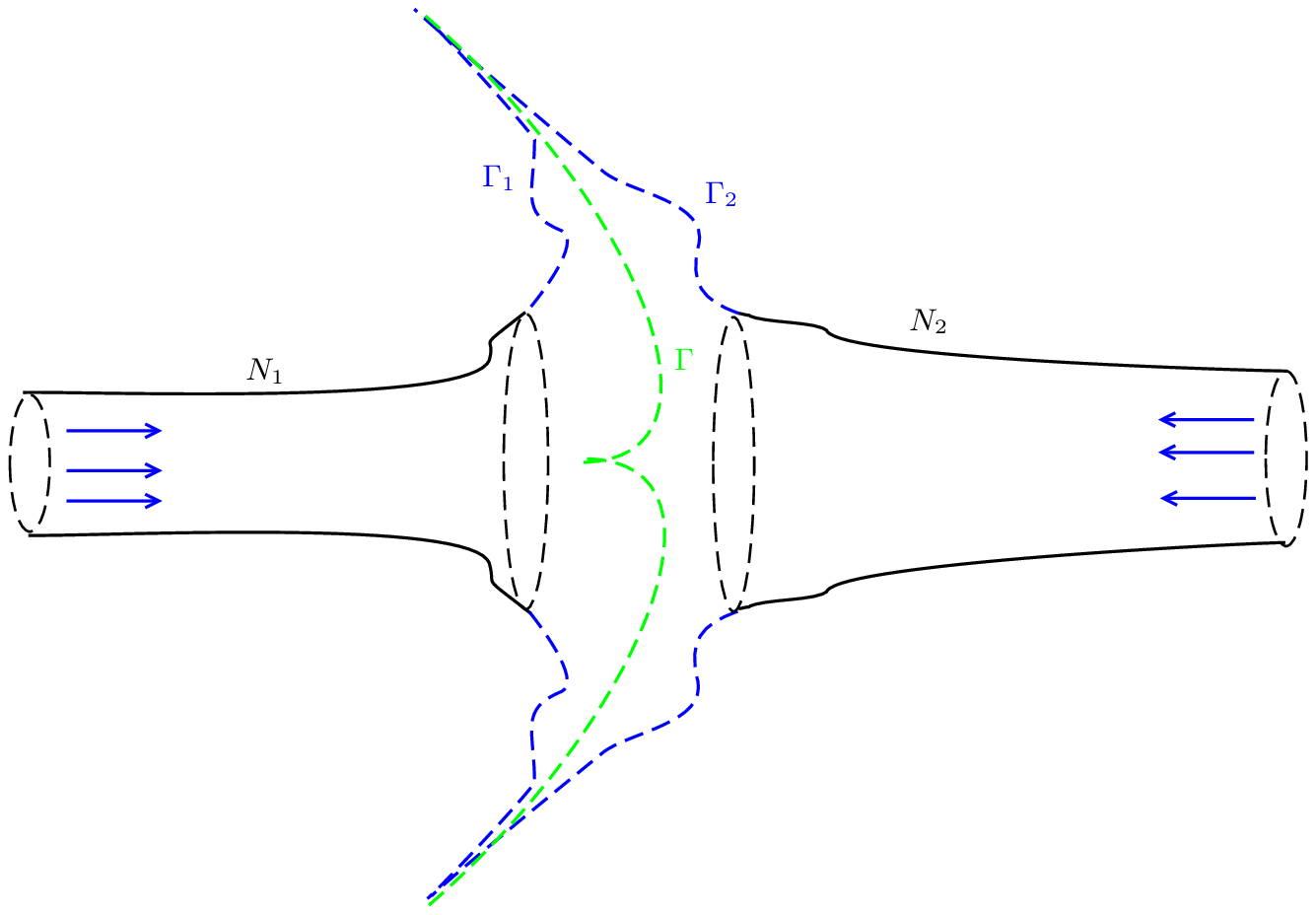}
\caption{Collision of two jets}\label{f11}
\end{minipage}%
\begin{minipage}[t]{0.5\linewidth}
\centering
\includegraphics[width=60mm,height=35mm]{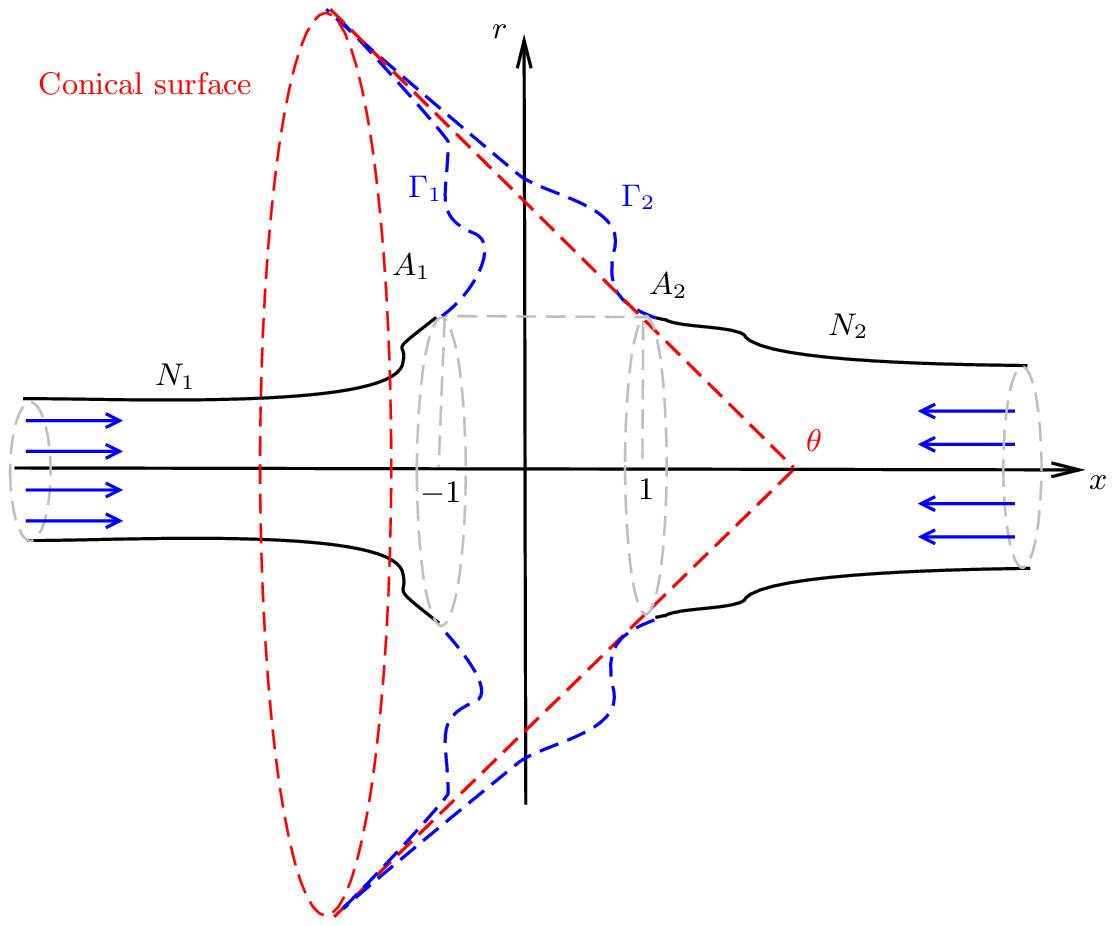}
\caption{Impinging outgoing jet}\label{f05}
\end{minipage}
\end{figure}

Then the Rankine-Hugoniot jump conditions
on $\Gamma$ become

 \be\label{a6}-\left [\ba{l}
\rho U\\
\rho U^2+P\\
 \rho UV\\
\ea\right]+g'(r)\left [\ba{l}
\rho V\\
\rho UV\\
 \rho V^2+P\\
\ea\right]=0,
 \ee
where $[\cdot]$ denotes the jump of a related function crossing the interface $\Gamma$.

Set $\mathfrak{m}_i=\rho_i\left(g'(r)V_i-U_i\right)$ ($i=1,2$) be
the mass flux across the interface, if
$\mathfrak{m}_1=\mathfrak{m}_2=0$ on the interface $\Gamma$, then
$(U, V, P, \rho)$ is a contact discontinuity. The Rankine-Hugoniot
conditions \eqref{a6} read as
\be\label{a7}-U_1+g'(r)V_1=0,\ \
-U_2+g'(r)V_2=0\  \text{and} \
 P_1=P_2.\ee

The condition \eqref{a7} implies that the
normal velocities on both sides of the interface $\Gamma$ vanish,
while the tangential velocity on both side of $\Gamma$ may have
nontrivial jump.

Furthermore, the well-known
Bernoulli's law can be written as $$
(U_i,V_i)\cdot\g\left(\frac{1}{2}(U_i^2+V_i^2)+\f{P_i}{\rho_i}\right)=0, \ \
\text{for}\ \ i=1,2,$$  namely,
$$\f{U_1^2+V_1^2}{2}+\f{P_1}{\rho_1}=\mathfrak{B}_1\ \ \text{and}\
\ \f{U_2^2+V_2^2}{2}+\f{P_2}{\rho_2}=\mathfrak{B}_2,$$
where $\mathfrak{B}_1$ and $\mathfrak{B}_2$ denote the Bernoulli's constants of the two fluids, respectively, in view of \eqref{a7}, then \be\label{a9}\rho_1\left(U_1^2+V_1^2\right)-\rho_2\left(U_2^2+V_2^2\right)=2\left(\rho_1\mathfrak{B}_1-\rho_2\mathfrak{B}_2\right)\triangleq \Lambda\ \ \ \text{on}\ \ \Gamma,\ee without loss of generality, we assume $\Lambda\geq0$.% However, the main purpose of this paper is to investigate the impinging outgoing jet with axis symmetric free surface $\Gamma_1$ and $\Gamma_2$, therefore, as the first step in this attempt, here we seek an impinging outgoing jet, which is $C^1$-smooth across the interface $\Gamma$.

%Here, we will seek an impinging outgoing jet with two free surfaces $\Gamma_1$ and $\Gamma_2$, which initial smoothly from the orifices of the nozzles $N_1$ and $N_2$, respectively, and extend to the infinity. Since the impinging outgoing jet approaches to some uniform flows in the far field, the free surfaces $\Gamma_1$ and $\Gamma_2$ have to get close enough to merge in far field. This is one main feature of the axially symmetric impinging jet here.

On another hand, on the free surfaces $\Gamma_1$ and $\Gamma_2$, the pressure is assumed to be the constant atmosphere pressure $P_{at}$ (in absence of gravity and surface tension), namely,\be\label{a09}P=P_{at}\ \ \text{on $\Gamma_1\cup\Gamma_2$.}\ee

Here is our problem of fluid mechanics: determine an impinging outgoing jet $(U, V, P, \rho)$ issuing from two nozzles $\mathcal{N}_1$ and $\mathcal{N}_2$ with two mass fluxes $M_1$ and $M_2$, bounded by two free surfaces $\Gamma_1$ and $\Gamma_2$ on which the pressure is a constant $P_{at}$. Furthermore, on the interface, the Rankine-Hugoniot conditions \eqref{a7} and \eqref{a9} hold.

On the solid walls $N_1$ and $N_2$, the flow satisfies the
slip-boundary condition, \be\label{a5} (U_i,V_i)\cdot \vec n_i=0,\ \
\ \ \ \text{on}\ \ N_i, \ee where $\vec n_i$ is the unit outer
normal of the wall $N_i$, for $i=1,2$. Moreover, on the symmetry axis $N_0$, \be\label{a05} V_i=0.\ee

Denote $M_1$ and $M_2$ as the mass fluxes in nozzles $\mathcal{N}_1$
and $\mathcal{N}_2$, respectively, then
\be\label{a10}\int_{\Sigma_i}(r U,r V,0)\cdot\vec{l_i}dS=\f{M_i}{2\pi},\ee where $\Sigma_i$ is any curve transversal to
the $x$-axis direction and $\vec{l_i}$ is the normal of $\Sigma_i$
in the positive $x$-axis direction for $i=1,2$.

\subsection{Impinging outgoing jet problem and main results}

We define the axially symmetric impinging outgoing jet problem as
follows.

 {\bf
Axially symmetric impinging outgoing jet problem.} For given any
mass fluxes $M_1>0$ and $M_2<0$ in the two semi-infinitely long
axially symmetric nozzles $\mathcal{N}_1$ and $\mathcal{N}_2$,
respectively, there exists an axially symmetric impinging outgoing
jet extending to the infinity, the free stream surfaces initiate at
the edges of the nozzles smoothly and shrink to some conical surface
at the far field, and a smooth interface separates the two jets,
furthermore, the pressure remains a constant on free stream surfaces
(see Figure \ref{f05}).

Next, we give the definition of the solution to the impinging
outgoing jet problem.

 {\bf A solution to the axially symmetric
impinging outgoing jet problem.}  A quintuple
$(U,V,P,\Gamma_1,\Gamma_2)$ is called a solution to the axially
symmetric impinging outgoing jet problem, provided that

(1). The smooth surfaces $\Gamma_1$ and $\Gamma_2$ are given by two
functions $x=g_1(r)\in C^1{((R,+\infty))}$ and $x=g_2(r)\in
C^1{((R,+\infty))}$ with $g_1(r)<g_2(r)$, and
\be\label{a12}g_1(R+0)=f_1(R-0),\quad g_2(R+0)=f_2(R-0)\quad
\text{({\it continuous fit conditions}),}\ee and
\be\label{a13}g_1'(R+0)=f_1'(R-0), \quad g_2'(R+0)=f_2'(R-0)
\quad\text{({\it smooth fit conditions})}.\ee Moreover, there exists
an asymptotic direction $\nu=(\cos\theta,\sin\theta)$ with $\th\in(0,\pi)$, such that
$g_1$ and $g_2$ are close to the asymptotic direction $\nu$ at far
field (See Figure \ref{f02}), ie., \be\label{a014}
\lim_{r\rightarrow
\infty}\left(g_2(r)-g_1(r)\right)=0\quad\text{and}\quad\lim_{r\rightarrow
\infty}g'_1(r)=\lim_{r\rightarrow\infty}g'_2(r)=\cot\theta,\ee the angle $\th$ is called the asymptotic deflection angle of the impinging outgoing jet.

(2). Denote the flow field $G$ bounded by the symmetric axis $N_0$,
the nozzle walls $N_1, N_2$ and the free boundaries $\Gamma_1,
\Gamma_2$. $(U,V,P)\in \left(C^{1,\alpha}(G)\cap
C^{0}(\overline{G})\right)^3$ solves the steady incompressible
Euler system \eqref{a3}, the boundary condition \eqref{a5}, the Rankine-Hugoniot conditions \eqref{a7} and the
mass flux conditions \eqref{a10};

(3). The radial velocity $V$ is positive in flow field and its
closure, except the symmetric axis and interface $\Gamma$, namely, $V>0$ in
$\bar G\setminus \left(N_0\cup\Gamma\right)$;

(4). $P=P_{at}$ on $\Gamma_1\cup\Gamma_2$;

(5). The interface $\Gamma$ satisfies the condition \eqref{a9}.

The first result in this paper is the existence of the impinging outgoing jet as follows.

\begin{theorem}\label{thm1}
For any given atmosphere pressure $P_{at}$, mass fluxes $M_1>0$, $M_2<0$ and $\Lambda\geq0$ issuing from the two
axially symmetric nozzles $\mathcal{N}_1$ and $\mathcal{N}_2$,
respectively, there exists a solution $(U,V,P,\Gamma_1,\Gamma_2)$
to the axially symmetric impinging outgoing jet problem. Furthermore, there
exists a $C^1$-smooth surface $x=g(r)$ satisfying $g_1(r)<g(r)<g_2(r)$ for $R<r<\infty$, which separates the two fluids and
initiates at the branching point on the symmetric axis (Figure
\ref{f04}). Furthermore, there exists a positive constant $\ld$, such that \be\label{a01} r
(g(r)-g_1(r))\rightarrow
\f{M_1}{2\pi\sqrt{\rho_1(\Lambda+\ld})\sin\th}\ \ \text{and}\ \ r(g(r)-g_2(r))\rightarrow
\f{M_2}{2\pi\sqrt{\rho_2\ld}\sin\th}\quad\quad\text{as}\quad
r\rightarrow+\infty. \ee
\end{theorem}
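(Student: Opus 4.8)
The plan is to recast the collision problem as a free boundary problem for the Stokes stream function and then solve it by the Alt--Caffarelli--Friedman variational method, adapted to the $r$--weighted axially symmetric setting and to the two--phase (contact discontinuity) structure. Since each $(U_i,V_i)$ is divergence free in the axisymmetric sense, one introduces the Stokes stream function $\psi$ with $rU=\psi_r$, $rV=-\psi_x$ in the whole flow field $G$; then the first equation in \eqref{a3} is automatic and irrotationality $\partial_r U-\partial_x V=0$ turns the Euler system into $\mathrm{div}\!\left(\frac{\nabla\psi}{r}\right)=0$ in each $\Omega_i$. The flux conditions \eqref{a10} together with the slip condition on $N_1,N_2$ and the symmetry condition on $N_0$ force $\psi=0$ on $N_0\cup\Gamma$, $\psi=\frac{M_1}{2\pi}$ on $N_1\cup\Gamma_1$, $\psi=\frac{M_2}{2\pi}$ on $N_2\cup\Gamma_2$, so that $\Omega_1=\{0<\psi<\frac{M_1}{2\pi}\}$, $\Omega_2=\{\frac{M_2}{2\pi}<\psi<0\}$ and the interface is the level set $\Gamma=\{\psi=0\}\setminus N_0$. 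Bernoulli's law with $P=P_{at}$ on $\Gamma_1\cup\Gamma_2$ gives the one--phase free boundary conditions $\rho_i\frac{|\nabla\psi|^2}{r^2}=2\rho_i\mathfrak{B}_i-2P_{at}$ on $\Gamma_i$, while \eqref{a7} and \eqref{a9} become the two--phase condition $\rho_1\frac{|\nabla\psi^+|^2}{r^2}-\rho_2\frac{|\nabla\psi^-|^2}{r^2}=\Lambda$ on $\Gamma$, the gradient jump of $\psi$ across $\Gamma$ encoding the contact discontinuity. Writing $\lambda:=2\rho_2\mathfrak{B}_2-2P_{at}$ (so that $2\rho_1\mathfrak{B}_1-2P_{at}=\Lambda+\lambda$), the undetermined parameters of the construction are $\lambda>0$ and the asymptotic direction $\nu=(\cos\theta,\sin\theta)$.

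For a large truncation $L$ choose a bounded domain $\Omega_L$ (a large ball minus the nozzle tails, with $\psi$ prescribed on the nozzle walls, the axis, the nozzle inlets and on the outer cap so as to be compatible with a jet leaving in the direction $\nu$), and minimize
\[
J_L(\psi)=\int_{\Omega_L}\Big(\tfrac{\rho(\psi)}{r}\,|\nabla\psi|^2+\Phi(r,\psi)\Big)\,dx\,dr,\qquad \rho(\psi)=\rho_1\chi_{\{\psi>0\}}+\rho_2\chi_{\{\psi<0\}},
\]
over $\{\,\frac{M_2}{2\pi}\le\psi\le\frac{M_1}{2\pi}\,\}$ with the prescribed boundary data, where the $\chi$--type penalty $\Phi$ is tuned so that the Euler--Lagrange conditions of $J_L$ are exactly the above Bernoulli and jump relations on the level sets $\{\psi=\frac{M_i}{2\pi}\}$ and $\{\psi=0\}$. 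Existence of a minimizer $\psi_L$ is standard, and the one--phase and two--phase free boundary regularity theory (Alt--Caffarelli and Alt--Caffarelli--Friedman, in the $r$--weighted version) gives that $\psi_L$ is Lipschitz, $C^{1,\alpha}$ up to the free boundary and interface, that $\Gamma_1,\Gamma_2,\Gamma$ are $C^{1,\alpha}$ and that all free boundary relations hold classically. Then, using the maximum principle, moving--plane/reflection comparisons against the nozzle walls, and the non--degeneracy of two--phase minimizers, one shows: $\psi_L$ is strictly decreasing in $x$ on horizontal slices of the flow region (hence $V=-\psi_x/r>0$ off $N_0\cup\Gamma$), $\{\psi_L>0\}$ and $\{\psi_L<0\}$ are connected with the expected topology, the level sets $\Gamma_1,\Gamma_2,\Gamma$ are graphs $x=g_i(r)$ with $g_1<g<g_2$, and $|\nabla\psi_L|>0$ on $\overline{G}$ except at one point of $N_0$, the branching point of $\Gamma$. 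Uniform-in-$L$ interior and boundary estimates together with non--degeneracy then allow the limit $L\to\infty$, producing for each admissible $(\lambda,\theta)$ a solution on the unbounded domain, with $\Gamma_1,\Gamma_2$ touching $\overline{N_1},\overline{N_2}$ at some points and $\Gamma$ issuing from a single stagnation point on $N_0$.

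The delicate part is the continuous fit \eqref{a12}, i.e. that $\Gamma_1,\Gamma_2$ detach precisely at the edges $A_1,A_2$. Following the Alt--Caffarelli--Friedman monotonicity method, one shows the detachment points of $\psi_L$ vary monotonically and continuously with $(\lambda,\theta)$, rules out detachment strictly inside a nozzle or strictly downstream of an edge by comparison with explicit sub/super--solution jets, and then invokes a two--dimensional intermediate value argument (Poincar\'e--Miranda/topological degree) to select $(\lambda,\theta)=(\lambda^\ast,\theta^\ast)$ realizing the fit at both edges simultaneously; the smooth fit \eqref{a13} follows from the $C^{1,\alpha}$ regularity and the local geometry near $A_i$. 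The prescribed fluxes $M_1,M_2$ are then satisfied automatically, since they are built into the boundary values of $\psi$. Finally, a blow--down and monotonicity analysis of the far field shows the jet is asymptotically the thin conical shell in the direction $\nu$ with $g_2-g_1\to 0$ and $g_1',g_2'\to\cot\theta$, which is \eqref{a014}; computing the flux across this conical shell, with the two fluid speeds pinned down by Bernoulli's law ($P=P_{at}$) and the jump relation \eqref{a9}, yields the sharp rates \eqref{a01} with $\lambda=\lambda^\ast$.

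I expect the hardest step to be the simultaneous continuous (and smooth) fit at the two nozzle edges in the presence of the interface: the two free boundaries and the contact discontinuity are coupled through the single function $\psi$, so the classical one--parameter monotonicity argument must be upgraded to a genuinely two--parameter one, and the barrier constructions near $A_1$ and $A_2$ have to be designed to control how the a priori unknown position of $\Gamma$ influences the local flow. A second technically delicate point is the two--phase non--degeneracy and the $C^1$ regularity of $\Gamma$ up to the branching point on the symmetry axis, where the weight $1/r$ degenerates and the two phases come together.
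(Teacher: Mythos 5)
Your overall skeleton matches the paper's: stream-function reformulation, an Alt--Caffarelli--Friedman type variational problem with the two parameters $(\lambda,\theta)$, truncation, selection of the parameters to achieve continuous/smooth fit, and a blow-down analysis for the far field. However, there is a genuine gap in how you encode the asymptotic direction $\theta$ and control the far field. The paper rescales $\psi_i=\Psi_i/\sqrt{\rho_i}$ and minimizes the \emph{directional} functional $\int_{\Omega_\mu} r\left|\frac{\nabla\psi}{r}-\left(\sqrt{\Lambda+\lambda}\,\chi_{\{0<\psi<m_1\}}+\sqrt{\lambda}\,\chi_{\{m_2<\psi\le 0\}}\right)e\right|^2dX$ with $e=(-\sin\theta,\cos\theta)$, truncating only the nozzle inlets at $x=\pm\mu$ and keeping the domain unbounded in $r$: the direction $\theta$ sits inside the integrand, the expected solution has finite energy, and the key decay estimate of Lemma \ref{lem1} (energy on $\{r>r_0\}$ bounded by $Cr_0^{-3}$, proved by comparison with the asymptotic conical profile and a dyadic iteration) is the engine behind everything at infinity: $g_i'\to\cot\theta$, the vanishing of one free boundary exactly when $\theta\in\{0,\pi\}$ (Proposition \ref{co1}) versus nonvanishing for $\theta\in(0,\pi)$ (Lemma \ref{lee}), and the rates \eqref{a01} via Lemma \ref{lem3}. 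This is indispensable here because the jet annulus shrinks like $1/r$, so no uniform comparison flow exists as in the plane case. Your functional $\int\left(\rho(\psi)|\nabla\psi|^2/r+\Phi(r,\psi)\right)$ contains no $\theta$ at all; you propose to impose the direction through boundary data on the outer cap of a large ball $\Omega_L$ and then let $L\to\infty$. As stated this does not close: the sum-form energy of the desired solution on the unbounded domain is infinite (which is precisely why the directional form is used), and with an artificial cap you would need uniform-in-$L$ estimates showing that the free boundaries exit near the prescribed direction and that the limit does not forget the cap data; you give no mechanism for this, nor any substitute for the $r^{-3}$ decay estimate on which \eqref{a014} and \eqref{a01} rest (your flux computation identifies the correct constants but presupposes exactly this convergence).

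The continuous fit is also resolved by a different mechanism than the one you name, and yours is only asserted. The paper does not use Poincar\'e--Miranda or degree theory: it defines $\Sigma_\mu=\{\lambda\ge 0:\ \exists\,\theta\in(0,\pi)\ \text{with}\ g_{1,\lambda,\theta,\mu}(R)<-1,\ g_{2,\lambda,\theta,\mu}(R)>1\}$, shows it is nonempty for small $\lambda$ (Lemma \ref{ee2}) and bounded (Lemma \ref{ee3}), sets $\lambda_\mu=\sup\Sigma_\mu$, and then combines monotonicity of the minimizer and of the free boundaries in $\theta$ (Proposition \ref{123}), continuous dependence on $(\lambda,\theta)$ (Proposition \ref{ee1}), and the identification of $\theta=0,\pi$ with vanishing of one free boundary to force the exact fit at both edges for some $\theta_\mu\in(0,\pi)$, before letting $\mu\to\infty$. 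If you want a two-parameter intermediate-value argument instead, you must first establish comparable monotonicity/continuity of the detachment points in both parameters and the boundary behavior of your map as $\theta\to 0,\pi$ and $\lambda\to 0,\infty$ --- which is exactly the content skipped. Two further points you gloss over: the $\sqrt{\rho_i}$ rescaling makes the Dirichlet integrand phase-independent so the interface carries only the jump $\Lambda$ (your unscaled two-coefficient functional would need the two-fluid transmission analysis redone), and the facts that $\Gamma$ is a finite-valued graph with $g_1<g<g_2$ reaching the axis at a single branching point, and that $V>0$ off $N_0\cup\Gamma$, each require separate arguments in the paper (Green's identity estimates near the axis and Hopf-type boundary arguments), not merely ``maximum principle and moving planes.''
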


%\begin{figure}[!h]
%\includegraphics[width=110mm]{2AS-Impinging03.eps}
%\caption{Asymptotic direction of the impinging jet}\label{f03}
%\end{figure}

We would like to give the following comments on the existence
theorem.
%\begin{remark}
%In fact, the quantities $\sqrt{\f{\Lambda+\ld}{\rho_1}}$ and $\sqrt{\f{\ld}{\rho_2}}$ are nothing but the speed of the impinging outgoing jet for the fluid I and fluid II in downstream, respectively. And the positive parameter $\ld$ is unknown in advance, which is regarded as a parameter to solve the free boundary problem.
%\end{remark}

%\begin{remark}
%In this paper, we look for an impinging outgoing jet with positive radial velocity, and then the restriction of the asymptotic deflection angle $\th\in(0,\pi)$ is reasonable. Moreover, if the
%asymptotic direction approaches to the critical cases
%$(1,0)$ or $(-1,0)$, the one of the free
%surfaces has to disappear. The fact will be shown in Section \ref{ss}.
%\end{remark}

\begin{remark}One of key points in this work is the appearance of the interface between the two fluids, which is also a free boundary and is determined by the solution itself. In this paper, the impinging outgoing jets possess a smooth surface separating the two immiscible fluids, which intersects the symmetric axis at a unique point, called the \emph{branching point}. However, the appearance of the interface takes many essential difficulties to solve the free boundary problem in mathematics. The first one is the non-trivial jump of the velocity field on the interface (see \eqref{a9}), which leads that we have to seek a non-smooth solution in the whole fluid field. The second one is that the interface is common boundaries of two fluids, which is totally free. And we shall define the interface as the level set of the stream function and show that it is indeed a smooth curve. The third one is the regularity of the two-phase fluids near the branching point. \end{remark}

\begin{figure}
\begin{minipage}[t]{0.5\linewidth}
\centering
\includegraphics[width=60mm,height=35mm]{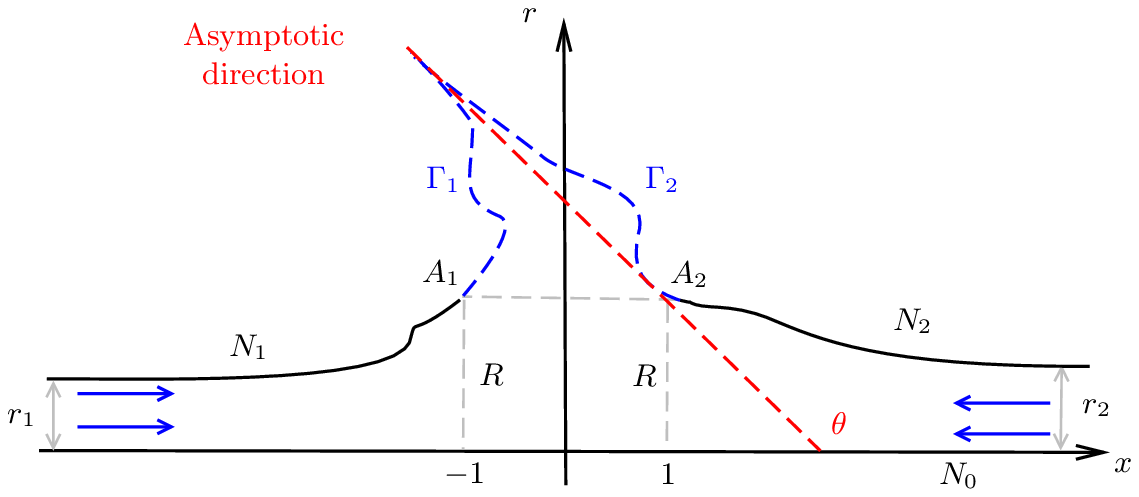}
\caption{Axisymmetric impinging outgoing jet flow in cylindrical
coordinates}\label{f02}
\end{minipage}%
\begin{minipage}[t]{0.5\linewidth}
\centering
\includegraphics[width=60mm,height=35mm]{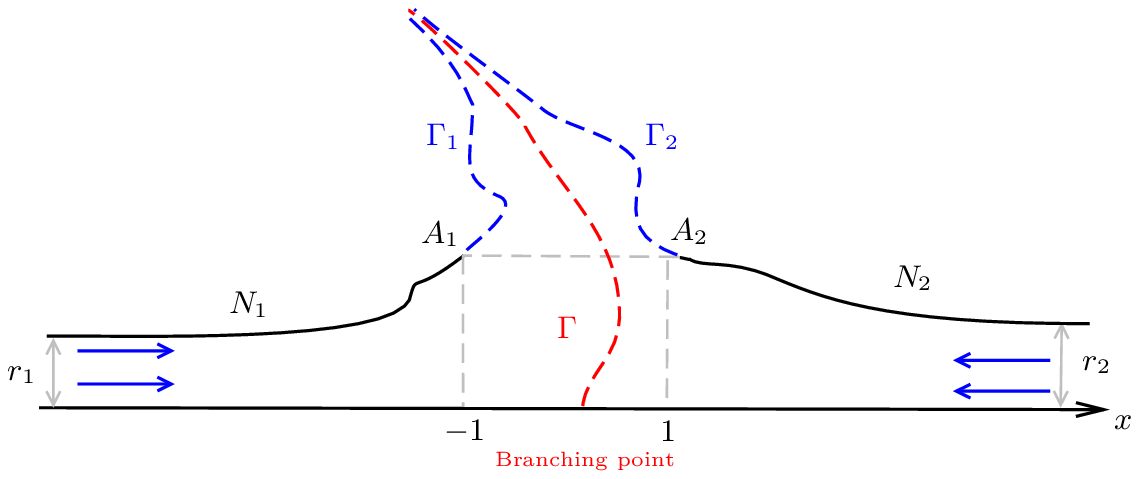}
\caption{Impinging outgoing jet and the interface
$\Gamma$}\label{f04}
\end{minipage}
\end{figure}

\begin{remark}There are many numerical results on the impinging free jets in absence of rigid nozzle walls, such as unsymmetrically impinging jets in \cite{KE}, impinging free jets in \cite{HW}, compressible impinging jets in \cite{CXF}. However, here we have to consider the geometry of both solid boundaries and free boundaries, the one of main difficulties is to verify the continuous fit and smooth fit conditions. In present work, an essential point is that we can choose a suitable pair of parameters $(\ld,\th)$, such that the continuous fit conditions are fulfilled. In other word, the parameters $\ld$ and $\th$ can be determined by the continuous fit conditions,
 which is the main difference from the analysis of impinging free jets without rigid boundaries. Therefore,
we first solve the free boundary problem for any $\ld$ and $\th$,
and then show the existence of a suitable pair of parameters $(\ld,
\th)$ to guarantee the continuous fit conditions of impinging
outgoing jet. Furthermore, we can show that the continuous fit
conditions imply the smooth fit conditions.\end{remark}

Theorem \ref{thm1} gives that there exists a pair of parameters
$(\ld,\th)$ to guarantee the existence of the axially symmetric
impinging outgoing jet. However, to the best of our knowledge, the
uniqueness of the jet with two free boundaries is totally open.
Next, for $\Lambda=0$, we give the uniqueness results on the axially
symmetric impinging outgoing jet, the idea borrows from the recent
work \cite{CDW} on the uniqueness of the asymmetric incompressible
jet.
% In fact, for $\Lambda=0$, in Theorem \ref{thm2}, we will show that the impinging outgoing jet with two free boundaries is unique, provided that the parameters $\ld$ and $\th$ are fixed.

\begin{theorem}\label{thm2}(Uniqueness of the axially impinging outgoing jet)(1) Given any parameters $(\ld, \th)$, such that the continuous fit conditions \eqref{a12} hold, then the axially symmetric impinging outgoing jet $(U,V,P,\Gamma_1, \Gamma_2)$ established in Theorem \ref{thm1} is unique.
\\ (2) Suppose that there exist two pairs of the parameters $(\ld,\th)$ and $(\ld, \tilde{\th})$, such that the continuous
fit conditions \eqref{a12} to the axially symmetric impinging outgoing jet hold, then
$\th=\tilde{\th}$.\end{theorem}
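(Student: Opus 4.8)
The plan is to recast the collision problem as a one-phase axially symmetric free boundary problem for a suitably rescaled Stokes stream function and then to argue by comparison, following \cite{CDW}. In each fluid region $\O_i$ introduce $\psi_i$ with $rU_i=\p_r\psi_i$ and $rV_i=-\p_x\psi_i$; then $\mcL\psi_i:=\p_{rr}\psi_i-\f1r\p_r\psi_i+\p_{xx}\psi_i=0$ in $\O_i$. Normalizing $\psi=0$ on the interface (a streamline by \eqref{a7}), the slip condition on the walls $N_i$ together with \eqref{a05} make $\psi$ constant on $N_1\cup\Gamma_1$, on $N_2\cup\Gamma_2$ and on $N_0\cup\Gamma$, and \eqref{a10} fixes these constants to $\f{M_1}{2\pi}$, $\f{M_2}{2\pi}$ and $0$. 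Set $\phi=\sqrt{\rho_i}\,\psi_i$ on $\O_i$. Since $\Gamma$ is a common streamline, $\g\psi_1$ and $\g\psi_2$ are normal to it and point into $\O_1$ there; combined with \eqref{a9} for $\Ld=0$ this gives $\sqrt{\rho_1}\,\g\psi_1=\sqrt{\rho_2}\,\g\psi_2$ on $\Gamma$, so $\phi\in C^1$ across $\Gamma$ and hence $\mcL\phi=0$ throughout the flow field, while \eqref{a09} and Bernoulli's law give $|\g\phi/r|^2=\ld$ on $\Gamma_1\cup\Gamma_2$. Thus a solution is equivalent to the pair $(\phi,\Gamma_1\cup\Gamma_2)$, the interface being recovered as the component of $\{\phi=0\}$ in the jet, and the solution of Theorem~\ref{thm1} is the minimizer of the associated Alt--Caffarelli-type functional $J_{\ld,\th}$.

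For part (1), fix $(\ld,\th)$ with continuous fit and let $(\phi,\Gamma_1,\Gamma_2)$ be any solution. First one shows $\phi$ minimizes $J_{\ld,\th}$: the relation $|\g\phi/r|^2=\ld$ is exactly the free-boundary Euler--Lagrange condition, and a harmonic-replacement comparison rules out a competitor of strictly smaller energy. Uniqueness of the minimizer then follows by the lattice argument: for two minimizers $\phi,\t\phi$ the functions $\max(\phi,\t\phi)$ and $\min(\phi,\t\phi)$ are again admissible, the Dirichlet part of the energy splits by the pointwise pairing of values and $|\{\max(\phi,\t\phi)>0\}|+|\{\min(\phi,\t\phi)>0\}|=|\{\phi>0\}|+|\{\t\phi>0\}|$ (and likewise for the negative parts), so $\max(\phi,\t\phi)$ and $\min(\phi,\t\phi)$ are also minimizers. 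Since $\max(\phi,\t\phi)\ge\phi$, both are $\mcL$-harmonic in the interior of their positivity sets, agree on the fixed boundaries $N_0,N_1,N_2$, are pinned together at $A_1,A_2$ by the continuous and smooth fit conditions \eqref{a12}--\eqref{a13} and at $r=+\infty$ by the asymptotics \eqref{a01}; the strong maximum principle and the free boundary regularity of Theorem~\ref{thm1} then force $\max(\phi,\t\phi)\equiv\phi$, and similarly $\min(\phi,\t\phi)\equiv\phi$, whence $\phi=\t\phi$ and $\Gamma_i=\t\Gamma_i$.

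For part (2), fix $\ld$ and suppose $(\ld,\th)$ and $(\ld,\t\th)$ both give solutions of the continuous fit, with $\th\ne\t\th$. I would show that the free surfaces of the two solutions must coincide, which forces $\th=\t\th$. The continuous fit together with the implication ``continuous fit $\Rightarrow$ smooth fit'' makes $\Gamma_1$ and $\t\Gamma_1$ pass through $A_1$ with the common slope $f_1'(R-0)$, and likewise at $A_2$; on their free surfaces $\phi$ and $\t\phi$ carry the \emph{same} Cauchy data, namely the value $\sqrt{\rho_1}\,\f{M_1}{2\pi}$ and $|\g\,\cdot\,/r|^2=\ld$. Starting from $A_1$ and using Holmgren-type uniqueness for $\mcL$ one propagates $\phi\equiv\t\phi$ along the flow: the set of $r\ge R$ up to which $\Gamma_1$ and $\t\Gamma_1$ agree is nonempty, relatively closed, and relatively open (an open step holds because the shared Cauchy data on $\Gamma_1=\t\Gamma_1$ determines $\phi=\t\phi$ on a one-sided neighborhood, hence prolongs the coincidence of the level set $\{\phi=\sqrt{\rho_1}M_1/2\pi\}$), so it is all of $[R,+\infty)$; likewise $\Gamma_2\equiv\t\Gamma_2$. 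Then the two solutions have the same asymptotic cone, so $\th=\t\th$, a contradiction. (Equivalently, for fixed $\ld$ this can be phrased as strict monotonicity in $\th$ of the abscissa at which the auxiliary free surface meets $\{r=R\}$, so that continuous fit selects $\th$ uniquely.)

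The hard part, in both statements, is the comparison/continuation step across the two a priori distinct free domains, which are genuinely non-nested in part (2) since the jet sheets hug different asymptotic rays at infinity. Three points need real care: (i) the two-sided free boundary --- the interface $\Gamma$ is only the zero level set of $\phi$ and carries no Bernoulli condition of its own, so the lattice and propagation arguments must be organized around the sign structure of $\phi$ rather than a single positivity set, and the reduction above is what makes this tractable; (ii) the nozzle edges $A_1,A_2$, where the ordinary Hopf lemma must be replaced by a Serrin corner-point lemma (or a direct local expansion), and where the smooth fit \eqref{a13} provides exactly the first-order agreement of the competing free boundaries the argument needs to start; and (iii) the stagnation (branching) point on $N_0$, where $\g\phi$ vanishes and the coefficient $\f1r$ of $\mcL$ is singular, which requires the precise local behavior of $\phi$ near that point from Theorem~\ref{thm1}. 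Near $r=+\infty$ the comparison is controlled using the asymptotic expansions \eqref{a01} as barriers.
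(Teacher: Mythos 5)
Your reduction to a single scaled stream function for $\Ld=0$ is fine, but both halves of your argument rest on steps that are not justified and, in one case, cannot work. In part (1), the pivot of your plan is the claim that \emph{every} solution of the free boundary problem \eqref{b3} is a minimizer of $J_{\ld,\th}$; satisfying the Euler--Lagrange/Bernoulli condition $|\g\phi/r|^2=\ld$ on the free surfaces only makes $\phi$ a critical point, and the sentence ``a harmonic-replacement comparison rules out a competitor of strictly smaller energy'' is not an argument --- this implication is precisely what would have to be proved, and nothing in Theorem \ref{thm1} supplies it. Moreover, even granting minimality, the lattice step only shows that $\max(\phi,\t\phi)$ and $\min(\phi,\t\phi)$ are again minimizers; since the two positivity sets need not coincide, the strong maximum principle does not by itself force $\max(\phi,\t\phi)\equiv\phi$ --- what is needed is a Hopf-type comparison at a point where the two free boundaries touch, including the corner case at $A_1,A_2$. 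The paper avoids both difficulties by comparing two arbitrary solutions directly: slide one solution, $\psi^{\e}(x,r)=\psi_{\ld,\th}(x+\e,r)$, take the smallest $\e_0\ge0$ with $\psi^{\e_0}\le\t\psi_{\ld,\th}$, show the touching point must lie on a free boundary, and rule it out by Hopf's lemma (interior touching, $\e_0>0$) or by the corner barrier $\bar\psi=(1+\zeta)(m_1-\t\psi_{\ld,\th})-(m_1-\psi^{\e_0})$ as in \eqref{e017} (touching at $A_1$ or $A_2$, $\e_0=0$), which yields the contradiction $(1+\zeta)\sqrt\ld\le\sqrt\ld$. No minimality of the competing solution is ever used.

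In part (2) your main argument fails at the start: the open-closed/Holmgren continuation requires the two free surfaces to coincide along an \emph{arc} so that $\phi$ and $\t\phi$ share Cauchy data there, but the continuous and smooth fit conditions \eqref{a12}--\eqref{a13} only give agreement of position and slope at the single point $A_1$, which is not Cauchy data on a curve; there is no mechanism that makes the coincidence set open at $r=R$. Worse, the conclusion you are trying to propagate is false for $\th\neq\t\th$: by the monotonicity in $\th$ (Proposition \ref{123}) the two solutions are strictly ordered and their free boundaries satisfy $g_{1,\ld,\th}(r)>g_{1,\ld,\t\th}(r)$ for $r>R$, so $\Gamma_1$ and $\t\Gamma_1$ never coincide on any arc --- the contradiction must come from the strict ordering together with the fact that both boundaries emanate from the same corner $A_1$, violating the common gradient condition $\sqrt\ld$ there via the same corner lemma as in Proposition \ref{ee4}. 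Your parenthetical remark about strict monotonicity in $\th$ of the fitting abscissa is in fact the correct route (and essentially the paper's), but in your write-up it is an unproved aside while the argument you actually develop is the untenable continuation one.
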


%\begin{remark}
%An important and interesting problem still remains open is whether
%the parameter $\lambda$ is unique under the present situation.
%\end{remark}

 Next, we give the asymptotic behaviors and the decay rate of the impinging
outgoing jet in the far field.

\begin{theorem}\label{thm4} The impinging outgoing jet flow $(U,V,P,\Gamma_1,\Gamma_2)$ established in Theorem \ref{thm1} satisfies the following asymptotic behavior in far fields,
 \be\label{a14}(U(x,r),V(x,r), P(x,r))\rightarrow
\left(\f{M_1}{\pi\rho_1 r_1^2},0, \f{\ld+\Lambda}{2\rho_1}+P_{at}-\f{M_1^2}{2\rho_1\pi^2r_1^4}\right),\ee
and\be\label{a15}\g U\rightarrow0, \ \ \g
V\rightarrow0,\ \ \g P\rightarrow0, \ee  as $x\rightarrow -\infty$, in any compact subset of $(0,r_1)$ and
\be\label{a16}(U(x,r),V(x,r), P(x,r))\rightarrow
\left(\f{M_2}{\pi\rho_2 r_1^2},0, \f{\ld}{2\rho_2}+P_{at}-\f{M_2^2}{2\rho_2\pi^2r_2^4}\right),\ee and
\be\label{a17}\g U\rightarrow0, \ \ \g
V\rightarrow0,\ \ \g P\rightarrow0, \ee as $x\rightarrow +\infty$, in any compact subset of $(0,r_2)$, and in the downstream,
\be\label{a07}(U(x,r),V(x,r), P(x,r))\rightarrow
\left(\sqrt{\f{\Lambda+\ld}{\rho_1}}\cos\th,\sqrt{\f{\Lambda+\ld}{\rho_1}}\sin\th,
P_{at}\right),\ee uniformly in any compact
subset of $\O_1$ as $r\rightarrow+\infty$, and
\be\label{a08}(U(x,r),V(x,r), P(x,r))\rightarrow
\left(\sqrt{\f{\ld}{\rho_2}}\cos\th,\sqrt{\f{\ld}{\rho_2}}\sin\th,
P_{at}\right),\ee  uniformly in any compact
subset of $\O_2$ as $r\rightarrow+\infty$, and
\be\label{a09}\g U\rightarrow0, \ \ \g V\rightarrow0, \ \ \g
P\rightarrow0 ,\ee uniformly in any compact subset of
$\O_1\cup\O_2$ as $r\rightarrow +\infty$.

Furthermore, for any $\alpha\in(0,2)$, one has \be\label{a09}r^\alpha\left(\left|U_1(x,r)-\sqrt{\f{\Lambda+\ld}{\rho_1}}\cos\th\right|+\left|V_1(x,r)-\sqrt{\f{\Lambda+\ld}{\rho_1}}\sin\th\right|\right)\rightarrow0,\ee \be\label{a009}r^\alpha\left(\left|U_2(x,r)-\sqrt{\f{\ld}{\rho_2}}\cos\th\right|+\left|V_2(x,r)-\sqrt{\f{\ld}{\rho_2}}\sin\th\right|\right)\rightarrow0,\ee as $r\rightarrow+\infty$.
%and %\be\label{a010}r\left(g_{2,\ld,\th}(r)-g_{1,\ld,\th}(r)\right)\rightarrow\f{M_1-M_2}{2\pi\ld\sin\th},\ee as $r\rightarrow+\infty$.
\end{theorem}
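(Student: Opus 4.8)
The plan is to treat three regimes separately — the far upstream part of $\mathcal{N}_1$, the far downstream part of $\mathcal{N}_2$, and the downstream jet as $r\to+\infty$ — working throughout with the stream function $\psi$ from the proof of Theorem \ref{thm1} (normalized so that $r\rho_i U_i=\psi_r$, $r\rho_i V_i=-\psi_x$ in $\Omega_i$), for which $\psi$ solves the linear equation $\psi_{xx}+\psi_{rr}-r^{-1}\psi_r=0$ in each $\Omega_i$ (degenerate only on $N_0$, where axisymmetry supplies the missing regularity), $\psi$ is constant on each of $N_0,N_1,N_2,\Gamma_1,\Gamma,\Gamma_2$ — with the jumps fixed by $M_1/2\pi$ and $M_2/2\pi$ and with $\Gamma$ itself a level set of $\psi$ — the flow speed obeys $\sqrt{U^2+V^2}\le\max(q_1,q_2)$ on $\overline{G}$ (the speed bound from the proof of Theorem \ref{thm1}, $q_i$ the constant speed on $\Gamma_i$), and $V>0$ on $\overline{G}\setminus(N_0\cup\Gamma)$. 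In every regime the pressure limit is read off from the velocity limit through Bernoulli's law $\tfrac12(U_i^2+V_i^2)+P_i/\rho_i=\mathfrak{B}_i$, and the gradient decays follow by upgrading the velocity convergence to $C^1$ with interior and boundary Schauder estimates; so the real content is the identification of the velocity limits.

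For \eqref{a14}--\eqref{a15}: since $\mathcal{N}_1$ coincides with the straight cylinder $\{0\le r<r_1\}$ for $x\le-1$, fix $x_n\to-\infty$ and set $\psi_n(x,r)=\psi(x+x_n,r)$. The bound $|\nabla\psi_n|\le Cr$ and Schauder estimates give, along a subsequence, $\psi_n\to\psi_\infty$ in $C^{2,\alpha}_{\mathrm{loc}}$ on $\mathbb{R}\times[0,r_1)$, with $\psi_\infty$ solving the same equation and $\psi_\infty=0$ on $\{r=0\}$, $\psi_\infty=M_1/2\pi$ on $\{r=r_1\}$. Since $V=-\psi_x/(r\rho_1)$ keeps one sign throughout $\mathcal{N}_1$ (item (3) of the definition of a solution), $\psi_\infty$ is monotone in $x$ and bounded, hence $x$-independent (a sliding argument); then $(\psi_\infty'/r)'=0$ forces $\psi_\infty=\frac{M_1}{2\pi r_1^2}\,r^2$, i.e. $U\to M_1/(\pi\rho_1 r_1^2)$ and $V\to 0$, the limit being subsequence-independent. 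The gradient statement \eqref{a15} is then immediate from the $C^{2,\alpha}_{\mathrm{loc}}$ convergence, and Bernoulli (with $\rho_1\mathfrak{B}_1=\tfrac12\rho_1 q_1^2+P_{at}$ fixed by $P=P_{at}$ on $\Gamma_1$) gives the pressure value. The downstream statements \eqref{a16}--\eqref{a17} are identical with $1,2$ interchanged and $x_n\to+\infty$.

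For \eqref{a07}--\eqref{a09} I would blow up the shrinking jet. By Theorem \ref{thm1}, far out $\Omega_1\cup\Omega_2$ is a thin shell along the ray $\{x=r\cot\theta\}$ whose perpendicular thickness is asymptotically $\big(\tfrac{M_1}{2\pi\rho_1 q_1}-\tfrac{M_2}{2\pi\rho_2 q_2}\big)r^{-1}$, where $q_1=\sqrt{(\Lambda+\lambda)/\rho_1}$ and $q_2=\sqrt{\lambda/\rho_2}$ are the constant speeds on $\Gamma_1,\Gamma_2$ (from \eqref{a9}, Bernoulli, $P=P_{at}$ on $\Gamma_1\cup\Gamma_2$, and the definition of $\lambda$). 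Pick $P_n\in\Gamma$ with $r_n:=|P_n|\to\infty$, rescale $\psi$ about $P_n$ by the factor $r_n$ in coordinates $(y,z)$ tangent and normal to the shell, and set $\psi_n(y,z)=\psi\big(P_n+r_n^{-1}(y,z)\big)$ in these axes. Then $\psi_n$ solves $\Delta\psi_n=O(r_n^{-2})$ on domains converging to the bi-infinite strip of rescaled widths $w_1=\tfrac{M_1}{2\pi\rho_1 q_1}$, $w_2=-\tfrac{M_2}{2\pi\rho_2 q_2}$, with $\psi_n$ equal to the fixed constants $M_1/2\pi$, $0$, $M_2/2\pi$ on the images of $\Gamma_1$, $\Gamma$, $\Gamma_2$. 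Boundary Schauder estimates — using the $C^{1,\alpha}$ regularity of $\Gamma_i$ from the Alt--Caffarelli theory, preserved by the rescaling — give $\psi_n\to\psi_\infty$ in $C^{1,\alpha'}_{\mathrm{loc}}$, with $\psi_\infty$ bounded and harmonic on each of $\{-w_2<z<0\}$ and $\{0<z<w_1\}$ and equal to those constants on the edges. A bounded harmonic function on a strip with constant Dirichlet data on both edges is affine in the transverse variable, so $\psi_\infty$ is affine with constant gradient $|\nabla\psi_\infty|=\rho_i q_i$ on the $\Omega_i$-side; transcribing back gives $(U_i,V_i)\to q_i(\cos\theta,\sin\theta)$ and, via Bernoulli, $P\to P_{at}$, uniformly at definite relative distance from $\Gamma_i$ and $\Gamma$ — that is, on compact subsets of $\Omega_i$ — and \eqref{a09} follows from the $C^{2,\alpha}_{\mathrm{loc}}$ bounds. (Equivalently one can straighten $\Gamma_1,\Gamma,\Gamma_2$ by the partial hodograph $(x,r)\mapsto(r,\psi)$ and run the same rigidity in the resulting fixed strip.)

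Finally, \eqref{a09}--\eqref{a009} are the quantitative refinement, obtained by tracking constants in the blow-up: $\Delta\psi_n=O(r_n^{-2})$, and (using that $r(g(r)-g_i(r))$ converges, together with the interior estimates) the rescaled free boundaries converge rapidly to the limiting lines, so elliptic estimates give $\|\psi_n-\psi_\infty\|_{C^1}=O(r_n^{-2})$; unrescaling — a factor $r_n$ from $\nabla$ and a factor from the $1/r$ in the velocity formula — turns this into $|U_i-q_i\cos\theta|+|V_i-q_i\sin\theta|=o(r^{-\alpha})$ for every $\alpha\in(0,2)$, the value $2$ being the scaling weight of the equation $\psi_{xx}+\psi_{rr}-r^{-1}\psi_r=0$. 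The heart of the proof is the far-field step: making the shell blow-up rigorous (it rests on the free-boundary regularity and the width/flatness estimates underlying Theorem \ref{thm1}), handling the totally free interface $\Gamma$ where the two sub-strips meet and are tied by the transmission conditions \eqref{a7}, \eqref{a9}, showing the blow-up limit is subsequence-independent, and sharpening to the $O(r^{-2})$ rate; by comparison, the nozzle Liouville arguments and all the Schauder bootstrapping are routine.
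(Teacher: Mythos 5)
Your treatment of the nozzle regimes \eqref{a14}--\eqref{a17} is essentially the paper's argument (translate by $x_n$, compactness, a Liouville-type identification of the limit in the strip $0<r<r_1$ giving $\psi_\infty=\tfrac{M_1}{2\pi r_1^2}r^2$, then Bernoulli); your use of the monotonicity of $\psi$ in $x$ in place of the paper's two-sided barrier from Lemma \ref{cc9} is a harmless variant. The downstream qualitative limits \eqref{a07}--\eqref{a08} by a blow-up at scale $1/r_n$ and a strip-Liouville rigidity is also close in spirit to the paper's Lemmas \ref{lem2}--\ref{lem3}, with one caveat: you identify the slope of the affine limit from the widths $w_i$, i.e.\ from \eqref{a01}, whereas in the paper \eqref{a01} is not proved inside the existence argument but is itself derived at this very stage (from the blow-up limit $\Theta$ of Lemma \ref{lem2}, identified via the non-degeneracy lemmas and the energy decay of Lemma \ref{lem1}, not via the widths). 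Since \eqref{a01} is formally part of Theorem \ref{thm1} this is defensible, but you should be aware that the logical order in the paper is the reverse of yours.

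The genuine gap is the quantitative statements \eqref{a09}--\eqref{a009}. Your argument for them is ``the rescaled free boundaries converge rapidly to the limiting lines, so elliptic estimates give $\|\psi_n-\psi_\infty\|_{C^1}=O(r_n^{-2})$,'' but no rate of convergence of the free boundaries (or of $r(g-g_i)$, or of $g_i'$) is available from the qualitative inputs you use -- \eqref{a01} and \eqref{a014} give convergence with no rate -- so the claimed $O(r_n^{-2})$ bound has no proof; moreover, if it were true it would yield the rate $r^{-2}$ itself, contradicting the fact that the theorem (and the paper) only reach every $\alpha<2$. The actual mechanism in the paper is an a priori decay estimate proved variationally: Lemma \ref{lem1} establishes $\int_{\{r>r_0\}} r\bigl|\tfrac{\nabla\psi}{r}-(\cdots)e\bigr|^2\,dX\le C r_0^{-3}$ by inserting an explicit comparison function into $J_{\ld,\th,\mu}$ and running a dyadic iteration, and Lemma \ref{lem3} then converts this, through the rescaling and a weighted version of the same bound ($\int r^{2\alpha}|\cdots|^2\lesssim r_n^{2\alpha-4}$), into the pointwise decay with the threshold $\alpha<2$ coming precisely from the cubic energy decay -- not from ``the scaling weight of the equation,'' as you assert. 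Without an estimate of this type (or some substitute giving a rate), your proposal proves the qualitative limits but not \eqref{a09}, \eqref{a009}, nor, for that matter, \eqref{a01} if one does not grant it from Theorem \ref{thm1}; this missing decay estimate is the core quantitative ingredient of the paper's proof.
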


\begin{remark}The one of main differences between the impinging outgoing jet in two-dimensional and axially symmetric case is that the two-dimensional outgoing jet possesses a uniform positive width in far field, and however, the distance of free streamlines goes to zero in downstream in axially symmetric case. Here, we have to establish the decay estimates of outgoing jets and the free boundaries in far field. Indeed, the facts \eqref{a01} and \eqref{a09} give the decay rates of the velocity field and the distance of the two free stream surfaces in downstream. In particular, \eqref{a01} implies that the optimal decay rate of the distance of two free stream surfaces is $\f{1}{r}$ in downstream.
\end{remark}

\subsection{Motivation and history of the problem}
The motivation to investigate the impinging outgoing jets from two
nozzles comes from Chapter V. $\S$ 5 in the classical book \cite{BZ}
by G. Birkhoff and E. H. Zarantonello, in which the impinging
outgoing jets from two plane symmetric cylinders were considered.
Except for some simple channel geometries, the impinging problem of
two jets can not be solved analytically, as was shown in the
monographs \cite{BZ} and \cite{Gu}. Here, we consider the general
case that the impinging jet issuing from two axially symmetric
nozzles with variable cross-section.

\begin{figure}[!h]
\includegraphics[width=100mm]{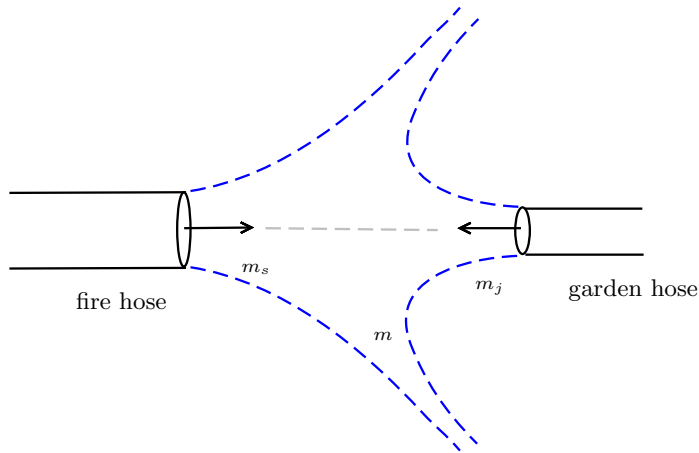}
\caption{Collision of two jets (Figure 16.7 in \cite{FA3})}\label{fi0}
\end{figure}

Another motivation to investigate the impinging outgoing jets
issuing from two nozzles comes from the Chapter 16 in famous survey
\cite{FA3}. The physical problem is also related to the shaped
charge question in \cite{CH}. As mentioned in Page 152 \cite{FA3},
"... \emph{we can formulate this problem as a collision of two jets,
say a garden hose and a fire hose; see Figure 16.7.}" (see Figure
\ref{fi0}) and A. Friedman proposed an open problem that

``\emph{Problem (1). Analyze the axially symmetric free boundary
problem associated with the flow in Figure 16.7 in incompressible
case.}"

On another side, there are many numerical results on this impinging
outgoing jet problem, such as the incompressible problem for an
arbitrary polygonal nozzle in \cite{DET}, and the incompressible jet
with gravity in \cite{DVB} and so on. Moreover, Hurean and Weber in
\cite{HW} considered the impinging of two incompressible ideal free
jets (in the absence of rigid nozzle walls) numerically, and some
existence results on two compressible free jets were also
investigated in \cite{CXF}. However, we also would like to mention
the numerical result on asymmetric impinging free jets in \cite{KE}.

The study of liquid jets issuing from containers is centuries old. As far back as 1868, Helmholtz and Kirchhoff introduced the
classical theory of free streamlines in two-dimensional jets. The
steady irrotational flows of ideal incompressible fluid, bounded by
nozzle walls and free streamlines were investigated. The following
decades saw extensions of a great many different kinds of
two-dimensional flows, on the basis of the complex analysis methods
by Planck, Joukowsky, R\'ethy, Levi-Civita, Greenhill and others.

Some substantial post-war monographs are those of
Birkhoff-Zarantonello \cite{BZ}, Gurevich \cite{Gu}, Milne-Thomson
\cite{MT}. For two-dimensional irrotational case, a generalized
Schwarz-Christoffel transformation, combined with a Fourier
technique to formulate a free boundary problem into a nonlinear
integral-differential equation, some existence results on jets in
special nozzles have been constructed. However, two-dimensional jets
have been given most of the attention in the existence theory, and
the limited amount of work on axisymmetric jets has been confined.
The reason is that the complex analysis method which has been
adapted to two-dimensional jets has noneffective in the axially
symmetric case. A first breakthrough work on the axially symmetric
free streamline was due to Garabedian, Lewy and Schiffer in
\cite{GLS} in 1952, in which some existence results on the axially
symmetric cavity were established by variational approach.
Furthermore, Alt, Caffarelli and Friedman developed the variational
method to deal the free streamlines problem in their elegant works
\cite{AC1,ACF7}. Based on their framework, some well-posedness
results on axially symmetric jet in \cite{ACF3}, asymmetric jet in
\cite{ACF1}, jet with gravity in \cite{ACF2}, and jets with two
fluids in \cite{ACF5, ACF6} have been established. In this paper,
some fundamental ideas on the existence theory are still borrowed
from the variational method in \cite{AC1}. Recently, if we assume
that the fluid is smooth across the interface $\Gamma$ (ie. $\Ld=0$)
apriorily, some existence results for incompressible plane symmetric
impinging outgoing jets has been obtained in \cite{DW}. In fact, the
interface $\Gamma$ is a contact discontinuity and the jump $\Ld$ is
always non-trivial and non-zero generally. As we mentioned before,
we have to investigate the non-zero jump $\Ld\neq 0$, which is one
of main differences between this paper to the previous paper
\cite{DW}. As a first step to attack the original problem on
impinging outgoing jets with nontrivial jump, Wang and Xiang in
\cite{WX} considered a toy model on the incompressible fluids
issuing from two infinity long co-axis and symmetric nozzles without
jet free boundary and established some properties on the contact
discontinuity between the two fluids. Therefore, the objective of
the present paper is to establish the well-posedness theory on the
impinging outgoing jet problem and solve the open problem proposed
by A. Frideman in 1989.

\subsection{Methodology}
From the physical problem here, the main difference and difficulty
here stems from the shape of nozzle walls, we have to find a
mechanism, such that the free boundaries of the jets connecting
smoothly at the edge of the nozzle walls (so-called \emph{continuous
fit and smooth fit conditions}). Another main difficulty is how to
analyze the interface with contact discontinuity between the two
incompressible ideal fluids.

We would like to comment the main ideas of the proof as follows. The
variational method developed by H. Alt, L. Caffarelli and A.
Friedman in 1980s has been shown to be powerful and effective to
solve the free streamline theory for more general models. In
two-dimensional case, the stream function is harmonic in the fluid
domain, while in axially symmetric case, it solves a linear elliptic
equation with some lower-order term, and we have to deal with the
regularity near the symmetric axis. This is the first difficulty in
this paper. The second one is that the distance of the two free
boundaries converges to a positive constant in two-dimensional case (see \cite{DW}),
and however, the distance goes to zero in axially symmetric jet
here. Therefore, we can not borrow some uniform special flow to show
some fundamental properties of the free boundaries as in 2D case,
such as the vanishing and non-vanishing of free boundary, and the
asymptotic behaviors of the jet in far fields, and so on. Here, our
strategy is to establish firstly the decay rate of distance of the
two free boundaries and the impinging outgoing jet in far field, and
then to obtain the desired properties via some rescaling arguments.
The third principal difficulty in this paper centers about how to
choose the suitable parameters $(\lambda, \th)$ to assure the
continuous fit conditions in impinging outgoing jets. Some
continuous dependent relationships and monotonic properties to the
impinging outgoing jets with respect to the parameters $(\ld, \th)$
are established and guarantee the fact to be fulfilled. The fourth difficulty here is the occurrence of three free boundaries $\Gamma_1$, $\Gamma_2$ and $\Gamma$, which is main difference to the previous results, such as \cite{DW,WX}. In particular, the solution is not smooth across the interface $\Gamma$ and it is a contact discontinuity between the two fluids. To our knowledge, this is the first well-posedness work on the jet flows problem with three free boundaries.
%Moreover, the proof of the asymptotic
%behavior of the impinging outgoing jet is based on the blowup
%argument, which has been used to deal with the subsonic compressible
%flows in infinitely long nozzles in \cite{CDSW, CDX, CHW, CHWX, DD,
%DWX, DX, DXX,DXY, DL, XX1,XX2,XX3}.

The remain of this paper is organized as follows. First, we
establish the free boundary value problem to the physical problem in
Section 2. The solvability of the free boundary value problem
follows from the standard variational approach, which has been
developed by Alt, Caffarelli and Friedman in the celebrated works
\cite{AC1, ACF1, ACF3}. Moreover, some properties of the free
boundaries will be obtained and we can verify the continuous fit and
smooth fit conditions for suitable parameters $\ld$ and $\th$.
Additionally, we will investigate the existence and properties of
the interface between the two fluids. Hence, we can obtain the
existence of impinging outgoing jet in Section 3. In Section 4, we
will give the uniqueness of the impinging outgoing jet and the
parameters. In Section 5, the asymptotic behavior of the impinging
outgoing jet is obtained along the blow-up argument, which has been
used to deal with the subsonic compressible flows in infinitely long
nozzles in \cite{DD, DWX, DX, DXX,DXY, DL, XX1,XX2,XX3}. Some results on the variational problem are given in the Appendix.
\section{Mathematical settings of the free boundary problem}
Due to the continuity equation \eqref{a3}, one can introduce stream
functions $\Psi_i(x,r)$ ($i=1,2$) such that \be\label{b1}
\p_x\Psi_i=- r\rho_iV_i,\ \ \ \ \p_r\Psi_i= r\rho_iU_i. \ee

In order to convenient to the analysis, we introduce the scaled stream function
$\psi_i=\f{\Psi_i}{\sqrt{\rho_i}}$ ($i=1,2$), and denote $\psi$ as
$$\psi=\left\{ \ba{l}
\psi_1\ \ \ \text{in}\ \ \O_1,\\
\psi_2\ \ \ \text{in}\ \ \O_2.
 \ea \right.$$

This together with the irrotational condition gives \be\label{b01}\Delta\psi-\f{1}{r}\f{\p\psi}{\p r}=0\ \
\text{in $\O_1\cup\O_2$}.\ee Here and after, $\O_i$ denotes the flow field,
bounded by the nozzle walls $N_i$, the symmetric axis $N_0$, the interface $\Gamma$ and the free
boundaries $\Gamma_i$ ($i=1,2$).

In this paper, we expect to seek an axially symmetric impinging outgoing jet flow with
positive vertical velocity, and thus denote $\O$ bounded by $N_i$,
$N_0$ and $L_i$ ($i=1, 2$) as the possible flow field of impinging outgoing jet (see Figure \ref{fi2}), where
$$L_1=\{(x, r)\mid r=R, x<-1\}\ \ \text{and}\ \ L_2=\{(x, r)\mid r=R,
x>1\}.$$

\begin{figure}[!h]
\includegraphics[width=100mm,height=40mm]{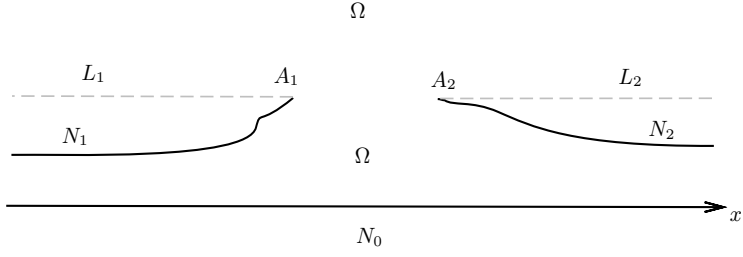}
\caption{The possible flow field $\O$}\label{fi2}
\end{figure}

Moreover, the nozzles $N_1\cup N_2$ and the free boundaries
$\Gamma_1\cup\Gamma_2$ are streamlines, thus $\psi$ remains some
constant on those boundaries, without loss of generality, we can
impose $\psi=m_1$ on $N_1\cup \Gamma_1$ and $\psi=m_2$ on
$N_2\cup\Gamma_2$. Thus, the free boundaries $\Gamma_1$ and
$\Gamma_2$ can be defined as
$$\Gamma_1= \O\cap\p\left\{\psi<m_1\right\},\ \ \Gamma_2=
\O\cap\p\left\{\psi>m_2\right\},$$ respectively, where $\Omega$ is
the possible flow field defined before and
$m_i=\f{M_i}{2\pi\sqrt{\rho_i}}$ ($i=1,2$). And the constant
pressure boundary condition on free boundaries can be rewritten as
\be\label{b2}\left|\f
1r\f{\p\psi}{\p\nu}\right|=\sqrt{\Lambda+\lambda} \ \
\text{on}~~\Gamma_1,\ \ \ \ \  \left|\f
1r\f{\p\psi}{\p\nu}\right|=\sqrt{\lambda} \ \
\text{on}~~\Gamma_2,\ee where $\nu$ is unit outward normal to the
free stream surfaces $\Gamma_1$ and $\Gamma_2$.

Hence, we formulate the boundary value problem to the stream function
$\psi$,
\be\label{b3}\left\{\ba{ll} \Delta\psi-\f1r\f{\p\psi}{\p r}=0,\ \ \ &\text{in}\ \ \ \O_1\cup\O_2,\\
\left|\f 1r\f{\p\psi}{\p\nu}\right|=\sqrt{\Lambda+\lambda},\ \ \ \
&\text{on}\ \ \ \Gamma_1,\ \ \ \ \  \left|\f 1r\f{\p\psi}{\p\nu}\right|=\sqrt{\lambda},\ \ \ \
\text{on}\ \ \ \Gamma_2,\\ \left|\f{\g\psi^+}{r}\right|^2-\left|\f{\g\psi^-}{r}\right|^2=\Lambda,\ \ &\text{on}\ \ \ \Gamma,\\ \psi=m_1,\ \ \ &\text{on}\ \ \
N_1\cup\Gamma_1,\ \ \ \ \  \psi=m_2,\ \ \ \text{on}\ \
N_2\cup\Gamma_2,\\
\psi=0,\ \ \ &\text{on}\ \ N_0\cup\Gamma,\ea\right. \ee where $\psi^\pm(X_0)$
($X_0\in \Gamma$) denotes the limit of $\psi(X)$ with
$X\in\{\pm\psi>0\}$, as $X\rightarrow X_0$.

We would like to emphasize that the undetermined constants $\lambda$
and $\theta$ are regarded as two parameters to solve the free
boundary problem. We will solve the free boundary problem for any
$\lambda$ and $\theta$, and then to show the existence of suitable
parameters to guarantee the continuous fit conditions.

\section{Existence of the impinging outgoing jets}
\subsection{Truncated variational problem}
In order to solve the free boundary value problem \eqref{b3}, we
first introduce some notations and two auxiliary functions as
follows. Define domain $D$ as
$$D=\O\cap\left\{r<R\right\}.$$

%\begin{figure}[!h]
%\includegraphics[width=90mm]{2Impinging06.eps}
%\caption{Definitions of $D_1$ and $D_2$}\label{fi6}
%\end{figure}

Next, we define two bounded functions $\Phi_1$ and $\Phi_2$ as
follows,
$$\text{$\Delta\Phi_1-\f1r\f{\p\Phi_1}{\p r}=0$ in $D$, and $m_2<\Phi_1<m_1$ in $D$},$$ \be\label{b4}\begin{aligned}\Phi_1(x,r)=\left\{\ba{ll}
0,\ \ \ \ \ \ &\text{if}\ \
(x, r) \in N_0,\\
m_2,\ \ \ \ \ \ &\text{if}\ \ r_2<r\leq R,~ (x, r)~ \text{lies
right}~ N_2,\\ m_1,\ \ \ \ \ \ &\text{if}\ \ r_1<r\leq R,~ (x, r)~
\text{lies left}~ N_1,\\ m_1,\ \ \ \ \ \ &\text{if}\ \  (x,
r)\in\Omega\cap \left\{r\geq R\right\}, \ea\right.\end{aligned}\ee
and
$$\text{$\Delta\Phi_2-\f1r\f{\p\Phi_2}{\p r}=0$ in $D$, $m_2<\Phi_2<m_1$
in $D$},$$
\be\label{b5}\begin{aligned}\Phi_2(x,r)=\left\{\ba{ll} 0,\ \ \ \ \ \
&\text{if}\ \
(x, r)\in N_0,\\
m_2,\ \ \ \ \ \ &\text{if}\ \ r_2<r\leq R,~ (x, r)~ \text{lies
right}~ N_2,\\ m_1,\ \ \ \ \ \ &\text{if}\ \ r_1<r\leq R,~ (x, r)~
\text{lies left}~ N_1,\\ m_2,\ \ \ \ \ \ &\text{if}\ \  (x,
r)\in\Omega\cap \left\{r\geq R\right\}. \ea\right.\end{aligned}\ee

We introduce the admissible set as
$$K=\left\{\psi\in
H_{loc}^1(\O)|~ \Phi_2\leq\psi\leq\Phi_1\right\},$$ set $e=\left(-\sin\th,\cos\th\right)$ with $\th\in[0,\pi]$, and a functional \be\label{b6}J_{\ld,\th}(\psi)=\int_{\O}r\left|\f{\g\psi}{r}-\left(\sqrt{\Lambda+\ld} \chi_{\{0<\psi<m_1\}}+\sqrt{\ld} \chi_{\{m_2<\psi\leq0\}}\right)e\right|^2dxdr.\ee

Since the functional $J_{\ld,\th}$ is unbounded for any $\psi\in K$, we have to truncate the possible
flow field $\O$ and formulate truncated problems as
follows.

For any $\mu>1$ and $i=1, 2$, we define
\be\label{b006}\begin{aligned}&r_{i,
\mu}=\min\left\{r\right|(-1)^i\mu=f_i(r)\}, \quad \ H_{i,
\mu}=\left\{\left((-1)^i\mu, r\right)|~ 0\leq r\leq
r_{i,\mu}\right\},\\& N_{0,\mu}=N_0\cap\{-\mu<x<\mu\},\ \text{and}\
N_{i,\mu}=\left\{(x, r)| ~x=f_i(r), r_{i, \mu}< r\leq
R\right\}.\end{aligned}\ee

Moreover, we introduce a cutoff domain $\O_\mu$ (see Figure
\ref{f017}) as \be\label{b007}\text{$\O_{\mu}$ is
bounded by $N_{i, \mu}$, $L_i$, $N_{0,\mu}$ and $H_{i, \mu}$},\ee and denote $$D_{\mu}=\O_\mu\cap\left\{r<R\right\}.$$
\begin{figure}[!h]
\includegraphics[width=100mm,height=40mm]{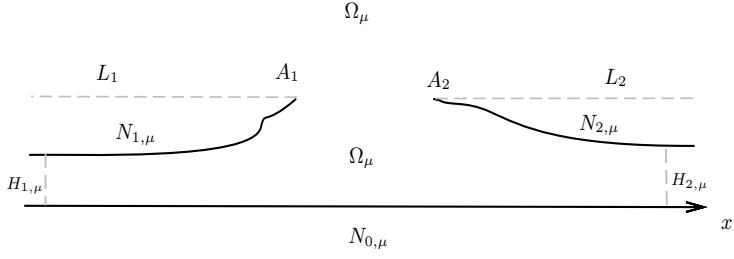}
\caption{The truncated domain $\O_\mu$}\label{f017}
\end{figure}

We also introduce an admissible set
$$K_{\mu}=\left\{\psi\in K\left|\psi=\f{m_1}{r_{1,\mu}^2}r^2~\text{on}~H_{1, \mu}, \right.~~\psi=\f{m_2}{r_{2,\mu}^2}r^2~\text{on}~H_{2, \mu}\right\},$$ and an auxiliary functional
$$J_{\lambda, \th,
\mu}(\psi)=\int_{\O_\mu}r\left|\f{\g\psi}{r}-\left(\sqrt{\Lambda+\ld} \chi_{\{0<\psi<m_1\}}+\sqrt{\ld} \chi_{\{m_2<\psi\leq0\}}\right)e\right|^2dX,\ \ \psi\in K_\mu,$$
where $\chi_E$ is the indicator function of the set $E$. Here and after, we denote $dX=dxdr$
and $X=(x,r)$ for simplicity.

{\bf The truncated variational problem ($P_{\ld,\th,\mu}$):} Find a $\psi_{\ld, \th,
\mu}\in K_\mu$, such that \be\label{b06}J_{\lambda, \th,
\mu}(\psi_{\ld, \th,\mu})=\mathop {\min }\limits_{\psi \in
K_\mu}J_{\lambda, \th, \mu}(\psi).\ee

Furthermore, the free boundaries of the truncated variational
problem ($P_{\ld,\th,\mu}$) are defined as follows.
\begin{definition}\label{bb0}The set $$\Gamma_{1, \mu}= \O_\mu\cap\p\left\{\psi_{\ld,\th,\mu}<m_1\right\},$$
is called the left free boundary, and $$\Gamma_{2,
\mu}=\O_\mu\cap\p\left\{\psi_{\ld,\th,\mu}>m_2\right\},$$ is called
the right free boundary.

Furthermore, define $$\Gamma_\mu=\O_\mu\cap\{\psi_{\ld,\th,\mu}=0\},$$ be the interface separating the two fluids.\end{definition}

\subsection{Existence of minimizer to the truncated variational problem} First, we give
the existence of the minimizer to the truncated variational problem.
\begin{prop}\label{bb0} For any $\ld>0$, $\th\in [0,\pi]$ and $\mu>1$, there exists a minimizer $\psi_{\ld, \th,\mu}\in K_\mu$ to the truncated variational problem ($P_{\ld,\th,\mu}$).
\end{prop}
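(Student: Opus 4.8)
This is a standard existence-of-minimizer statement for a variational problem, and I would attack it by the direct method in the calculus of variations. The functional $J_{\lambda,\theta,\mu}$ is a nonnegative quadratic-type functional over the admissible set $K_\mu$, so the first thing to check is that the infimum is finite: taking a specific competitor (for instance $\Phi_1$ or $\Phi_2$, suitably modified on $H_{i,\mu}$ to match the parabolic boundary data, or simply any $H^1$ function lying between $\Phi_2$ and $\Phi_1$ with the prescribed traces on $H_{i,\mu}$) shows $\inf_{K_\mu} J_{\lambda,\theta,\mu} < +\infty$, using that $\Omega_\mu$ is bounded and $r$ is bounded above on $\Omega_\mu$. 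I would also note $K_\mu \neq \emptyset$ for the same reason.

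Next I would take a minimizing sequence $\{\psi_k\} \subset K_\mu$, so $J_{\lambda,\theta,\mu}(\psi_k) \to \inf_{K_\mu} J_{\lambda,\theta,\mu}$. The key a priori bound: expanding the square in $J_{\lambda,\theta,\mu}$ and using the uniform pointwise bounds $\Phi_2 \le \psi_k \le \Phi_1$ (which control $\|\psi_k\|_{L^2(\Omega_\mu)}$), together with the fact that on $\Omega_\mu$ the weight $r$ is bounded both above and below away from the axis — and near the axis the degeneracy is mild because $|\nabla\psi_k/r|^2 \cdot r = |\nabla\psi_k|^2/r$ is still an honest weighted Dirichlet energy — one gets a uniform bound on $\int_{\Omega_\mu} |\nabla\psi_k|^2 \, dX$ on every region $\{r \ge \delta\}$, and a uniform bound on $\int_{\Omega_\mu} r^{-1}|\nabla\psi_k|^2\,dX$ globally. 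Hence $\{\psi_k\}$ is bounded in $H^1_{loc}(\Omega_\mu)$ (and in the weighted space), so after passing to a subsequence $\psi_k \rightharpoonup \psi_{\lambda,\theta,\mu}$ weakly in $H^1_{loc}$ and strongly in $L^2_{loc}$, and pointwise a.e.

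Then I would verify that the limit $\psi_{\lambda,\theta,\mu}$ lies in $K_\mu$: the pointwise bounds $\Phi_2 \le \psi_{\lambda,\theta,\mu} \le \Phi_1$ pass to the limit from a.e.\ convergence, and the boundary traces on $H_{1,\mu}, H_{2,\mu}$ are preserved because the admissible set is closed under weak $H^1_{loc}$ convergence (trace continuity on the fixed portions of the boundary). Finally, lower semicontinuity: the only subtlety is the presence of the characteristic functions $\chi_{\{0<\psi<m_1\}}$ and $\chi_{\{m_2<\psi\le 0\}}$ inside the functional, which are not weakly continuous in $\psi$. The standard device is to write $J_{\lambda,\theta,\mu}(\psi) = \int_{\Omega_\mu} \frac{1}{r}|\nabla\psi|^2 \, dX - 2\int_{\Omega_\mu}\big(\sqrt{\Lambda+\lambda}\,\chi_{\{0<\psi<m_1\}} + \sqrt{\lambda}\,\chi_{\{m_2<\psi\le 0\}}\big)\, e\cdot\nabla\psi \, dX + \int_{\Omega_\mu} r\big(\Lambda+\lambda)\chi_{\{0<\psi<m_1\}} + \lambda\,\chi_{\{m_2<\psi\le 0\}}\big) dX$; the first term is weakly lower semicontinuous (convexity), and in the second and third terms one uses that $\chi_{\{0<\psi_k<m_1\}} e\cdot\nabla\psi_k$ can be rewritten as the divergence-form expression $e\cdot\nabla\big(\beta(\psi_k)\big)$ for the Lipschitz primitive $\beta(t) = \int_0^t \chi_{\{0<s<m_1\}}\,ds = \min\{\max\{t,0\},m_1\}$ (and similarly for the other term), and $\beta(\psi_k) \to \beta(\psi_{\lambda,\theta,\mu})$ strongly in $L^2$ by a.e.\ convergence plus the uniform bound, so these terms converge. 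Combining gives $J_{\lambda,\theta,\mu}(\psi_{\lambda,\theta,\mu}) \le \liminf_k J_{\lambda,\theta,\mu}(\psi_k) = \inf_{K_\mu} J_{\lambda,\theta,\mu}$, hence $\psi_{\lambda,\theta,\mu}$ is a minimizer.

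**Main obstacle.** The one genuinely delicate point — as opposed to routine bookkeeping — is handling the axis degeneracy of the weight $r$ when extracting a weakly convergent subsequence and when proving lower semicontinuity: one must work in the degenerate-weighted Sobolev space and make sure the cross term with the characteristic functions is treated by the primitive/divergence trick rather than by naive weak convergence, since $\chi_{\{0<\psi_k<m_1\}}$ only converges weakly-$\ast$ in $L^\infty$ and not strongly. Everything else — finiteness of the infimum, the uniform energy bound, closedness of $K_\mu$ — is standard and follows the Alt--Caffarelli framework of \cite{AC1, ACF3}.
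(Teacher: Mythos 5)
Your direct-method skeleton is standard, but it rests on a false premise about the geometry: $\Omega_\mu$ is \emph{not} bounded. The truncation at $x=\pm\mu$ only cuts the two inlet pipes (through $H_{i,\mu}$); the cutoff domain still contains the entire outgoing region $\{r>R\}$, which is unbounded in $r$ (this is why Lemma \ref{lem1} later estimates $\psi_{\lambda,\theta,\mu}$ on $\Omega_\mu\cap\{r>r_0\}$ for arbitrarily large $r_0$). Consequently the finiteness of $\inf_{K_\mu}J_{\lambda,\theta,\mu}$ --- which you dismiss as routine ``using that $\Omega_\mu$ is bounded and $r$ is bounded above'' --- is precisely the nontrivial content of the paper's proof: the authors quote Theorem 1.3 of \cite{AC1} for the direct-method machinery and spend the whole proof constructing an explicit $\psi_0\in K_\mu$ with $J_{\lambda,\theta,\mu}(\psi_0)<\infty$, taking for $r\ge R+1$ the truncated far-field profile built from $\sqrt{\Lambda+\lambda}\,r\,(r\cos\theta-x\sin\theta)$ and $\sqrt{\lambda}\,r\,(r\cos\theta-x\sin\theta)$, so that the integrand vanishes off a strip of width $O(1/r)$ and decays like $r^{-3}$ inside it, with a separate construction for $\theta=0,\pi$. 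Your proposed competitors do not work: $\Phi_1$ (resp.\ $\Phi_2$) has a jump in its boundary data at the corner $A_2$ (resp.\ $A_1$), so its Dirichlet integral diverges logarithmically there, and in any case one must verify integrability of the energy density as $r\to\infty$, which neither ``$\Phi_1$ suitably modified on $H_{i,\mu}$'' nor an unspecified ``$H^1$ function between the obstacles'' addresses. (A competitor locked at $m_1$ in $\{r\ge R\}$ outside a bounded neighborhood of the nozzle mouths, with an honest transition region so that the trace does not jump at $A_2$, would also do, but some such construction must be made explicit --- it is the heart of the proposition.)

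A second, related defect: your lower-semicontinuity argument splits $J_{\lambda,\theta,\mu}$ into three separate integrals, but on the unbounded domain these pieces are in general $+\infty$ individually even when $J_{\lambda,\theta,\mu}$ is finite. Indeed, for the actual minimizer the transition set $\{m_2<\psi<m_1\}\setminus\{\psi=0\}$ is a strip of width $\sim 1/r$ reaching to $r=\infty$, so $\int_{\Omega_\mu} r\bigl((\Lambda+\lambda)\chi_{\{0<\psi<m_1\}}+\lambda\chi_{\{m_2<\psi\le 0\}}\bigr)dX=+\infty$ and $\int_{\Omega_\mu}|\nabla\psi|^2/r\,dX=+\infty$, the functional being finite only through cancellation. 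Hence ``first term l.s.c., second and third terms converge'' is not meaningful as written; it must be localized (prove lower semicontinuity of $J$ restricted to $\Omega_\mu\cap\{r<R_0\}$ by your primitive/truncation trick, then let $R_0\to\infty$ using nonnegativity of the integrand), or one simply invokes the Alt--Caffarelli existence theorem as the paper does. The same boundedness error infects your claim of a global $L^2$ bound from the obstacle inequalities; only local bounds are available, which is all the argument actually needs.
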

\begin{proof}
Due to Theorem 1.3 in \cite{AC1}, it suffices to construct a function $\psi_0\in K_\mu$ such that $J_{\ld,\th,\mu}(\psi_0)<+\infty$.

{Case 1:} For $\th\in(0,\pi)$ in $\O_\mu$.

Indeed,  for some sufficiently large $$R_0>\max\left\{1, \f{m_1}{\sqrt{\Lambda+\ld}
(R+1)\sin\th}-(R+1)\cot\th, -\f{m_2}{\sqrt{\ld}
(R+1)\sin\th}+(R+1)\cot\th\right\},$$ and taking
$\overline{\psi}$ be a smooth function such that $\psi_0\in K_\mu$.
Define $\psi_0$ in $\O_\mu$ as follows
$$\psi_0(X)=\left\{\ba{ll} \min\left\{\max\left\{\sqrt{\Lambda+\ld} r\left(r\cos\th-x\sin\th\right),0\right\},m_1\right\},\ \ \ \ \ \ &\text{if}\ \
r\geq R+1,\ \ r\cos\th-x\sin\th\geq0,\\ \max\left\{\min\left\{\sqrt{\ld} r\left(r\cos\th-x\sin\th\right),0\right\},m_2\right\},\ \ \ \ \ \ &\text{if}\ \
r\geq R+1,\ \ r\cos\th-x\sin\th\leq0, \\
m_1,\ \ \ \ \ \ &\text{if}\ \ x\leq-R_0,\ \ R\leq r\leq R+1,\\
m_2,\ \ \ \ \ \ &\text{if}\ \ x\geq R_0,\ \ R\leq r\leq R+1,\\
\overline{\psi}(X),\ \ \ \ \ \ &\text{if $(x,r)\in\O_{\mu, R_0}$},\\
\eta(x)\f{m_1}{r_{1,\mu}^2}r^2+(1-\eta(x))\f{m_2}{r_{2,\mu}^2}r^2,\
\ \ \ \ \ &\text{if $(x,r)\in \O'_\mu$}.\ea\right.$$ Here, $\O'_\mu=\O_{\mu}\cap\{r\leq\min\{r_{1,\mu},r_{2,\mu}\}\}$, $\eta(x)$ be a cut-off function
satisfying
\be\label{b0b}\eta(x)=1\ \ \text{for}\ \
x\leq-\mu,\ \
\eta(x)=\f{\mu-x}{2\mu}\ \ \text{for}\ \ -\mu\leq x\leq \mu,\ \ \eta(x)=0 \ \ \text{for}\ \ x\geq \mu,\ee and
$$\O_{\mu, R_0}=\O_\mu\cap\left\{\min\{r_{1,\mu}, r_{2,\mu}\}\leq
r\leq R\right\}\cup\left\{-R_0\leq x\leq R_0, R\leq r\leq
R+1\right\}.$$

It is easy to check that $J_{\ld, \th,
\mu}(\psi_0)<+\infty$.

{\bf Case 2:} For $\th=0$ or $\pi$.

Without loss of generality, assume $\th=0$. It suffice to define a function $\psi_0(X)$ as follows. Set
$$\t\O_{\mu, R_0}=\O_\mu\cap\left\{\min\{r_{1,\mu},r_{2,\mu}\}\leq r\leq R\right\}\cup\left\{-2\leq x\leq 2, R\leq r\leq
R_0\right\},$$ for some sufficiently large
$R_0>\sqrt{R^2+\f{2m_1}{\sqrt{\Lambda+\ld}}-\f{2m_2}{\sqrt{\ld}}}$, and define a function
$\psi_0$ as
$$\psi_0(X)=\left\{\ba{ll}\f{\sqrt{\ld}(r^2-R^2)}{2}+m_2,\ \ \ \ \ \ &\text{if}\ \
x\geq2,\ \  R\leq r\leq \sqrt{R^2-\f{2m_2}{\sqrt{\ld}}},\\
\min\left\{\f{\sqrt{\Lambda+\ld}}2\left(r^2+\f{2m_2}{\sqrt{\ld}}-R^2\right),m_1\right\},\
\ \ \ \ \ &\text{if}\ \
x\geq2,\ \  r\geq \sqrt{R^2-\f{2m_2}{\sqrt{\ld}}},\\
m_1,\ \ \ \ \ \ &\text{if}\ \ x\leq2,\ \ r\geq R_0,\\ m_1,\ \ \ \ \ \ &\text{if}\ \ x\leq -2,\ \ R\leq r\leq R_0,\ \ \\
\overline{\psi}(X),\ \ \ \ \ \ &\text{if}\ \ (x, r)\in \t\O_{\mu,
R_0},\\
\eta(x)\f{m_1}{r_{1,\mu}^2}r^2+(1-\eta(x))\f{m_2}{r_{2,\mu}^2}r^2,\
\ \ \ \ \ &\text{if $(x,r)\in\O_\mu\cap\left\{r\leq\{r_{1,\mu},
r_{2,\mu}\}\right\}$} ,\ea\right.$$ where $\eta(x)$ is defined as
\eqref{b0b}, and $\overline{\psi}$ be a smooth function
such that $\psi_0\in K_\mu$.

Therefore, we finish the proof of Proposition \ref{bb0}.
\end{proof}

Next, we will obtain the regularity of the minimizer.
\begin{prop}\label{cc8} Let $\psi_{\ld, \th, \mu}$ be a minimizer to the truncated variational problem
($P_{\ld,\th,\mu}$), and for any open subset
$\O_0\subset\subset\O_\mu\cap\{m_2<\psi_{\ld,\th,\mu}<m_1\}\cap\{\psi_{\ld,\th,\mu}\neq0\}$, then
$\psi_{\ld, \th,\mu}\in C^{0, 1}(\O_{\mu})$, $\psi_{\ld,\th,\mu}\in
C^{2,\sigma}(\O_0)$ and $\psi_{\ld, \th, \mu}\in C^{1,
\sigma}(\O_0\cup N_{1,\mu}\cup N_{2,\mu})$ for some $0<\sigma<1$.
\end{prop}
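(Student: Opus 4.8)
The plan is to establish the three regularity assertions in the order of increasing strength: first global Lipschitz continuity on $\O_\mu$, then interior $C^{1,\sigma}$ (and in fact $C^{2,\sigma}$) regularity away from the free boundaries and the interface, and finally $C^{1,\sigma}$ regularity up to the smooth portions $N_{1,\mu}\cup N_{2,\mu}$ of the fixed boundary. The key observation underlying all of this is that the functional $J_{\ld,\th,\mu}$, after the substitution $\phi = \psi / $ (a suitable power of $r$), or more directly by writing the weight $r$ and the drift term $-\frac1r\p_r$, falls into the class of variational problems with $(A)$-type coefficients treated by Alt and Caffarelli in \cite{AC1}. The weight $r$ is bounded above and below by positive constants on $\O_\mu$ (since $\O_\mu$ is a bounded domain staying a fixed distance from, and also bounded away from, the axis on the relevant pieces, and near the axis one uses that $\psi_{\ld,\th,\mu}$ is prescribed to equal $\frac{m_i}{r_{i,\mu}^2}r^2$ on $H_{i,\mu}$), so the problem is genuinely uniformly elliptic and the abstract machinery applies.

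First I would prove $\psi_{\ld,\th,\mu}\in C^{0,1}(\O_\mu)$. The minimizer is an absolute minimizer of a functional of the form $\int_{\O_\mu} r\,|\nabla\psi/r - F(\psi,X)e|^2\,dX$ where $F$ takes the two values $\sqrt{\Ld+\ld}$ and $\sqrt\ld$ on the regions $\{0<\psi<m_1\}$ and $\{m_2<\psi\le0\}$; away from the level sets $\{\psi=0\}$, $\{\psi=m_1\}$, $\{\psi=m_2\}$ the function is a weak solution of $\Delta\psi-\frac1r\p_r\psi=0$, and across $\{\psi=m_1\}$, $\{\psi=m_2\}$ it is subsolution/supersolution respectively so that the obstacle constraint $\Phi_2\le\psi\le\Phi_1$ is handled as in \cite{AC1,ACF3}. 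Interior Lipschitz bounds follow from the standard comparison argument of Alt--Caffarelli: a non-degeneracy/decay estimate on balls where $\psi$ is close to one of the critical levels, combined with interior gradient estimates for the elliptic equation in the open regions. Near the obstacle-contact sets and near $H_{i,\mu}$ one uses that the obstacle functions $\Phi_1,\Phi_2$ and the boundary data $\frac{m_i}{r_{i,\mu}^2}r^2$ are themselves smooth (in fact $\Phi_1,\Phi_2$ solve the same homogeneous equation), so they furnish Lipschitz barriers. This yields $\psi_{\ld,\th,\mu}\in C^{0,1}_{loc}(\O_\mu)$, and because $\O_\mu$ is bounded with Lipschitz boundary and the boundary data are Lipschitz, in fact $C^{0,1}(\O_\mu)$.

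Next, on any $\O_0\cin\O_\mu\cap\{m_2<\psi_{\ld,\th,\mu}<m_1\}\cap\{\psi_{\ld,\th,\mu}\ne0\}$, the obstacle constraint and the level set $\{\psi=0\}$ are both inactive, so on a slightly larger open set $\psi$ is a weak solution of the linear, uniformly elliptic equation $\Delta\psi-\frac1r\p_r\psi=0$ with smooth ($r$-dependent, hence real-analytic) coefficients. Interior elliptic regularity (Schauder theory) then upgrades $\psi_{\ld,\th,\mu}$ from Lipschitz to $C^{2,\sigma}(\O_0)$ for every $\sigma\in(0,1)$; one could even say $C^\infty$, but $C^{2,\sigma}$ is all that is claimed. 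Finally, for the boundary regularity up to $N_{1,\mu}\cup N_{2,\mu}$: these nozzle-wall pieces are $C^2$-smooth arcs $\{x=f_i(r)\}$, on which $\psi$ is constant ($=m_i$), so locally near such a piece $\psi$ solves the same homogeneous elliptic equation with a $C^{1,\sigma}$ (indeed smooth) boundary and constant Dirichlet data. Boundary Schauder estimates then give $\psi_{\ld,\th,\mu}\in C^{1,\sigma}(\O_0\cup N_{1,\mu}\cup N_{2,\mu})$; one flattens the boundary via the $C^2$ change of variables coming from $f_i$ and applies the standard up-to-the-boundary estimate. (The reason one only claims $C^{1,\sigma}$ and not $C^{2,\sigma}$ at the wall is the interplay with the free boundary conditions and the corner at $A_i$; away from $A_i$ the solution is in fact smoother, but $C^{1,\sigma}$ suffices.)

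The main obstacle, and the step that requires genuine care rather than citation, is the global Lipschitz bound near the three critical level sets simultaneously — in particular controlling the behavior near the interface $\{\psi=0\}$, where the coefficient $F$ jumps from $\sqrt\ld$ to $\sqrt{\Ld+\ld}$ and the gradient itself has a genuine jump (this is the contact discontinuity, cf. \eqref{a9}). One cannot simply quote the one-phase Alt--Caffarelli Lipschitz estimate there; instead the idea is to note that $\psi$ is still a weak solution of $\Delta\psi-\frac1r\p_r\psi=0$ across $\{\psi=0\}$ in the distributional sense appropriate to the two-phase functional (the jump in $|\nabla\psi/r|^2$ being exactly the Euler--Lagrange transmission condition), so that interior $C^{0,\alpha}$ and then $C^{0,1}$ estimates for two-phase transmission problems — or equivalently, the observation that $\psi$ restricted to each side solves the equation with Lipschitz-up-to-$\{\psi=0\}$ data by the De Giorgi--Nash bound on $\psi$ itself — give the Lipschitz bound; this is exactly the type of argument used for two-fluid jet problems in \cite{ACF5,ACF6}. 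Handling the coincidence sets $\{\psi=m_i\}$ and the artificial boundary pieces $H_{i,\mu}$ is comparatively routine since there the relevant comparison functions are explicit and smooth.
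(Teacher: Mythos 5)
Your outline of the interior and wall estimates matches the paper's route (the paper simply cites Corollary 4.4 of \cite{ACF5} for the global Lipschitz bound of the two-phase minimizer, and Chapter 8 of \cite{GT} for the interior $C^{2,\sigma}$ and the $C^{1,\sigma}$ estimate up to $N_{1,\mu}\cup N_{2,\mu}$), and your remarks about the two-phase transmission structure at $\{\psi=0\}$ are in the spirit of that citation. But there is a genuine gap at the very start: your claim that the weight $r$ is bounded above \emph{and below} by positive constants on $\O_\mu$, so that the problem is ``genuinely uniformly elliptic,'' is false. The truncated domain $\O_\mu$ is by construction bounded in part by the symmetry axis $N_{0,\mu}=N_0\cap\{-\mu<x<\mu\}\subset\{r=0\}$, so it contains interior points with $r$ arbitrarily small; there the integrand $r\left|\frac{\g\psi}{r}-\cdots\right|^2=\frac{|\g\psi|^2}{r}-\cdots$ degenerates and the equation $\Delta\psi-\frac1r\frac{\p\psi}{\p r}=0$ has a singular drift. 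The parenthetical you offer (that $\psi$ is prescribed as $\frac{m_i}{r_{i,\mu}^2}r^2$ on $H_{i,\mu}$) does not repair this: $H_{1,\mu},H_{2,\mu}$ are only the two vertical cut-off segments at $x=\pm\mu$, while the Lipschitz claim $\psi_{\ld,\th,\mu}\in C^{0,1}(\O_\mu)$ must hold up to the entire axis segment $N_{0,\mu}$ between them, where no such explicit data are imposed (only $\Phi_1=\Phi_2=0$, i.e.\ $\psi=0$ on $N_0$).

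This is precisely the point the paper singles out as a main difficulty of the axially symmetric case (``we have to deal with the regularity near the symmetric axis''), and its proof contains a separate third step for it, invoking the standard arguments near the axis from \cite{DD} and \cite{XX2}, where one exploits the quadratic barriers $\max\{\frac{m_2}{r_{2,\mu}^2}r^2,m_2\}\le\psi\le\min\{\frac{m_1}{r_{1,\mu}^2}r^2,m_1\}$ (cf.\ Lemma 3.4 of the paper) and the structure of the operator in the variables adapted to $r\to0$ to get Lipschitz (indeed $\psi\sim r^2$) behavior at the axis. Without some such argument your proof only yields $C^{0,1}_{loc}$ away from $\{r=0\}$, which is strictly weaker than the stated $C^{0,1}(\O_\mu)$. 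The remaining parts of your proposal (inactive constraints on $\O_0$, Schauder interior estimates, flattening the $C^2$ walls $x=f_i(r)$ with constant Dirichlet data for the boundary $C^{1,\sigma}$ estimate) are correct and coincide with the paper's use of standard elliptic theory.
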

\begin{proof}Firstly, $\psi_{\ld, \th,\mu}\in C^{0,
1}(\O_\mu)$ follows in the same manner as Corollary 4.4 in \cite{ACF5}.

Next, the standard interior estimates to linear elliptic equation in
Chapter 8 in \cite{GT} gives $\psi_{\ld,\th,\mu}\in C^{2,\sigma}(\O_0)$
and $\psi_{\ld,\th, \mu}\in C^{1, \sigma}(N_{1,\mu}\cup N_{2,\mu})$.

The regularity of $\psi_{\ld, \th, \mu}$ near the axis $N_{0,\mu}$ can be obtained by the standard arguments as in \cite{DD} and \cite{XX2}.

Therefore, we finish the proof of Lemma \ref{cc8}.
\end{proof}

\subsection{Uniqueness and monotonicity of the minimizer}

Firstly, we will give a lower bound and an upper bound to the
minimizer $\psi_{\ld, \th, \mu}$.
\begin{lemma}\label{cc9} For any minimizer $\psi_{\ld, \th, \mu}$ to the variational problem
($P_{\ld,\th,\mu}$), one has \be\label{b18}\max\left\{\f{m_2}{r_{2,\mu}^2}r^2,
m_2\right\}\leq\psi_{\lambda, \th, \mu}(x,
r)\leq\min\left\{\f{m_1}{r_{1,\mu}^2}r^2,m_1\right\}\ \ \ \
\text{in}\ \ \ \O_{\mu},\ee where $r_{1,\mu}$ and $r_{2,\mu}$ are defined
in \eqref{b006}.
\end{lemma}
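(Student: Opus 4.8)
The plan is to establish the two-sided bound by a comparison/truncation argument that is standard in the Alt--Caffarelli framework: I will show that replacing $\psi_{\ld,\th,\mu}$ by its truncation against each of the four competing functions does not increase the functional $J_{\ld,\th,\mu}$, so by uniqueness (or minimality) the truncation must coincide with $\psi_{\ld,\th,\mu}$ itself. Concretely, I would first check that the four functions $m_1$, $m_2$, $\frac{m_1}{r_{1,\mu}^2}r^2$ and $\frac{m_2}{r_{2,\mu}^2}r^2$ are each (sub- or super-) solutions of the operator $\Delta\psi-\frac1r\psi_r=0$: the constants are trivially solutions, and a direct computation gives $\Delta\big(\frac{c}{\rho^2}r^2\big)-\frac1r\p_r\big(\frac{c}{\rho^2}r^2\big)=\frac{2c}{\rho^2}-\frac{2c}{\rho^2}=0$, so $\frac{m_i}{r_{i,\mu}^2}r^2$ is also a solution of the equation. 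These are exactly the boundary data prescribed on $H_{i,\mu}$ in the definition of $K_\mu$, and they are consistent with the data $m_1$ on $N_1\cup\Gamma_1$-type boundaries and $m_2$ on $N_2\cup\Gamma_2$-type boundaries, and with $\psi=0$ on $N_0$ (here one uses $\max\{m_2/r_{2,\mu}^2\,r^2,m_2\}\le 0\le \min\{m_1/r_{1,\mu}^2\,r^2,m_1\}$ on the whole domain, which holds since $m_1>0>m_2$).

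The key step is the truncation inequality. Set, for instance, $w=\min\{\psi_{\ld,\th,\mu},\,\min\{\frac{m_1}{r_{1,\mu}^2}r^2,m_1\}\}$. Because the obstacle function $\Phi_1$ already forces $\psi_{\ld,\th,\mu}\le m_1$ in $\{r\le R\}$ and equals $m_1$ for $r\ge R$, and because $\psi_{\ld,\th,\mu}=\frac{m_1}{r_{1,\mu}^2}r^2$ on $H_{1,\mu}$ with the right sign elsewhere on $\partial\O_\mu$, one verifies that $w\in K_\mu$ is an admissible competitor. Then I would compare $J_{\ld,\th,\mu}(w)$ with $J_{\ld,\th,\mu}(\psi_{\ld,\th,\mu})$ on the set $\{\psi_{\ld,\th,\mu}>\min\{\frac{m_1}{r_{1,\mu}^2}r^2,m_1\}\}=:A$. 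On $A$ the truncation level $\min\{\frac{m_1}{r_{1,\mu}^2}r^2,m_1\}$ is a harmonic-type solution, and on $A$ one has $0<\psi<m_1$ is violated (indeed $\psi>m_1$ would contradict $\psi\le m_1$, so $A$ meets only the region where the comparison level is $\frac{m_1}{r_{1,\mu}^2}r^2<m_1$, i.e.\ $r<r_{1,\mu}$); hence on $A$ the indicator terms in the integrand are $\chi_{\{0<\psi<m_1\}}$ and $\chi_{\{m_2<\psi\le0\}}$, which vanish there since $\psi\ge m_1/r_{1,\mu}^2\,r^2>0$ near the relevant part and equals the level $m_1$ elsewhere — more carefully, on $A$ the integrand for $w$ is $r|\nabla(\text{level})/r - (\cdots)e|^2$ and the cross term integrates against $\nabla(\psi_{\ld,\th,\mu}-w)$, which on $\partial A$ has the right sign; integrating by parts and using that the level solves the equation kills the cross term, leaving $J(w)\le J(\psi_{\ld,\th,\mu})$ with equality forcing $\nabla(\psi_{\ld,\th,\mu}-w)\equiv 0$ on $A$, hence $A$ has measure zero. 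This gives $\psi_{\ld,\th,\mu}\le\min\{\frac{m_1}{r_{1,\mu}^2}r^2,m_1\}$; the lower bound is symmetric, truncating from below against $\max\{\frac{m_2}{r_{2,\mu}^2}r^2,m_2\}$.

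The main obstacle I anticipate is handling the indicator-function term $\sqrt{\Lambda+\ld}\,\chi_{\{0<\psi<m_1\}}+\sqrt{\ld}\,\chi_{\{m_2<\psi\le 0\}}$ correctly under truncation: unlike the purely quadratic Dirichlet-type functionals, this term is not convex in $\psi$ and changes discontinuously at the levels $0$, $m_1$, $m_2$, so one must check that truncating at exactly those levels does not create a ``bad'' set on which the coefficient jumps unfavourably. The saving fact is that the truncation levels are precisely the thresholds $\{0,m_1\}$ and $\{0,m_2\}$ appearing in the indicators, so on the set $A$ where truncation is active the function $w$ sits exactly on a level where the two integrands agree in a way that makes the comparison go through; I would make this rigorous by splitting $A$ into $A\cap\{$level $=m_1\}$ (where the competitor is constant, integrand $r|{-}(\cdots)e|^2$, and one checks $\chi_{\{0<w<m_1\}}=0$ while the original had $\chi_{\{0<\psi<m_1\}}=0$ too since $\psi\ge m_1$) and $A\cap\{$level $=\frac{m_1}{r_{1,\mu}^2}r^2, r<r_{1,\mu}\}$, treating the cross term on each piece via the divergence theorem. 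Alternatively, and more cleanly, I expect this is exactly the content of the cited uniqueness result for this class of functionals, so the whole Lemma may follow by quoting the comparison principle for $(P_{\ld,\th,\mu})$ once the four barriers are shown to be admissible sub/supersolutions consistent with the boundary data.
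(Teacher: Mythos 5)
Your plan reproduces the paper's argument for the \emph{lower} bound (the paper also takes the competitor $\max\{\psi_{\lambda,\theta,\mu},\max\{\tfrac{m_2}{r_{2,\mu}^2}r^2,m_2\}\}$ and compares energies on $\Omega_\mu\cap\{r<r_{2,\mu}\}$), but for the \emph{upper} bound the paper does something simpler: by Proposition \ref{bb1} the minimizer satisfies $\Delta\psi-\tfrac1r\partial_r\psi\ge0$ weakly in $D_\mu$ (see \eqref{b07}), while $\tfrac{m_1}{r_{1,\mu}^2}r^2$ is an exact solution of the operator which dominates $\psi$ on all of $\partial\bigl(\Omega_\mu\cap\{r<r_{1,\mu}\}\bigr)$ (equality on $H_{1,\mu}$, $\psi\le0$ on $N_{0,\mu}$, $H_{2,\mu}$, $N_{2,\mu}$, and $\psi\le m_1$ on $\{r=r_{1,\mu}\}$), so the maximum principle finishes it with no energy comparison at all. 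Your variational version of the upper bound can be pushed through, but as written it has two genuine gaps. First, admissibility: for $w=\min\{\psi_{\lambda,\theta,\mu},\min\{\tfrac{m_1}{r_{1,\mu}^2}r^2,m_1\}\}$ to lie in $K_\mu$ you need $\min\{\tfrac{m_1}{r_{1,\mu}^2}r^2,m_1\}\ge\Phi_2$ wherever the truncation is active. Your justification only compares the barriers with $0$, but $\Phi_2$ equals $m_1$ on the wall $N_1$, so near the corner $(-\mu,r_{1,\mu})$ of $\Omega_\mu$ (which lies on $N_1$) the inequality $\Phi_2\le\tfrac{m_1}{r_{1,\mu}^2}r^2$ is not obvious; in fact it is a consequence of the very lemma you are proving (via $\Phi_2\le\psi$), so assuming it is close to circular and it needs an independent comparison argument. (A mirrored issue, with $\Phi_1=m_2$ on $N_2$, affects the lower-bound truncation as well.)

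Second, your treatment of the indicator terms is incorrect as stated. On the truncation set $A=\{\psi>\tfrac{m_1}{r_{1,\mu}^2}r^2\}\subset\{r<r_{1,\mu}\}$ one has $0<\tfrac{m_1}{r_{1,\mu}^2}r^2<\psi\le m_1$, so $\chi_{\{0<\psi<m_1\}}$ does \emph{not} vanish there: it equals $1$ off $\{\psi=m_1\}$. The correct mechanism is that the coefficient $\sqrt{\Lambda+\lambda}\,\chi_{\{0<\cdot<m_1\}}+\sqrt{\lambda}\,\chi_{\{m_2<\cdot\le0\}}$ takes the \emph{same} value $\sqrt{\Lambda+\lambda}$ for $\psi$ and for $w$ on $A\setminus\{\psi=m_1\}$, while the remaining uncompensated contribution $(\Lambda+\lambda)\int_{A\cap\{\psi=m_1\}}r\,dX$ disappears only because $A\subset D$ and $\psi\le\Phi_1<m_1$ strictly in $D$; you never invoke this strict obstacle inequality, yet without it the energy comparison does not close. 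Once these two points are supplied, the rest of your sketch is fine: the linear terms reduce, via the constant vector $e$ and the divergence theorem, to boundary integrals of $\max\{\psi-w,0\}$, which vanish on $\partial\bigl(\Omega_\mu\cap\{r<r_{1,\mu}\}\bigr)$, and the cross term dies after integration by parts because $\tfrac{m_1}{r_{1,\mu}^2}r^2$ solves the equation in $\{r<r_{1,\mu}\}$; this yields $\int_A\tfrac{|\nabla(\psi-w)|^2}{r}\,dX\le0$ and hence $|A|=0$. Alternatively, quoting \eqref{b07} and running the paper's maximum-principle comparison avoids both difficulties.
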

\begin{proof}Firstly, consider the lower bound of $\psi_{\ld,\th,\mu}$.

Set $\phi_1=\max\left\{\f{m_2}{r_{2,\mu}^2}r^2, m_2\right\}$,
$\phi_2=\min\left\{\f{m_1}{r_{1,\mu}^2}r^2,m_1\right\}$ and
$\psi=\psi_{\ld,\th,\mu}$ for simplicity.

Firstly, since $\psi\in K_{\mu}$, one has
\be\label{bb}m_2\leq\psi\leq m_1\ \ \text{in}\ \ \O_{\mu}.\ee

Next, we shall prove that $$\phi_1\leq\psi\ \ \text{in $\O_\mu$}.$$
Due to the fact that $\max\left\{\psi, \phi_1\right\}\in K_{\mu}$,
we have
$$J_{\ld,\th,\mu}(\psi)\leq J_{\ld,\th,\mu}(\max\left\{\psi, \phi_1\right\}).$$
Furthermore, the fact \be\label{b012}\phi_1=m_2\ \ \text{for $r\geq
r_{2,\mu}$},\ee gives that $$\psi\geq\phi_1\ \ \text{in}\ \ \O_{\mu}\cap\{r\geq r_{2,\mu}\}.$$

Therefore, we obtain
\be\label{b12}\begin{aligned}0\geq&\int_{\O_{\mu,r_{2,\mu}}}r\left|\f{\g\psi}{r}-\left(\sqrt{\Lambda+\ld}\chi_{\{0<\psi<m_1\}}
+\sqrt{\ld}\chi_{\{m_2<\psi\leq0\}}\right)\cdot
e\right|^2dX\\&-\int_{\O_\mu,r_{2,\mu}}r\left|\f{\g\max\left\{\psi,
\phi_1\right\}}{r}-\left(\sqrt{\Lambda+\ld}\chi_{\{0<\max\left\{\psi,
\phi_1\right\}<m_1\}} +\sqrt{\ld}\chi_{\{m_2<\max\left\{\psi,
\phi_1\right\}\leq0\}}\right)\cdot
e\right|^2dX,\end{aligned}\ee here, $\O_{\mu,r_{2,\mu}}=\O_\mu\cap\{r<r_{2,\mu}\}$. This together with the similar arguments as Lemma 3.4 in \cite{WX} yields to
$$\int_{\O_{\mu, r_{2,\mu}}}\left|\g\min\left(\psi-\phi_1,
0\right)\right|^2dX\leq0,$$ which implies that
$$\psi-\phi_1=\text{constant} \ \ \ \ \text{in} ~\O_{\mu, r_{2,\mu}}.$$
Since $\psi\geq\phi_1$ on $\p\O_{\mu, r_{2,\mu}}$, we conclude that
$$\psi\geq\phi_1\ \ \ \ \text{in}~\O_{\mu, r_{2,\mu}}.$$

Therefore, we obtain the lower bound of $\psi_{\ld,\th,\mu}$ in
\eqref{b18}.

Next, we can now proceed as before to show the upper bound of $\psi_{\ld,\th,\mu}$.

Similarly, it suffices to prove that the upper bound holds in
$\O_{\mu,r_{1,\mu}}=\O_\mu\cap\left\{r<r_{1,\mu}\right\}$. Due to \eqref{b07}, one has
$$\Delta\psi_{\ld,\th,\mu}-\f{1}{r}\f{\p\psi_{\ld,\th,\mu}}{\p
r}\geq0\ \ \text{in $\O_{\mu,r_{1,\mu}}$ in a weak sense,}$$ and $\psi_{\ld, \th, \mu}\leq
\f{m_1}{r_{1, \mu}^2}r^2$ on $\p\O_{\mu,r_{1,\mu}}$, then the maximum principle
implies $\psi_{\ld, \th, \mu}(x,r)\leq \f{m_1}{r_{1,\mu}^2}r^2$ in
$\O_{\mu,r_{1,\mu}}$.
 We complete the proof of Lemma \ref{cc9}.
\end{proof}

Next, in view of Lemma \ref{cc9}, using the similar
arguments Proposition 3.5 in \cite{WX}, we will establish the uniqueness and
some monotonicity of the minimizer $\psi_{\lambda, \th, \mu}$ to the
variational problem ($P_{\ld,\th,\mu}$), and we omit the proof here.
\begin{prop}\label{bb9}
For any $\lambda\in(0,+\infty)$ and $\th\in[0,\pi]$, the
minimizer $\psi_{\lambda, \th, \mu}$ to the truncated variational
problem ($P_{\ld,\th,\mu}$) is unique. Furthermore, the solution $\psi_{\ld, \th, \mu}$ is monotonic with respect to $x$, namely
\be\label{b19}\psi_{\lambda, \th, \mu}(x,r)\leq\psi_{\lambda, \th,
\mu}(\tilde{x},r)\ \ \ \ \text{for any}~~x\geq\tilde{x}.\ee
\end{prop}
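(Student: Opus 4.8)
The plan is to adapt the standard uniqueness/monotonicity argument of Alt--Caffarelli type (as in Proposition~3.5 of \cite{WX}) to the present truncated functional, exploiting the fact that the obstacle data on $H_{1,\mu}$ and $H_{2,\mu}$ and the boundary values on $N_{i,\mu}$, $N_{0,\mu}$, $L_i$ are all monotone in $x$. I would first prove the monotonicity \eqref{b19} and then deduce uniqueness from it. For monotonicity, fix $\psi=\psi_{\ld,\th,\mu}$ and, for a small shift $\tau>0$, compare $\psi$ with its shifted competitor $\psi_\tau(x,r):=\psi(x-\tau,r)$ on the overlap of $\O_\mu$ with its $\tau$-translate; the functions $\min\{\psi,\psi_\tau\}$ and $\max\{\psi,\psi_\tau\}$ can be glued with the respective boundary data to produce admissible competitors in $K_\mu$ (this is where the $x$-monotonicity of $\Phi_1,\Phi_2$ and of the data on $H_{i,\mu}$ enters, guaranteeing the competitors still lie between $\Phi_2$ and $\Phi_1$ and satisfy the prescribed values on $H_{i,\mu}$). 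Adding the two minimality inequalities $J(\psi)\le J(\min\{\psi,\psi_\tau\})$ and $J(\psi_\tau)\le J(\max\{\psi,\psi_\tau\})$ and using the elementary identity for $|\g\min|^2+|\g\max|^2=|\g\psi|^2+|\g\psi_\tau|^2$ together with the fact that the indicator terms $\chi_{\{0<\cdot<m_1\}}$, $\chi_{\{m_2<\cdot\le0\}}$ are monotone along the level structure, the bulk contributions cancel and one is left with an inequality forcing $\g\big(\min\{\psi-\psi_\tau,0\}\big)=0$ on the overlap domain. Since $\min\{\psi-\psi_\tau,0\}=0$ on the relevant part of the boundary, one concludes $\psi\le\psi_\tau$, i.e. $\psi(x,r)\le\psi(x-\tau,r)$, which is \eqref{b19}.

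For uniqueness, suppose $\psi$ and $\tilde\psi$ are two minimizers. Using the just-established monotonicity and the comparison $\min\{\psi,\tilde\psi\}$, $\max\{\psi,\tilde\psi\}\in K_\mu$ (both satisfy the same data on $H_{i,\mu}$ and the same obstacle bounds), the two minimality inequalities again combine to give $\g\big(\min\{\psi-\tilde\psi,0\}\big)=0$, hence $\psi\le\tilde\psi$; by symmetry $\tilde\psi\le\psi$, so $\psi=\tilde\psi$. The extra care here, exactly as in \cite{WX}, is that the functional is not strictly convex because of the characteristic-function terms, so one cannot invoke convexity directly; instead the cancellation must be carried out at the level of the explicit quadratic expansion, and one must check that the level sets $\{\psi=0\}$, $\{0<\psi<m_1\}$, $\{m_2<\psi\le 0\}$ of the minimum and maximum of two minimizers match up with those of $\psi$ and $\tilde\psi$ up to the set where they agree, so that the indicator contributions also cancel pairwise.

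The main obstacle I anticipate is precisely this bookkeeping with the two jump-parameters $\sqrt{\Lambda+\ld}$ and $\sqrt\ld$ appearing in two different level ranges: unlike the one-phase case, one has to verify that replacing $\psi$ by $\min\{\psi,\psi_\tau\}$ and $\psi_\tau$ by $\max\{\psi,\psi_\tau\}$ redistributes the sets $\{0<\cdot<m_1\}$ and $\{m_2<\cdot\le0\}$ in a way that leaves the total measure-weighted contribution of the $e$-linear and $e$-quadratic terms unchanged. This is handled by noting that on the overlap the level $\{\cdot=0\}$ and the two one-phase regions are nested consistently (a consequence of Lemma~\ref{cc9}, which pins $\psi$ between the two explicit parabolic barriers and in particular controls the sign of $\psi$ region by region), so the pointwise identity $\chi_{E}(\min)+\chi_E(\max)=\chi_E(\psi)+\chi_E(\psi_\tau)$ holds for each of the two sets $E$ separately. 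Once this is in hand the argument reduces to the classical one and, as the authors indicate, may be stated with the proof omitted by reference to \cite{WX}; I would nonetheless include the one-line verification of the indicator identity since the two-phase structure is the new feature here.
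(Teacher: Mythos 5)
The paper itself omits this proof (it points to Lemma \ref{cc9} and the arguments of Proposition 3.5 in \cite{WX}), so your proposal is judged as a reconstruction, and its central analytic step does not work. You claim that adding the two minimality inequalities $J(\psi)\le J(\min\{\psi,\psi_\tau\})$ and $J(\psi_\tau)\le J(\max\{\psi,\psi_\tau\})$ ``leaves'' an inequality forcing $\g\bigl(\min\{\psi-\psi_\tau,0\}\bigr)=0$, and likewise for two minimizers $\psi,\tilde\psi$. But for this functional the min/max rearrangement is an \emph{exact} pointwise identity: at a.e.\ point the pair $(\min,\g\min)$, $(\max,\g\max)$ is a permutation of $(\psi,\g\psi)$, $(\psi_\tau,\g\psi_\tau)$ (gradients coincide a.e.\ on the coincidence set), and the indicator identity $\chi_E(\min)+\chi_E(\max)=\chi_E(\psi)+\chi_E(\psi_\tau)$ that you verify in your last paragraph holds trivially because $\{\min,\max\}=\{\psi,\psi_\tau\}$ as values. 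Hence $J(\min)+J(\max)=J(\psi)+J(\psi_\tau)$ with equality, and adding the two minimality inequalities yields only that $\min$ and $\max$ are themselves minimizers of the respective (original and shifted) problems; no Dirichlet-energy remainder $\int\frac1r|\g\min\{\psi-\psi_\tau,0\}|^2$ survives, and neither $\psi\le\psi_\tau$ nor $\psi=\tilde\psi$ follows. The mechanism you are borrowing is the one in Lemma \ref{cc9}, but there it works only because the comparison function $\phi_1$ is a \emph{fixed explicit barrier} solving the equation with favorable boundary inequalities, so that integrating by parts against it produces a genuine coercive leftover; when both functions are minimizers (or a minimizer and its translate, which minimizes the translated problem by $x$-translation invariance of the integrand) that leftover is identically zero, which is precisely the non-strict-convexity obstruction you acknowledge but do not overcome.

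What is missing is the strict-comparison step that converts ``$\min\{\psi,\psi_\tau\}$ is also a minimizer'' into ``$\min\{\psi,\psi_\tau\}=\psi$'': a sliding argument in the shift parameter combined with the strong maximum principle and a Hopf-lemma (or free-boundary-condition) contradiction at a touching point, started and controlled by the strict ordering of the prescribed data on $H_{i,\mu}$, $N_{i,\mu}$, $N_{0,\mu}$ and by the explicit bounds \eqref{b18} of Lemma \ref{cc9}. This is exactly the type of argument used elsewhere in the paper (the proof of Proposition \ref{123} and of Theorem \ref{thm2}) and is the content of Proposition 3.5 in \cite{WX}; without it your scheme establishes neither \eqref{b19} nor uniqueness. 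A secondary, more minor gap: the $x$-monotonicity of the obstacles $\Phi_1,\Phi_2$, which you invoke to check admissibility of the glued competitors on the non-translation-invariant domain $\O_\mu$, is asserted but not proved (it needs its own maximum-principle argument), whereas the boundary comparisons on the shifted truncations can indeed be read off from Lemma \ref{cc9} as you indicate.
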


\subsection{Some properties of the free boundaries}
\subsubsection{Preliminaries}

Before we investigate the properties of the free boundaries, we give
some important auxiliary lemmas, and we refer the proofs in
\cite{AC1, ACF1, FA1}. So we only state the result and omit the proof as follows.
\begin{lemma}\label{cc1} There exists a universal constant $c>0$, such that for $X_0=(x_0,r_0)\in\O_\mu\cap\{\psi_{\ld, \th,\mu}<0\}$ and $B_r(X_0)\subset\O_\mu\cap\{\psi_{\ld, \th,\mu}<0\}$ with
$$\f{1}{r}\fint_{\p B_r(X_0)}\left(\psi_{\ld,\th,\mu}-m_2\right)dS\geq\sqrt{\lambda} cr_0,$$ then we have $\psi_{\ld, \th,\mu}>m_2$ in $B_r(X_0)$; Similarly, $X_0=(x_0,r_0)\in\O_\mu\cap\{\psi_{\ld, \th,\mu}>0\}$ and $B_r(X_0)\subset\O_\mu\cap\{\psi_{\ld, \th,\mu}>0\}$, if $$\f{1}{r}\fint_{\p B_r(X_0)}\left(m_1-\psi_{\ld, \th,\mu}\right) dS\geq\sqrt{\lambda+\Ld} cr_0,$$ then we have $\psi_{\ld, \th,\mu}<m_1$ in $B_r(X_0)$. Here and after, $B_r(X)$ denotes some ball with radius $r>0$ and center
$X\in\O_{\mu}$.\end{lemma}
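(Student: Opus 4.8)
The plan is to prove only the first statement (the second is entirely symmetric, replacing $\psi_{\ld,\th,\mu}<0$ by $\psi_{\ld,\th,\mu}>0$, $m_2$ by $m_1$, and $\sqrt{\ld}$ by $\sqrt{\ld+\Ld}$, and reversing the appropriate inequalities). Write $\psi=\psi_{\ld,\th,\mu}$ and $w=\psi-m_2\ge 0$ in $B_r(X_0)$. In the region where $\psi<0$ we have $m_2<\psi<0$, so the functional density near $X_0$ is $r|\nabla\psi/r - \sqrt{\ld}\,e|^2$, and since $\psi$ minimizes $J_{\ld,\th,\mu}$ it is a local minimizer of $\int r|\nabla\psi/r|^2 - 2\sqrt{\ld}\,e\cdot\nabla\psi + \ld\,\frac1r\,\chi_{\{m_2<\psi\le0\}}$; the linear term integrates to a boundary term, so on the set $\{w>0\}$ the function $w$ satisfies $\Delta w - \frac1r\, w_r = 0$ weakly and is subharmonic for the operator $\mathcal L := \Delta - \frac1r\partial_r$ across the free boundary $\{w=0\}$, i.e. $\mathcal L w \ge 0$ in $B_r(X_0)$ in the distributional sense (this is exactly the non-degeneracy setup of \cite{AC1, ACF1}). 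The key point is a comparison: let $h$ solve $\mathcal L h = 0$ in $B_r(X_0)$ with $h = w$ on $\partial B_r(X_0)$; then $w \le h$ in $B_r(X_0)$, and by the mean-value property adapted to $\mathcal L$ (the weight $r$ is comparable to $r_0$ on the ball once $r \le r_0/2$, which we may assume, else shrink $r$), $h(X_0)$ is controlled from below in terms of $\fint_{\partial B_r(X_0)} w\,dS$.

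The core estimate is then the standard Alt--Caffarelli dichotomy. Suppose for contradiction that $\{w=0\}\cap B_{r/2}(X_0)\ne\emptyset$, pick $Y_0$ in it, and use the non-degeneracy lower bound (which holds because $\psi$ is a minimizer and $\sqrt{\ld}>0$ is the prescribed gradient jump): there is a universal $c_0>0$ with $\sup_{B_\rho(Y_0)} w \ge c_0\sqrt{\ld}\, r_0\,\rho$ for $B_\rho(Y_0)\subset B_r(X_0)$, obtained by testing $J_{\ld,\th,\mu}$ against $\min\{w, c_0\sqrt{\ld}\,r_0(\rho - |X-Y_0|)_+\}$ and using that the measure of $\{w>0\}\cap B_\rho(Y_0)$ drops. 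Combining this with the sub-mean-value bound $\fint_{\partial B_r(X_0)} w\,dS \le C\sup_{B_{r}} w$ and the linear growth of $w$ away from the free boundary, one shows that if the spherical average $\frac1r\fint_{\partial B_r(X_0)}(\psi - m_2)\,dS$ exceeds $\sqrt{\ld}\,c\,r_0$ for a suitably large universal constant $c$, the free boundary cannot enter $B_r(X_0)$; hence $w>0$, i.e. $\psi>m_2$, throughout $B_r(X_0)$.

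I expect the main obstacle to be the careful bookkeeping of the weight $r$ in all the mean-value and non-degeneracy estimates: unlike the flat Laplacian case in \cite{AC1}, here the elliptic operator is $\mathcal L = \Delta - \frac1r\partial_r$, so the natural "harmonic" comparison functions and the Poisson-type representation carry the weight $r$, and one must verify that on $B_r(X_0)$ with $X_0=(x_0,r_0)$ and $r\le r_0/2$ the weight satisfies $r_0/2 \le r \le 3r_0/2$, making all constants uniformly comparable and reducing the analysis to the classical case up to fixed multiplicative constants — which is precisely the source of the factor $r_0$ in the hypothesis. The remaining ingredients (existence and regularity of the minimizer, Lipschitz continuity) are already available from Proposition \ref{cc8}, and the argument itself is a direct transcription of \cite[Lemma 4.3]{AC1} and the corresponding lemma in \cite{ACF1, FA1}, so beyond this weighted adaptation no new difficulty arises; accordingly one may, as the paper does, state the result and refer to those sources for the details.
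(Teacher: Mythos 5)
The paper gives no argument here at all---it states the lemma and refers to \cite{AC1,ACF1,FA1}---so the benchmark is the classical positivity lemma of Alt--Caffarelli (Lemma 3.4 of \cite{AC1}) adapted to the operator $\mathcal{L}=\Delta-\frac{1}{r}\partial_r$. That proof runs: on $B_r(X_0)\subset\{\psi_{\ld,\th,\mu}<0\}$ reduce to the one-phase functional for $w=\psi_{\ld,\th,\mu}-m_2\ge 0$ with $Q\approx\sqrt{\ld}\,r_0$ (your first paragraph does this correctly, up to the slip $\ld\frac{1}{r}\chi$ in place of $\ld\, r\chi$), replace $w$ by the $\mathcal{L}$-harmonic function $h$ with the same boundary values, use minimality to get $\int_{B_r}\frac{1}{r}|\nabla(h-w)|^2\le \ld\,(\sup_{B_r} r)\,|\{w=0\}\cap B_r|$, bound $h$ from below on $\{w=0\}$ via the Poisson-kernel estimate in terms of $\fint_{\partial B_r}w$, and combine with a Hardy-type (radial integration) inequality to force $|\{w=0\}\cap B_r|=0$ unless $\frac{1}{r}\fint_{\partial B_r}w\le C\sqrt{\ld}\,r_0$. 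You set up the replacement $h$ and the comparison $w\le h$ but never use them; the argument you then call the core estimate substitutes an invalid mechanism.

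Concretely: the non-degeneracy lower bound $\sup_{B_\rho(Y_0)}w\ge c_0\sqrt{\ld}\,r_0\,\rho$ at a point of $\{w=0\}$ is the ingredient for Lemma \ref{cc2}/Lemma \ref{cc3}, and it points the same way as your hypothesis (a large spherical average), so it cannot yield a contradiction. What would yield one is the opposite, linear-growth \emph{upper} bound $w(X)\le C\sqrt{\ld}\,r_0\,\mathrm{dist}(X,\{w=0\})$ with a universal $C$, which you mention only in passing and which is not available at this stage: the paper's gradient bound (Lemma \ref{cc7}) has a constant depending on the subdomain $G$ rather than the form $c\sqrt{\ld}\,r_0$, and in \cite{AC1} the estimate with that constant is \emph{deduced from} the very positivity lemma being proved (contrapositive of the lemma plus subharmonicity), so invoking it here is circular unless you prove it independently---which essentially forces you back to the harmonic-replacement/Hardy argument. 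Two smaller points: you may not ``shrink $r$'' to arrange $r\le r_0/2$, since the hypothesis is an average over the given sphere and does not pass to smaller radii (for an $\mathcal{L}$-subsolution the normalized average $\frac{1}{\rho}\fint_{\partial B_\rho}w$ need not be monotone), so the comparability of the weight with $r_0$ has to be argued rather than assumed; and the inequality $\fint_{\partial B_r}w\le C\sup_{B_r}w$ does no work. Your closing fallback---state the result and cite \cite{AC1,ACF1,FA1}---coincides with what the paper does, but the sketch you interpose is not a correct proof.
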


Next, we will establish a non-degeneracy lemma to $\psi_{\ld, \th,
\mu}-m_2$ and $m_1-\psi_{\ld, \th, \mu}$ as follows.
\begin{lemma}\label{cc2} (Non-degeneracy lemma) For any $0<\kappa_1<1$, there exists a positive constant $c$ (depending on $\kappa_1$),  if $B_r(X_0)\subset\O_\mu\cap\{\psi_{\ld, \th,\mu}<0\}$ ($X_0=(x_0,r_0)$)
and
$$\f{1}{r}\fint_{\p B_r(X_0)}\left(\psi_{\ld,\th,\mu}-m_2\right) dS\leq\sqrt{\lambda} cr_0,~~and~~\psi_{\ld, \th,\mu}<0~~in~~B_r(X_0),$$ then $\psi_{\ld, \th,\mu}=m_2$ in $B_{\kappa_1 r}(X_0)$;
Similarly, for any $0<\kappa_2<1$, there exists a positive constant
$c$ (depending on $\kappa_2$) and
$$\f{1}{r}\fint_{\p B_r(X_0)}\left(m_1-\psi_{\ld, \th, \mu}\right)
dS\leq\sqrt{\lambda+\Ld} cr_0,~~and~~\psi_{\ld, \th, \mu}>0~~in~~B_r(X_0),$$ then
$\psi_{\ld, \th,\mu}=m_1$ in $B_{\kappa_2 r}(X_0)$.\end{lemma}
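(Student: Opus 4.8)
\textbf{Proof proposal for Lemma \ref{cc2}.}

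The plan is to argue by contradiction using a comparison function, exactly along the lines of the non-degeneracy lemma in \cite{AC1} (and its axially symmetric variant in \cite{ACF5}), adapted to our functional $J_{\ld,\th,\mu}$ with the weight $r$ and the $\sqrt{\ld}$-term. I will give the argument for the first assertion (the bound from below by $m_2$ on a ball where $\psi_{\ld,\th,\mu}<0$); the second is entirely symmetric, with $\sqrt{\ld}$ replaced by $\sqrt{\Lambda+\ld}$ and $m_2$ replaced by $m_1$. Write $\psi=\psi_{\ld,\th,\mu}$ and $w=\psi-m_2\ge 0$ on $B_r(X_0)$, where $X_0=(x_0,r_0)$ and $r_0>0$ is bounded away from $0$ by the standing assumption that $B_r(X_0)\subset\O_\mu$ (so on such a ball $r\simeq r_0$ and the operator $\Delta-\frac1r\p_r$ is uniformly elliptic with smooth coefficients, and the density $r$ is comparable to $r_0$).

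First I would set up the comparison. Let $v$ be the $J$-harmonic replacement of $\psi$ on $B_r(X_0)$, i.e.\ the solution of $\Delta v-\frac1r\p_r v=0$ in $B_r(X_0)$ with $v=\psi$ on $\p B_r(X_0)$; by the maximum principle $m_2\le v$, and one has the representation estimate $\frac1r\fint_{\p B_r(X_0)}(v-m_2)\,dS = \frac1r\fint_{\p B_r(X_0)}(\psi-m_2)\,dS\le \sqrt\ld\,c\,r_0$, while interior gradient estimates for this uniformly elliptic equation give $|v-m_2|\le C(v-m_2)\big|_{\text{average}}\cdot$ (bounded factor) on $B_{(1+\kappa_1)r/2}(X_0)$, hence $\sup_{B_{\kappa_1' r}(X_0)}(v-m_2)\le C\sqrt\ld\,c\,r_0 r$ for a radius $\kappa_1'\in(\kappa_1,1)$, where $C$ depends only on $\kappa_1$ and the ellipticity constants. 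Now compare $\psi$ with the test function $\psi_t := \max\{\psi,\ v - t\,\zeta\,(v-m_2)\}$ no --- more cleanly, use $\tilde\psi := \psi$ outside $B_r(X_0)$ and $\tilde\psi := \max\{m_2,\ v-\e\eta\}$ inside, where $\eta$ is a fixed smooth bump equal to a positive constant on $B_{\kappa_1 r}(X_0)$, supported in $B_r(X_0)$, with $|\g\eta|\le C/r$, and $\e>0$ small to be chosen; since $m_2\le \tilde\psi\le\psi\le 0< m_1$ and $\tilde\psi=\psi$ near $\p B_r(X_0)$, we have $\tilde\psi\in K_\mu$, so $J_{\ld,\th,\mu}(\psi)\le J_{\ld,\th,\mu}(\tilde\psi)$.

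Next I would expand the resulting inequality. On $B_r(X_0)$ both $\psi$ and $\tilde\psi$ are negative (or equal to $m_2$), so the relevant part of the integrand is $r\big|\frac{\g\psi}{r}-\sqrt\ld\,e\big|^2$; subtracting and using that $v$ is $J$-harmonic to kill the cross term $\int r\frac{\g v}{r}\cdot\frac{\g(\psi-v)}{r}$, the inequality $J(\psi)\le J(\tilde\psi)$ reduces, after routine manipulation with the weight $r\simeq r_0$, to
\[
\int_{B_r(X_0)\cap\{\psi>m_2\}} |\g(\psi-v)|^2\,dX \;\lesssim\; \e\int_{B_r(X_0)}|\g\eta|\,|\g v|\,dX \;+\; \e\sqrt\ld\,|\{\tilde\psi=m_2\}\cap B_r(X_0)| \cdot(\text{bounded}) \;-\; \e\sqrt\ld\int_{B_r(X_0)}\!\chi_{\{v-\e\eta>m_2\}}\cdots
\]
the precise bookkeeping being exactly that of \cite[proof of the non-degeneracy lemma]{AC1}. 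The upshot is a differential inequality for $\phi(t):=\int_{B_t(X_0)}(\psi-v)\,dX$ (or for the measure of the set $\{\psi>m_2\}$), of the type $\phi'\gtrsim \phi/r$ combined with the smallness hypothesis $\frac1r\fint_{\p B_r}(\psi-m_2)\le\sqrt\ld c r_0$, which forces, for $c$ chosen small enough depending on $\kappa_1$, that $\psi\equiv v$ and in fact $v\equiv m_2$ on $B_{\kappa_1 r}(X_0)$; hence $\psi=m_2$ there.

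\textbf{Main obstacle.} The delicate point is not the soft comparison but the quantitative step: extracting from $J(\psi)\le J(\tilde\psi)$ the correct iteration inequality, i.e.\ controlling the gain $-\e\sqrt\ld\,|\{v>m_2+\e\eta\}|$ against the cost $\e\,\fint_{\p B_r}(v-m_2)$, and tracking how the constant $c$ must depend on $\kappa_1$ (through the interior gradient estimate on $B_{\kappa_1'r}$) and on $r_0$ only through the harmless comparability $r\simeq r_0$. This is precisely where the hypotheses $B_r(X_0)\subset\O_\mu$ (so $r_0>0$) and $\psi<0$ on all of $B_r(X_0)$ (so the integrand is the ``lower phase'' everywhere) are used, and it is the content of \cite[Lemma 3.2 / Theorem 2.5]{AC1} transcribed to the weighted operator $\Delta-\frac1r\p_r$; I would carry it out verbatim in that style, citing \cite{AC1,ACF1,ACF5} for the details.
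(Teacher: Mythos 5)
The paper offers no proof of Lemma \ref{cc2} at all (it is stated with a pointer to \cite{AC1,ACF1,FA1}), so the benchmark is the classical Alt--Caffarelli non-degeneracy argument transplanted to the operator $\Delta-\f1r\p_r$ and the weighted functional; that is also the argument you announce, and your preliminary observations (that on $B_r(X_0)\subset\{\psi_{\ld,\th,\mu}<0\}$ the function $\psi_{\ld,\th,\mu}-m_2\ge0$ is governed by the $\sqrt{\ld}$-phase, and that its interior sup is controlled by $C\sqrt{\ld}\,c\,r_0\,r$ through the boundary average) are correct and are indeed where the hypotheses $\psi_{\ld,\th,\mu}<0$ and $B_r(X_0)\subset\O_\mu$ enter. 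However, the details you then sketch are not that argument, and the concluding logic would fail. The classical proof compares $\psi_{\ld,\th,\mu}$ with the competitor that equals $m_2$ on all of $B_{\kappa_1 r}(X_0)$, equals $\min\{\psi_{\ld,\th,\mu},w\}$ on the annulus $B_{\sqrt{\kappa_1}r}\setminus B_{\kappa_1 r}$ (with $w$ solving the homogeneous equation, equal to the sup bound on $\p B_{\sqrt{\kappa_1}r}$ and to $m_2$ on $\p B_{\kappa_1 r}$), and equals $\psi_{\ld,\th,\mu}$ outside; minimality then bounds $\int_{B_{\kappa_1 r}}\f{|\g(\psi_{\ld,\th,\mu}-m_2)|^2}{r}dX+\ld\int_{B_{\kappa_1 r}}r\,\chi_{\{\psi_{\ld,\th,\mu}>m_2\}}dX$ by $C_{\kappa_1}\sqrt{\ld}\,c\,r_0\int_{\p B_{\kappa_1 r}}(\psi_{\ld,\th,\mu}-m_2)\,dS$, and a trace inequality plus Cauchy--Schwarz absorbs the right-hand side once $c=c(\kappa_1)$ is small, forcing $\psi_{\ld,\th,\mu}\equiv m_2$ in $B_{\kappa_1 r}$.

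Your competitor $\tilde\psi=\max\{m_2,\,v-\e\eta\}$, a small perturbation of the replacement $v$, cannot produce this dichotomy: as $\e$ is small it only yields a first-variation (Euler--Lagrange-type) inequality, and nowhere in your sketch is the measure term over the full inner ball gained. Two concrete defects follow. First, since $\psi_{\ld,\th,\mu}-m_2$ is a subsolution, the replacement satisfies $v\ge\psi_{\ld,\th,\mu}$, so your inequality $\tilde\psi\le\psi_{\ld,\th,\mu}$ is unjustified (harmless for admissibility, but symptomatic). Second, and decisively, the stated upshot that the comparison ``forces $\psi\equiv v$ and in fact $v\equiv m_2$ on $B_{\kappa_1 r}$'' is incompatible with the strong maximum principle: unless $\psi_{\ld,\th,\mu}\equiv m_2$ on all of $\p B_r(X_0)$, one has $v>m_2$ everywhere inside $B_r(X_0)$, whereas in the nontrivial case the conclusion to be proved is $\psi_{\ld,\th,\mu}=m_2$ (hence $\psi_{\ld,\th,\mu}\ne v$) on the inner ball; the conclusion must therefore be drawn for $\psi_{\ld,\th,\mu}$ directly, by showing $|\{\psi_{\ld,\th,\mu}>m_2\}\cap B_{\kappa_1 r}|=0$ and that the Dirichlet term vanishes, not via identification with $v$. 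The auxiliary quantity $\phi(t)=\int_{B_t}(\psi-v)\,dX$ with a differential inequality $\phi'\gtrsim\phi/r$ is machinery for the opposite statement (the positivity Lemma \ref{cc1}), not for non-degeneracy. Finally, your key display is left unfinished, and the absorption step where $c$ acquires its dependence on $\kappa_1$ is precisely what you defer; citing \cite{AC1,ACF3,ACF5} for it is legitimate (the paper does the same), but then the construction transcribed should be the one in those references, including the care needed for the weight $r$ versus $r_0$ when the ball is large relative to its distance to the axis, which is handled in the axially symmetric references rather than by uniform ellipticity alone.
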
A direct
application of Lemma \ref{cc2} gives the following lemma.
\begin{lemma}\label{cc3}Suppose that $X_0=(x_0,r_0)\in \overline{\{\psi_{\ld, \th,\mu}>m_2\}\cap(\O_\mu\backslash D_{\mu})}$ and $\psi_{\ld, \th,\mu}<0$ in $B_r(X_0)$ for some $r>0$,
 then \be\label{b13}\f{1}{r}\fint_{\p B_r(X_0)}\left(\psi_{\ld,\th,\mu}-m_2\right) dS\geq\sqrt{\lambda} cr_0.\ee In
 particular,
 \be\label{b14}\sup_{\p B_r(X_0)}\left(\psi_{\ld,\th,\mu}-m_2\right)\geq \sqrt{\lambda}c r_0
 r. \ee\end{lemma}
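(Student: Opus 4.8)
The plan is to deduce Lemma \ref{cc3} from the non-degeneracy Lemma \ref{cc2} by a contrapositive argument, together with a continuity/sliding argument using the monotonicity from Proposition \ref{bb9}. First I would argue \eqref{b13}. Suppose for contradiction that $\frac1r\fint_{\p B_r(X_0)}(\psi_{\ld,\th,\mu}-m_2)\,dS<\sqrt\lambda\, c r_0$ with $c$ the constant from Lemma \ref{cc2} associated to some fixed $\kappa_1\in(0,1)$ (say $\kappa_1=\frac12$). Since $\psi_{\ld,\th,\mu}<0$ in $B_r(X_0)$ by hypothesis, Lemma \ref{cc2} applies and yields $\psi_{\ld,\th,\mu}=m_2$ in $B_{\kappa_1 r}(X_0)$. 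This contradicts the assumption $X_0\in\overline{\{\psi_{\ld,\th,\mu}>m_2\}\cap(\O_\mu\setminus D_\mu)}$: there are points arbitrarily close to $X_0$ at which $\psi_{\ld,\th,\mu}>m_2$, and such points lie inside $B_{\kappa_1 r}(X_0)$ for $r$ fixed. Hence \eqref{b13} holds.

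There is one subtlety to handle carefully: Lemma \ref{cc2} as stated requires $B_r(X_0)\subset\O_\mu\cap\{\psi_{\ld,\th,\mu}<0\}$ as an \emph{open} ball contained in $\O_\mu$, while here $X_0$ is only on the closure of $\{\psi_{\ld,\th,\mu}>m_2\}\cap(\O_\mu\setminus D_\mu)$ and a priori could lie on $\p\O_\mu$ or on $\p D_\mu$. To address this I would first note that, by hypothesis, $B_r(X_0)\subset\{\psi_{\ld,\th,\mu}<0\}$, so in particular $X_0$ together with a neighborhood lies in the interior of $\O_\mu$ (being surrounded by points of the open set where the solution is defined and strictly negative); alternatively one restricts to slightly smaller concentric balls $B_{r'}(X_0')$ with $X_0'$ an interior point of $\{\psi_{\ld,\th,\mu}>m_2\}$ close to $X_0$ and passes to the limit $X_0'\to X_0$, $r'\to r$, using that both sides of \eqref{b13} are continuous in $(X_0,r)$ by the $C^{0,1}$ regularity of $\psi_{\ld,\th,\mu}$ from Proposition \ref{cc8}. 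This limiting step is the one place requiring genuine care, though it is routine given the Lipschitz bound.

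Finally, \eqref{b14} follows immediately from \eqref{b13}: by the mean value inequality,
\[
\sup_{\p B_r(X_0)}\left(\psi_{\ld,\th,\mu}-m_2\right)\ \geq\ \fint_{\p B_r(X_0)}\left(\psi_{\ld,\th,\mu}-m_2\right)dS\ \geq\ \sqrt\lambda\, c\, r_0\, r,
\]
using that $\psi_{\ld,\th,\mu}-m_2\ge 0$ everywhere (from the lower bound $\psi_{\ld,\th,\mu}\ge m_2$ in Lemma \ref{cc9}), so the averaged quantity is nonnegative and the supremum over the sphere dominates the spherical average. I expect the main obstacle to be the bookkeeping in the previous paragraph — ensuring the non-degeneracy lemma is legitimately applicable at a boundary-closure point $X_0$ — rather than any substantial new estimate; the core of the argument is just the contrapositive of Lemma \ref{cc2}.
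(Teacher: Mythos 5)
Your argument is correct and is essentially the paper's own: the paper offers no separate proof, saying only that the lemma is ``a direct application of Lemma \ref{cc2}'', which is precisely the contrapositive argument you give (a small average forces $\psi_{\ld,\th,\mu}=m_2$ on $B_{\kappa_1 r}(X_0)$, contradicting that $X_0$ is a limit of points with $\psi_{\ld,\th,\mu}>m_2$), and \eqref{b14} follows since the supremum over $\p B_r(X_0)$ dominates the spherical average. Your additional care about $B_r(X_0)$ lying in $\O_\mu$ is a harmless refinement of the same route, not a different approach.
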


We shall establish a non-oscillation lemma, which implies that the
free boundary $\Gamma_{i, \mu}$ for $i=1, 2$ cannot oscillate near
the solid boundaries. Without loss of generality, consider the right
free boundary $\Gamma_{2, \mu}$, and introduce a domain $G\subset
\O_\mu\backslash D_{\mu}$ bounded by
$$x=x_1,\quad x=x_1+h \ (h>0),$$and$$ \gamma_1:X=X^1(t)=(x^1(t),r^1(t)),\quad\gamma_2:X=X^2(t)=(x^2(t),r^2(t)),$$
where $0\leq t\leq T$ with $$x_1<x^i(t)<x_1+h \ \ \text{for}\ \
0<t<T,
$$ and $$x^i(0)=x_1,\ \  x^i(T)=x_1+h,\ \ r_1\leq r^i(t)\leq r_1+\delta, \ \
i=1,2.$$

Furthermore, the arc $\gamma_2$ lies above the arc $\gamma_1$, this
implies that $r^1(0)<r^2(0)$, $\gamma_1$ and $\gamma_2$ do not
intersect, $\gamma_2$ is contained in $\Gamma_{2,\mu}$, either
$$\text{Case}\ 1.\ \gamma_1 \ \ \text{is contained in}\ \Gamma_{2,\mu},\quad \text{(see
Figure \ref{fi7})}$$ or
$$\text{Case}\ 2.\ \gamma_1 \ \ \text{lies on}\ \{r=R,x>1\}, \ \text{and then} \ \ r_1=R,\  x_1\geq 1.\quad \text{(see Figure \ref{fi8})}$$
\begin{figure}
\begin{minipage}[t]{0.5\linewidth}
\centering
\includegraphics[width=60mm,height=35mm]{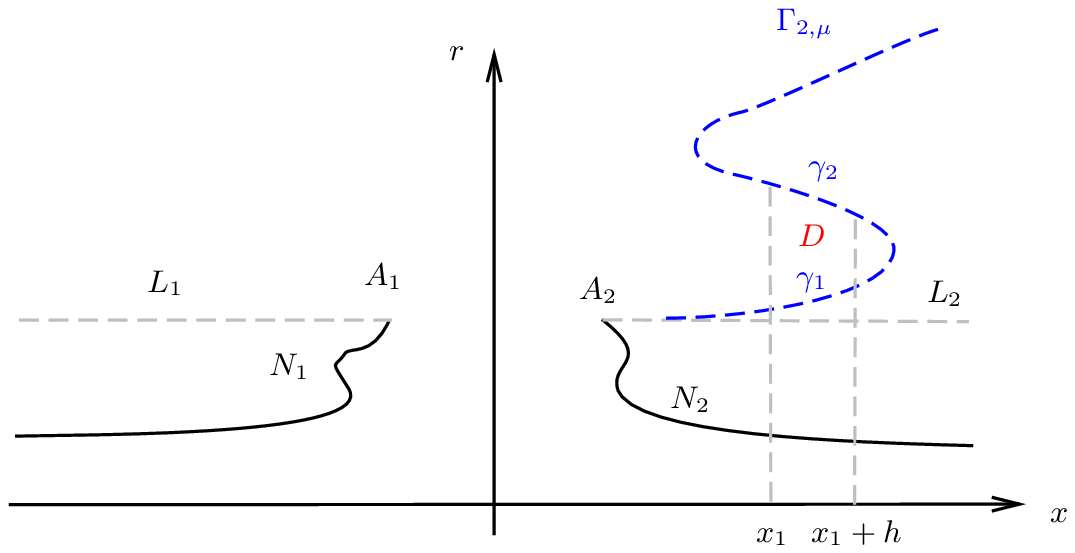}
\caption{Case 1}\label{fi7}
\end{minipage}%
\begin{minipage}[t]{0.5\linewidth}
\centering
\includegraphics[width=60mm,height=35mm]{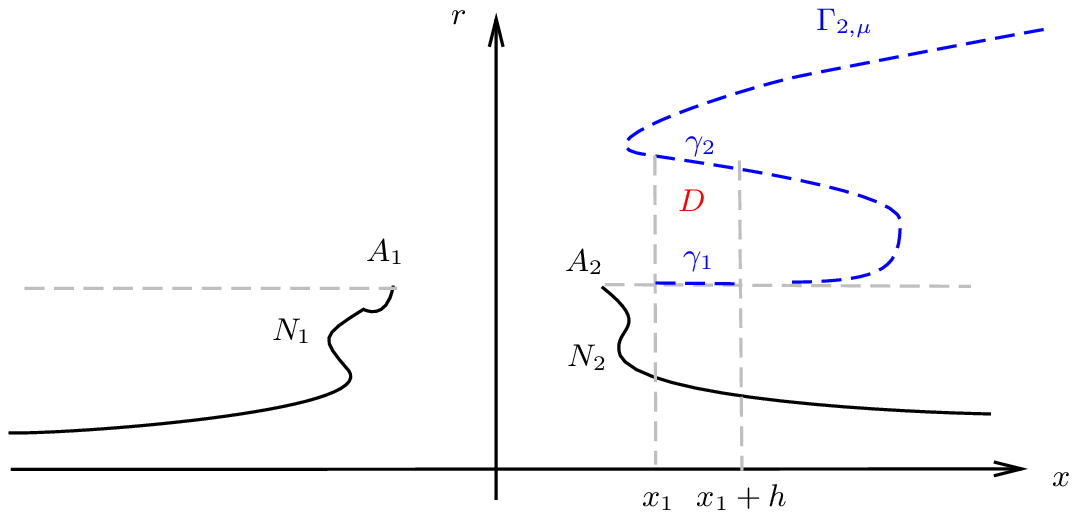}
\caption{Case 2}\label{fi8}
\end{minipage}
\end{figure}

Let the domain $G\subset\{\psi_{\ld, \th, \mu}>m_2\}$ be a
neighborhood of $\gamma_1$ and $\gamma_2$, and $\psi_{\ld, \th, \mu}<0$
in $G$ and for some $c^*>0$, we have
$$\text{dist}(G,\overline{A_1A_2})>c^*.$$

\begin{lemma}\label{cc5}(Non-oscillation lemma) Under the foregoing assumptions, there exists a  positive constant $C$ depending only on
$\lambda$, $m_2$ and $c^*$ such that \be\label{b15}h\leq
C\delta.\ee
\end{lemma}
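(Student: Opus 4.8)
The plan is to compare $\psi_{\ld,\th,\mu}$ with a suitable comparison function supported in the strip $\{x_1<x<x_1+h\}$ between the arcs $\gamma_1$ and $\gamma_2$, and to extract a differential inequality for the ``width'' of the oscillating region. First I would introduce the competitor $\tilde\psi$ which agrees with $\psi_{\ld,\th,\mu}$ outside the region $G_0\subset G$ enclosed by $\gamma_1$, $\gamma_2$ and the two vertical segments $x=x_1$, $x=x_1+h$, and inside $G_0$ is replaced by the harmonic-type replacement (solving $\Delta\tilde\psi-\frac1r\p_r\tilde\psi=0$ in $G_0$ with the same boundary data as $\psi_{\ld,\th,\mu}$), capped so that $\tilde\psi\le m_2$ there — concretely, $\tilde\psi=\min\{\psi_{\ld,\th,\mu}, v\}$ where $v$ is a barrier that forces $\tilde\psi= m_2$ on a definite portion of $G_0$. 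Since $\tilde\psi\in K_\mu$ and $\psi_{\ld,\th,\mu}$ is a minimizer, $J_{\ld,\th,\mu}(\psi_{\ld,\th,\mu})\le J_{\ld,\th,\mu}(\tilde\psi)$. Because $\gamma_2\subset\Gamma_{2,\mu}$ and (in Case 1) $\gamma_1\subset\Gamma_{2,\mu}$ as well, on both arcs the flux condition $|\frac1r\p_\nu\psi_{\ld,\th,\mu}|=\sqrt\ld$ holds, so the energy comparison reduces, after expanding the square in $J_{\ld,\th,\mu}$ and integrating by parts, to an inequality of the schematic form
\be
\int_{G_0}\frac1r\,|\g(\psi_{\ld,\th,\mu}-\tilde\psi)|^2\,dX\;\le\; \ld\,\big|G_0\cap\{\psi_{\ld,\th,\mu}>m_2\}\big| \;-\;(\text{boundary contribution on }\gamma_1\cup\gamma_2).
\ee

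Next I would quantify both sides in terms of $h$ and $\delta$. On the left, a Poincaré-type inequality on the thin curved strip $G_0$ (which has width comparable to $\delta$ in the $r$-direction and length $h$) combined with the non-degeneracy estimate \eqref{b14} of Lemma \ref{cc3} — applied at interior points of $\gamma_1$ where $\psi_{\ld,\th,\mu}-m_2$ must be bounded below by $\sqrt\ld c r_0 \rho$ on balls of radius $\rho\sim\delta$ — gives a lower bound $\gtrsim \ld\, c\, h\,\delta$ (the factor $r_0\ge r_1>0$ and the lower bound $\mathrm{dist}(G,\overline{A_1A_2})>c^*$ keeping all the geometric constants under control and away from degeneracy at the nozzle edge). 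On the right, $|G_0\cap\{\psi_{\ld,\th,\mu}>m_2\}|\le C h\delta$ trivially, but the point is that the oscillation forces the measure of this positivity set to be a \emph{definite fraction} of $|G_0|$, so the right side is $\gtrsim \ld\, h\,\delta$ as well — and more importantly one uses the sign and the exactness of the flux on $\gamma_1,\gamma_2$ to see that the net right-hand side, after the boundary terms cancel the leading $\ld$-contribution, is actually controlled by $C\delta^2$ times the number of oscillations, i.e.\ by $C\delta\cdot(\text{length swept})$. Balancing $\ld c h\delta \le C\delta^2 + (\text{lower order})$ and absorbing yields $h\le C\delta$, with $C$ depending only on $\ld$, $m_2$ and $c^*$.

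The main obstacle I anticipate is making the comparison function $\tilde\psi$ genuinely admissible and genuinely energy-decreasing \emph{simultaneously}: one must choose the barrier $v$ so that $\tilde\psi$ stays in $K_\mu$ (in particular $\tilde\psi\ge\Phi_2$ and respects the boundary data on $H_{i,\mu}$, $N_{i,\mu}$), so that $\tilde\psi= m_2$ on a region whose measure is a fixed proportion of $|G_0|$ (this is where the geometry ``$\gamma_2$ lies above $\gamma_1$, both within the $\delta$-strip'' is used), and so that the boundary integrals over $\gamma_1$ and $\gamma_2$ have the right sign to be discarded rather than estimated — the latter being exactly where the hypothesis that $\gamma_1,\gamma_2\subset\Gamma_{2,\mu}$ (Case 1) or $\gamma_1\subset\{r=R\}$ with the slip condition (Case 2) enters. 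Case 2 needs separate but parallel treatment: there the segment on $\{r=R\}$ contributes a boundary term governed by $\psi_{\ld,\th,\mu}=m_2$ on $N_2\cup\Gamma_2$ and the sign of $\p_r\psi_{\ld,\th,\mu}$ there, handled by the same minimality comparison after reflecting or extending across $r=R$ as in \cite{ACF1}. Once the admissibility and the sign bookkeeping are settled, the dimensional balance $h\lesssim\delta$ is forced, and the argument follows the template of the non-oscillation lemmas in \cite{AC1,ACF1,FA1}.
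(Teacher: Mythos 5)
There is a genuine gap, and it sits exactly where you flag ``the main obstacle.'' Your energy-comparison inequality, as you write it schematically, has left-hand side and right-hand side both scaling like $h\delta$: the Dirichlet excess over the thin strip $G_0$ is at best of order $\lambda h\delta$, and the term $\lambda\,|G_0\cap\{\psi_{\lambda,\theta,\mu}>m_2\}|$ is also of order $\lambda h\delta$. From such a balance no inequality of the form $h\leq C\delta$ can follow; the decisive step is your assertion that ``after the boundary terms cancel the leading $\lambda$-contribution'' the right side is controlled by $C\delta^2$, and this is never derived -- it is precisely the content of the lemma. Worse, your two statements about the right-hand side (that it is $\gtrsim\lambda h\delta$ because the positivity set occupies a definite fraction of $G_0$, and that it is $\leq C\delta^2$) are mutually consistent only if $h\lesssim\delta$, i.e.\ only if the conclusion already holds, so the argument as sketched is circular. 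The admissibility of the capped competitor $\min\{\psi_{\lambda,\theta,\mu},v\}$ is not the issue (in $\Omega_\mu\setminus D_\mu$ one has $\Phi_2=m_2$, so truncation at $m_2$ stays in $K_\mu$); the issue is that the minimality comparison alone, without a quantitative identity linking the length of $\gamma_1\cup\gamma_2$ to the short sides, cannot see the oscillation.

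The paper itself omits the proof and points to Lemma 4.1 in \cite{ACF1} and Lemma 5.6 in \cite{ACF3}, whose argument is not variational but a flux (Green's identity) argument, and it is worth internalizing because it supplies exactly the missing quantitative input. Since $\mathrm{div}\bigl(\tfrac{1}{r}\nabla\psi_{\lambda,\theta,\mu}\bigr)=0$ in $G\subset\{m_2<\psi_{\lambda,\theta,\mu}<0\}$, integrating over the region bounded by $\gamma_1$, $\gamma_2$ and the two vertical segments gives that the total boundary flux of $\tfrac1r\p_\nu\psi_{\lambda,\theta,\mu}$ vanishes. On the free-boundary arcs the normal derivative has one sign (there $\psi_{\lambda,\theta,\mu}$ attains the value $m_2$ from above) and magnitude $\sqrt{\lambda}\,r$, with $r$ bounded below thanks to $r\geq R$ and $\mathrm{dist}(G,\overline{A_1A_2})>c^*$, so the arcs contribute at least $c\sqrt{\lambda}\,(|\gamma_1|+|\gamma_2|)\geq c\sqrt{\lambda}\,h$ in absolute value; on the two vertical segments, whose length is at most $\delta$, the flux is bounded by $C\delta$ using the gradient bound \eqref{b17} of Lemma \ref{cc7}. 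Balancing these two contributions yields $h\leq C\delta$ directly, with $C$ depending only on $\lambda$, $m_2$, $c^*$; Case 2 is handled by the same identity, using that on $\{r=R,\,x>1\}$ the data of $\psi_{\lambda,\theta,\mu}$ (equal to $m_2$ there) fixes the sign of the corresponding boundary term. If you want to keep a purely variational proof you must extract an analogue of this flux identity from domain variations of the functional; the comparison-function estimate you propose does not by itself produce it.
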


The proof is similar to Lemma 4.1 in \cite{ACF1} and Lemma 5.6
\cite{ACF3}, we omit it here.

Finally, we give the uniform bound of the gradient to the minimizer,
which is independent of $m_1$ and $m_2$. Please see Lemma 8.1 in \cite{ACF3} and Lemma 5.2 in \cite{ACF1} for the proof.
\begin{lemma}\label{cc7}
Let $X_0=(x_0,r_0)$ be a free boundary point in
$\O_{\mu}\setminus\overline{D}_{\mu}$ and $G$ be a bounded domain
with $X_0\in G$, $\overline{G}\subset
{\O_\mu\setminus\overline{D}_{\mu}}$. There exists a constant
$C>0$ depending only on $\lambda$, $G$ and $\Ld$, such that
\be\label{b17}\f{\left|\g \psi_{\lambda, \th,\mu}\right|}{r}\leq C~~
\text{in}~~G.\ee
\end{lemma}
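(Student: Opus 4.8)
The plan is to reproduce, in the present axially symmetric two-phase setting, the Alt--Caffarelli--Friedman gradient bound near a free boundary (Lemma 5.2 of \cite{ACF1}, Lemma 8.1 of \cite{ACF3}). The first observation is that the region $\O_\mu\setminus\overline{D}_\mu$ is benign for the operator $L\psi:=\Delta\psi-\f1r\f{\p\psi}{\p r}$: there $r\ge R>0$, and since $\overline{G}$ is compact one has $R\le r\le C_G$ on a fixed neighborhood of $\overline{G}$, so $\psi_{\ld,\th,\mu}$ solves the uniformly elliptic divergence-form equation $\Div\!\left(\f1r\g\psi_{\ld,\th,\mu}\right)=0$ in $\{m_2<\psi_{\ld,\th,\mu}<m_1\}\setminus\Gamma_\mu$ with smooth coefficient bounded above and below, and $\left|\f{\g\psi_{\ld,\th,\mu}}{r}\right|$ is comparable to $|\g\psi_{\ld,\th,\mu}|$ with ratios depending only on $R$ and $C_G$. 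Hence it suffices to bound $|\g\psi_{\ld,\th,\mu}|$ on $G$ by a constant that depends only on $\ld$, $\Ld$ and $G$ --- in particular not on the mass fluxes, i.e.\ not on $m_1$ and $m_2$.

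The core of the argument is a linear growth estimate at free boundary points of $\Gamma_{1,\mu}$ and $\Gamma_{2,\mu}$. Set $d_0=\mathrm{dist}\big(\overline{G},\p(\O_\mu\setminus\overline{D}_\mu)\big)>0$ and fix $X_0=(x_0,r_0)\in\Gamma_{2,\mu}$; a point of $\Gamma_{1,\mu}$ is treated by the mirror argument, replacing $m_2,\sqrt{\ld}$ by $m_1,\sqrt{\ld+\Ld}$ and $\psi_{\ld,\th,\mu}-m_2$ by $m_1-\psi_{\ld,\th,\mu}$. By continuity $\psi_{\ld,\th,\mu}(X_0)=m_2<0$, so for every small $\rho$ (with $8\rho<d_0$) one has $\psi_{\ld,\th,\mu}<0$ on $B_{2\rho}(X_0)$, hence there $u:=\psi_{\ld,\th,\mu}-m_2\ge 0$ is a nonnegative subsolution of $v\mapsto\Div(\f1r\g v)$ --- it solves the equation in $\{u>0\}$, and the normal derivative of $u$ jumps in the favorable direction across $\p\{u>0\}$ (it is $\le 0$ from the $\{u>0\}$ side and $0$ from the other side, where $u$ vanishes identically). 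Since $X_0\in\p\{\psi_{\ld,\th,\mu}>m_2\}$, the function $u$ cannot be strictly positive throughout $B_{2\rho}(X_0)$, so the contrapositive of Lemma \ref{cc1} forces $\fint_{\p B_{2\rho}(X_0)}u\,dS< 2\sqrt{\ld}\,c\,r_0\,\rho$; the mean value inequality for nonnegative subsolutions then yields $\sup_{B_\rho(X_0)}u\le C\sqrt{\ld}\,\rho$ with $C=C(d_0,R,C_G)$. With linear growth available, take any $Y\in G$ with $m_2<\psi_{\ld,\th,\mu}(Y)<0$ and put $d=\mathrm{dist}(Y,\{\psi_{\ld,\th,\mu}=m_2\})$; if $d$ lies below a fixed threshold then $B_d(Y)\subset\{m_2<\psi_{\ld,\th,\mu}<0\}$, the interior gradient estimate of \cite{GT} applies, and combining it with the linear growth centered at a nearest point $Z\in\p\{\psi_{\ld,\th,\mu}>m_2\}$ gives $|\g\psi_{\ld,\th,\mu}(Y)|\le \f Cd\,\sup_{B_{2d}(Z)}u\le C\sqrt{\ld}$. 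The mirror estimate near $\Gamma_{1,\mu}$ produces the bound $C\sqrt{\ld+\Ld}$; together these control $|\g\psi_{\ld,\th,\mu}|$ in a tubular neighborhood of $\Gamma_{1,\mu}\cup\Gamma_{2,\mu}$.

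It remains to control $|\g\psi_{\ld,\th,\mu}|$ on the part of $G$ lying in $\{m_2<\psi_{\ld,\th,\mu}<m_1\}$ but away from $\Gamma_{1,\mu}\cup\Gamma_{2,\mu}$, in particular in a neighborhood of the interface $\Gamma_\mu=\{\psi_{\ld,\th,\mu}=0\}$, where a naive interior estimate would only give a bound of size $|m_i|/\mathrm{dist}$. Here I would invoke the maximum principle satisfied by the flow speed $\left|\f{\g\psi_{\ld,\th,\mu}}{r}\right|$ of an irrotational axially symmetric flow bounded away from the axis (classical; the $\f1r$ enters only through lower order terms): its supremum over $G\cap\{m_2<\psi_{\ld,\th,\mu}<m_1\}$ is attained on the boundary of that set relative to $\overline{G}$, which consists of points of $\Gamma_{1,\mu}\cup\Gamma_{2,\mu}$ (controlled by the previous paragraph), points of $\{\psi_{\ld,\th,\mu}=m_i\}$ (where $\g\psi_{\ld,\th,\mu}=0$ a.e.), points of $\p G$ (handled by first running the whole argument on a slightly larger domain $G'\Supset G$), and points of $\Gamma_\mu$. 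On $\Gamma_\mu$ the tangential derivative of $\psi_{\ld,\th,\mu}$ vanishes ($\psi_{\ld,\th,\mu}\equiv 0$ there) and the Rankine--Hugoniot jump $\left|\f{\g\psi^+}{r}\right|^2-\left|\f{\g\psi^-}{r}\right|^2=\Ld$ ties the two one-sided normal derivatives together; combining this with the $C^{0,1}$ bound of Proposition \ref{cc8} and the two-phase comparison estimates of \cite{ACF5,ACF6}, one bounds both one-sided gradients on $\Gamma_\mu$ by a constant depending only on $\Ld$ and the already-established bounds nearby. Collecting all contributions yields \eqref{b17}. The main obstacle is precisely this last step --- carrying out the interface analysis rigorously, i.e.\ reconciling the non-smoothness of $\psi_{\ld,\th,\mu}$ and the jump $\Ld$ across $\Gamma_\mu$ with the speed maximum principle (verifying the latter in the axially symmetric case rather than merely quoting it) --- while checking throughout that no constant in the chain of estimates secretly depends on $m_1$ or $m_2$.
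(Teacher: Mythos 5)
The paper itself does not write this proof out: it simply refers to the one-phase gradient bounds of Lemma 5.2 in \cite{ACF1} and Lemma 8.1 in \cite{ACF3}. The first half of your argument (linear growth of $\psi_{\ld,\th,\mu}-m_2$, resp.\ $m_1-\psi_{\ld,\th,\mu}$, at points of $\Gamma_{2,\mu}$, resp.\ $\Gamma_{1,\mu}$, via the contrapositive of Lemma \ref{cc1}, the mean value inequality for subsolutions of $\Div\left(\f1r\g\cdot\right)$, and interior elliptic estimates in balls $B_d(Y)$) is exactly that argument transplanted to the uniformly elliptic setting $r\geq R$, and is fine in substance. One caveat even there: Lemma \ref{cc1} requires $B_r(X_0)\subset\{\psi_{\ld,\th,\mu}<0\}$ (resp.\ $\{\psi_{\ld,\th,\mu}>0\}$), i.e.\ the balls must stay away from the interface $\Gamma_\mu$, and this is \emph{not} guaranteed by $d_0=\mathrm{dist}\big(\overline G,\p(\O_\mu\setminus\overline D_\mu)\big)$ alone, since $\Gamma_\mu$ approaches $\Gamma_{1,\mu}$ and $\Gamma_{2,\mu}$ in the far field; so your "for every small $\rho$, $\psi<0$ on $B_{2\rho}(X_0)$" needs justification or must be absorbed into the two-phase argument below.

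The genuine gap is the region near $\Gamma_\mu$, and the route you sketch there does not close it. The speed maximum principle is circular in this context: $\Gamma_\mu$ is part of the boundary of each fluid subdomain, and the one-sided speeds on $\Gamma_\mu$ are precisely the quantities to be estimated; the Rankine--Hugoniot relation only prescribes the difference of their squares ($=\Ld$) and bounds neither, while falling back on the $C^{0,1}$ regularity of Proposition \ref{cc8} produces a constant depending on $m_1$, $m_2$ and on the particular solution --- which destroys exactly the uniformity the lemma exists to provide (it is applied in the blow-up of Lemma \ref{lem2} on balls of radius $O(1/r_n)$ around far-field free boundary points, which contain pieces of $\Gamma_\mu$, and in the compactness argument of Proposition \ref{ee1}, and in both places the constant must be independent of $m_1$, $m_2$ and of the scale). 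What is actually needed across $\Gamma_\mu$ is the two-phase Lipschitz estimate of Alt--Caffarelli--Friedman based on their monotonicity formula, as in \cite{ACF4} and as implemented for two-fluid jets in \cite{ACF5,ACF6}: locally the minimizer is a two-phase minimizer with phase constants $(\Ld+\ld)r$ and $\ld r$, the monotonicity formula controls the product of the two one-sided linear growth rates at points of $\{\psi_{\ld,\th,\mu}=0\}$, and minimality together with non-degeneracy of the negative phase along the interface then bounds each one-sided slope by a constant depending only on $\ld$, $\Ld$ and the distance to $\overline D_\mu$. That machinery (and not a speed maximum principle plus unspecified "comparison estimates") is the missing ingredient, and you explicitly acknowledge not having carried it out, so as it stands the proposal does not prove \eqref{b17} on all of $G$.
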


\subsubsection{Some properties of the free boundaries}\label{ss}

It follows from the monotonicity of $\psi_{\ld, \th, \mu}$ with
respect to $x$ that the free boundaries are $r$-graph, namely, the
free boundaries $\Gamma_{i,\mu}$ ($i=1,2$) intersect $r=r_0$ either one single
point or a segment for any $r_0\in (R, +\infty)$. Thus, there
exist four mappings $g_{1, \lambda, \th,\mu}(r)$ with $r>R$,
$g_{2, \lambda, \th, \mu}(r)$ with $r>R$, $g_{\lambda, \th,\mu}(r)$ with $r>0$ and $\t g_{\lambda, \th,\mu}(r)$ with $r>0$ such
that\be\label{b31}\left\{0<\psi_{\lambda, \th,
\mu}<m_1\right\}\cap\O_\mu=\left\{\tilde{g}_{1, \lambda, \th, \mu}(r)<x<g_{\lambda, \th,\mu}(r)\right\}\cap\O_\mu,\ee
and
\be\label{b031}\left\{m_2<\psi_{\lambda, \th,
\mu}<0\right\}\cap\O_\mu=\left\{\t g_{\lambda, \th,\mu}(r)<x<\tilde{g}_{2, \lambda, \th, \mu}(r)\right\}\cap\O_\mu,\ee
where
$$\begin{aligned}\tilde{g}_{1, \lambda, \th,
\mu}(r)=\left\{\ba{ll}f_1(r) \ \ \ &\text{for}~ 0<r\leq R,
\\ g_{1, \ld,\th,\mu}(r)\ \ \
\ &\text{for }~R< r<+\infty,\ea\right.\end{aligned}$$
and $$\begin{aligned}\tilde{g}_{2, \lambda, \th,
\mu}(r)=\left\{\ba{ll}f_2(r) \ \ \ &\text{for}~ 0<r\leq R,
\\ g_{2, \ld,\th,\mu}(r)\ \ \
\ &\text{for }~R< r<+\infty.\ea\right.\end{aligned}$$

Indeed, along similar arguments as in \cite{ACF1}, we obtain that $g_{i, \ld, \th, \mu}(r)$ is indeed a
general continuous function in $[R, +\infty)$, and $g_{i,\ld,\th,\mu}(R)$ is defined as $\lim_{r\rightarrow R^+}g_{i,\ld,\th,\mu}(r)$ for $i=1,2$. Furthermore, due to Lemma 3.3 in \cite{ACF6} and Proposition 4.1 in \cite{WX}, the interface $g_{\ld,\th,\mu}(r)\equiv\t g_{\ld,\th,\mu}(y)$ is indeed a continuous function
in $[0, +\infty)$, and we omit the proof here.
\begin{lemma}\label{dd02} The free boundary $\Gamma_{i,
\mu}:x=g_{i, \lambda, \th,\mu}(r)$ is a generalized continuous
function in $R\leq r<+\infty$ with values in $[-\infty,+\infty]$ ($i=1,2$), respectively. Furthermore, the interface $\Gamma_\mu$: $x=g_{\ld,\th,\mu}(r)$ is bounded continuous functions in $0<r<+\infty$, $g_{\ld,\th,\mu}(0+0)\triangleq\lim_{r\rightarrow 0^+}g_{\ld,\th,\mu}(r)$ exists and is finite.
\end{lemma}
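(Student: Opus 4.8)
The plan is to read the lemma off from the monotonicity of the truncated minimizer, combined with the flatness, non-degeneracy and non-oscillation estimates already at hand, so that the statement becomes an instance of the Alt--Caffarelli--Friedman theory. First I would fix the slice structure: by Proposition \ref{bb9} (see \eqref{b19}) the map $x\mapsto\psi_{\ld,\th,\mu}(x,r_0)$ is non-increasing on every horizontal section $\{r=r_0\}\cap\O_\mu$, so each of the sets $\{\psi_{\ld,\th,\mu}=m_1\}$, $\{\psi_{\ld,\th,\mu}=m_2\}$ and $\{\psi_{\ld,\th,\mu}=0\}$ meets that section in a closed interval (possibly empty or a single point). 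This is exactly what makes the decompositions \eqref{b31} and \eqref{b031} legitimate, identifies $\Gamma_{i,\mu}$ with the graph $x=g_{i,\ld,\th,\mu}(r)$ over $r>R$ (with value $\pm\infty$ on sections where the corresponding phase is absent), and identifies the two faces of $\Gamma_\mu=\{\psi_{\ld,\th,\mu}=0\}$ with $x=g_{\ld,\th,\mu}(r)$ and $x=\t g_{\ld,\th,\mu}(r)$.

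Next I would show that $g_{i,\ld,\th,\mu}$ is a generalized continuous function of $r\in[R,+\infty)$ with values in $[-\infty,+\infty]$, exactly as in \cite{ACF1, ACF3}. Upper semicontinuity of the graph is automatic from the closedness of the coincidence sets and the monotonicity, so the only thing to exclude is a downward jump, i.e. an oscillation of the free boundary: in the interior of $\O_\mu\setminus\overline D_\mu$ this is ruled out by the flatness Lemma \ref{cc1}, the non-degeneracy Lemmas \ref{cc2} and \ref{cc3}, and the gradient bound Lemma \ref{cc7}, while near the solid boundaries $N_{i,\mu}$ and $L_i$ it is ruled out by the non-oscillation Lemma \ref{cc5}. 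The boundary value $g_{i,\ld,\th,\mu}(R)$ is then the one-sided limit $\lim_{r\to R^+}g_{i,\ld,\th,\mu}(r)$, which exists by the same monotonicity.

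The hard part will be the interface $\Gamma_\mu$, since across it $\psi_{\ld,\th,\mu}$ does not freeze at a constant, so the coincidence-set argument above does not apply and a priori the section $\{r=r_0\}\cap\{\psi_{\ld,\th,\mu}=0\}$ could be a nondegenerate segment, in which case $g_{\ld,\th,\mu}$ and $\t g_{\ld,\th,\mu}$ would differ. Here I would follow Lemma 3.3 in \cite{ACF6} and Proposition 4.1 in \cite{WX}: using that $\psi_{\ld,\th,\mu}$ is Lipschitz by Proposition \ref{cc8} and solves the elliptic equation \eqref{b01} on each of the open sets $\{\psi_{\ld,\th,\mu}>0\}$ and $\{\psi_{\ld,\th,\mu}<0\}$, one proves via the two-sided non-degeneracy and density estimates for the two-phase free boundary that $\{\psi_{\ld,\th,\mu}=0\}$ has empty interior, hence zero Lebesgue measure; combined with the monotone slice structure this collapses the section of $\{\psi_{\ld,\th,\mu}=0\}$ to a single point for every $r_0$, so $g_{\ld,\th,\mu}\equiv\t g_{\ld,\th,\mu}$ is single-valued and finite, and its continuity on $(0,+\infty)$ then follows from the same non-degeneracy and flatness arguments. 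This is where the contact-discontinuity nature of $\Gamma$ (the jump $\Ld\geq0$) genuinely enters and what one cannot import verbatim from the one-fluid jet papers, and I expect it to be the main obstacle.

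Finally, for the boundedness of $g_{\ld,\th,\mu}$ and the existence of $g_{\ld,\th,\mu}(0+0)$: for $r>R$ the interface is trapped between the two free boundaries, $g_{1,\ld,\th,\mu}(r)\leq g_{\ld,\th,\mu}(r)\leq g_{2,\ld,\th,\mu}(r)$, and both of these are finite because $\psi_{\ld,\th,\mu}$ is continuous and attains the values $m_1$ and $m_2$ on every section $\{r=r_0\}$ with $r_0>R$; for $0<r\leq R$ the interface lies in the bounded $x$-window of $\O_\mu$, whose section at height $r\leq R$ is contained in $\{-\mu<x<\mu\}$; hence $g_{\ld,\th,\mu}$ is finite-valued. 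The existence of the one-sided limit $g_{\ld,\th,\mu}(0+0)=\lim_{r\to0^+}g_{\ld,\th,\mu}(r)$ I would obtain from a barrier/non-oscillation argument near the symmetric axis $N_{0,\mu}$ analogous to Lemma \ref{cc5}, using the near-axis behaviour $\psi_{\ld,\th,\mu}(x,r)=O(r^2)$; this limit is the branching point of the two fluids on the axis, and with it the proof is complete.
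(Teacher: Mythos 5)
Your outline is in fact the paper's own route: for $\Gamma_{1,\mu},\Gamma_{2,\mu}$ the paper invokes the monotonicity \eqref{b19} and says only ``along similar arguments as in \cite{ACF1}'', and for the interface it cites Lemma 3.3 in \cite{ACF6} and Proposition 4.1 in \cite{WX} and omits the proof, exactly the sources you lean on. So in spirit you match the paper. However, two of the bridging arguments that you supply yourself do not hold as stated, and they are precisely the points where the cited two-phase results do the real work.

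First, the inference ``$\{\psi_{\ld,\th,\mu}=0\}$ has empty interior, hence zero Lebesgue measure, and combined with the monotone slice structure each horizontal section collapses to a single point'' is a non sequitur: a single horizontal segment has empty interior and zero two-dimensional measure, yet its section at that height is a nondegenerate interval, which is exactly the scenario $g_{\ld,\th,\mu}(r_0)<\t g_{\ld,\th,\mu}(r_0)$ you must exclude. Monotonicity in $x$ only tells you each section of $\{\psi_{\ld,\th,\mu}=0\}$ is an interval; to rule out a fat section one needs a genuine non-degeneracy/clean-up statement at interface points (this is what Lemma 3.3 of \cite{ACF6} and Proposition 4.1 of \cite{WX} provide, and where the jump $\Ld$, or for $\Ld=0$ the smoothness of $\psi$ across $\Gamma$, actually enters), not a measure bound. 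Second, your finiteness argument for $g_{\ld,\th,\mu}$ on $\{r>R\}$ assumes $g_{1,\ld,\th,\mu}(r_0)$ and $g_{2,\ld,\th,\mu}(r_0)$ are finite for every $r_0>R$ ``because $\psi$ attains $m_1$ and $m_2$ on each section''; but the lemma itself allows the generalized values $\pm\infty$, and Proposition \ref{co1} together with Remark \ref{e00} shows a free boundary can vanish entirely (e.g. $g_{1,\ld,\th,\mu}\equiv-\infty$ when $\th=\pi$), so the trapping $g_{1,\ld,\th,\mu}\leq g_{\ld,\th,\mu}\leq g_{2,\ld,\th,\mu}$ does not by itself give finiteness for $r>R$; the paper's own finiteness/limit-at-the-axis argument (carried out for the untruncated interface in the subsection on the properties of the interface, via Lemma 6.1 of \cite{ACF5} and a Green's formula estimate) is a separate argument, deferred at the truncated level to \cite{WX}. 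The remaining ingredients of your sketch --- the slice decomposition from \eqref{b19} and \eqref{b31}--\eqref{b031}, generalized continuity of $g_{i,\ld,\th,\mu}$ via Lemmas \ref{cc1}--\ref{cc3}, \ref{cc5}, \ref{cc7}, and a near-axis non-oscillation argument for $g_{\ld,\th,\mu}(0+0)$ --- are consistent with the references the paper relies on.
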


In order to study the limit behavior of the solution as
$r\rightarrow+\infty$, we first establish the decay estimate of the
minimizer in far field as follows. This is one of the crucial parts
in this paper.
\begin{lemma}\label{lem1}For any $\theta\in(0,\pi)$ and $r_0>2R$, there exists a constant $C$ (independent of $r_0$) such that \be\label{aa1}\int_{\O_\mu\cap\{r>r_0\}}r\left|\f{\g\psi_{\ld,\th,\mu}}{r}-\left(\sqrt{\Lambda+\ld}\chi_{\{0<\psi_{\ld,\mu,\th}<m_1\}}+\sqrt{\ld}\chi_{\{m_2<\psi_{\ld,\th,\mu}\leq0\}}\right)e\right|^2dX\leq \f{C}{r_0^3}.\ee \end{lemma}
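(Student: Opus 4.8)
\textbf{Proof plan for Lemma \ref{lem1}.}
The plan is to use the minimality of $\psi_{\ld,\th,\mu}$ against a carefully chosen competitor that agrees with $\psi_{\ld,\th,\mu}$ outside a spherical shell $\{r_0<|X|<2r_0\}$ (or rather outside $\{r_0<r<2r_0\}$, working with the $r$-variable) and that, inside $\{r>2r_0\}$, coincides with the explicit "asymptotic" flow $\psi_0$ built from the direction $e=(-\sin\th,\cos\th)$: namely $\psi_0(X)=\mathrm{med}\{0,\sqrt{\Lambda+\ld}\,r(r\cos\th-x\sin\th),m_1\}$ in the region $r\cos\th - x\sin\th\geq 0$ and the analogous expression with $\sqrt{\ld}$ and $m_2$ on the other side. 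The point of this particular $\psi_0$ is that it is an exact critical point of the functional: $\f{\g\psi_0}{r}$ equals $\sqrt{\Lambda+\ld}\,e$ (resp. $\sqrt{\ld}\,e$) precisely where $0<\psi_0<m_1$ (resp. $m_2<\psi_0<0$) and vanishes elsewhere, so the integrand of $J_{\ld,\th,\mu}$ vanishes identically on $\psi_0$. Hence, after interpolating between $\psi_{\ld,\th,\mu}$ and $\psi_0$ in the shell via a cutoff function $\zeta(r)$ with $\zeta\equiv 1$ on $\{r\leq r_0\}$, $\zeta\equiv 0$ on $\{r\geq 2r_0\}$ and $|\zeta'|\leq C/r_0$, the minimality inequality $J_{\ld,\th,\mu}(\psi_{\ld,\th,\mu})\leq J_{\ld,\th,\mu}(\mathrm{competitor})$ reduces the left side integral over $\{r>r_0\}$ to an integral over the shell $\{r_0<r<2r_0\}$ of terms controlled by $|\psi_{\ld,\th,\mu}-\psi_0|^2/r_0^2$ plus $|\g(\psi_{\ld,\th,\mu}-\psi_0)|^2$.

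The key steps, in order: (i) verify the competitor lies in $K_\mu$ — this requires $\psi_0$ to respect the obstacle bounds $\Phi_2\leq\cdot\leq\Phi_1$ and the boundary data on $H_{i,\mu}$, which holds for $r_0$ large since those constraints are active only near $r\leq R$; (ii) expand the functional and use that the integrand vanishes on $\psi_0$ and on $\psi_{\ld,\th,\mu}$ restricted to $\{r>2r_0\}$ up to the gradient-of-the-difference and the cutoff-derivative terms, yielding
\be\label{aa1aux}\int_{\O_\mu\cap\{r>r_0\}}r\left|\f{\g\psi_{\ld,\th,\mu}}{r}-(\cdots)e\right|^2dX\leq C\int_{\O_\mu\cap\{r_0<r<2r_0\}}\left(\f{|\psi_{\ld,\th,\mu}-\psi_0|^2}{r_0}+r|\g(\psi_{\ld,\th,\mu}-\psi_0)|^2\right)dX;\ee
(iii) bound the shell integral. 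For the gradient term one uses the uniform estimate $|\g\psi_{\ld,\th,\mu}|/r\leq C$ from Lemma \ref{cc7} together with $|\g\psi_0|/r\leq C$, so $r|\g(\psi_{\ld,\th,\mu}-\psi_0)|^2\leq Cr^3$, and integrating $r^3$ over a shell of width $r_0$ and with $r\sim r_0$ gives $\sim r_0\cdot r_0^3 = r_0^4$ — too big, so this crude bound is not enough and one must instead bootstrap: use \eqref{aa1aux} at scale $r_0/2$ to control the shell gradient integral at scale $r_0$ by the already-estimated quantity, i.e. run a dyadic/iteration argument on the quantity $E(r_0):=\int_{\{r>r_0\}}r|\f{\g\psi_{\ld,\th,\mu}}{r}-(\cdots)e|^2$, showing $E(r_0)\leq \f12 E(r_0/2)+C r_0^{?}$ or more precisely deriving a differential inequality for $E$. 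For the zeroth-order term one needs $|\psi_{\ld,\th,\mu}-\psi_0|\leq C r^2$ on the shell (both are trapped between $m_2$ and $m_1$ after the level curves open up, but that only gives an $O(1)$ bound; the $r^2$ bound comes from Lemma \ref{cc9}'s $\f{m_i}{r_{i,\mu}^2}r^2$ envelopes near the axis and a separate pointwise comparison away from it), which integrated over the shell gives $\sim r_0^{-1}\cdot r_0^4\cdot r_0 = r_0^4$ again — so the honest route is to first prove a weaker decay $E(r_0)\leq C/r_0$ or $E(r_0)\leq C/r_0^2$ by a rougher version of this scheme and then iterate to reach the sharp exponent $3$.

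The main obstacle, as the discussion above signals, is closing the gap between the crude shell bounds (which only give $E(r_0)=O(r_0^{\text{large}})$, or after one comparison step an $L^\infty$-in-$r_0$ bound) and the claimed $O(r_0^{-3})$. I expect the real argument to proceed by a monotonicity/iteration scheme: defining $\Phi(r_0)=\int_{\O_\mu\cap\{r>r_0\}}r|\f{\g\psi_{\ld,\th,\mu}}{r}-(\cdots)e|^2\,dX$, one shows via the competitor that $\Phi$ satisfies an integral inequality of the form $\Phi(r_0)\leq C r_0^2\,\Phi'(r_0)^{1/2}\,(\text{something})$ or a decay recursion $\Phi(2r_0)\leq \theta\,\Phi(r_0)$ for some fixed $\theta<1$ once $r_0$ is large, combined with the a priori boundedness of $\int r|\f{\g\psi_{\ld,\th,\mu}}{r}|^2$ on bounded pieces (which itself follows from $J_{\ld,\th,\mu}(\psi_{\ld,\th,\mu})<\infty$ and the construction of $\psi_0$ in Proposition \ref{bb0}). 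The factors of $r$ and $1/r$ coming from the axisymmetric weight are the source of the exponent $3$ rather than the flat-case exponent one would naively expect, so bookkeeping the weight $r\,dX$ carefully through the cutoff and the Cauchy–Schwarz steps is the delicate technical point. Once the recursion is established, iterating it from a fixed large radius down the dyadic scale $r_0, 2r_0, 4r_0,\dots$ yields the geometric decay and hence \eqref{aa1}.
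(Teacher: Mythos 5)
Your overall philosophy (compare with an explicit asymptotic profile, then iterate dyadically) points in the right direction, but the engine of the argument is missing, and the competitor you actually describe would not produce it. The paper does not interpolate between $\psi_{\ld,\th,\mu}$ and a \emph{fixed} profile over a shell of width $r_0$ with a cutoff of size $1/r_0$. Instead it works with the annular energy $S(r_0)$ over $\O_\mu\cap\{r_0/2<r<r_0\}$ and uses the mean value theorem to select a good slice $\t r\in(r_0/2,r_0)$ on which the cross-sectional energy is controlled by $S(r_0)$ itself; the competitor then keeps $\psi_{\ld,\th,\mu}$ for $r\le\t r$, transports its trace on the slice along the asymptotic direction ($\bar\psi(x,r)=\psi(x-(r-\t r)\cot\th,\t r)$), and glues, over a \emph{thin} layer of width $1/\t r$, to the exact one-dimensional profile $\phi$ recentered at the actual interface location $g_{\ld,\th,\mu}(\t r)$ on that slice. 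The thin-layer width $1/\t r$ and the recentering at $g_{\ld,\th,\mu}(\t r)$ are precisely what make the gluing error of size $C/r_0^3$, while the good-slice selection is what converts the boundary mismatch into the term $S(r_0)/16$. This yields the inhomogeneous recursion $S(2r_0)\le C r_0^{-3}+S(r_0)/16$, which is then iterated on dyadic scales and summed to bound the tail integral by $\t C/r_0^3$.

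Your version lacks both ingredients. First, your fixed $\psi_0$ is centered on the line $r\cos\th-x\sin\th=0$ through the origin, but at this stage nothing controls where the actual free boundaries and interface of $\psi_{\ld,\th,\mu}$ sit at radius $r_0$ (the directional convergence in Lemma \ref{lem2} is itself a consequence of Lemma \ref{lem1}), so $|\psi_{\ld,\th,\mu}-\psi_0|$ can be of order $m_1-m_2$ on a large portion of your shell and the term $\int|\psi-\psi_0|^2/r_0$ is not small. Second, as you yourself observe, the wide-shell cutoff gives error terms growing like positive powers of $r_0$, and your proposed fix is only the statement that ``a monotonicity/iteration scheme'' with $\Phi(2r_0)\le\theta\,\Phi(r_0)$ or a differential inequality ``should'' exist; no mechanism is given that produces the inhomogeneous $r_0^{-3}$ term, and a purely homogeneous geometric recursion is not what holds here. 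Since the quantitative heart of the lemma is exactly that recursion, the proposal as written has a genuine gap rather than being an alternative proof.
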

\begin{proof}
Denote $\psi(x,r)=\psi_{\ld,\th,\mu}(x,r)$, $g(r)=g_{\ld,\th,\mu}(r)$ and $g_{i}(r)=g_{i,\ld,\th,\mu}(r)$ ($i=1,2$) for simplicity.

For any $r_0>2R$, define $$S(r_0)=\int_{\O_\mu\cap\{\f{r_0}{2}<r<r_0\}}r\left|\f{\g\psi_{\ld,\th,\mu}}{r}-\left(\sqrt{\Lambda+\ld}\chi_{\{m_2<\psi_{\ld,\mu,\th}<0\}}+\sqrt{\ld}\chi_{\{m_2<\psi_{\ld,\th,\mu}\leq0\}}\right)e\right|^2dX.$$ Taking advantage of the mean value theorem, there exists some $\t r\in\left(\f{r_0}{2},r_0\right)$ such that \be\label{aa0}\begin{aligned}S(r_0)=&\f{r_0}{2\t r}\left\{\int_{g_1(\t r)}^{g(\t r)}\left|\g\psi(x,\t r)-\sqrt{\Lambda+\ld} \t re\right|^2dx+\int_{g(\t r)}^{g_2(\t r)}\left|\g\psi(x,\t r)-\sqrt{\ld} \t re\right|^2dx\right\}\\ \geq&\f12\left\{\int_{g_1(\t r)}^{g(\t r)}\left|\g\psi(x,\t r)-\sqrt{\Lambda+\ld} \t re\right|^2dx+\int_{g(\t r)}^{g_2(\t r)}\left|\g\psi(x,\t r)-\sqrt{\ld} \t re\right|^2dx\right\}.\end{aligned}\ee

We choose a function $w(x,r)$ as follows
\be\label{aa2}w(x,r)=\left\{\ba{ll}\psi(x,r),\ &\text{in}\ \
\O_\mu\cap\{r\leq \t r\},\\
\eta(r)\bar\psi(x,r)+(1-\eta(r))\phi(x,r),\ &\text{in}\ \ \O_\mu\cap\{r\geq \t r\},\ea\right.\ee where $\eta(r)=\max\left\{0,\f{\bar r-r}{\bar r-\t r}\right\}$ with $\bar r=\t r+\f {1}{\t r}$, $$\bar\psi(x,r)=\psi(x-(r-\t r)\cot\th,\t r),$$ and $$\begin{aligned}\phi(x,r)=&\min\left\{\max\left\{\sqrt{\Lambda+\ld} r\left((r-\t r)\cos\th-(x-g(\t r))\sin\th\right),0\right\},m_1\right\}\\&+\max\left\{\min\left\{\sqrt{\ld} r\left((r-\t r)\cos\th-(x-g(\t r))\sin\th\right),0\right\},m_2\right\}.\end{aligned}$$

It's easy to check that $w(x,r)\in K_\mu$, then $J_{\ld,\th,\mu}(\psi)\leq J_{\ld,\th,\mu}(w)$, which implies \be\label{aa3}\begin{aligned}&\int_{\O_\mu\cap\{r>\t r\}}r\left|\f{\g\psi_{\ld,\th,\mu}}{r}-\left(\sqrt{\Lambda+\ld}\chi_{\{m_2<\psi_{\ld,\mu,\th}<0\}}+\sqrt{\ld}\chi_{\{m_2<\psi_{\ld,\th,\mu}\leq0\}}\right)e\right|^2dX\\ \leq&\int_{\O_\mu\cap\{r>\t r\}}r\left|\f{\g w}{r}-\left(\sqrt{\Lambda+\ld}\chi_{\{m_2<w<0\}}+\sqrt{\ld}\chi_{\{m_2<w\leq0\}}\right)e\right|^2dX\\=&\int_{\O_\mu\cap\{\t r< r\leq \bar r\}}r\left|\f{\g w}{r}-\left(\sqrt{\Lambda+\ld}\chi_{\{m_2<w<0\}}+\sqrt{\ld}\chi_{\{m_2<w\leq0\}}\right)e\right|^2dX\\&+\int_{\O_\mu\cap\{r\geq \bar r\}}r\left|\f{\g w}{r}-\left(\sqrt{\Lambda+\ld}\chi_{\{m_2<w<0\}}+\sqrt{\ld}\chi_{\{m_2<w\leq0\}}\right)e\right|^2dX.\end{aligned}\ee

First, similar arguments as Lemma 3.10 in \cite{CXF} and Lemma 4.1 in \cite{CDW}. We obtain
\be\label{aa015}\int_{\O_\mu\cap\{r>\t r\}}r\left|\f{\g\psi}{r}-\left(\sqrt{\Lambda+\ld} \chi_{\{0<\psi<m_1\}}+\sqrt{\ld} \chi_{\{m_2<\psi\leq0\}}\right)e\right|^2dX\leq\f{C}{r_0^3}+\f{S(r_0)}{16},\ee where $C$ is independent of $r_0$.

Next, $S(2r_0)$ can be calculated as follows,
\be\label{aa16}\begin{aligned}S(2r_0)=&\int_{\O_\mu\cap\{r_0< r<2r_0\}}r\left|\f{\g\psi}{r}-\left(\sqrt{\Lambda+\ld} \chi_{\{0<\psi<m_1\}}+\sqrt{\ld} \chi_{\{m_2<\psi\leq0\}}\right)e\right|^2dX\\ \leq&\int_{\O_\mu\cap\{r> \t r\}}r\left|\f{\g\psi}{r}-\left(\sqrt{\Lambda+\ld} \chi_{\{0<\psi<m_1\}}+\sqrt{\ld} \chi_{\{m_2<\psi\leq0\}}\right)e\right|^2dX\\ \leq&\f{C}{r_0^3}+\f{S(r_0)}{16},\end{aligned}\ee where we have used $\t r\in\left(\f{r_0}{2},r_0\right)$ and \eqref{aa015}.

Using mathematical induction for any $n\in \mathbb{N}$ and \eqref{aa16}, one has \be\label{aa17}S(2^{n+1}R)\leq\f{2C}{\left(2^{n}R\right)^3},\ \ n=0,1....\ee

Indeed, \eqref{aa17} holds for $n=0$ when choose $C$ large enough. If \eqref{aa17} holds for $n$, one has $$S(2^{n+2}R)=S(2\cdot2^{n+1}R)\leq \f{C}{(2^{n+1}R)^3}+\f{S(2^{n+1}R)}{16}\leq \f{C}{(2^{n+1}R)^3}+\f{1}{16}\f{2C}{(2^nR)^3}=\f{2C}{\left(2^{n+1}R\right)^3},$$ which implies \eqref{aa17} holds for $n+1$.

Therefore, for any $r_0>2R$, there exists a $n_0$ such that $2^{n_0}R\leq r_0\leq 2^{n_0+1}R$, this together with \eqref{aa17} yields to $$\begin{aligned}&\int_{\O_\mu\cap\{r>r_0\}}r\left|\f{\g\psi}{r}-\left(\sqrt{\Lambda+\ld} \chi_{\{0<\psi<m_1\}}+\sqrt{\ld} \chi_{\{m_2<\psi\leq0\}}\right)e\right|^2dX\\ \leq&\int_{\O_\mu\cap\{r>2^{n_0}R\}}r\left|\f{\g\psi}{r}-\left(\sqrt{\Lambda+\ld} \chi_{\{0<\psi<m_1\}}+\sqrt{\ld} \chi_{\{m_2<\psi\leq0\}}\right)e\right|^2dX\\ \leq&\sum_{j=n_0}^{+\infty}S(2^{j+1}R)\leq \f{\t C}{R^3r_0^3},\end{aligned}$$ where have used the following fact $$\\ \sum_{j=n_0}^{+\infty}S(2^{j+1}R)\leq 2C\sum_{j=n_0}^{+\infty}\f{1}{\left(2^{j}R\right)^3}\leq\f{32C}{\left(2^{n_0+1}R\right)^3}\leq \f{\t C}{r_0^3}.$$

This completes the proof of Lemma \ref{lem1}.

\end{proof}

Firstly, we will show that some convergence of the minimizer in far field and the free boundaries approach to
the asymptotic direction $\th\in(0,\pi)$ as $r\rightarrow+\infty$ in the far field.
\begin{lemma}\label{lem2}Let $\th\in(0,\pi)$, $\psi_n(\t x,\t r)=\psi_{\ld,\th,\mu}\left(x_n+\f{\t x}{r_n}, r_n+\f{\t r}{r_n}\right)$ with $X_n=(x_n, r_n)\in\Gamma_{1, \mu}$ and $r_n\rightarrow+\infty$, $\t X=(\t x, \t r)\in\mathbb{R}^2$, then for a subsequence\be\label{le5}\begin{aligned}\psi_n(\t x,\t r)\rightarrow\Theta(\t x,\t r)\triangleq\left\{\ba{ll} m_1, \ \ \ &\text{if}\quad
 \t r\cos\th-\t x\sin\th\geq0,
\\ m_1+\sqrt{\Lambda+\ld}(\t r\cos\th-\t x\sin\th),\
\ &\text{if }\quad-\f{m_1}{\sqrt{\Lambda+\ld}}\leq \t r\cos\th-\t x\sin\th\leq 0,\\ \f{m_1\sqrt{\ld}}{\sqrt{\Lambda+\ld}}+\sqrt{\ld}(\t r\cos\th-\t x\sin\th),\
\ &\text{if }\quad \f{m_2}{\sqrt{\ld}}-\f{m_1}{\sqrt{\Lambda+\ld}}\leq \t r\cos\th-\t x\sin\th\leq -\f{m_1}{\sqrt{\Lambda+\ld}},\\
m_2,\ &\text{if}\quad\t r\cos\th-\t x\sin\th\leq
\f{m_2}{\sqrt{\ld}}-\f{m_1}{\sqrt{\Lambda+\ld}},\ea\right.\end{aligned}\ee uniformly in any
compact subset of $\mathbb{R}^2$. Furthermore,
$$g'_{1, \ld, \th, \mu}(r)\rightarrow\cot\th\ \ \text{as}\ \ r\rightarrow+\infty.$$The similar conclusion holds for $X_n\in\Gamma_{2, \mu}$.\end{lemma}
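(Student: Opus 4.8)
The plan is a blow-up (rescaling) argument in the spirit of Alt--Caffarelli, powered by the energy decay of Lemma \ref{lem1}. Write $\psi=\psi_{\ld,\th,\mu}$, $e=(-\sin\th,\cos\th)$, and for $\tilde X=(\tilde x,\tilde r)$ put $s:=\tilde X\cdot e=\tilde r\cos\th-\tilde x\sin\th$, so that the profile $\Theta$ in \eqref{le5} is precisely $\Theta(\tilde X)=\phi_0(s)$, where $\phi_0$ equals $m_1$ for $s\ge0$, descends with slope $\sqrt{\Lambda+\ld}$ down to the value $0$, then descends with slope $\sqrt{\ld}$ down to the value $m_2$, and equals $m_2$ thereafter. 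First I would set up the compactness: since $X_n\in\Gamma_{1,\mu}$ with $r_n\to\infty$, for large $n$ the balls $B_{\rho/r_n}(X_n)$ lie in $\O_\mu\setminus\overline{D}_\mu$ and are centred at free boundary points, so the uniform gradient bound of Lemma \ref{cc7} gives $|\g\psi|/r\le C$ there, hence $|\g\psi_n|=r_n^{-1}|\g\psi|\le C$ on $B_\rho(0)$; together with $m_2\le\psi_n\le m_1$ this yields, along a subsequence, $\psi_n\to\psi_\infty$ locally uniformly and weakly in $H^1_{loc}(\R^2)$.

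Next I would extract the structure of $\psi_\infty$. A direct computation gives that $\psi_n$ solves $\Delta\psi_n-(r_n^2+\tilde r)^{-1}\p_{\tilde r}\psi_n=0$ away from the level sets $\{\psi_n=m_1\}$, $\{\psi_n=m_2\}$, $\{\psi_n=0\}$, while rescaling the estimate of Lemma \ref{lem1} (using $r\approx r_n$ on $B_{\rho/r_n}(X_n)$ and the measure change) yields
$$\int_{B_\rho(0)}\Big|\g\psi_n-\big(\sqrt{\Lambda+\ld}\,\chi_{\{0<\psi_n<m_1\}}+\sqrt{\ld}\,\chi_{\{m_2<\psi_n\le0\}}\big)e\Big|^2\,d\tilde X\le\frac{C(\rho)}{r_n^{2}}\longrightarrow0 .$$
Pairing with $e^\perp=(\cos\th,\sin\th)$ shows $\p_{e^\perp}\psi_n\to0$ in $L^2_{loc}$, so $\psi_\infty$ depends on $s$ only: $\psi_\infty=\phi(s)$ with $\phi$ Lipschitz. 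On any open set where $0<\psi_\infty<m_1$ (resp.\ $m_2<\psi_\infty<0$), uniform convergence forces $0<\psi_n<m_1$ (resp.\ $m_2<\psi_n<0$) on compact subsets for large $n$, hence $\g\psi_n\to\sqrt{\Lambda+\ld}\,e$ (resp.\ $\sqrt{\ld}\,e$) in $L^2_{loc}$, i.e.\ $\phi'=\sqrt{\Lambda+\ld}$ (resp.\ $\phi'=\sqrt{\ld}$) there, while $\phi'=0$ on the interiors of $\{\phi=m_1\}$, $\{\phi=m_2\}$, $\{\phi=0\}$. Since $X_n\in\Gamma_{1,\mu}\subset\{\psi=m_1\}$ we have $\phi(0)=m_1$; the $x$-monotonicity of $\psi$ (Proposition \ref{bb9}) gives $\p_{\tilde x}\psi_n\le0$, so $\phi$ is non-decreasing; and the non-degeneracy of $m_1-\psi$ at $\Gamma_{1,\mu}$ (Lemmas \ref{cc2}--\ref{cc3}), after rescaling, gives $\sup_{\p B_\rho(0)}(m_1-\psi_\infty)\ge c\sqrt{\Lambda+\ld}\,\rho>0$ for every $\rho$, so $\phi<m_1$ on $(-\infty,0)$. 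Integrating $\phi$ from $\phi(0)=m_1$ with these constraints forces $\phi(s)=m_1+\sqrt{\Lambda+\ld}\,s$ on $[-m_1/\sqrt{\Lambda+\ld},0]$, down to $\phi=0$; to conclude $\phi=\phi_0$ one must exclude a positive-length plateau of $\phi$ at level $0$, which I would obtain from the fact that the interface $\{\psi=0\}$ is a graph $x=g_{\ld,\th,\mu}(r)$ with locally bounded slope (Lemma \ref{dd02} and the interface non-degeneracy of \cite{ACF6,WX}): its rescalings are graphs of uniformly Lipschitz functions and thus converge to a graph, so $\{\psi_\infty=0\}$ is a line and $\phi$ vanishes at a single point. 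Hence $\phi\equiv\phi_0$, $\psi_\infty=\Theta$, and since the limit is independent of the subsequence the whole sequence converges.

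Finally, for the free boundary slope: with $\psi_n\to\Theta$ locally uniformly, the uniform non-degeneracy and non-oscillation estimates (Lemmas \ref{cc1}--\ref{cc5}), and the interior $C^{1,\alpha}$ regularity of the free boundary applied to the rescaled minimizers, the rescaled free boundaries $r_n(\Gamma_{1,\mu}-X_n)$ converge, in $C^1$ near the origin, to $\p\{\Theta<m_1\}=\{s=0\}=\{\tilde x=\tilde r\cot\th\}$, whose slope is $\cot\th$; since the rescaling preserves $dx/dr$ this gives $g'_{1,\ld,\th,\mu}(r_n)\to\cot\th$, and as $(X_n)$ was an arbitrary sequence on $\Gamma_{1,\mu}$ with $r_n\to\infty$ we conclude $g'_{1,\ld,\th,\mu}(r)\to\cot\th$ as $r\to\infty$. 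The statement for $\Gamma_{2,\mu}$ is obtained identically, centring the blow-up on $\Gamma_{2,\mu}$ and starting the profile from $\phi(0)=m_2$ (so the two slopes enter in the order $\sqrt{\ld}$ then $\sqrt{\Lambda+\ld}$). I expect the main obstacle to be the identification $\psi_\infty=\Theta$ in the second step --- excluding a spurious plateau of the limit profile at the interface level $0$ and checking that $\phi$ genuinely descends all the way to $m_2$ --- because the interface is a level set rather than a classical one-phase free boundary, so controlling it requires combining the graph/slope bounds for $\{\psi=0\}$ with the non-degeneracy at the jet free boundaries; by contrast, the compactness and the reduction to a one-dimensional profile are routine once Lemma \ref{lem1} is available.
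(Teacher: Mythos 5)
Your proposal follows the same blow-up route as the paper's proof: rescaling at free boundary points by the factor $r_n$, using the decay estimate of Lemma \ref{lem1} to make the rescaled excess energy vanish, the gradient bound of Lemma \ref{cc7} together with $m_2\le\psi\le m_1$ for compactness, reduction to a one-dimensional Lipschitz profile in $s=\t r\cos\th-\t x\sin\th$ pinned down by $\psi_n(0)=m_1$ and the non-degeneracy lemmas, and the slope of $\Gamma_{1,\mu}$ from local Hausdorff convergence of the rescaled free boundaries plus the flatness theory of \cite{AC1} (the paper finishes this last step with the mean value theorem rather than asserting full $C^1$ convergence, but the mechanism is the same).

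The one point where you deviate is the exclusion of a plateau of the limit profile at the level $0$, and the device you propose there is not available: Lemma \ref{dd02} only gives that the interface is a \emph{continuous} graph $x=g_{\ld,\th,\mu}(r)$; no Lipschitz bound on this graph is proved in the paper, so ``the rescalings are uniformly Lipschitz graphs converging to a line'' is unsupported. It is also unnecessary, because the exclusion is already contained in the energy identity you derived. If $\Theta$ were constant $=0$ on an open set, then on compact subsets of it uniform convergence gives $m_2<\psi_n<m_1$ for large $n$, i.e. $\chi_{\{0<\psi_n<m_1\}}+\chi_{\{m_2<\psi_n\le0\}}=1$ there; passing to the weak-star limits $\gamma_1,\gamma_2$ (so $\gamma_1+\gamma_2=1$), the identity $\g\Theta=\left(\sqrt{\Lambda+\ld}\,\gamma_1+\sqrt{\ld}\,\gamma_2\right)e$ a.e. forces $|\g\Theta|\ge\sqrt{\ld}>0$ a.e. on that set, contradicting the fact that a Lipschitz function has vanishing gradient a.e. on any level set of positive measure. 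This is precisely how the paper's ``direct computation'' of $w(t,s)$ proceeds: the phase indicator $\chi_{\{m_2<\psi\le0\}}$ includes the zero level, so the limiting relation $\p_s w=\sqrt{\Lambda+\ld}\,\chi_{\{0<w<m_1\}}+\sqrt{\ld}\,\chi_{\{m_2<w\le0\}}$ admits no plateau at $0$ (nor at any intermediate level), and since $w\ge m_2$ and is non-decreasing with slope bounded below by $\sqrt{\ld}$ wherever $m_2<w<m_1$, it must descend all the way to $m_2$ in finite $s$-distance, which settles your other worry. With that repair your argument coincides with the paper's.
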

\begin{proof} Set $\psi=\psi_{\ld, \th,\mu}$, $g(r)=g_{\ld,\th,\mu}(r)$ and $g_i(r)=g_{i,\ld,\th,\mu}(r)$ ($i=1,2$) for simplicity. Define $x=x_n+\f{\t x}{r_n}$ and $r=r_n+\f{\t r}{r_n}$.

For any $R_0>0$, one has
\be\label{aa018}\begin{aligned}&\int_{\{|r-r_n|<R_0\}}r\left|\f{\g\psi}{r}-\left(\sqrt{\Lambda+\ld
}\chi_{\{0<\psi<m_1\}}+\sqrt{\ld
}\chi_{\{m_2<\psi\leq0\}}\right)e\right|^2dX\\=&\int_{\{|\t r|<R_0r_n\}}\left(\f{1}{r_n}+\f{\t r}{r_n^3}\right)\left|\f{\t\g\psi_n}{1+\f{\t r}{r_n^2}}-\left(\sqrt{\Lambda+\ld
}\chi_{\{0<\psi_n<m_1\}}+\sqrt{\ld
}\chi_{\{m_2<\psi_n\leq0\}}\right)e\right|^2d\t X,\end{aligned}\ee where $\t\g=(\p_{\t x}, \p_{\t r})$.

For large $r_n>R_0+2R$, in view of \eqref{aa1}, we obtain
$$\int_{\{|r-r_n|<R_0\}}r\left|\f{\g\psi}{r}-\left(\sqrt{\Lambda+\ld
}\chi_{\{0<\psi<m_1\}}+\sqrt{\ld
}\chi_{\{m_2<\psi\leq0\}}\right)e\right|^2dX \leq\f{C}{\left(r_n-R_0\right)^3},$$
which together with \eqref{aa018} implies that \be\label{aa18}\int_{\O_{\mu}\cap\{|\t r|<R_0r_n\}}\left(1+\f{\t r}{r_n^2}\right)\left|\f{\t\g\psi_n}{1+\f{\t r}{r_n^2}}-\left(\sqrt{\Lambda+\ld
}\chi_{\{0<\psi_n<m_1\}}+\sqrt{\ld
}\chi_{\{m_2<\psi_n\leq0\}}\right)e\right|^2d\t X\leq\f{Cr_n}{\left(r_n-R_0\right)^3}.\ee
 Recalling Proposition \ref{cc8}
and $\Theta(\t x,\t r)\in H_{loc}^1(\mathbb{R}^2)$, then there exist
a subsequence $\psi_{n_k}$ and two functions
$\gamma_1,\gamma_2\in[0,1]$ such that
$$\psi_{n_k}(\t x,\t r)\rightarrow\Theta(\t x, \t r)\ \
\text{weakly in $H_{loc}^1(\mathbb{R}^2)$},$$
$$\psi_{n_k}(\t x,\t r)\rightarrow\Theta(\t x, \t r)\ \ \text{a.e. in $\mathbb{R}^2$},$$
$$\chi_{\{0<\psi_{n_k}<m_1\}}\rightarrow\gamma_1\ \ \text{weak-star in $L_{loc}^\infty(\mathbb{R}^2)$, $\gamma_1=1$ a.e. on $\{0<\Theta(\t x,\t r)<m_1\}$},$$ and $$\chi_{\{m_2<\psi_{n_k}\leq0\}}\rightarrow\gamma_2\ \ \text{weak-star in $L_{loc}^\infty(\mathbb{R}^2)$, $\gamma_2=1$ a.e. on $\{m_2<\Theta(\t x,\t r)\leq0\}$},$$
as $k\rightarrow+\infty$. This together with
\eqref{aa18} gives that
\be\label{le2}\t\g \Theta=\sqrt{\Lambda+\ld}
e\chi_{\{0<\psi_0<m_1\}}+\sqrt{\ld}
e\chi_{\{m_2<\psi_0\leq0\}}\ \ \text{a.e.},\ee in any compact
subset $\Omega'$ of $\mathbb{R}^2$.

Lemma \ref{cc7} implies that for sufficiently large $n$
$$|\t\g\psi_n(\t x,\t r)|=\left|\f{1}{r_n}\g\psi\left(x_n+\f{\t
x}{\t r_n}, r_n+\f{\t r}{r_n}\right)\right|\leq c_0,$$ where the
constant $c_0$ is independent of $R_0$. Hence, we conclude that there
exists a subsequence $\psi_{n_k}\rightarrow\Theta(\t x,\t r)$
uniformly in any compact subset of $\mathbb{R}^2$ and
$$m_1-\psi_n(\t x, \t r)=\psi(x_n,r_n)-\psi\left(x_n+\f{\t x}{\t r_n}, r_n+\f{\t r}{r_n}\right)\leq|\t\g\psi_n||\t X|,\ \ \text{for}\ \ |\t X|<\f{m_1}{c_0},$$which implies $\psi_n(\t x, \t r)>0$.

The non-degeneracy lemma \ref{cc2} implies that $$\f{1}{r}\fint_{\p
B_r(0)}(m_1-\psi_n(\t x,\t r))d\t S=\f{1}{r_n}\f{1}{\f{r}{r_n}}\fint_{\p
B_{\f{r}{r_n}}(X_n)}(m_1-\psi(x,r))d S\geq c\sqrt{\Lambda+\ld}, \ \ \text{for}\ \
r<\f{m_1}{c_0},$$ taking $n\rightarrow+\infty$, which implies that $\Theta\not\equiv m_1$ in
$B_r(0)$ and $\Theta(0)=m_1$.

Define \be\label{le4}
 t=\t x\cos\th+\t r\sin\th\ \ \text{and}\ \ s=\t r\cos\th-\t x\sin\th,\ee
and $w(t, s)=\Theta(\t x,\t r)$, then \eqref{le2} implies that
$$\f{\p w}{\p t}=0, \ \ \f{\p w}{\p s}=\sqrt{\Lambda+\ld}
e\chi_{\{0<w<m_1\}}+\sqrt{\ld} e\chi_{\{m_2<w\leq0\}}\ \ \text{a.e.
in $\O'_\mu$}\ \ \text{and}\ \ w(0)=m_1.$$

A direction computation gives that
$$\begin{aligned}w(t,s)=\left\{\ba{ll} m_1 \ \ \ &\text{if}~~ s\geq0,
\\ m_1+\sqrt{\Lambda+\ld} s,\ \ \
\ &\text{if }~-\f{m_1}{\sqrt{\Lambda+\ld}}\leq s\leq 0,\\ \f{m_1\sqrt{\ld}}{\sqrt{\Lambda+\ld}}+\sqrt{\ld}s,\
\ &\text{if }\quad \f{m_2}{\sqrt{\ld}}-\f{m_1}{\sqrt{\Lambda+\ld}}\leq s\leq -\f{m_1}{\sqrt{\Lambda+\ld}},\\
m_2,\ \ \ &\text{if}~~s\leq
\f{m_2}{\sqrt{\ld}}-\f{m_1}{\sqrt{\Lambda+\ld}},\ea\right.\end{aligned}$$
which yields \eqref{le5}.

Next, let $X_0=(t,s)$ with $s>0$ or $s<\f{m_2}{\sqrt{\ld}}-\f{m_1}{\sqrt{\Lambda+\ld}}$, for small $r>0$, then
$$\lim_{n\rightarrow+\infty}\f{1}{r}\fint_{\p B_r(X_0)}(m_1-\psi_n)dS=0,\ \ \text{or}\ \ \lim_{n\rightarrow+\infty}\f{1}{r}\fint_{\p B_r(X_0)}(\psi_n-m_2)dS=0,$$
respectively. Then, the non-degeneracy lemma implies that $X_0$ is
not a free boundary point for sufficiently large $n$.

Similarly, for the case $\f{m_2}{\sqrt{\ld}}-\f{m_1}{\sqrt{\Lambda+\ld}}<s<0$, one gets
$$\lim_{n\rightarrow+\infty}\f{1}{r}\fint_{\p B_r(X_0)}(m_1-\psi_n)dS\rightarrow+\infty,\ \
\text{or}\ \ \lim_{n\rightarrow+\infty}\f{1}{r}\fint_{\p
B_r(X_0)}(\psi_n-m_2)dS\rightarrow+\infty,\ \ \text{as}\ \
r\rightarrow0,$$ and then $X_0$ is not a free boundary point for sufficiently large $n$.

Then, one has \be\label{le3}\p\{\psi_n>0\}\rightarrow
\left\{s=\f{m_2}{\sqrt{\ld}}-\f{m_1}{\sqrt{\Lambda+\ld}}\right\} \ \ \text{and}\ \
\p\{\psi_n<m_1\}\rightarrow \left\{s=0\right\},\ee locally in
Hausdorff distance (see Definition 3.1 in \cite{FA1}).

Noticing the flatness
conditions in Section 7 in \cite{AC1} for the free boundaries, there exists a $\xi_{1,n}\in\left(\min\left\{r_n, r_n+\f{\t r}{r_n}\right\},\max\left\{r_n, r_n+\f{\t r}{r_n}\right\}\right)$ such that
$$\t x=r_n\left(g_{1}\left(r_n+\f{\t r}{r_n}\right)-x_n\right)=r_n\left(g_{1}\left(r_n+\f{\t r}{r_n}\right)-g_{1}(r_n)\right)=g_1'(\xi_{1,n})\t r,$$ thus, we obtain
$$g_{1}'\left(r_n+\f{\t r}{r_n}\right)\rightarrow\cot\th,\ \ \text{as}\ \ n\rightarrow+\infty.$$
Therefore, we complete the proof of Lemma \ref{lem2}.
\end{proof}

Next, for the critical cases $\th=0$ or $\th=\pi$, we have the following facts.
\begin{prop}\label{co2}
Assume that there exist some free boundary points
$X_n=(x_n,r_n)\in\Gamma_{2,\mu}$, such that $r_n\rightarrow\xi>R$ and
$x_n\rightarrow +\infty$, where $\xi$ is a finite positive number, then $\th=0$. Moreover, let $\psi_n(\t X)=\psi_{\ld,\th,\mu}(x_n+\t x, r_n+\t r)$,
then $$
\psi_n(\t X)\rightarrow\min\left\{\max\left\{\sqrt{\ld}\left(\f{m_2}{\sqrt{\ld}}+\f{\t r^2}{2}+\xi\t r\right) , m_2\right\},
0\right\}+\max\left\{\min\left\{\sqrt{\Lambda+\ld}\left(\f{m_2}{\sqrt{\ld}}+\f{\t r^2}{2}+\xi\t r\right), m_1\right\},
0\right\},$$ uniformly in any compact subset of $\{(\t x,\t r)\mid
\t r>R-\xi\}$. If
$r_n\rightarrow \xi>\sqrt{R^2+\f{2m_1}{\sqrt{\Lambda+\ld}}-\f{2m_2}{\sqrt{\ld}}}$ and $x_n\rightarrow -\infty$, $\xi$ is a
finite positive number, then $\th=\pi$ and $$
\psi_n(\t X)\rightarrow\min\left\{\max\left\{\sqrt{\ld}\left(\f{m_2}{\sqrt{\ld}}-\f{\t r^2}{2}-\xi\t r\right) , m_2\right\},
0\right\}+\max\left\{\min\left\{\sqrt{\Lambda+\ld}\left(\f{m_2}{\sqrt{\ld}}-\f{\t r^2}{2}-\xi\t r\right), m_1\right\},
0\right\},$$ uniformly in any compact subset of $\{(\t x,\t r)\mid
\t r>R-\xi\}$. The similar assertion holds for $X_n=(x_n,r_n)\in\Gamma_{1,\mu}$.
\end{prop}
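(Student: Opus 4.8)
The plan is to perform a translation blow-up at the free boundary points $X_n=(x_n,r_n)$, in the spirit of the proof of Lemma \ref{lem2}, but \emph{without} rescaling, since here $r_n\to\xi$ stays bounded. Set $\psi_n(\t X)=\psi_{\ld,\th,\mu}(x_n+\t x,\,r_n+\t r)$. Because $x_n\to+\infty$ and $r_n\to\xi>R$, for every fixed $R_0>0$ the ball $B_{R_0}(X_n)$ lies, for all large $n$, in $\O_\mu\setminus\overline D_\mu$ at a definite distance from $\overline{A_1A_2}$ and from the nozzle tips; hence Lemma \ref{cc7} gives $|\g\psi_{\ld,\th,\mu}|/r\le C$ there, so $|\t\g\psi_n|\le C(r_n+\t r)\le C'$ on $B_{R_0}(0)\cap\{\t r>R-r_n\}$. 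Combined with $m_2\le\psi_n\le m_1$ and the interior estimates of Proposition \ref{cc8}, Arzel\`a--Ascoli provides a subsequence with $\psi_{n_k}\to\psi_0$ locally uniformly and weakly in $H^1_{loc}$ on the half-plane $\Pi=\{(\t x,\t r):\t r>R-\xi\}$, while $\chi_{\{0<\psi_{n_k}<m_1\}}$ and $\chi_{\{m_2<\psi_{n_k}\le0\}}$ converge weak-$\ast$ in $L^\infty_{loc}(\Pi)$ to some $\gamma_1,\gamma_2\in[0,1]$; the density and non-degeneracy estimates of Lemmas \ref{cc1}--\ref{cc3} force $\gamma_1=\chi_{\{0<\psi_0<m_1\}}$ and $\gamma_2=\chi_{\{m_2<\psi_0\le0\}}$ a.e., exactly as in Lemma \ref{lem2}. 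By the monotonicity of Proposition \ref{bb9}, $\psi_0$ is non-increasing in $\t x$.

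Next I would identify $\psi_0$. Unlike in Lemma \ref{lem1}, no decay in $r$ is needed: the competitors used in the existence proof of the minimizer show $J_{\ld,\th,\mu}(\psi_{\ld,\th,\mu})<+\infty$, hence $\int_{\O_\mu\cap\{x>x_0\}}r\,|\g\psi_{\ld,\th,\mu}/r-(\sqrt{\Ld+\ld}\chi_{\{0<\psi<m_1\}}+\sqrt{\ld}\chi_{\{m_2<\psi\le0\}})e|^2\,dX\to0$ as $x_0\to+\infty$. Since $x_n\to+\infty$, translating and passing to the limit (using weak lower semicontinuity together with the a.e.\ convergence of the characteristic functions just obtained) gives that the limiting functional of $\psi_0$ over each compact subset of $\Pi$ vanishes, i.e.
$$\f{\t\g\psi_0}{\xi+\t r}=\sqrt{\Ld+\ld}\,\chi_{\{0<\psi_0<m_1\}}\,e+\sqrt{\ld}\,\chi_{\{m_2<\psi_0\le0\}}\,e\qquad\text{a.e.\ in }\Pi,$$
the analogue of \eqref{le2}. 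Thus $\psi_0$ is locally constant ($\equiv m_1$ or $\equiv m_2$) outside the jet $\{m_2<\psi_0<m_1\}$, while inside it $\t\g\psi_0$ is a positive multiple of the fixed vector $e=(-\sin\th,\cos\th)$; in particular $\psi_0$ is constant along lines of direction $\nu=(\cos\th,\sin\th)$, so every level set of $\psi_0$, including the free boundary $\p\{\psi_0>m_2\}$, is a straight line parallel to $\nu$.

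Now the orientation. Since $\Gamma_{2,\mu}$ is an $r$-graph $x=g_{2,\ld,\th,\mu}(r)$ with $g_{2,\ld,\th,\mu}(r_n)=x_n\to+\infty$ while $r_n\to\xi$, and (as in Lemma \ref{lem2}) $\Gamma_{2,\mu}-X_n\to\p\{\psi_0>m_2\}$ locally in Hausdorff distance, the limiting free boundary cannot be an $r$-graph of finite slope (that would force $g_{2,\ld,\th,\mu}$ to have bounded derivative near $r=\xi$, hence bounded values there); a line parallel to $\nu$ has finite slope $\cot\th$ unless $\sin\th=0$, so $\th\in\{0,\pi\}$. To rule out $\th=\pi$: along $L_2$ the boundary value of $\psi_{\ld,\th,\mu}$ is $m_2$ (by the continuous fit \eqref{a12} the jet meets $\{r=R\}$ in $\{-1<x<1\}$, so $L_2$ lies on the exterior $\{\psi>m_2\}$-side of $\Gamma_{2,\mu}$ and $\psi_{\ld,\th,\mu}$ is non-increasing in $x$), whence $\psi_0\equiv m_2$ on $\{\t r=R-\xi\}$; but $\th=\pi$ gives $e=(0,-1)$, so $\psi_0=\psi_0(\t r)$ is strictly decreasing in $\t r$ throughout the jet, which together with $\psi_0(0)=m_2$, $0\in\p\{\psi_0>m_2\}$ and $\psi_0\ge m_2$ places the jet in $\{\t r<0\}$ and forces $\psi_0\equiv m_1$ on the part of $\{\t r=R-\xi\}$ lying below it (or else forces the jet to leave $\Pi$), a contradiction. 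Hence $\th=0$; with $e=(0,1)$ the displayed identity reads $\psi_0'(\t r)=(\xi+\t r)\bigl(\sqrt{\Ld+\ld}\,\chi_{\{0<\psi_0<m_1\}}+\sqrt{\ld}\,\chi_{\{m_2<\psi_0\le0\}}\bigr)$ with $\psi_0(0)=m_2$; integrating piecewise, with $H(\t r)=\f{m_2}{\sqrt\ld}+\f{\t r^2}{2}+\xi\t r$ one obtains $\psi_0=\sqrt{\ld}\,H$ on the range $m_2\le\psi_0\le0$ and $\psi_0=\sqrt{\Ld+\ld}\,H$ on the range $0\le\psi_0\le m_1$ (the two pieces matching at $\{H=0\}=\{\psi_0=0\}$, the blow-up of $\Gamma_\mu$), together with the dead regions $\psi_0\equiv m_2$ for $\t r\le0$ and $\psi_0\equiv m_1$ beyond; this is exactly the stated formula, and one verifies a posteriori that $|\psi_0'|/(\xi+\t r)$ equals $\sqrt{\ld}$ on $\p\{\psi_0>m_2\}$ and $\sqrt{\Ld+\ld}$ on $\p\{\psi_0<m_1\}$, and that $|(\g\psi_0)^{+}/(\xi+\t r)|^2-|(\g\psi_0)^{-}/(\xi+\t r)|^2=\Ld$ at $\{\psi_0=0\}$. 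The case $X_n\in\Gamma_{1,\mu}$, and the case $x_n\to-\infty$ (which gives $\th=\pi$), are the mirror images, with the clamping constants interchanged.

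I expect the main obstacle to be twofold: (i) justifying that the limiting functional genuinely vanishes -- this requires $\chi_{\{0<\psi_n<m_1\}}$ and $\chi_{\{m_2<\psi_n\le0\}}$ to converge \emph{in measure} to the characteristic functions of the corresponding level sets of $\psi_0$, which rests on the non-degeneracy/density estimates of Lemmas \ref{cc1}--\ref{cc3} and the local uniform convergence, precisely as in the proof of Lemma \ref{lem2}; and (ii) pinning down $\th=0$ rather than merely $\sin\th=0$, which forces one to keep track of which side of $\Gamma_{2,\mu}$ the jet sits on in the far downstream (equivalently, of the boundary value of $\psi_{\ld,\th,\mu}$ along $L_2$) together with the global monotonicity in $x$ from Proposition \ref{bb9}.
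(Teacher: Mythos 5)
Your blow-up skeleton is the same as the paper's: translate (no rescaling), use the finiteness of $J_{\ld,\th,\mu}(\psi_{\ld,\th,\mu})$ so that the energy in the windows $\{|x-x_n|<R_0\}$ vanishes as $x_n\to+\infty$, pass to the limit identity $\t\g\psi_0=(\xi+\t r)\bigl(\sqrt{\Ld+\ld}\,\chi_{\{0<\psi_0<m_1\}}+\sqrt{\ld}\,\chi_{\{m_2<\psi_0\leq0\}}\bigr)e$, and integrate the resulting ODE once $\th=0$ is known; all of that is fine and matches the paper. The genuine gap is in your exclusion of $\th=\pi$: you assert that $\psi_{\ld,\th,\mu}\equiv m_2$ on $L_2$ ``by the continuous fit \eqref{a12}''. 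That is circular. Proposition \ref{co2} is proved for an \emph{arbitrary} triple $(\ld,\th,\mu)$ and is precisely one of the tools used afterwards (Lemma \ref{lee}, Proposition \ref{ee4}) to \emph{produce} parameters for which continuous fit holds; at this stage $g_{2,\ld,\th,\mu}(R+0)$ need not be $1$ (it can even be $+\infty$), and $\psi=m_2$ on $L_2$ is not a constraint of the truncated problem — on $\O\cap\{r\geq R\}$ the obstacles $\Phi_1,\Phi_2$ only give $m_2\leq\psi\leq m_1$. What you need is exactly the paper's step \eqref{c0}, namely that the blow-up limit equals $m_2$ on $\{\t r=R-\xi\}$, and this has to be extracted from the structure of the minimizer itself (monotonicity in $x$ from Proposition \ref{bb9} together with the level-set representation \eqref{b031} locating $\Gamma_{2,\mu}$ and the set $\{\psi=m_2\}$), not from \eqref{a12}. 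Until you supply such an argument, the $\th=\pi$ profile (jet sitting in $\{R-\xi<\t r<0\}$) is not ruled out for the $x_n\to+\infty$ sequence.

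On the exclusion of $\th\in(0,\pi)$ you genuinely diverge from the paper, and your version needs repair. The paper simply observes that in $\{m_2<\psi_0<0\}$ the limit identity forces $\p_{\t r}\p_{\t x}\psi_0=-\sqrt{\ld}\sin\th$ while $\p_{\t x}\p_{\t r}\psi_0=0$, i.e.\ the field $(\xi+\t r)\sqrt{\ld}\,e$ is not a gradient unless $\sin\th=0$ — a two-line contradiction. Your geometric route (level lines parallel to $\nu$, local Hausdorff convergence of $\Gamma_{2,\mu}-X_n$, unboundedness of $x_n$) can be made to work, but the parenthetical justification ``bounded derivative near $r=\xi$, hence bounded values there'' is not the correct deduction: the Hausdorff information only controls the free boundary \emph{near the points} $X_n$, whose abscissas blow up, so what you actually obtain is $g_{2,\ld,\th,\mu}(r)\geq x_n-C$ on a fixed $r$-interval around $\xi$, hence $g_{2,\ld,\th,\mu}\equiv+\infty$ on that interval in the limit, contradicting $g_{2,\ld,\th,\mu}(r_m)=x_m<+\infty$; that is the argument to write down (and it requires first establishing the Hausdorff convergence of the translated free boundaries, as in Lemma \ref{lem2}). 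Note also that the very field whose level lines you describe is the non-integrable one, which is a hint that the paper's shorter curl contradiction is available and spares you this extra machinery.
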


\begin{proof}If
$X_n=(x_n,r_n)\in\Gamma_{2, \mu}$ with $r_n\rightarrow \xi$ ($\xi$
is a finite positive number) and $x_n\rightarrow +\infty$. Set $x=x_n+\t x$ and $r=r_n+\t r$. For any large $R_0>0$, the boundedness of $J_{\ld,\th,\mu}(\psi_{\ld,\th,\mu})$ gives that $$\begin{aligned}&\int_{\O_\mu\cap\{|x-x_n|<R_0\}\cap\{R-\xi<r-r_n <R_0\}}r\left|\f{\g\psi}{r}-\left(\sqrt{\Lambda+\ld
}\chi_{\{0<\psi<m_1\}}+\sqrt{\ld
}\chi_{\{m_2<\psi\leq0\}}\right)e\right|^2dX\\=&\int_{\t\O_\mu\cap\{|\t x|<R_0\}\cap\{R-\xi<\t r<R_0\}}\left(r_n+\t r\right)\left|\f{\t\g\psi_n}{r_n+\t r}-\left(\sqrt{\Lambda+\ld
}\chi_{\{0<\psi_n<m_1\}}+\sqrt{\ld
}\chi_{\{m_2<\psi_n\leq0\}}\right)e\right|^2d\t X\rightarrow0,\end{aligned}$$ as $n\rightarrow+\infty$.

Along the
similar arguments in Lemma \ref{lem2}, then there exists a subsequence
$\psi_{n_k}$ such that
$$\psi_{n_k}\rightarrow\psi_0\ \
\text{weakly in $H_{loc}(\O'_\mu)$},$$ and
$$\psi_{n_k}\rightarrow\psi_0\ \ \text{a.e. in $\O'_\mu$},$$as $k\rightarrow+\infty$, and $$\g \psi_0=(\t r+\xi)
\left(\sqrt{\Lambda+\ld
}\chi_{\{0<\psi_0<m_1\}}+\sqrt{\ld
}\chi_{\{m_2<\psi_0\leq0\}}\right)(-\sin\th, \cos\th)\ \ \text{a.e.},$$ in
any compact subset $\Omega'$ of $\{(\t x,\t r)\mid \t r>R-\xi\}$.
Furthermore, it's easy to see that $\psi_0(0,0)=m_2$,
$\psi_0\not\equiv m_2$ in any neighborhood of $(0,0)$ and
\be\label{c0}\psi(x_n+\t x, R-\xi+r_n)\rightarrow\psi_0(\t
x,R-\xi)=m_2\ \ \text{if}\ \ x_n\rightarrow+\infty.\ee

Next, we claim $\th=0$. Suppose not, if $\theta=\pi$, indeed, similar arguments as Lemma
\ref{lem2}, we obtain
$$\psi_0(\t x,\t r)=\min\left\{\max\left\{\sqrt{\ld}\left(\f{m_2}{\sqrt{\ld}}-\f{\t r^2}{2}-\xi\t r\right) , m_2\right\},
0\right\}+\max\left\{\min\left\{\sqrt{\Lambda+\ld}\left(\f{m_2}{\sqrt{\ld}}-\f{\t r^2}{2}-\xi\t r\right), m_1\right\},
0\right\},$$ which contradicts
with \eqref{c0}.

If $\th\in(0,\pi)$, since $\psi_0$ is smooth in any compact subset of $G\subset\{m_2<\psi_0<0\}\cap\{\t r\geq R-\xi\}$, one has $$\f{\p^2\psi_0}{\p\t x\p\t r}=-\sqrt{\ld}\sin\th,\ \ \f{\p^2\psi_0}{\p\t r\p\t x}=0,$$ which derives a contradiction with $\th\in(0,\pi)$.

Therefore, we obtain $\th=0$. Along the similar
arguments in Lemma \ref{lem2}, one has
$$
\psi_n(\t X)\rightarrow\min\left\{\max\left\{\sqrt{\ld}\left(\f{m_2}{\sqrt{\ld}}+\f{\t r^2}{2}+\xi\t r\right) , m_2\right\},
0\right\}+\max\left\{\min\left\{\sqrt{\Lambda+\ld}\left(\f{m_2}{\sqrt{\ld}}+\f{\t r^2}{2}+\xi\t r\right), m_1\right\},
0\right\},$$ uniformly in any compact subset of $\{(\t x,\t r)\mid
\t r>R-\xi\}$. The similar conclusion holds if
$X_n=(x_n,r_n)\in\Gamma_{1,\mu}$ with $r_n\rightarrow \xi>R$ and
$x_n\rightarrow -\infty$.

Similarly, we can obtain the conclusion for $\th=\pi$. Thus, we complete the proof of Proposition \ref{co2}.

\end{proof}

Now, we can obtain the convergence rate of distance of the two free boundaries and the minimizer as follows.
\begin{lemma}\label{lem3}For any $\th\in(0,\pi)$ and $\alpha\in(0,2)$, the free boundaries $x=g_{1,\ld,\th,\mu}(r)$, $x=g_{2,\ld,\th,\mu}(r)$, the interface $x=g_{\ld,\th,\mu}(r)$ and the minimizer $\psi_{\ld,\th,\mu}$ satisfy \be\label{aa20}r(g_{\ld,\th,\mu}(r)-g_{1,\ld,\th,\mu}(r))\rightarrow\f{m_1}{\sqrt{\Lambda+\ld}\sin\th},\ee
 \be\label{aa020}r(g_{\ld,\th,\mu}(r)-g_{2,\ld,\th,\mu}(r))\rightarrow\f{m_2}{\sqrt{\ld}\sin\th},\ee
 and \be\label{aa21}r^\alpha\left(\f{\g\psi_{\ld,\th,\mu}}{r}-\left(\sqrt{\Lambda+\ld
}\chi_{\{0<\psi_{\ld,\th,\mu}<m_1\}}+\sqrt{\ld
}\chi_{\{m_2<\psi_{\ld,\th,\mu}\leq0\}}\right)e\right)\rightarrow0,\ee
as $r\rightarrow+\infty$.
\end{lemma}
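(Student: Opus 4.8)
The plan is to deduce the three limits \eqref{aa20}, \eqref{aa020}, \eqref{aa21} from the decay estimate of Lemma \ref{lem1} together with the blow-up/compactness picture of Lemma \ref{lem2}. First I would establish \eqref{aa21}. Fix $r_0>2R$; by Lemma \ref{lem1} the $L^2$-energy of $\frac{\g\psi_{\ld,\th,\mu}}{r}-(\sqrt{\Lambda+\ld}\chi_{\{0<\psi<m_1\}}+\sqrt{\ld}\chi_{\{m_2<\psi\le0\}})e$ over $\O_\mu\cap\{r>r_0\}$ is $O(r_0^{-3})$. Rescaling around a free-boundary point $X_n=(x_n,r_n)\in\Gamma_{1,\mu}$ with $r_n\to\infty$ as in \eqref{aa018}, and using the interior gradient bound of Lemma \ref{cc7} (so the rescaled functions $\psi_n$ are uniformly Lipschitz), I would pass to the limit: $\psi_n\to\Theta$ with $\t\g\Theta$ the expected constant-modulus field by \eqref{le5}. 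To upgrade the weak/a.e. convergence of $\t\g\psi_n$ to the pointwise decay \eqref{aa21} with the full weight $r^\alpha$ for any $\alpha<2$, I would combine the energy decay $O(r_0^{-3})$ with the uniform local Lipschitz bound and Caccioppoli/interior elliptic estimates on $\{m_2<\psi<m_1\}\cap\{\psi\ne0\}$ (where $\psi$ solves the linear equation \eqref{b01}) to get $L^\infty$ control of the deficit from its $L^2$ mean on shrinking annuli, yielding a pointwise bound $o(r^{-\alpha})$; the loss from $3$ to $2$ is the price of going from $L^2$-on-an-annulus to $L^\infty$ in dimension $2$.

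Next I would prove \eqref{aa20} (and \eqref{aa020} symmetrically). By Lemma \ref{lem2}, near a point $(g_1(r),r)\in\Gamma_{1,\mu}$ the blow-up $\Theta$ from \eqref{le5} shows that in the slab between $\Gamma_{1,\mu}$ and the interface $\Gamma_\mu$ (i.e. $\{0<\psi<m_1\}$) the limit profile is the linear ramp $m_1+\sqrt{\Lambda+\ld}(\t r\cos\th-\t x\sin\th)$, which equals $0$ precisely on the line $\t r\cos\th-\t x\sin\th=-m_1/\sqrt{\Lambda+\ld}$. Translating back through the scaling $\t x=r_n(x-x_n)$, the horizontal distance at height $r_n$ between the level set $\{\psi=0\}$ and $\{\psi=m_1\}$ is, to leading order, $\frac{1}{r_n}\cdot\frac{m_1}{\sqrt{\Lambda+\ld}\sin\th}$; more precisely I would integrate the ODE $g_1'(r)\to\cot\th$ together with the quantitative form of \eqref{aa21} along the segment $\{x_n\}\times\{r\}$ connecting the two free boundaries. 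Concretely, writing the mass flux between $\Gamma_{1,\mu}$ and $\Gamma_\mu$ across the horizontal segment at height $r$ and using that $\frac{\g\psi}{r}\to\sqrt{\Lambda+\ld}\,e$ there with rate controlled by \eqref{aa21}, the quantity $m_1=\int \p_x\psi\,dx$ over that segment gives $m_1 = r\sqrt{\Lambda+\ld}\sin\th\,(g(r)-g_1(r)) + o(1)$, which is exactly \eqref{aa20}; here the non-oscillation Lemma \ref{cc5} and the graph property from \S\ref{ss} guarantee that $g-g_1$ and $g_2-g$ are well-defined single-valued and bounded on the relevant range.

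The main obstacle I anticipate is the passage from the \emph{integral} energy decay of Lemma \ref{lem1} to the \emph{uniform pointwise} statement \eqref{aa21} with the sharp weight $r^\alpha$ for every $\alpha\in(0,2)$: one must rule out that the deficit concentrates near the two free boundaries $\Gamma_{i,\mu}$ or the interface $\Gamma_\mu$, where the elliptic regularity degenerates. The resolution is to use that each of $\Gamma_{1,\mu},\Gamma_{2,\mu}$ is a Lipschitz $r$-graph asymptotic to direction $\th$ (Lemma \ref{lem2}) and that $\psi-m_1$, $\psi-m_2$, $\psi$ satisfy the non-degeneracy and linear-growth bounds near the respective free boundaries (Lemmas \ref{cc1}, \ref{cc2}, \ref{cc3}), so that after flattening the free boundary a boundary Schauder estimate converts the $O(r_0^{-3})$ annular energy into an $O(r^{-\alpha})$ sup-bound; a secondary difficulty is the branching point on the axis, which however lies in a compact region and does not affect the $r\to\infty$ asymptotics. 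Finally, uniqueness of the limiting distances in \eqref{aa20}–\eqref{aa020} (independence of the subsequence and of the chosen boundary points) follows because the blow-up limit $\Theta$ in Lemma \ref{lem2} is unique once $\th$ is fixed.
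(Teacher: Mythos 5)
Your proposal follows essentially the same route as the paper: the distance asymptotics \eqref{aa20}--\eqref{aa020} are read off from the blow-up profile $\Theta$ of Lemma \ref{lem2} and the local Hausdorff convergence of the rescaled free boundaries (the paper literally takes $\t r=0$ in that convergence), and \eqref{aa21} is obtained from the weighted form of the decay estimate of Lemma \ref{lem1} around translated free-boundary points, passed to a pointwise statement exactly as in the paper (your elliptic-estimate discussion fills in the paper's terse ``hence''). Your supplementary flux-integral derivation of \eqref{aa20} is a harmless variant (note only the sign $\int_{g_{1}}^{g}\p_x\psi\,dx=-m_1$ and that it presupposes \eqref{aa21} uniformly on the cross-section), so the proposal is correct and aligned with the paper's proof.
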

\begin{proof}
Define $\psi_n(\t x,\t r)=\psi_{\ld,\th,\mu}\left(x_n+\f{\t x}{r_n},
r_n+\f{\t r}{r_n}\right)$ with $(x_n,
r_n)\in\Gamma_{1,\ld,\th,\mu}$, $r_n\rightarrow+\infty$. Set
$x=x_n+\f{\t x}{r_n}$ and $r=r_n+\f{\t r}{r_n}$. The free boundaries
and interface of $\psi_n(\t x,\t r)$ are given by \be\label{aa020}\left\{(\t x,\t
r)\mid x_n+\f{\t x}{r_n}=g_{i,\ld,\th,\mu}\left(r_n+\f{\t r}{r_n}\right),
i=1,2\right\}\ \ \text{and}\ \ \left\{(\t x,\t
r)\mid x_n+\f{\t x}{r_n}=g_{\ld,\th,\mu}\left(r_n+\f{\t r}{r_n}\right)\right\}.\ee

Thanks to Lemma \ref{lem2}, we have $$\text{$\p\{0<\psi_n<m_1\}$
converges to $\p\{0<\Theta<m_1\}$ locally in Hausdorff
distance.}$$
This together with \eqref{aa020} and \eqref{le5}, taking
$\t r=0$, yields that $$\t
x\sin\th=r_n\left(g_{\ld,\th,\mu}(r_n)-x_n\right)\sin\th=r_n\left(g_{\ld,\th,\mu}(r_n)-g_{1,\ld,\th,\mu}(r_n)\right)\sin\th\rightarrow\f{m_1}{\sqrt{\Lambda+\ld}}.$$
Furthermore, set $\psi_n(\t x,\t r)=\psi_{\ld,\th,\mu}\left(x_n+\t x,
r_n+\t r\right)$ with $(x_n,
r_n)\in\Gamma_{1,\ld,\th,\mu}$, $r_n\rightarrow+\infty$, for any $\t R>0$, and large $r_n>\t R+2R$, similarly in
Lemma \ref{lem1}, one has
$$\begin{aligned}&\int_{\O_{\mu}\cap\{|r-r_n|<\t R\}}r^{2\alpha}\left|\f{\g\psi_{\ld,\th,\mu}}{r}-\left(\sqrt{\Lambda+\ld
}\chi_{\{0<\psi_{\ld,\th,\mu}<m_1\}}+\sqrt{\ld
}\chi_{\{m_2<\psi_{\ld,\th,\mu}\leq0\}}\right)e\right|^2dX\\=&\int_{\{|\t r|<\t R\}}\left(\t r+r_n\right)^{2\alpha}\left|\f{\t\g\psi_n}{\t r+r_n}-\left(\sqrt{\Lambda+\ld
}\chi_{\{0<\psi_n<m_1\}}+\sqrt{\ld
}\chi_{\{m_2<\psi_n\leq0\}}\right)e\right|^2d\t X\\ \leq&\f{C(r_n+\t R)^{2\alpha-1}}{\left(r_n-\t R\right)^3}.\end{aligned}$$ Hence for any $\alpha\in(0,2)$ and $(\t x,\t r)\in \mathbb{R}^2$, one has $$\left(\t r+r_n\right)^{\alpha}\left|\f{\t\g\psi_n}{\t r+r_n}-\left(\sqrt{\Lambda+\ld
}\chi_{\{0<\psi_n<m_1\}}+\sqrt{\ld
}\chi_{\{m_2<\psi_n\leq0\}}\right)e\right|\rightarrow0\ \ \text{as
$n\rightarrow+\infty$},$$ taking $\t r=0$, $x=x_n+\t x$ and
$r=r_n$ yields to the desired estimate \eqref{aa21}.

Therefore, we complete the proof of Lemma \ref{lem3}.
\end{proof}

Next, we will prove that one of free boundaries will vanish, provided that
the asymptotic direction of the outgoing jet is horizontal. We call
that $\Gamma_{1,\mu}$ vanishes in $\O_\mu\cap\{r>R\}$, means $\psi_{\ld, \th, \mu}<m_1$
in $\O_\mu\cap\{r>R\}$, and similarly, we call that the free boundary $\Gamma_{2,\mu}$ vanishes in $\O_\mu\cap\{r>R\}$ means that $\psi_{\ld, \th, \mu}>m_2$
in $\O_\mu\cap\{r>R\}$.
\begin{prop}\label{co1} (1). If $\th=\pi$, then the left free boundary $\Gamma_{1, \mu}$ vanishes
in $\O_\mu\cap\{r>R\}$;

(2). If $\th=0$, then the
right free boundary $\Gamma_{2, \mu}$ vanishes in
$\O_\mu\cap\{r>R\}$.
%(3). If $\th=0$ and $R_1^2-R^2\geq \f{2m_1}{\sqrt{\ld+\Ld}}-\f{2m_2}{\sqrt{\ld}}$, then the
%left free boundary $\Gamma_{1, \mu}$ vanishes in
%$\O_\mu\cap\{r>R_1\}$.
\end{prop}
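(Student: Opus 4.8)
The plan is to establish (2) — that $\th=0$ forces $\psi_{\ld,0,\mu}>m_2$ on $\O_\mu\cap\{r>R\}$ — and then to obtain (1) for $\th=\pi$ by the mirror argument under $x\mapsto-x$, which interchanges the two nozzle systems and the roles of $\sqrt{\Lambda+\ld}$ and $\sqrt\ld$. I would argue by contradiction: suppose $\th=0$ but $\psi:=\psi_{\ld,0,\mu}$ attains $m_2$ at some $X_0=(x_0,r_0)$ with $r_0>R$, noting $e=(-\sin\th,\cos\th)=(0,1)$. Since $m_2=\min_{\overline\O_\mu}\psi$ (Lemma \ref{cc9}) and $\psi=0>m_2$ on $N_0\cup\Gamma$, the level $\{r=r_0\}$ meets both $\{\psi=m_2\}$ and $\{\psi>m_2\}$; by the $x$-monotonicity of $\psi$ (Proposition \ref{bb9}) the former is a terminal interval of that level, so $\Gamma_{2,\mu}$ has a genuine free boundary point at height $r_0>R$, and by Lemma \ref{dd02} it is there the continuous $r$-graph $x=g_{2,\ld,0,\mu}(r)$, finite on a maximal interval $(R,\xi)$ with $\xi\in(R,+\infty]$, with $\{\psi(\cdot,r)=m_2\}=\{x\ge g_{2,\ld,0,\mu}(r)\}$ for $R<r<\xi$.

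Next I would read off the downstream structure. If $\xi<+\infty$, picking free boundary points $X_n=(x_n,r_n)$ with $r_n\uparrow\xi$ forces $x_n=g_{2,\ld,0,\mu}(r_n)\to+\infty$, so Proposition \ref{co2} gives the explicit blow-up limit $\psi(x_n+\tilde x,r_n+\tilde r)\to\psi_0$ on $\{\tilde r>R-\xi\}$; its formula shows $\psi_0\equiv m_2$ on the strip $\{R-\xi<\tilde r\le0\}$, and by $x$-monotonicity once more $\psi\equiv m_2$ on $\{x\ge X^*\}\cap\{R<r<\xi\}$ for $X^*$ large, with the layer just above matching the stationary radial profile $m_2+\tfrac{\sqrt\ld}{2}(r^2-\xi^2)$. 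If instead $\xi=+\infty$, the rescaled blow-up of Lemma \ref{lem2} — whose proof uses only the local structure — gives, because $e=(0,1)$, a limit depending on the rescaled vertical variable alone, so its free boundary is horizontal, $g'_{2,\ld,0,\mu}(r)\to+\infty$, and $\Gamma_{2,\mu}$ still carries at every height $r>R$ the nonempty stuck set $\{x\ge g_{2,\ld,0,\mu}(r)\}$. In both cases $\psi\equiv m_2$ on a far-downstream region with $r>R$.

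Finally I would derive a contradiction by comparing with the stationary horizontal flow. Set $\psi_\infty(r)=\min\{\tfrac{\sqrt{\Lambda+\ld}}{2}r^2,m_1\}$; its functional density vanishes identically, since $\nabla\psi_\infty/r=\sqrt{\Lambda+\ld}\,e$ on $\{0<\psi_\infty<m_1\}$, $\nabla\psi_\infty=0$ on $\{\psi_\infty=m_1\}$, and $\psi_\infty$ never lies in $(m_2,0)$. Let $\widetilde\psi$ agree with $\psi$ for $x\le X^*$ and for $r\le R$, equal $\psi_\infty$ for $x\ge X^*+1$, $r>R$, and interpolate across the slab $\{X^*<x<X^*+1\}$ (and across $r=R$ if necessary), keeping the modification in $K_\mu$. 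Then $\widetilde\psi$ costs nothing downstream, while $\psi$ does not reduce to $\psi_\infty$ there (it carries a fluid-II layer that $\psi_\infty$ lacks); if one shows this downstream gain outweighs the cost of the interpolation slab, then $J_{\ld,0,\mu}(\widetilde\psi)<J_{\ld,0,\mu}(\psi)$, contradicting the minimality (and uniqueness, Proposition \ref{bb9}) of $\psi$. Hence no such $X_0$ exists and $\Gamma_{2,\mu}$ vanishes in $\O_\mu\cap\{r>R\}$; (1) follows by reflection.

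The step I expect to be the main obstacle is this last energy comparison: the stuck strip itself has vanishing functional density, so no local "bubble" competitor beats it and the argument has to be global. One needs quantitative control of the convergence $\psi(X^*,\cdot)\to\psi_\infty$ as $X^*\to+\infty$ (so that the $O(1)$ transition cost in the slab is in fact small and is dominated by the downstream gain), or a more clever competitor that absorbs the transition. This is also why the two extremal angles $\th=0,\pi$ must be singled out here and cannot be deduced from the asymptotics of Lemma \ref{lem2}, which only treat $\th\in(0,\pi)$.
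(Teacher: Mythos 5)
Your plan stalls at exactly the step that carries all the weight, and the heuristic driving that step is not correct. When $\th=0$ we have $e=(0,1)$, and a horizontal \emph{two--phase} stratified configuration (a fluid-II layer following $m_2+\f{\sqrt{\ld}}{2}(r^2-\xi^2)$, matched above by a fluid-I layer with $|\g\psi|/r=\sqrt{\Lambda+\ld}$ and then by $\psi\equiv m_1$) has identically vanishing integrand, exactly like your one-phase profile $\psi_\infty$. So there is no ``downstream gain'' at all: the hypothetical stuck configuration and your competitor both cost zero per unit length in the far field, the strict inequality $J_{\ld,0,\mu}(\widetilde\psi)<J_{\ld,0,\mu}(\psi)$ cannot be produced by a cut-and-paste in a distant slab, and the quantitative control you say is missing is not a technical refinement but the entire proof. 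The preparatory steps (reduction via $x$-monotonicity, the blow-up description via Proposition \ref{co2}) are fine but do not feed into any contradiction as written. A secondary problem is the reduction of (1) to (2) ``by reflection'': the map $x\mapsto-x$, $\psi\mapsto-\psi(-x,r)$ interchanges the coefficients $\sqrt{\Lambda+\ld}$ and $\sqrt{\ld}$ on the two free boundaries, so for $\Lambda>0$ it lands in a problem with jump $-\Lambda<0$, outside the setting of the paper; the two cases are genuinely asymmetric.

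The paper's proof avoids all of this by proving a \emph{pointwise global} bound rather than an asymptotic energy comparison. For $\th=\pi$ one takes the explicit one-dimensional barrier $\psi_0(r)$ built from the horizontal profile anchored at $r=R$ and shows $\psi\leq\psi_0$ in all of $\O_\mu$: if not, $\min\{\psi,\psi_0\}\in K_\mu$ and uniqueness of the minimizer give $J_{\ld,\th,\mu}(\psi)<J_{\ld,\th,\mu}(\min\{\psi,\psi_0\})$, and an integration-by-parts decomposition of the energy difference (the terms $I_1,\dots,I_4$ in the paper, using that $|\g\psi_0|/r$ takes exactly the values $\sqrt{\Lambda+\ld}$ and $\sqrt{\ld}$ on the appropriate level regions and that the boundary contributions on $\{\psi_0=0\}$ cancel) shows this difference is a sum of nonnegative terms, a contradiction. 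Since $\psi_0<m_1$ strictly for $r>R$, the bound $\psi\le\psi_0$ immediately gives $\psi<m_1$ there, i.e. $\Gamma_{1,\mu}$ vanishes; the case $\th=0$ is handled with a second, different barrier giving $\psi\geq\psi_0>m_2$ for $r>R$. No blow-up analysis, no far-field matching, and no interpolation slab are needed. If you want to salvage your approach, you would in effect have to prove such a comparison principle anyway, so the barrier argument is the efficient route.
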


\begin{proof}Denote $\psi=\psi_{\ld,\th,\mu}$ for simplicity.

For $\th=\pi$, then $e=(0,-1)$, for $(x,r)\in\O_\mu$, set  $$\psi_0(x,r)=\max\left\{\min\left\{m_1-\f{\sqrt{\Lambda+\ld}(r^2-R^2)}{2},m_1\right\},0\right\}+\min\left\{\max\left\{\f{\sqrt{\ld}(R^2+\f{2m_1}{\sqrt{\Lambda+\ld}}-r^2)}{2},m_2\right\},0\right\}.$$

Next, we claim that \be\label{aa23}\psi(x,r)\leq\psi_0(x,r)\ \ \text{in $\O_\mu$}.\ee

 Suppose that the assertion \eqref{aa23} is not true, recalling that $\min\{\psi,\psi_0\}\in K_\mu$ and the uniqueness of minimizer, we obtain $$J_{\ld,\th,\mu}(\psi)<J_{\ld,\th,\mu}(\min\{\psi,\psi_0\}).$$This implies that there exists some sufficiently large $R_0>\max\left\{\sqrt{\f{2m_1}{\sqrt{\Lambda+\ld}}-\f{2m_2}{\sqrt{\ld}}+R^2}, 1\right\}$, and
\be\label{aa25}\begin{aligned}0>&\int_{\O_{\mu,R_0}}r\left|\f{\g\psi}{r}-\left(\sqrt{\Lambda+\ld
}\chi_{\{0<\psi<m_1\}}+\sqrt{\ld
}\chi_{\{m_2<\psi\leq0\}}\right)e\right|^2dX\\&-\int_{\O_{\mu,R_0}}r\left|\f{\g\min\{\psi,\psi_0\}}{r}-\left(\sqrt{\Lambda+\ld
}\chi_{\{0<\min\{\psi,\psi_0\}<m_1\}}+\sqrt{\ld
}\chi_{\{m_2<\min\{\psi,\psi_0\}\leq0\}}\right)e\right|^2dX\\=&\int_{\O_{\mu,R_0}}\f{\g\max\{\psi-\psi_0,0\}\cdot\g(\psi+\psi_0)}{r}dX
\\&-2\sqrt{\Lambda+\ld}\int_{\O_{\mu,R_0}}\g\psi\cdot
e\chi_{\{0<\psi<m_1\}}-\g\min\left\{\psi, \psi_0\right\}\cdot
e\chi_{\{0<\min\left\{\psi,
\psi_0\right\}<m_1\}}dX\\&-2\sqrt{\ld}\int_{\O_{\mu,R_0}}\g\psi\cdot
e\chi_{\{m_2<\psi\leq0\}}-\g\min\left\{\psi, \psi_0\right\}\cdot
e\chi_{\{m_2<\min\left\{\psi,
\psi_0\right\}\leq0\}}dX\\&+\int_{\O_{\mu,R_0}}(\Lambda+\ld)r\left(\chi_{\{0<\psi<m_1\}}-\chi_{\{0<\min\{\psi,\psi_0\}<m_1\}}\right)+\ld r\left(\chi_{\{m_2<\psi\leq0\}}-\chi_{\{m_2<\min\{\psi,\psi_0\}\leq 0\}}\right)dX\\=&I_1+I_2+I_3+I_4,\end{aligned}\ee where $\O_{\mu,R_0}$ is bounded by $N_{i,\mu}$, $N_{0,\mu}$, $L_i$, $H_{i,\mu}$, $\left\{((-1)^i,r)\mid R\leq r\leq R_0\right\}$
and $\{(x,R_0)\mid-R_0\leq x\leq R_0\}$ for $i=1,2$.

The first term $I_1$ can be estimated as follows,
\be\label{aa26}\begin{aligned}I_1=&\int_{\O_{\mu,R_0}}\f{|\g\max\{\psi-\psi_0,0\}|^2}{r}dX+2\int_{\O_{\mu,R_0}}\f{\g\max\{\psi-\psi_0,0\}\cdot\g\psi_0}{r}dX\\=&\int_{\O_{\mu,R_0}}\f{|\g\max\{\psi-\psi_0,0\}|^2}{r}dX-2\sqrt{\Lambda+\ld}\int_{\bar\O_{\mu,R_0}\cap\{\psi_0=0\}}\max\{\psi-\psi_0,0\}dx\\&+2\sqrt{\ld}\int_{\bar\O_{\mu,R_0}\cap\{\psi_0=0\}}\max\{\psi-\psi_0,0\}dx-2\sqrt{\ld}\int_{\bar\O_{\mu,R_0}\cap\{\psi_0<0\}\cap\{r=R_0\}}\max\{\psi-\psi_0,0\}dx.\end{aligned}\ee
Furthermore, for the second term $I_2$, one has \be\label{aa27}\begin{aligned}I_2=&-2\sqrt{\Lambda+\ld}\left\{\int_{\O_{\mu,R_0}\cap\{0<\psi_0<m_1\}\cap\{0<\psi<m_1\}}\g\max\left\{\psi-\psi_0,0\right\}\cdot edX\right.\\&+\left.\int_{\O_{\mu,R_0}\cap\{\psi_0<0\}\cap\{0<\psi<m_1\}}\g(\psi-\psi_0)\cdot edX-\int_{\O_{\mu,R_0}\cap\{0<\psi_0<m_1\}\cap\{\psi=m_1\}}\g(m_1-\psi_0)\cdot edX\right\}\\=&2\sqrt{\Lambda+\ld}\int_{\bar\O_{\mu,R_0}\cap\{\psi_0=0\}}\max\{\psi-\psi_0,0\}dx.\end{aligned}\ee
Similarly, we obtain \be\label{aa027}I_3=-2\sqrt{\ld}\int_{\bar\O_{\mu,R_0}\cap\{\psi_0=0\}}\max\{\psi-\psi_0,0\}dx+2\sqrt{\ld}\int_{\bar\O_{\mu,R_0}\cap\{\psi_0<0\}\cap\{r=R_0\}}\max\{\psi-\psi_0,0\}dx.\ee
Finally, we have \be\label{aa28}\begin{aligned}I_4\geq&(\Lambda+\ld)\int_{\O_{\mu,R_0}}r\left(\chi_{\{0<\psi<m_1\}\cap\{\psi_0\leq0\}}-\chi_{\{0<\psi_0<m_1\}\cap\{\psi=m_1\}}\right)dX-\ld\int_{\O_{\mu,R_0}}r\chi_{\{m_2<\psi_0\leq0\}\cap\{\psi>0\}}dX\\ \geq&-(\Lambda+\ld)\int_{\O_{\mu,R_0}}r\chi_{\{0<\psi_0<m_1\}\cap\{\psi=m_1\}}dX-\ld\int_{\O_{\mu,R_0}\cap\{m_2<\psi_0\leq0\}}r\left(\chi_{\{\psi>0\}}-\chi_{\{0<\psi<m_1\}}\right)dX\\ \geq&-(\Lambda+\ld)\int_{\O_{\mu,R_0}}r\chi_{\{0<\psi_0<m_1\}\cap\{\psi=m_1\}}dX-\ld\int_{\O_{\mu,R_0}}r\chi_{\{m_2<\psi_0\leq0\}\cap\{\psi=m_1\}}dX.\end{aligned}\ee

Inserting \eqref{aa26}-\eqref{aa28} into \eqref{aa25} yields

$$\begin{aligned}0>&\int_{\O_{\mu,R_0}}\f{|\g\max\{\psi-\psi_0,0\}|^2}{r}dX-(\Lambda+\ld)\int_{\O_{\mu,R_0}}r\chi_{\{0<\psi_0<m_1\}\cap\{\psi=m_1\}}dX\\&-\ld\int_{\O_{\mu,R_0}}r\chi_{\{m_2<\psi_0\leq0\}\cap\{\psi=m_1\}}dX\\=&\int_{\O_{\mu,R_0}\cap\{0<\psi_0<m_1\}\cap\{\psi=m_1\}}\left(\f{|\g\psi_0|^2}{r}- (\Lambda+\ld)r\right)dX+\int_{\O_{\mu,R_0}\cap\{0<\psi_0<\psi<m_1\}}\f{|\g(\psi-\psi_0)|^2}{r}dX\\&+\int_{\O_{\mu,R_0}\cap\{m_2<\psi_0\leq0\}\cap\{\psi=m_1\}}\left(\f{|\g\psi_0|^2}{r}-\ld r\right)dX+\int_{\O_{\mu,R_0}\cap\{m_2<\psi_0\leq0\}\cap\{\psi_0<\psi<m_1\}}\f{|\g(\psi-\psi_0)|^2}{r}dX\\=&\int_{\O_{\mu,R_0}\cap\{m_2<\psi_0<\psi<m_1\}}\f{|\g(\psi-\psi_0)|^2}{r}dX,\end{aligned}$$
which derives a contradiction. Hence, \eqref{aa23} holds, it implies that $$\psi(x,r)<m_1\ \ \text{in $\O_{\mu}\cap\{r>R\}$},$$ and thus, this gives that the free boundaries $\Gamma_{1,\mu}$ vanishes.

For $\theta=0$, taking $$\psi_0=\min\left\{\max\left\{\f{\sqrt{\ld}(r^2-R^2)}{2}+m_2,m_2\right\},0\right\}+\max\left\{\min\left\{\f{\sqrt{\Lambda+\ld}(r^2-R^2+\f{2m_2}{\sqrt{\ld}})}{2},m_1\right\},0\right\}.$$ Similar arguments as before, yield that $$\psi(x,r)\geq\psi_0(x,r)\ \ \text{in $\O_{\mu}\cap\{r>R\}$},$$ which implies that the free boundary $\Gamma_{2,\mu}$ is empty.

%Similarly, we can show that the result (3) in Proposition \ref{co1}
%holds.

Therefore, we complete the proof of Proposition \ref{co1}.
\end{proof}
%\begin{remark}In this paper, we assume that $R_1\geq R_2$ without loss of generality. In fact, for $R_1<R_2$, we can obtain the similar conclusion as follows\\ (1) The right free boundary $\Gamma_{2,\mu}$ vanishes for $\th=0$;\\ (2) The right free boundary $\Gamma_{2,\mu}$ vanishes for $R_2^2-R_1^2\geq \f{2m_1}{\sqrt{\ld+\Ld}}-\f{2m_2}{\sqrt{\ld}}$ and $\th=\pi$;\\ (3) The left free boundary $\Gamma_{1,\mu}$ vanishes for $R_2^2-R_1^2\leq \f{2m_1}{\sqrt{\ld+\Ld}}-\f{2m_2}{\sqrt{\ld}}$ and $\th=\pi$.\end{remark}

\begin{remark}\label{e00}Furthermore, we define
$g_{1, \lambda, \th,\mu}(R)=-\infty$ for $\th=\pi$,
$g_{2, \lambda, \th,\mu}(R)=+\infty$ for $\th=0$, respectively.
\end{remark}

Proposition \ref{co1} implies that the one of free boundaries
vanishes for horizontal asymptotic direction, and on another side,
we will show that the both of two free boundaries are non-empty, for
non-horizontal asymptotic direction.
\begin{lemma}\label{lee}If $\th\in(0,\pi)$, then $\Gamma_{i,\mu}$ is non-empty and a connected curve, $x=g_{i,\ld,\th,\mu}(r)$ is continuous in $(R, +\infty)$. And $\lim_{r\rightarrow R^+}g_{i,\ld,\th,\mu}(r)$ exists and denoted as $g_{i,\ld,\th,\mu}(R+0)$ for $i=1, 2$.\end{lemma}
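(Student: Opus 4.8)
The plan is to combine the structural facts already in hand: the monotonicity of $\psi_{\ld,\th,\mu}$ in $x$ (Proposition \ref{bb9}), so that $\Gamma_{i,\mu}$ is an $r$-graph $x=g_{i,\ld,\th,\mu}(r)$ which by Lemma \ref{dd02} is a generalized continuous function into $[-\infty,+\infty]$; the decay estimate \eqref{aa1} and the gradient bound \eqref{b17}; the blow-up description of Lemma \ref{lem2}; the non-degeneracy Lemma \ref{cc2}; the non-oscillation Lemma \ref{cc5}; and Proposition \ref{co2}, which for $\th\in(0,\pi)$ prevents the free boundaries from escaping to $x=\pm\infty$ at a finite height. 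I would treat $i=1$, the case $i=2$ being symmetric.

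For the non-emptiness I would argue by contradiction. If $\Gamma_{1,\mu}=\emptyset$, then $\{\psi_{\ld,\th,\mu}<m_1\}$ is relatively open and closed in the connected set $\O_\mu$ and contains a neighbourhood of $N_{0,\mu}$ (where $\psi_{\ld,\th,\mu}=0$), hence equals $\O_\mu$; in particular $\psi_{\ld,\th,\mu}<m_1$ throughout $\O_\mu\cap\{r>R\}$. I would then blow up along the interface: by Lemma \ref{dd02} the function $g_{\ld,\th,\mu}$ is bounded and continuous on $(0,+\infty)$, so for $r_n\to+\infty$ the points $X_n=(g_{\ld,\th,\mu}(r_n),r_n)$ lie on $\Gamma_\mu$ with $\psi_{\ld,\th,\mu}(X_n)=0$, and the rescalings $\psi_n(\t x,\t r)=\psi_{\ld,\th,\mu}(x_n+\t x/r_n,\,r_n+\t r/r_n)$ converge, along a subsequence and uniformly on compacta (using \eqref{aa1} and \eqref{b17} as in the proof of Lemma \ref{lem2}), to a broken-linear profile $\Psi_\infty$ with $\t\g\Psi_\infty=(\sqrt{\Lambda+\ld}\,\chi_{\{0<\Psi_\infty<m_1\}}+\sqrt{\ld}\,\chi_{\{m_2<\Psi_\infty\le0\}})\,e$ and $\Psi_\infty(0)=0$; solving this ODE in the rotated variables exactly as in Lemma \ref{lem2} shows that $\{\Psi_\infty=m_1\}$ is a non-trivial half-plane. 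Fixing an interior point $\t Y_0$ of that half-plane and setting $Y_n:=X_n+\t Y_0/r_n$, one has $\psi_n\to m_1>0$ uniformly near $\t Y_0$, so for a fixed small $\rho>0$ and $n$ large, $B_{\rho/r_n}(Y_n)\subset\O_\mu\cap\{\psi_{\ld,\th,\mu}>0\}$ and
\[
\frac{1}{\rho/r_n}\fint_{\p B_{\rho/r_n}(Y_n)}(m_1-\psi_{\ld,\th,\mu})\,dS=\frac{r_n}{\rho}\fint_{\p B_{\rho}(\t Y_0)}(m_1-\psi_n)\,d\t S\le \sqrt{\Lambda+\ld}\,c\,r_n,
\]
the last inequality holding for $n$ large because the average on the far right tends to $0$. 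This is exactly the hypothesis of the non-degeneracy Lemma \ref{cc2}, which then forces $\psi_{\ld,\th,\mu}\equiv m_1$ on a ball contained in $\O_\mu\cap\{r>R\}$, contradicting $\psi_{\ld,\th,\mu}<m_1$ there. Hence $\Gamma_{1,\mu}\neq\emptyset$.

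Next I would establish that $g_{1,\ld,\th,\mu}$ is finite and continuous on $(R,+\infty)$ and that $\Gamma_{1,\mu}$ is a connected curve. By Proposition \ref{bb9} one has $\{0<\psi_{\ld,\th,\mu}<m_1\}\cap\O_\mu=\{\t g_{1,\ld,\th,\mu}(r)<x<g_{\ld,\th,\mu}(r)\}\cap\O_\mu$ (cf.\ \eqref{b31}), so $\Gamma_{1,\mu}$ is the graph $x=g_{1,\ld,\th,\mu}(r)$. Here the value $+\infty$ is excluded since $g_{1,\ld,\th,\mu}(r)<g_{\ld,\th,\mu}(r)<+\infty$, and $g_{1,\ld,\th,\mu}(r_*)=-\infty$ for some $r_*>R$ would produce free boundary points $X_n\in\Gamma_{1,\mu}$ with $r_n$ bounded and $x_n\to-\infty$, which Proposition \ref{co2} forbids when $\th\in(0,\pi)$. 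Thus $g_{1,\ld,\th,\mu}$ is finite on $(R,+\infty)$, so $\Gamma_{1,\mu}$ is a graph over the whole interval $(R,+\infty)$ and, being the boundary of the connected band above, it is connected; genuine jumps of $g_{1,\ld,\th,\mu}$ are then ruled out by combining Lemma \ref{cc2} with the non-oscillation Lemma \ref{cc5} as in \cite{ACF1,ACF3}, which upgrades generalized continuity to continuity on $(R,+\infty)$.

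Finally, for the one-sided limit at $R^+$: near $r=R$ the free boundary $\Gamma_{1,\mu}$ lies in a small neighbourhood of the edge $A_1$, and Lemma \ref{cc5} bounds the horizontal extent of any sub-arc of $\Gamma_{1,\mu}$ contained in $R<r<R+\delta$ by $C\delta$; letting $\delta\to0$, the oscillation of $g_{1,\ld,\th,\mu}$ at $R^+$ vanishes and the finite limit $\lim_{r\to R^+}g_{1,\ld,\th,\mu}(r)$ exists, as in \cite{ACF1}. I expect the non-emptiness step to be the main obstacle: the delicate point is the scaling bookkeeping that lets the blow-up profile feed the non-degeneracy lemma, whose threshold $\sqrt{\Lambda+\ld}\,c\,r_0$ grows linearly in the height $r_n$, so that the smallness coming from uniform convergence is exactly enough; the other assertions amount to bookkeeping with Proposition \ref{co2}, Lemma \ref{cc5} and the ACF-type arguments already used in the paper.
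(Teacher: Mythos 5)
Your proposal is correct, and for the main step (non-emptiness of $\Gamma_{i,\mu}$) it takes a genuinely different route from the paper. The paper argues downward: using a Lemma \ref{cc3}-type growth estimate it first shows that a free boundary point must lie within a universal distance $R_0$ of any point at height $R+R_0$ where $0<\psi_{\ld,\th,\mu}<m_1$; hence, if $\Gamma_{1,\mu}$ were empty, there would be points of $\Gamma_{2,\mu}$ with $R<r_n\le c$ and $x_n\to-\infty$, and Proposition \ref{co2} then forces $\th=\pi$, contradicting $\th\in(0,\pi)$. You argue upward instead: blowing up along the interface at heights $r_n\to+\infty$, using the decay estimate \eqref{aa1} of Lemma \ref{lem1} to identify the one-dimensional limit profile through the value $0$, and then feeding the closeness of $\psi_n$ to $m_1$ on a fixed ball into the non-degeneracy Lemma \ref{cc2}; your scaling bookkeeping (threshold $\sqrt{\Lambda+\ld}\,c\,r_n$ versus $\tfrac{r_n}{\rho}$ times a vanishing average) is right, and $\psi_{\ld,\th,\mu}\equiv m_1$ on a small ball contradicts emptiness of $\Gamma_{1,\mu}$. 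Your route avoids the covering claim and needs no information about the other free boundary, at the price of redoing the Lemma \ref{lem2} blow-up at interface points rather than free boundary points; two small points should then be made explicit: the locally uniform convergence you use (to get $\Psi_\infty(0)=0$ and $\psi_n>0$ on $B_\rho(\tilde Y_0)$) is not literally Lemma \ref{cc7}, which is stated near free boundary points, but in your contradiction scenario a small ball about an interface point free of $\Gamma_{1,\mu}\cup\Gamma_{2,\mu}$ lies in $\{m_2<\psi_{\ld,\th,\mu}<m_1\}$, where the two-phase Lipschitz estimate behind Proposition \ref{cc8} gives the scale-invariant bound $|\g\psi_{\ld,\th,\mu}|\le Cr$; and one must rule out a plateau of $\Psi_\infty$ at level $0$ (immediate, since there $\g\Psi_\infty=0$ a.e.\ while the weak-star limits of the two indicators must sum to one). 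For the remaining assertions (finiteness and continuity of $g_{i,\ld,\th,\mu}$ on $(R,+\infty)$, connectedness, and the one-sided limit at $R^+$) your argument coincides with the paper's: both exclude $g_{i,\ld,\th,\mu}\to\pm\infty$ at a finite height via Proposition \ref{co2} and defer the continuity and the limit at $R$ to the non-oscillation Lemma \ref{cc5} and the arguments of \cite{ACF1,ACF3}.
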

\begin{proof}
{\bf Step 1.} We will show that $\Gamma_{i,\mu}$ is non-empty for $i=1,2$.

Firstly, we claim that there exists a constant $R_0>0$, such that
$$B_{R_0}(X_0)\subset\O_\mu\cap\{r>R_0\}$$  contains a free boundary point
$X_0=(x_0,R+R_0)\in\O_\mu$ for any  $0<\psi_{\ld, \th,
\mu}(X_0)<m_1$.

Indeed, suppose not, we have $B_{R_0}(X_0)\cap\Gamma_{1,
\mu}=\varnothing$. Similar arguments as Lemma
\ref{cc3}, one gets $$\sup_{\p B_{R_0}(X_0)}\left(m_1-\psi_{\ld,\th,
\mu}\right)\geq c\sqrt{\Ld+\ld} R(R+R_0),$$ which implies $R_0\leq\f{m_1}{c\sqrt{\Ld+\ld}  R}$.
This is impossible for sufficiently large $R_0$. Hence,
the claim holds.

Without loss of generality, we assume that $\Gamma_{1,
\mu}$ is empty, then we obtain $\psi_{\ld, \th, \mu}<m_1$ in
$\O_{\mu}\cap\{r>R\}$.

In view of the claim, there is a sequence $X_n=(x_n, r_n)\in
\Gamma_{2, \mu}$ such that $R<r_n\leq c$ and
$x_n\rightarrow-\infty$. Hence, there exists a subsequence
$X_{n_k}=(x_{n_k}, r_{n_k})\in \Gamma_{2, \mu}$ and
$r_{n_k}\rightarrow \xi$, $x_{n_k}\rightarrow-\infty$ as
$k\rightarrow+\infty$. Due to Proposition \ref{co2}, we can prove
that $\psi_{\ld, \th, \mu}(X+X_{n_k})\rightarrow\psi_0(X)$ uniformly
in any compact subset of $\{(x, r)| r>R-\xi\}$ as
$k\rightarrow+\infty$, where $\psi_{0}$ is a constant flow with
deflection angle $\th=\pi$. This contradicts with
$\th\in(0,\pi)$. Thus, the free boundaries $\Gamma_{1,\mu}$ and $\Gamma_{2,\mu}$ are non-empty.

{\bf Step 2.} We will verify that $\Gamma_{i,\mu}$ is a connected curve and $x=g_{i,\ld,\th,\mu}(r)$ is a continuous function in $[R,+\infty)$, $i=1,2$.

Without loss of generality, we consider the left free boundary. Let
$(\alpha,\beta)$ be the maximal interval such that
$x=g_{1,\ld,\th,\mu}(r)$ is finite-valued for all $[R,+\infty)$.

Similar arguments as Section 5 in \cite{ACF1}, we obtain
$\alpha=R$, and the limit $\lim_{r\rightarrow
R}g_{1,\ld,\th,\mu}(r)$ exists.

If $\beta<+\infty$, one has
$$x=g_{1,\ld,\th,\mu}(r)\rightarrow+\infty\quad \text{ or}\quad
x=g_{1,\ld,\th,\mu}(r)\rightarrow-\infty\ \ \text{as}\ \
r\rightarrow\beta,$$ which together with Proposition \ref{co2}
implies $\theta=0$ or $\theta=\pi$. This leads a contradiction to
the assumption $\th\in(0,\pi)$.

Therefore, we complete the proof of Lemma \ref{lee}.
\end{proof}

\subsection{Monotonicity with respect to the parameter $\th$}
Next, we will establish a fact that the minimizer $\psi_{\ld, \th,
\mu}$ and free boundary $x=g_{i,\ld,\th,\mu}(r)$ ($i=1,2$) are monotonic with respect to the asymptotic deflection angle $\th$.
\begin{prop}\label{123}Suppose that $\th_1, \th_2\in[0,\pi]$ with $\th_1<\th_2$, $\psi_{\ld,\th_1,\mu}$ and $\psi_{\ld,\th_2,\mu}$ are minimizers to the truncated variational problem ($P_{\ld,\th_1,\mu}$) and ($P_{\ld,\th_2,\mu}$), and $x=g_{i,\ld,\th_1,\mu}(r)$ and $x=g_{i,\ld,\th_2,\mu}(r)$ be the free boundary of $\psi_{\ld,\th_1,\mu}$ and $\psi_{\ld,\th_2,\mu}$, respectively, then \be\label{00}\psi_{\ld, \th_1, \mu}\geq\psi_{\ld, \th_2, \mu}\ \ \text{for $(x,r)\in\O_\mu$},\ee
and\be\label{001}g_{i,\ld,\th_1,\mu}(r)>g_{i,\ld,\th_2,\mu}(r)\ \
\text{for $r\geq R$, $i=1,2$}.\ee
\end{prop}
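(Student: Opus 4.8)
The plan is to use a sliding/comparison argument in the spirit of the proofs of Propositions \ref{bb9} and \ref{123} for the $x$-monotonicity, adapted to the parameter $\th$. The key observation is that $e=e(\th)=(-\sin\th,\cos\th)$ rotates monotonically as $\th$ increases, and the functional $J_{\ld,\th,\mu}$ penalizes the gradient $\f{\g\psi}{r}$ for not being aligned with a fixed multiple of $e(\th)$ on the jet regions. Larger $\th$ should ``push'' the flow, and hence the level sets $\{\psi=m_i\}$, toward the left, i.e.\ decrease $\psi$ pointwise and move the free boundaries $x=g_{i,\ld,\th,\mu}(r)$ to the left. To make this precise I would compare $\psi_1:=\psi_{\ld,\th_1,\mu}$ with $\psi_2:=\psi_{\ld,\th_2,\mu}$ via the usual pair of competitors $\max\{\psi_1,\psi_2\}$ and $\min\{\psi_1,\psi_2\}$. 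First one checks that both competitors lie in $K_\mu$: the constraint $\Phi_2\le\psi\le\Phi_1$ and the prescribed boundary values on $H_{i,\mu}$ are preserved by $\max$ and $\min$. Then minimality of $\psi_1$ for $J_{\ld,\th_1,\mu}$ and of $\psi_2$ for $J_{\ld,\th_2,\mu}$ gives
$$J_{\ld,\th_1,\mu}(\psi_1)\le J_{\ld,\th_1,\mu}(\max\{\psi_1,\psi_2\}),\quad J_{\ld,\th_2,\mu}(\psi_2)\le J_{\ld,\th_2,\mu}(\min\{\psi_1,\psi_2\}).$$
Adding these two inequalities and expanding the squares, the Dirichlet parts recombine into $\int\f{1}{r}|\g(\psi_1-\psi_2)^+|^2$ with a favorable sign (the standard algebraic identity $|\g\psi_1|^2+|\g\psi_2|^2-|\g\max|^2-|\g\min|^2=0$ a.e.), while the cross terms involving $\chi$'s and $e(\th_i)$ must be shown to have the right sign.

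The main obstacle, as always in these two-fluid problems, is the bookkeeping of the linear (cross) terms coming from the $-2\sqrt{\Ld+\ld}\,\chi_{\{0<\psi<m_1\}}e-2\sqrt{\ld}\,\chi_{\{m_2<\psi\le0\}}e$ part, now complicated by the fact that $e$ is \emph{different} on the two sides ($e(\th_1)$ vs.\ $e(\th_2)$). Here I would follow the decomposition used in the proof of Proposition \ref{co1}: split the domain according to which of the four phases $\{\psi=m_1\}$, $\{0<\psi<m_1\}$, $\{m_2<\psi\le0\}$, $\{\psi=m_2\}$ each of $\psi_1,\psi_2$ belongs to, integrate the gradient terms by parts turning them into boundary integrals over the free boundaries/interfaces of $\psi_0$, and use that on $\{(\psi_1-\psi_2)^+>0\}$ one has $\psi_1>\psi_2$ so the phase of $\psi_1$ is ``at least as high'' as that of $\psi_2$. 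The key sign input is geometric: because $\th_1<\th_2$, the direction $e(\th_2)$ is obtained from $e(\th_1)$ by a rotation, and on the overlap region the difference $(\sqrt{\Ld+\ld}\chi_1 e(\th_1)-\sqrt{\Ld+\ld}\chi_2 e(\th_2))\cdot\g(\psi_1-\psi_2)^+$ integrates to something nonnegative after integration by parts, since $\psi_1-\psi_2$ is pushed in the positive-$x$ direction relative to $\psi_2$. Concretely, I expect the computation to collapse — exactly as in Proposition \ref{co1} — to
$$0\ge\int_{\O_\mu\cap\{\psi_2<\psi_1\}}\f{|\g(\psi_1-\psi_2)^+|^2}{r}\,dX,$$
forcing $(\psi_1-\psi_2)^+\equiv\text{const}$ on each component of $\{\psi_1>\psi_2\}$, and since this difference vanishes on $\p\O_\mu$ (both equal the prescribed data on $H_{i,\mu}$ and on $N_i,N_0,L_i$) we conclude $(\psi_1-\psi_2)^+\equiv0$, i.e.\ \eqref{00}.

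For the free-boundary ordering \eqref{001}, once \eqref{00} is in hand the inclusion $\psi_2\le\psi_1$ immediately gives $\{\psi_2<m_1\}\supseteq\{\psi_1<m_1\}$, hence (using that both free boundaries are $r$-graphs by the $x$-monotonicity of Proposition \ref{bb9} and Lemma \ref{lee}) $g_{1,\ld,\th_2,\mu}(r)\le g_{1,\ld,\th_1,\mu}(r)$, and symmetrically $g_{2,\ld,\th_2,\mu}(r)\le g_{2,\ld,\th_1,\mu}(r)$. The strict inequality requires a bit more: on the set where $g_{i,\ld,\th_1,\mu}(r)=g_{i,\ld,\th_2,\mu}(r)$ the two minimizers would agree in a full neighborhood of that free-boundary point (by the non-degeneracy Lemma \ref{cc2} together with \eqref{00}), and then both solve the same overdetermined problem near that point but with free-boundary conditions $|\f1r\p_\nu\psi|=\sqrt{\Ld+\ld}$ and with gradient aligned to \emph{different} directions $e(\th_1)\ne e(\th_2)$ on the jet side — a contradiction via the Hopf-type argument / flatness analysis of \cite{AC1}, exactly as in the strict-monotonicity step of Proposition \ref{123}. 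This last strictness step, and the precise sign of the mixed $e(\th_1)$--$e(\th_2)$ cross terms, are the two places where genuine care is needed; the rest is the by-now-standard comparison machinery.
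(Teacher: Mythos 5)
Your starting point (comparing $\psi_1=\psi_{\ld,\th_1,\mu}$ and $\psi_2=\psi_{\ld,\th_2,\mu}$ through the competitors $\max\{\psi_1,\psi_2\}$, $\min\{\psi_1,\psi_2\}$) is the same as the paper's, but the logical core of your argument has a gap, and its stated conclusion even points the wrong way. The rearrangement algebra by itself is symmetric in $\psi_1$ and $\psi_2$: the Dirichlet and $\chi$-terms recombine exactly, and the only discrepancy is the mixed cross term on the switching set, which (after writing $\Xi(\psi)\g\psi$ as a gradient of a truncated primitive and integrating by parts) becomes a boundary term driven by the behavior at $r\to\infty$, since $\psi_1=\psi_2$ on the finite part of $\p\O_\mu$. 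Your claim that this term ``integrates to something nonnegative \ldots since $\psi_1-\psi_2$ is pushed in the positive-$x$ direction relative to $\psi_2$'' assumes the ordering you are trying to prove, so it is circular; and the inequality you then extract, $(\psi_1-\psi_2)^+\equiv 0$, says $\psi_1\le\psi_2$, which is the opposite of \eqref{00}. The paper avoids exactly this trap: from the max/min comparison it only concludes (via Lemma 8.1 in \cite{ACF1}) that $\max\{\psi_1,\psi_2\}$ minimizes $J_{\ld,\th_1,\mu}$ and $\min\{\psi_1,\psi_2\}$ minimizes $J_{\ld,\th_2,\mu}$, then invokes the dichotomy of Theorem 7.1 in \cite{ACF3} (either $\psi_1\ge\psi_2$ or $\psi_1\le\psi_2$ throughout $\O_\mu$), and finally selects the correct branch by the far-field comparison $\psi_1\ge\psi_2$ in $\O_\mu\cap\{r>R_0\}$, which comes from the distinct asymptotic directions $\th_1<\th_2$. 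This branch-selection input is indispensable and is missing from your sketch.

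For the strict ordering \eqref{001} your mechanism is also not the right one. The direction $e(\th)$ enters only the functional, not any pointwise condition on the free boundary: both minimizers satisfy the same condition $\f1r\left|\f{\p\psi}{\p\nu}\right|=\sqrt{\Ld+\ld}$ on $\Gamma_{1,\cdot}$ (resp.\ $\sqrt{\ld}$ on $\Gamma_{2,\cdot}$), so ``gradients aligned to different directions $e(\th_1)\neq e(\th_2)$'' produces no contradiction, and the assertion that non-degeneracy (Lemma \ref{cc2}) plus \eqref{00} forces the two minimizers to coincide in a neighborhood of a touching point is unjustified. The paper's argument at an interior touching point $r_0>R$ is a Hopf-lemma comparison: $\psi_1-\psi_2\ge0$ vanishes at the common free-boundary point, so its normal derivative there is strictly negative, contradicting that both normal derivatives equal the same constant $r\sqrt{\Ld+\ld}$ (resp.\ $r\sqrt{\ld}$). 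Moreover \eqref{001} is claimed for all $r\ge R$, and at the endpoint $r=R$ Hopf's lemma is not available; the paper handles this case separately with the $(1+\e)$-perturbation $\t\psi_\e=(1+\e)(m_1-\psi_1)-(m_1-\psi_2)$ in a corner domain $G_\delta$, following Corollary 11.5 in \cite{FA1}, leading to $(1+\e)\sqrt{\Ld+\ld}\le\sqrt{\Ld+\ld}$. You would need to add both the Hopf-type interior argument and this endpoint argument to complete the strictness step.
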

\begin{proof}
Denote $\psi_1=\psi_{\ld, \th_1, \mu}$ and $\psi_2=\psi_{\ld, \th_2,
\mu}$ for simplicity, and set $v_1=\max\left\{\psi_1,
\psi_2\right\}$ and $v_2=\min\left\{\psi_1, \psi_2\right\}$.

%For  $\th_1=\th_2$, the result $\psi_1=\psi_2$ follows directly from
%the Proposition \ref{bb9}.

For
$\th_1<\th_2$, as is customary Lemma 8.1 in \cite{ACF1}, we obtain $$J_{\lambda, \th_1,
\mu}(\psi_1)=J_{\lambda, \th_1,
\mu}(v_1)\ \ \text{and}\ \ J_{\lambda, \th_2, \mu}(\psi_2)=J_{\lambda, \th_2, \mu}(v_2).$$

Since $\psi_1$ and $\psi_2$ are the minimizers to the
functionals $J_{\ld, \th_1, \mu}$ and $J_{\lambda, \th_2, \mu}$,
respectively, we can now proceed as in Theorem 7.1 in \cite{ACF3} to obtain that
$$\text{either}~~\psi_1\geq\psi_2~~ \text{or}~~\psi_1\leq\psi_2~\text{in}~\O_\mu.
$$

However, noticing that $\psi_1\geq\psi_2$ in $\O_\mu\cap\{r>R_0\}$ for some sufficiently large $R_0>R$, we
conclude that the case $\psi_1\geq\psi_2$ in $\O_\mu$.

Next, without loss of generality, we prove that \eqref{001} holds for $i=1$, namely \be\label{0001}g_{1,\ld,\th_1,\mu}(r)>g_{1,\ld,\th_2,\mu}(r)\ \ \text{for $r\geq R$}.\ee

Indeed, in view of \eqref{00}, one has \be\label{e16} g_{1,
\lambda, \th_1, \mu}(r)\geq g_{1, \lambda, \th_2,
\mu}(r)\ \ \text{for}\ \ r\geq R.\ee

For any $r>R$, suppose not, there exists a point
$X_0=\left(x_0, r_0\right)$ with $r_0>R$ such that
$$x_0=g_{1, \lambda, \th_1, \mu}(r_0)=g_{1, \ld, \th_2,
 \mu}(r_0).$$ Since the free boundary $x=g_{1, \lambda, \th_1, \mu}(r)$ is analytic in $r>R$, and applying Hopf's lemma yields that
$$\f{\p}{\p\nu}\left(\psi_{\lambda, \th_1, \mu}-\psi_{\lambda,
\th_2, \mu}\right)<0\ \ \text{at}~~X_0,$$ where $\nu$ is the
unit outward normal vector of $x=g_{1, \lambda, \th_1, \mu}(r)$
at $X_0$. This contradicts to the free boundary conditions
$\sqrt{\ld+\Ld}=\f{1}{r}\f{\p\psi_{\lambda, \th_1,
\mu}}{\p\nu}<\f{1}{r}\f{\p\psi_{\lambda, \th_2, \mu}}{\p\nu}=\sqrt{\ld+\Ld}$ at
$X_0$.

On another side, for $r=R$, suppose that $g_{1, \lambda,
\th_1, \mu}(R)=g_{1, \ld, \th_2, \mu}(R)$ and
$X_0=(g_{1,\ld,\th_1,\mu}(R), R)$. If $g_{1, \lambda,
\th_1, \mu}(R)\leq-1$, let $G_\delta$ be a domain bounded by $N_1$, $L_1$, $\Gamma_{1, \ld, \th_1,\mu}$
and $\p B_\delta(X_0)$, and $G_\delta\subset\{0<\psi_{\ld,\th_1,\mu}<m_1\}$. If $g_{1, \lambda_\mu,
\th_1, \mu}(R)>-1$, set $G_\delta$ be a domain bounded by $N_1$, $\{(x,R)\mid -1\leq x\leq g_{1, \lambda_\mu,
\th_1, \mu}(R)\}$, $\Gamma_{1, \ld, \th_1,\mu}$
and $\p B_\delta(X_0)$, and $G_\delta\subset\{0<\psi_{\ld,\th_1,\mu}<m_1\}$.

 Set
$$\tilde{\psi}_\e=(1+\e)(m_1-\psi_{\lambda, \th_1, \mu})-(m_1-\psi_{\lambda, \th_2, \mu})\ \ \text{for some $\e>0$}.$$
Recalling the fact $\psi_1\geq \psi_2$ in $\Omega_\mu$, we can choose
$\delta>0$ sufficiently small such that
$$G_\delta\subset\{0<\psi_{\ld, \th_1, \mu}<m_1\}\cap\{0<\psi_{\ld, \th_2, \mu}<m_1\}.$$
It follows from the similar arguments as in Corollary 11.5 \cite{FA1}
that there exists a small $\e>0$ such that
\be\label{e017}\tilde{\psi}_\e<0\  \ \text{in $G_\delta$}.\ee

 Hence, we obtain
$$\f{1}{R}\f{\p\tilde{\psi}_\e\left(g_{1,
\lambda, \th_1, \mu}(R), R\right)}{\p\nu}\geq0,$$ where $\nu$ is
the unit normal vector of the left free boundary $\Gamma_{1,
\ld, \th_1,\mu}$ at $\left(g_{1, \lambda, \th_1, \mu}(R),
R\right)$, then
$$(1+\e)\sqrt{\ld+\Ld}\leq\sqrt{\ld+\Ld}.$$ This leads a contradiction and then the inequality \eqref{0001} holds for $r=R$.

Therefore, we
finish the proof of the Proposition \ref{123}.
\end{proof}

\subsection{Continuous dependence to the parameters $\ld$ and $\th$}

In this subsection, a convergence result to the parameters $\ld$ and
$\th$ will be stated as follows.
\begin{prop}\label{ee1}
For any $\ld>0$ and $\th\in[0,\pi]$, and sequences
$\lambda_n\rightarrow\lambda$, $\th_n\rightarrow
\th$ with $\th_n\in[0,\pi]$, let $\psi_{\ld_n,\th_n,\mu}$ be the minimizer to the variational problem ($P_{\ld_n,\th_n,\mu}$), $x=g_{i,\ld_n,\th_n,\mu}(r)$ and $x=g_{\ld_n,\th_n,\mu}(r)$ be the free boundary of $\psi_{\ld_n,\th_n,\mu}$ and interface, respectively. Then there exist three subsequences still labeled as $\psi_{\lambda_n, \th_n,\mu}$, $g_{i, \lambda_n, \th_n,\mu}(r)$ and $g_{\lambda_n, \th_n,\mu}(r)$ such that \be\label{b51}\psi_{\lambda_n, \th_n, \mu}\rightarrow
\psi_{\lambda, \th,\mu}\ \ \text{weakly
in}~~H_{loc}^1(\O_\mu)~~\text{and pointwise}\ \text{in}\ \O_\mu,\ee
\be\label{b52}g_{i, \lambda_n, \th_n,\mu}(r)\rightarrow g_{i,
\lambda, \th,\mu}(r)\ \ \text{uniformly for $r\geq R$},\ee and \be\label{b052}g_{\lambda_n, \th_n,\mu}(r)\rightarrow g_{
\lambda, \th,\mu}(r)\ \ \text{uniformly for $r\geq 0$}.\ee

Here, $\psi_{\ld,\th,\mu}$ is the minimizer to the variational problem ($P_{\ld,\th,\mu}$) and $x=g_{i,\ld,\th,\mu}(r)$ and $x=g_{\ld,\th,\mu}(r)$ are the free boundary and interface of $\psi_{\ld,\th,\mu}$ for $i=1,2$, respectively.
\end{prop}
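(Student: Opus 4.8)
The plan is to run the standard compactness argument for minimizers of Alt--Caffarelli--Friedman type functionals (cf.\ Theorem~7.1 in \cite{ACF3} and Section~8 in \cite{ACF1}) on the \emph{fixed} truncated domain $\O_\mu$, combined with the \emph{a priori} estimates already established for $(P_{\ld,\th,\mu})$ and the uniqueness in Proposition~\ref{bb9}. First I would collect uniform bounds. Since $\ld_n\to\ld$ and $\th_n\to\th$, the comparison function $\psi_0$ constructed in the proof of Proposition~\ref{bb0} can be chosen once and for all in a neighbourhood of $(\ld,\th)$, and $(\ld',\th')\mapsto J_{\ld',\th',\mu}(\psi_0)$ is continuous, so $\sup_n J_{\ld_n,\th_n,\mu}(\psi_{\ld_n,\th_n,\mu})\le\sup_n J_{\ld_n,\th_n,\mu}(\psi_0)<+\infty$. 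With $\psi_{\ld_n,\th_n,\mu}\in K_\mu$ and Lemma~\ref{cc9} this gives a uniform $L^\infty(\O_\mu)$ bound and a uniform bound on $\int_{\O_\mu}|\g\psi_{\ld_n,\th_n,\mu}|^2/r\,dX$, hence uniform $H^1$ bounds on $\O_\mu\cap\{r<M\}$ for every $M$; moreover the interior and free-boundary gradient bounds of Proposition~\ref{cc8} and Lemma~\ref{cc7}, the non-degeneracy and non-oscillation constants of Lemmas~\ref{cc2}, \ref{cc3}, \ref{cc5}, and the decay constant of Lemma~\ref{lem1}, all depend on the parameters only through continuous quantities, hence may be taken uniform for $(\ld_n,\th_n)$ in a fixed compact neighbourhood of $(\ld,\th)$.

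Next, by Rellich's theorem and the Arzel\`a--Ascoli theorem I would pass to a subsequence (not relabelled) with $\psi_{\ld_n,\th_n,\mu}\rightharpoonup\psi_*$ weakly in $H^1_{loc}(\O_\mu)$, pointwise in $\O_\mu$ and locally uniformly off the corners, and $\chi_{\{0<\psi_{\ld_n,\th_n,\mu}<m_1\}}\rightharpoonup\gamma_1$, $\chi_{\{m_2<\psi_{\ld_n,\th_n,\mu}\le0\}}\rightharpoonup\gamma_2$ weak-$\star$ in $L^\infty_{loc}$, with $0\le\gamma_1+\gamma_2\le1$, $\gamma_1=1$ a.e.\ on $\{0<\psi_*<m_1\}$, $\gamma_2=1$ a.e.\ on $\{m_2<\psi_*<0\}$, and $\gamma_1+\gamma_2=1$ a.e.\ on $\{\psi_*=0\}$ (for such a point one of $\psi_{\ld_n,\th_n,\mu}>0$, $\psi_{\ld_n,\th_n,\mu}\le0$ holds together with $m_2<\psi_{\ld_n,\th_n,\mu}<m_1$ for all large $n$); also $\psi_*\in K_\mu$. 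The heart of the proof is then to show $\psi_*$ minimizes $J_{\ld,\th,\mu}$ over $K_\mu$, whence $\psi_*=\psi_{\ld,\th,\mu}$ by Proposition~\ref{bb9} (and, the limit being subsequence-independent, the whole sequence converges), which is \eqref{b51}. Fixing $v\in K_\mu$ and using minimality, $J_{\ld_n,\th_n,\mu}(\psi_{\ld_n,\th_n,\mu})\le J_{\ld_n,\th_n,\mu}(v)\to J_{\ld,\th,\mu}(v)$ by dominated convergence ($v$ fixed, only $\sqrt{\Lambda+\ld_n}$, $\sqrt{\ld_n}$, $e=e(\th_n)$ move). Writing
$$J_{\ld_n,\th_n,\mu}(\psi_{\ld_n,\th_n,\mu})=\int_{\O_\mu}\f{|\g\psi_{\ld_n,\th_n,\mu}|^2}{r}\,dX-2\int_{\O_\mu}e(\th_n)\cdot\g(G_n(\psi_{\ld_n,\th_n,\mu}))\,dX+\int_{\O_\mu}r\,a_n^2\,dX,$$
with $a_n^2=(\Lambda+\ld_n)\chi_{\{0<\psi_{\ld_n,\th_n,\mu}<m_1\}}+\ld_n\chi_{\{m_2<\psi_{\ld_n,\th_n,\mu}\le0\}}$ and $G_n(s)=\sqrt{\Lambda+\ld_n}\min\{\max\{s,0\},m_1\}+\sqrt{\ld_n}\max\{\min\{s,0\},m_2\}$ the primitive of the coefficient (legitimate since $\g\psi_{\ld_n,\th_n,\mu}=0$ a.e.\ on $\{\psi_{\ld_n,\th_n,\mu}=0\}$), I would pass to the limit term by term: the Dirichlet part is weakly lower semicontinuous; by the divergence theorem the middle term equals $-2\int_{\p\O_\mu}(e(\th_n)\cdot\nu)G_n(\psi_{\ld_n,\th_n,\mu})\,dS$, which tends to $-2\int_{\p\O_\mu}(e(\th)\cdot\nu)G(\psi_*)\,dS$ because $G_n\to G$ uniformly, $e(\th_n)\to e(\th)$, and $\psi_{\ld_n,\th_n,\mu}\to\psi_*$ in $L^2$ on (the bounded part of) $\p\O_\mu$ by trace compactness, the far-field cross-section contribution being uniformly small by Lemma~\ref{lem1}; and the bulk term tends to $\int_{\O_\mu}r((\Lambda+\ld)\gamma_1+\ld\gamma_2)\,dX$, which dominates $\int_{\O_\mu}r((\Lambda+\ld)\chi_{\{0<\psi_*<m_1\}}+\ld\chi_{\{m_2<\psi_*\le0\}})\,dX$ by the a.e.\ pointwise inequality that is an identity on $\{0<\psi_*<m_1\}\cup\{m_2<\psi_*<0\}$ and reduces on $\{\psi_*=0\}$ to $\Lambda\gamma_1\ge0$ --- which is exactly where the hypothesis $\Lambda\ge0$ is used. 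Therefore $\liminf_n J_{\ld_n,\th_n,\mu}(\psi_{\ld_n,\th_n,\mu})\ge J_{\ld,\th,\mu}(\psi_*)$, so $J_{\ld,\th,\mu}(\psi_*)\le J_{\ld,\th,\mu}(v)$ for every $v\in K_\mu$, as needed.

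For the free boundaries, Proposition~\ref{bb9} makes each $\Gamma_{i,\ld_n,\th_n,\mu}$ an $r$-graph $x=g_{i,\ld_n,\th_n,\mu}(r)$; the non-oscillation Lemma~\ref{cc5} and the gradient bound Lemma~\ref{cc7} (uniform in $n$) make these graphs uniformly bounded and equi-Lipschitz on compact subsets of $(R,+\infty)$, so a further diagonal subsequence has $g_{i,\ld_n,\th_n,\mu}\to h_i$ locally uniformly on $[R,+\infty)$. Then, using the uniform non-degeneracy of Lemmas~\ref{cc2}--\ref{cc3}, one argues as usual that $\p\{\psi_{\ld_n,\th_n,\mu}<m_1\}$ and $\p\{\psi_{\ld_n,\th_n,\mu}>m_2\}$ converge in local Hausdorff distance to $\p\{\psi_*<m_1\}$, $\p\{\psi_*>m_2\}$ (no collapse, by non-degeneracy; no spreading, by the density estimates), which identifies $h_i$ with the free boundary of $\psi_*=\psi_{\ld,\th,\mu}$; the far field is pinned uniformly in $n$ by Lemma~\ref{lem3}, upgrading the convergence to all of $[R,+\infty)$ and giving \eqref{b52}. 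The identical argument applied to the level set $\{\psi_{\ld_n,\th_n,\mu}=0\}$, together with the structure of the interface recorded in Lemma~\ref{dd02} (cf.\ \cite{ACF6,WX}), gives \eqref{b052}.

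I expect the crux to be the identification step, i.e.\ proving the weak limit is a genuine \emph{minimizer} of $J_{\ld,\th,\mu}$ and not merely a weak solution. This hinges on two points: (i) the absence of loss of mass of the characteristic functions in the limit --- precisely where Lemmas~\ref{cc2}--\ref{cc3} and the sign condition $\Lambda\ge0$ enter; and (ii) the uniformity in $n$ of every \emph{a priori} estimate, i.e.\ the continuous dependence of its constant on $(\ld,\th)$. A secondary technical point is the uniform control of the far field, which is why Lemma~\ref{lem1} (for the energy) and Lemma~\ref{lem3} (for the free boundaries) are invoked.
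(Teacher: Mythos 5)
Your overall architecture (compactness of the minimizers, identification of the limit as a minimizer, uniqueness from Proposition~\ref{bb9}, then non-degeneracy to pass the free boundaries and the interface to the limit) is the same as the paper's, and your treatment of \eqref{b52}--\eqref{b052} is consistent with it. But the execution of what you yourself call the crux --- the identification step --- has a genuine gap. The inequality chain $J_{\ld_n,\th_n,\mu}(\psi_{\ld_n,\th_n,\mu})\le J_{\ld_n,\th_n,\mu}(v)\to J_{\ld,\th,\mu}(v)$ fails on the unbounded domain $\O_\mu$: for a fixed competitor $v\in K_\mu$ with $J_{\ld,\th,\mu}(v)<+\infty$ (for instance $v=\psi_{\ld,\th,\mu}$ itself, or the comparison function of Proposition~\ref{bb0}), one has $J_{\ld_n,\th_n,\mu}(v)=+\infty$ for every $n$ with $(\ld_n,\th_n)\neq(\ld,\th)$. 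Indeed, on the transition set $\{m_2<v<m_1\}$ the integrand of $J_{\ld_n,\th_n,\mu}(v)$ contains the term $r\,\bigl|a(v)e(\th)-a_n(v)e(\th_n)\bigr|^2$, which is $r$ times a fixed positive constant there, while for any finite-energy $v$ this set is a strip of width comparable to $1/r$ extending to $r=+\infty$, hence of infinite $r\,dX$-measure; there is no dominating function and no convergence. For the same reason your preliminary claims are false as stated: a single $\psi_0$ cannot serve for all nearby parameters, since $(\ld',\th')\mapsto J_{\ld',\th',\mu}(\psi_0)$ equals $+\infty$ off the matching parameter (the uniform bound $\sup_n J_{\ld_n,\th_n,\mu}(\psi_{\ld_n,\th_n,\mu})<\infty$ must instead be obtained from parameter-adapted comparison functions $\psi_0^{(n)}$). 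Likewise the three-term expansion of $J_{\ld_n,\th_n,\mu}$ splits a finite quantity into individually divergent integrals ($\int_{\O_\mu}r\,a_n^2\,dX=+\infty$ and $\int_{\O_\mu}|\g\psi_{\ld_n,\th_n,\mu}|^2/r\,dX=+\infty$, since the strip between the two free boundaries has width of order $1/r$), so the ``term-by-term'' passage to the limit and the divergence-theorem boundary term over the unbounded $\p\O_\mu$ are not meaningful; Lemma~\ref{lem1} controls the tail of each $J_{\ld_n,\th_n,\mu}(\psi_{\ld_n,\th_n,\mu})$, i.e.\ the deviation from its \emph{own} asymptotic profile, not the non-integrable discrepancy between the $(\ld_n,\th_n)$- and $(\ld,\th)$-profiles.

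The paper avoids this by the local comparison argument it cites (Lemma~9.2 in \cite{ACF1}): to test minimality of the limit $\omega$ against a competitor, one modifies the competitor so that it coincides with $\psi_{\ld_n,\th_n,\mu}$ outside a large bounded subregion of $\O_\mu$, interpolating on an intermediate annulus, and applies the minimality of $\psi_{\ld_n,\th_n,\mu}$ there; the divergent far-field contributions then cancel because minimizer and competitor agree at infinity, and one passes to the limit first in $n$ and then in the truncation, after which uniqueness (Proposition~\ref{bb9}) identifies $\omega=\psi_{\ld,\th,\mu}$ and gives \eqref{b51} for the full sequence. If you replace your global lower-semicontinuity/energy-convergence step by this localized comparison (keeping your observation that the non-degeneracy, gradient and non-oscillation constants are uniform in $n$, which is what the paper also needs for its two-sided density estimates at free boundary points and for Lemma~10.4 of \cite{FA1} at $r=R$), the rest of your outline goes through.
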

\begin{proof}
Firstly, recalling the following facts$$\psi_{\lambda_n,
\th_n,\mu}\in H_{loc}^1(\O_\mu),\ \ \left|\g \psi_{\lambda_n,
\th_n,\mu}\right|\leq C,$$ and using diagonal procedure gives that there
exists a subsequence $\left\{\psi_{\lambda_{n},
\th_n,\mu}\right\}_{n=1}^{\infty}$ and a function $\o\in H_{loc}^1(\O_\mu)$ for
some $0<\alpha<1$ such that
$$\psi_{\lambda_{n}, \th_n,\mu}\rightarrow \o\ \
\text{weakly in}~~H_{loc}^1(\O_\mu),~~C_{loc}^\alpha(\O_\mu)~~\text{
and pointwise in}~~\O_\mu.$$

Along the similar arguments as Lemma 9.2 in \cite{ACF1}, we obtain
that $\o$ is indeed a minimizer to the truncated variational problem
($P_{\ld,\th,\mu}$). Due to the uniqueness of minimizer to the
truncated variational problem ($P_{\ld,\th,\mu}$), we have
$\o=\psi_{\ld,\th,\mu}$. Therefore, we obtain the convergence of
\eqref{b51}.

Secondly, we will show the statement \eqref{b52} for $\Gamma_{1,
\ld_n, \th_n, \mu}$. Indeed, for any $r_n>R$, let
$$X_n=\left(g_{1, \ld_n, \th_n, \mu}(r_n), r_n\right)\in
\Gamma_{1, \ld_n, \th_n, \mu},\ \ \text{and}\ \ X_n\rightarrow
X_0=(x_0,r_0),~~\text{as}~~n\rightarrow+\infty.$$ Then, for any small $r>0$,
the non-degeneracy lemma implies that there exist two positive
constants $C_1$ and $C_2$, such that
$$C_1\ld_nr_n\leq\f{1}{r}\fint_{\p B_r(X_n)}\left(m_1-\psi_{\lambda_n, \th_n,\mu}\right)dS\leq C_2\ld_nr_n.$$
Letting $n\rightarrow+\infty$
gives
$$C_1\ld r_0\leq\f{1}{r}\fint_{\p B_r(X_0)}\left(m_1-\psi_{\lambda, \th,\mu}\right)dS\leq C_2\ld r_0.$$
Moreover, recalling the non-degeneracy Lemma \ref{cc1} and Lemma
\ref{cc2} yields that $X_0\in \Gamma_{1,\mu}$. Hence, we obtain the
assertion \eqref{b52} for $r\in(R, +\infty)$.

Using Lemma 10.4 in \cite{FA1}, we can obtain the result for $r=R$, namely,
$$g_{1,\ld_n,\th_n,\mu}(R)\rightarrow g_{1,\ld,\th,\mu}(R)\ \ \text{as $n\rightarrow+\infty$}.$$

Similarly, \eqref{b52} holds for the right free
boundary $\Gamma_{2,\ld_n,\th_n,\mu}$.

Finally, similar arguments as Theorem 7.1 in \cite{ACF5}, we can obtain \eqref{b052}.
\end{proof}

\subsection{Continuous and smooth fit conditions of the free boundaries}
In this subsection, we will verify that there exist two parameters
$\ld$ and $\th$, such that the free boundaries $\Gamma_{i, \mu}$
connect smoothly at the end points $A_i$ of the nozzles $N_i$
($i=1,2$), respectively. Namely, for any $\mu>0$, there exists a pair of parameters $(\ld_\mu,\th_\mu)$ with $\ld_\mu>0$, $\th_\mu\in(0,\pi)$, such that $$g_{1,\ld_\mu,\th_\mu,\mu}(R)=-1\ \ \text{and}\ \ g_{2,\ld_\mu,\th_\mu,\mu}(R)=1.$$

As already mentioned before, this is the main difference to the
impinging free jet without rigid nozzle walls.

To see this, we first define a set $\Sigma_\mu$
as\be\label{ee5}\Sigma_\mu=\{\ld\mid\ld\geq 0, \text{there exists a
$\th\in(0,\pi)$, such that $g_{1, \lambda, \th, \mu}(R)<-1$ and $
g_{2, \lambda, \th, \mu}(R)>1$}\}.\ee

 The following lemma implies that $\Sigma_\mu$ is non-empty.
\begin{lemma}\label{ee2}There exists $\th_0\in(0,\pi)$ such that
\be\label{000} g_{1, \ld, \th_0, \mu}(R)<-1\ \ \text{and}\ \
g_{2, \ld, \th_0, \mu}(R)>1,\ee for sufficiently small
$\ld>0$.
\end{lemma}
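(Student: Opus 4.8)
The plan is to reduce the two target inequalities to one–point statements and then to produce each by an explicit comparison (barrier) function. By the monotonicity of the minimizer in $x$ (Proposition \ref{bb9}), for fixed $r=R$ the slice $\{\psi_{\ld,\th,\mu}<m_1\}$ is a half–line $\{x>g_{1,\ld,\th,\mu}(R)\}$ and the slice $\{\psi_{\ld,\th,\mu}>m_2\}$ is a half–line $\{x<g_{2,\ld,\th,\mu}(R)\}$; hence $g_{1,\ld,\th_0,\mu}(R)<-1$ is equivalent to the existence of one point $(x_0,R)$ with $x_0<-1$ and $\psi_{\ld,\th_0,\mu}(x_0,R)<m_1$, and $g_{2,\ld,\th_0,\mu}(R)>1$ to the existence of one point $(x_1,R)$ with $x_1>1$ and $\psi_{\ld,\th_0,\mu}(x_1,R)>m_2$. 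Moreover, by Proposition \ref{123} both quantities $g_{1,\ld,\th,\mu}(R)$ and $g_{2,\ld,\th,\mu}(R)$ are non–increasing in $\th$, so the two requirements compete, and the whole point is to locate a single $\th_0\in(0,\pi)$, depending only on $\Lambda,m_1,R,\mu$ (not on $\ld$), at which both hold for all sufficiently small $\ld>0$.

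For the left free boundary I would fix $\th_0$ close to $\pi$, the required closeness depending only on $\Lambda,m_1,R,\mu$. At $\th=\pi$, Proposition \ref{co1} and Remark \ref{e00} give $g_{1,\ld,\pi,\mu}(R)=-\infty$ (indeed $\psi_{\ld,\pi,\mu}<m_1$ throughout $\O_\mu\cap\{r>R\}$), via a supersolution that is a function of $r$ alone with left–jet speed $\sqrt{\Lambda+\ld}$. Tilting this supersolution through the small angle $\pi-\th_0$ yields, for each small $\ld$, a competitor $\overline\psi\in K_\mu$ dominating $\psi_{\ld,\th_0,\mu}$ — by the $\min$–competitor comparison, as in Lemma \ref{cc9} and Proposition \ref{co1} — with $\overline\psi<m_1$ at some point $(x_0,R)$, $x_0<-1$; hence $g_{1,\ld,\th_0,\mu}(R)<-1$. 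Equivalently, one combines the $\th=\pi$ information with the continuous dependence of Proposition \ref{ee1} in $\th$ and $\ld$. The content is that the threshold $\pi-\th_0$ can be chosen independent of $\ld$ for $\ld$ small, since the left part of the problem depends on $\ld$ only through the bounded quantity $\sqrt{\Lambda+\ld}$ — and when $\Lambda=0$ the left jet is itself "fat" (speed $\sqrt{\ld}\to0$), so then any $\th_0\in(0,\pi)$ works.

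For the right free boundary, keeping this same $\th_0$, I would exploit that $\sqrt{\ld}\to0$ forces the right jet to be arbitrarily wide: its mass flux $|m_2|$ is fixed while its speed on the free boundary is $\sqrt{\ld}$, so any subsolution modelling it has width of order $|m_2|/\sqrt{\ld}$ (consistent with Lemma \ref{lem3}). Concretely, I would build an explicit conical subsolution $\underline\psi$ with level sets linear in the coordinate $s=r\cos\th_0-x\sin\th_0$ and slope tuned to $\sqrt{\ld}$, extended through $D_\mu$ and near $N_{0,\mu}$ so that $\underline\psi\in K_\mu$ and $\underline\psi\le\psi_{\ld,\th_0,\mu}$ by the $\max$–competitor comparison. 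Since the slab $\{m_2<\underline\psi<0\}$ has width $\sim|m_2|/\sqrt{\ld}$, its boundary meets $\{r=R\}$ at an abscissa tending to $+\infty$ as $\ld\to0$; hence for $\ld$ below a threshold it contains a point $(x_1,R)$ with $x_1>1$, giving $\psi_{\ld,\th_0,\mu}(x_1,R)>m_2$ and $g_{2,\ld,\th_0,\mu}(R)>1$.

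The main obstacle is the coupling of the two requirements: as $\th_0\to\pi$ the left jet tends to fill all of $\O_\mu\cap\{r>R\}$ (Proposition \ref{co1}), which squeezes the right jet, so $\th_0$ cannot simply be pushed to $\pi$. What rescues the argument is that the angle $\th_0$ needed for the left free boundary is a fixed number strictly below $\pi$ (determined by $\Lambda,m_1,R,\mu$ alone), while the right jet's width blows up like $\ld^{-1/2}$, so it still overshoots $x=1$ at that fixed $\th_0$ once $\ld$ is small enough — the squeezing being a bounded, $\ld$–independent effect. The technical heart is therefore the construction of the two barriers so that they are admissible in $K_\mu$, respect the mixed Dirichlet/free–boundary data on $N_i$, $L_i$, $H_{i,\mu}$ and $N_{0,\mu}$, and stay uniform as $\ld\to0$; the non–degeneracy Lemmas \ref{cc1}--\ref{cc2} and the comparison Lemma \ref{cc9} then supply the rigor for passing from the barrier bounds to the claimed locations of $g_{1,\ld,\th_0,\mu}(R)$ and $g_{2,\ld,\th_0,\mu}(R)$.
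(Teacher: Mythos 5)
The left-boundary half of your plan is exactly the paper's argument: at $\th=\pi$ the left free boundary vanishes (Proposition \ref{co1}, Remark \ref{e00}), and the continuous dependence on $\th$ in Proposition \ref{ee1} then gives $g_{1,\ld,\th_0,\mu}(R)<-1$ for $\th_0$ close to $\pi$; your extra claim that the admissible closeness $\pi-\th_0$ can be taken uniform in $\ld$ is asserted but not proved (boundedness of the coefficient $\sqrt{\Lambda+\ld}$ does not by itself give a $\ld$-uniform modulus of continuity in $\th$), though this uniformity is not what the lemma is actually used for, so it is a minor point.

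The genuine gap is in your treatment of $g_{2,\ld,\th_0,\mu}(R)>1$. The step ``$\underline\psi\le\psi_{\ld,\th_0,\mu}$ by the max--competitor comparison, as in Lemma \ref{cc9} and Proposition \ref{co1}'' does not carry over to a conical barrier. Those two comparisons work only because the barriers there are functions of $r$ alone which solve $\Delta\phi-\f1r\p_r\phi=0$ exactly and meet the free boundary conditions with the exact speeds $\sqrt{\Lambda+\ld}\,r$, $\sqrt{\ld}\,r$, so that in the energy computation (the terms $I_1$--$I_4$ in Proposition \ref{co1}) all boundary and bulk contributions cancel or have a sign. A function whose level sets are lines in $s=r\cos\th_0-x\sin\th_0$ is not a solution of the axisymmetric operator (for $\phi=\sqrt{\ld}\,r\,s$ one gets $\Delta\phi-\f1r\p_r\phi=\sqrt{\ld}\,x\sin\th_0/r$, of indefinite sign), and a slope ``tuned to $\sqrt{\ld}$'' misses the $r$-weight in the condition $\f1r|\p_\nu\psi|=\sqrt{\ld}$; with these defects the competitor computation acquires uncontrolled terms and the claimed inequality is not established. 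There is also a positioning problem: since $\psi_{\ld,\th_0,\mu}=m_2$ beyond the actual right free boundary, $\underline\psi\le\psi_{\ld,\th_0,\mu}$ forces the slab $\{m_2<\underline\psi<0\}$ to lie inside $\{\psi_{\ld,\th_0,\mu}>m_2\}$, whose location (equivalently the location of the interface and of $\Gamma_{2,\mu}$) is a priori unknown --- Lemma \ref{cc9} only gives $\psi\ge\max\{m_2r^2/r_{2,\mu}^2,m_2\}$, which is $m_2$ for $r\ge r_{2,\mu}$ --- so for a badly anchored cone the comparison is simply false, and proving it for a well-anchored one amounts to proving the free boundary location you are after; admissibility of $\max\{\psi,\underline\psi\}$ in $K_\mu$ (the data on $H_{2,\mu}$, the obstacle $\Phi_2\le\cdot\le\Phi_1$) is likewise unchecked. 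The paper avoids barriers in the negative phase altogether: assuming $g_{2,\ld,\th,\mu}(R)\le1$, it picks $X_1=(x_1,R)$ with $\psi_{\ld,\th,\mu}(X_1)=\f{m_2}{2}$, joins it to a point of $\Gamma_{2,\mu}$ by an arc of length controlled by $|x_2-x_1|$, and applies the gradient estimate \eqref{b0001} (Lemma 5.2 in \cite{ACF1}: $|\g\psi_{\ld,\th,\mu}|\le C\ld$ in the phase $\{m_2<\psi_{\ld,\th,\mu}<0\}$ near free boundary points) to get $-\f{m_2}{2}\le C\ld\,(1-g_{\ld,\th,\mu}(R))$, a contradiction for small $\ld$; this gives $g_{2,\ld,\th,\mu}(R)>1$ for every $\th\in(0,\pi)$ and small $\ld$, after which $\th_0$ near $\pi$ is chosen as above. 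To salvage your route you would have to redo the full variational comparison for a non-radial barrier with the correct $r$-weighted speed and resolve the anchoring issue, none of which is sketched.
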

\begin{proof}
For any $\O_0\subset\subset \O_\mu\cap\{r<R\}\cap\{m_2<\psi_{\ld, \th, \mu}<0\}$, firstly, it follows from Lemma 5.2 in \cite{ACF1} that there exists a positive
constant $C$ (depending only on $\O_0$), such that
\be\label{b0001}|\g\psi_{\ld, \th, \mu}|\leq C\ld\ \ \text{in}\ \
\O_0,\ee  provided that $\O_0$ contains a free boundary point.

For $\th\in(0,\pi)$, suppose not, without loss of generality, suppose $g_{2,\ld,\th,\mu}(R)\leq1$.

Indeed, it follows from the monotonicity
of $\psi_{\ld, \th ,\mu}(x,r)$ with respect to $x$, that
there exists a point $X_1\in\O_\mu$, such that
$$\psi_{\ld, \th, \mu}(X_1)=\f{m_2}{2},\ \ \text{with}\ \ X_1=(x_1, R), \ \ \text{and}\ \  g_{\ld,\th, \mu}(R)<x_1<1.$$

Denote $X_2=(x_2, R)$ as the initial point of the right free boundary
$x=g_{2, \ld, \th, \mu}(r)$, due to the monotonicity of $\psi_{\ld, \th,
\mu}$ with respect to $x$, one has $x_2>x_1$. Taking
$X_3=\left(\f{3x_2-x_1}{2}, R\right)$, an arc $\gamma\in\O_{\mu}\cap\{r>R\}$
connecting $X_1$ to $X_3$ and $|\gamma|\leq C|X_2-X_1|=C|x_2-x_1|$,
which intersects $\Gamma_{2, \mu}$ at $X_4$, $\gamma_0$ denotes the
arc part $\gamma$ from $X_1$ to $X_4$.

Let $\O_0$ be bounded by $\gamma_0$, $r=R$ and $\Gamma_{2, \mu}$,
it follows from \eqref{b0001} that \be\label{b0002}|\g\psi_{\ld, \th,
\mu}|\leq C\ld\ \text{in} \ \ \O_0\setminus B_\delta(A_2),\ \ \text{for
sufficiently small $\delta>0$}.\ee

Hence, it follows from \eqref{b0002} that $$-\f{m_2}{2}=\psi_{\ld, \th,
\mu}(X_1)-\psi_{\ld, \th, \mu}(X_4)\leq\int_{\gamma_0}|\g\psi_{\ld, \th,
\mu}|dl\leq C\ld|X_1-X_2|\leq C\ld\left(1-g_{\ld,\th,\mu}(R)\right),$$ which is impossible with sufficiently small $\ld$. Therefore, for some sufficiently small $\ld>0$, one has $g_{2,\ld,\th,\mu}(R)>1$.

Indeed, due to Proposition \ref{ee1} and Remark \ref{e00}, we obtain
that there exists an $\th_0$ ($\pi-\th_0\ll1$) such
that \eqref{000} holds.

Hence, we complete the proof of this lemma.
\end{proof}

Next, the following lemma implies that the set $\Sigma_\mu$ has a uniform positive lower bound.

\begin{lemma}\label{ee3}If $\th\in(0,\pi)$, we have \be\label{b60}\min\left\{-g_{1,
\lambda, \th, \mu}(R), g_{2, \lambda, \th,
\mu}(R)\right\}<1,\ee for sufficiently large $\lambda$.
\end{lemma}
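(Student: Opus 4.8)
The plan is to argue by contradiction. If the assertion failed, there would be sequences $\ld_n\to+\infty$ and $\th_n\in(0,\pi)$ along which
$$g_{1,\ld_n,\th_n,\mu}(R)\le-1\qquad\text{and}\qquad g_{2,\ld_n,\th_n,\mu}(R)\ge1$$
hold simultaneously; abbreviate $\psi=\psi_{\ld_n,\th_n,\mu}$, $g=g_{\ld_n,\th_n,\mu}$, $g_i=g_{i,\ld_n,\th_n,\mu}$ and suppress $n$. Since the interface lies between the two free boundaries, $g_1(R)\le g(R)\le g_2(R)$ (cf. \eqref{b31}--\eqref{b031}), one has $(g(R)-g_1(R))+(g_2(R)-g(R))=g_2(R)-g_1(R)\ge2$, so at least one of the two fluids has a jet cross-section of width at least $1$ at the level $r=R$: either $g(R)-g_1(R)\ge1$ (fluid I, with the set $\{0<\psi<m_1\}$ wide there) or $g_2(R)-g(R)\ge1$ (fluid II, with $\{m_2<\psi<0\}$ wide there). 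The two cases are handled identically, the only difference being the free-stream speed $\sqrt{\Ld+\ld}$ versus $\sqrt\ld$, both of which blow up; I work with the second one.

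The driving mechanism is the non-degeneracy estimate of Lemmas \ref{cc2}--\ref{cc3}: if $X_0=(x_0,r_0)\in\overline{\{\psi>m_2\}\cap(\O_\mu\setminus D_\mu)}$ and $\psi<0$ throughout a ball $B_\rho(X_0)\subset\O_\mu$, then $\sup_{\p B_\rho(X_0)}(\psi-m_2)\ge c\sqrt\ld\,r_0\,\rho$, whereas this supremum is at most $-m_2$ because $\psi<0$ on the ball; hence
$$\rho\ \le\ \frac{-m_2}{c\sqrt\ld\,r_0}\ \le\ \frac{-m_2}{c\sqrt\ld\,R},$$
which tends to $0$ as $\ld\to+\infty$. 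Thus, once $\ld$ is large, the region $\{m_2<\psi<0\}$ contains no ball of a fixed positive radius, and I would derive a contradiction by exhibiting one from the width hypothesis $g_2(R)-g(R)\ge1$. Because the wide cross-section meets the solid part of $\p\O_\mu$ (the segment $\overline{A_1A_2}$ and the rays $L_1,L_2$), the ball must be placed slightly above $r=R$: combining the non-oscillation Lemma \ref{cc5} applied to $\Gamma_{2,\mu}$ near its initial point, the interior gradient bound of Lemma \ref{cc7} applied to the level line $\{\psi=0\}$, and the continuity of $g_2$ and $g$ at $r=R$ (Lemmas \ref{lee} and \ref{dd02}), the graphs $x=g_2(r)$ and $x=g(r)$ turn at a controlled rate on a short interval $[R,R+\delta]$; choosing a height $h$ in this range so that $g_2$ and $g$ vary by less than $\tfrac14$ on $[R,R+h]$, the point $X_0=\bigl(\tfrac12(g(R+h)+g_2(R+h)),\,R+h\bigr)$ lies in $\{m_2<\psi<0\}$ and satisfies $\mathrm{dist}(X_0,\{\psi=0\})\ge\tfrac14$, $\mathrm{dist}(X_0,\Gamma_{2,\mu})\ge\tfrac14$ and $\mathrm{dist}(X_0,\{r=R\})=h$, so that $\psi<0$ on $B_\rho(X_0)$ with $\rho=\min\{h/2,\tfrac18\}$. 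Plugging this ball into the non-degeneracy bound forces $\rho\le(-m_2)/(c\sqrt{\ld_n}\,R)$, and letting $n\to\infty$ gives the contradiction.

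The step I expect to be the main obstacle is precisely this placement of the ball near the solid wall: one must ensure that the admissible height $h$ — which quantifies how sharply $\Gamma_{2,\mu}$ and the interface $\{\psi=0\}$ may bend away from $\{r=R\}$ — does not shrink faster than the non-degeneracy radius $\sim\ld^{-1/2}$. This is exactly where the non-oscillation Lemma \ref{cc5} is indispensable, as it prevents the free boundary from re-entering a thin horizontal strip adjacent to the wall, while the a priori trapping $m_2\le\psi\le m_1$ near $r=R$ keeps the level line $\{\psi=0\}$ from oscillating there; since only a ball of radius slightly exceeding $\ld_n^{-1/2}$ is needed, the continuity moduli at $r=R$ for each fixed $n$, together with passing to a subsequence, close the gap. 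Hence for all sufficiently large $\ld$ and every $\th\in(0,\pi)$ one cannot simultaneously have $g_{1,\ld,\th,\mu}(R)\le-1$ and $g_{2,\ld,\th,\mu}(R)\ge1$; that is, $g_{1,\ld,\th,\mu}(R)>-1$ or $g_{2,\ld,\th,\mu}(R)<1$, which is exactly $\min\{-g_{1,\ld,\th,\mu}(R),\,g_{2,\ld,\th,\mu}(R)\}<1$.
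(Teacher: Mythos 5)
Your overall strategy (contradiction plus the non-degeneracy estimate, which forces any ball contained in one phase away from its boundaries to have radius $\lesssim \ld^{-1/2}$) is in the right spirit, and the reduction $g_{1,\ld,\th,\mu}(R)\le g_{\ld,\th,\mu}(R)\le g_{2,\ld,\th,\mu}(R)$ with the width-splitting at $r=R$ is fine. But the step you yourself flag as ``the main obstacle'' is a genuine gap, not a detail: to contradict the bound $\rho\le\f{-m_2}{c\sqrt{\ld_n}R}$ you must exhibit, for each $n$, a ball of radius exceeding a multiple of $\ld_n^{-1/2}$ inside $\{m_2<\psi_{\ld_n,\th_n,\mu}<0\}\cap\{r>R\}$, and this requires the cross-section $(g_{\ld_n,\th_n,\mu}(r),g_{2,\ld_n,\th_n,\mu}(r))$ to stay wide on an interval $[R,R+h_n]$ with $h_n\gtrsim\ld_n^{-1/2}$. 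None of the tools you cite delivers this: the gradient bound of Lemma \ref{cc7} and the non-oscillation Lemma \ref{cc5} both carry constants that depend on $\ld$, so they give no control that survives $\ld_n\to+\infty$; Lemma \ref{cc5} moreover concerns only the free boundaries $\Gamma_{i,\mu}$, and there is no analogous oscillation control for the interface $\{\psi=0\}$ in the paper's toolbox --- the trapping $m_2\le\psi\le m_1$ says nothing about how fast the zero level set can bend just above $r=R$. Finally, ``continuity moduli at $r=R$ for each fixed $n$, together with passing to a subsequence'' cannot close the gap: per-$n$ continuity gives some $h_n>0$ with no lower bound relative to $\ld_n^{-1/2}$, and there is no compactness to extract a uniform modulus along a sequence with $\ld_n\to+\infty$ (the continuous-dependence result, Proposition \ref{ee1}, is only for convergent parameters).

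The paper avoids exactly this difficulty by running the non-degeneracy estimate the other way around: it centers a ball of a \emph{fixed} radius $r_0$ at a free boundary point $\t X\in\Gamma_{1,\mu}$ (or $\Gamma_{2,\mu}$) with $R\le\t r\le 2R$ and supposes, for contradiction, that $B_{r_0}(\t X)$ lies in the corresponding phase and misses the interface $\Gamma_\mu$. Then Lemma \ref{cc3} gives $c\sqrt{\ld+\Ld}\,R\le\f{1}{r_0}\fint_{\p B_{r_0}(\t X)}\left(m_1-\psi_{\ld,\th,\mu}\right)dS\le\f{m_1}{r_0}$, where the upper bound uses only $0\le m_1-\psi\le m_1$; this is impossible once $\ld$ is large, so for large $\ld$ both free boundaries near the nozzle mouths are trapped in a small neighborhood of $\Gamma_\mu$, whence $g_{2,\ld,\th,\mu}(R)-g_{1,\ld,\th,\mu}(R)<2$ and \eqref{b60} follows. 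The point is that in the paper's version no persistence of the width above the wall is needed --- the contradiction is extracted at the free boundary point itself --- whereas your version hinges on precisely that unproved quantitative persistence. To repair your argument you would either have to prove a $\ld$-uniform flatness/oscillation estimate for both $\Gamma_{2,\mu}$ and the interface near $r=R$, or switch to the paper's placement of the ball.
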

\begin{proof}
Indeed, it suffices to prove that the free boundaries $\Gamma_{1,\mu}: x=g_{1, \ld, \th,\mu}(r)$ and $\Gamma_{2,\mu}: x=g_{2, \ld, \th,\mu}(r)$ with $R\leq
r\leq 2R$ are contained
in a neighborhood of $\Gamma_{\mu}: x=g_{\ld, \th,\mu}(r)$ for sufficiently large $\lambda$.

Firstly, we prove that the free boundary $\Gamma_{1,\mu}$ with $R\leq
r\leq 2R$ is contained
in a neighborhood of $\Gamma_{\mu}$ for sufficiently large $\lambda$.

 Suppose not, then there exists a small and fixed
$r_0>0$ and $\t X=(\t x,\t r)\in\Gamma_{1,\mu}
\cap\left\{R\leq
r\leq 2R\right\}$, and
$B_{r_0}(\t X)\subset \O_\mu\cap\{r>R\}\cap\{0<\psi_{\ld, \th, \mu}<m_1\}$,
such that for any $\lambda>0$,
$$B_{r_0}(\t X)\cap \Gamma_\mu=\varnothing.$$
Thus, the non-degeneracy Lemma \ref{cc2} implies that
$$\sqrt{\ld+\Ld} C\t r\leq\f{1}{r_0}\fint_{\p B_{r_0}(\t X)}\left(m_1-\psi_{\lambda, \th,
\mu}\right )dS\leq\f{m_1}{r_0},$$ which yields
$$\sqrt{\ld+\Ld} \leq\f{m_1}{C r_0R}.$$ This leads to a contradiction for sufficiently large $\ld>0$.

Similarly, we can prove that the free boundaries $\Gamma_{2,\mu}$ with $R\leq
r\leq 2R$ is contained
in a neighborhood of $\Gamma_{\mu}$ for sufficiently large $\lambda$.

Therefore, we finish the proof of Lemma \ref{ee3}.
\end{proof}
Define\be\label{e10}\lambda_\mu=\sup\left\{\ld\mid
\ld\in\Sigma_\mu\right\},\ee Lemma \ref{ee3} implies that there exists a positive constant $C$
independent of $\mu$, such that
$$\lambda_\mu\leq C.$$

Finally, we will check that there exists a $\th_\mu\in(0,\pi)$ such
that\be\label{e11}g_{1, \lambda_\mu,
\th_\mu, \mu}(R)=-1\ \ \text{and}\ \ g_{2, \lambda_\mu, \th_\mu,
\mu}(R)=1.\ee

\begin{prop}\label{ee4}There exists a $\th_\mu\in(0,\pi)$ such that \eqref{e11} holds. Furthermore,
$N_i\cup\Gamma_{i, \ld_\mu, \th_{\mu}, \mu}$ is $C^1$-smooth in a
neighborhood of $A_i$, for $i=1, 2$.\end{prop}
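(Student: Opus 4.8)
The plan is to pin down the continuous-fit angle by a shooting argument in $\th$ at the critical level $\ld_\mu=\sup\Sigma_\mu$, and then to deduce the smooth fit from the continuous fit by a blow-up analysis at the corners $A_i$. By Lemma~\ref{ee2} the set $\Sigma_\mu$ is non-empty and by Lemma~\ref{ee3} it is bounded above uniformly in $\mu$, so $\ld_\mu\in(0,+\infty)$. The first point is that $\ld_\mu\notin\Sigma_\mu$: if some $\th^\ast\in(0,\pi)$ had $g_{1,\ld_\mu,\th^\ast,\mu}(R)<-1$ and $g_{2,\ld_\mu,\th^\ast,\mu}(R)>1$, then, these being open conditions, the continuous dependence of the minimizer and its free boundaries on the parameters (Proposition~\ref{ee1}, together with the uniqueness of the minimizer) would keep both strict inequalities valid for $\ld$ slightly above $\ld_\mu$, contradicting $\ld_\mu=\sup\Sigma_\mu$.

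For fixed $\ld$ the maps $\th\mapsto g_{i,\ld,\th,\mu}(R)$ ($i=1,2$) are continuous on $(0,\pi)$ (Proposition~\ref{ee1}) and strictly decreasing (Proposition~\ref{123}), with $g_{2,\ld,\th,\mu}(R)\to+\infty$ as $\th\to0^+$ and $g_{1,\ld,\th,\mu}(R)\to-\infty$ as $\th\to\pi^-$ (Proposition~\ref{co1} and Remark~\ref{e00}). Hence the critical angles $\th_1(\ld),\th_2(\ld)\in[0,\pi]$ --- the (unique, by strict monotonicity) solutions of $g_{1,\ld,\th,\mu}(R)=-1$ and of $g_{2,\ld,\th,\mu}(R)=1$, with the conventions $\th_1(\ld)=0$, resp.\ $\th_2(\ld)=\pi$, if no solution exists --- are well defined, and they depend continuously on $\ld$ wherever they lie in $(0,\pi)$. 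If $\ld\in\Sigma_\mu$ then any angle satisfying both defining inequalities of $\Sigma_\mu$ lies strictly between $\th_1(\ld)$ and $\th_2(\ld)$, so $\th_1(\ld)<\th_2(\ld)$; on the other hand $\ld_\mu\notin\Sigma_\mu$ means that for every $\th\in(0,\pi)$ one has $\th\le\th_1(\ld_\mu)$ or $\th\ge\th_2(\ld_\mu)$, which forces $\th_2(\ld_\mu)\le\th_1(\ld_\mu)$. Also $\th_1(\ld_\mu),\th_2(\ld_\mu)\in(0,\pi)$: the inequalities $\th_1(\ld_\mu)<\pi$ and $\th_2(\ld_\mu)>0$ are automatic from the limits above, while $\th_1(\ld_\mu)=0$ (resp.\ $\th_2(\ld_\mu)=\pi$) would place a sufficiently small (resp.\ large) angle into both defining inequalities of $\Sigma_\mu$, contradicting $\ld_\mu\notin\Sigma_\mu$. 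Choosing now $\ld_n\nearrow\ld_\mu$ with $\ld_n\in\Sigma_\mu$ and letting $n\to\infty$ in $\th_1(\ld_n)<\th_2(\ld_n)$, the continuity of $\th_1,\th_2$ near $\ld_\mu$ yields $\th_1(\ld_\mu)\le\th_2(\ld_\mu)$; hence $\th_1(\ld_\mu)=\th_2(\ld_\mu)=:\th_\mu\in(0,\pi)$, and for this $\th_\mu$ we get $g_{1,\ld_\mu,\th_\mu,\mu}(R)=-1$ and $g_{2,\ld_\mu,\th_\mu,\mu}(R)=1$, i.e.~\eqref{e11}.

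For the smooth fit I would show that $N_i\cup\Gamma_{i,\ld_\mu,\th_\mu,\mu}$ is $C^1$ near $A_i$ by the classical blow-up argument at $A_i$ (as in \cite{ACF1,ACF3}): rescaling $\psi_{\ld_\mu,\th_\mu,\mu}$ about $A_i$, the uniform gradient bound of Lemma~\ref{cc7} produces subsequential blow-up limits which are global minimizers in a half-plane; the non-degeneracy Lemma~\ref{cc2} together with the free-boundary relations in \eqref{b3} identifies each such limit with the linear profile whose free line is the tangent to $N_i$ at $A_i$, so $\Gamma_{i,\ld_\mu,\th_\mu,\mu}$ leaves $A_i$ tangent to the nozzle wall; and Hopf's lemma, as in the proof of Proposition~\ref{123}, rules out the free boundary bending to the interior side. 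This gives $g_{i,\ld_\mu,\th_\mu,\mu}'(R+0)=f_i'(R-0)$.

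The crux is the middle step: there are two scalar fit conditions but, once $\ld$ has been set to $\ld_\mu$, only the single angular parameter $\th$ is left, and it is precisely the maximality in the definition of $\ld_\mu$ that forces the two critical angles to coincide. Making this limiting argument airtight --- especially controlling the critical angles as $\th\to0^+$ and $\th\to\pi^-$ via the degenerate configurations of Proposition~\ref{co1} and Remark~\ref{e00}, and using the continuous dependence up to the nozzle edge from Proposition~\ref{ee1} --- is the main difficulty; the smooth-fit step, while technical, follows the standard Alt--Caffarelli--Friedman template.
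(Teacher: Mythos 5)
Your argument is correct and rests on exactly the same pillars as the paper's proof --- strict monotonicity in $\th$ (Proposition \ref{123}), continuous dependence on $(\ld,\th)$ (Proposition \ref{ee1}), the degenerate configurations at $\th=0,\pi$ (Proposition \ref{co1}, Remark \ref{e00}), and the ``push $\ld$ slightly above $\ld_\mu$'' contradiction with $\ld_\mu=\sup\Sigma_\mu$ --- but you organize it differently. The paper does not introduce the critical-angle functions $\th_1(\ld),\th_2(\ld)$ at all: it extracts a sequence $(\ld_n,\th_n)$ realizing the supremum, passes to the limit to get the weak inequalities $g_{1,\ld_\mu,\th_\mu,\mu}(R)\leq-1$, $g_{2,\ld_\mu,\th_\mu,\mu}(R)\geq1$ (its \eqref{e13}), excludes $\th_\mu\in\{0,\pi\}$ by perturbing $\th$ and then $\ld$, and finally upgrades \eqref{e13} to \eqref{e11} by the same two-parameter perturbation. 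Your shooting formulation buys a cleaner logical skeleton (the two fit conditions become the single equation $\th_1(\ld_\mu)=\th_2(\ld_\mu)$), at the price of one extra step the paper never needs: the continuity of the implicitly defined angles $\th_1,\th_2$ in $\ld$ near $\ld_\mu$, which you assert rather than prove; it does follow routinely from strict monotonicity of $\th\mapsto g_{i,\ld,\th,\mu}(R)$ plus the sequential continuity of Proposition \ref{ee1} via an intermediate-value squeeze, so this is a small verification to add, not a gap. Note also that your preliminary observation $\ld_\mu\notin\Sigma_\mu$, while correct, is not needed in the paper's route. For the smooth fit both you and the paper defer to the Alt--Caffarelli--Friedman blow-up argument of \cite{ACF3}; your sketch of it is consistent with what is being cited.
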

\begin{proof}
Taking a sequence $(\lambda_n,\th_n)$ such that
$$g_{1, \lambda_n, \th_n, \mu}(R)<-1,\ \ g_{2,
\lambda_n, \th_n, \mu}(R)>1,$$ and $$\ld_n\rightarrow\ld_\mu>0,\ \ \th_n\rightarrow\th_\mu\in[0,\pi].$$

Noticing the fact that
$x=g_{i,\ld,\th, \mu}(r)$ $(i=1, 2)$ is continuous with respect to the
parameters $\lambda$ and $\th$, then we have \be\label{e13}g_{1, \lambda_\mu, \th_\mu, \mu}(R)\leq-1\ \
\text{and}\ \ g_{2, \lambda_\mu, \th_\mu, \mu}(R)\geq1.\ee

 Firstly, we claim that \be\label{e14}0<\th_\mu<\pi.\ee
Without loss of generality, we suppose $\th_\mu=0$, then for
any $\t\th>0$, the monotonicity in Proposition \ref{123} gives that
\be\label{e15}\psi_{\lambda_\mu, \t\th,
\mu}\leq\psi_{\lambda_\mu, 0, \mu},\ee and
 \be\label{e17} g_{1,
\lambda_\mu, \t\th, \mu}(r)< g_{1, \lambda_\mu, 0,
\mu}(r)\quad\text{for}\  r\geq R.\ee

It follows from $\th_\mu=0$ that $g_{2,
\ld_\mu, 0, \mu}(R)=+\infty$, choosing $\t\th>0$ be sufficiently small, due to the
convergence of the free boundaries \eqref{b52}, one gets
$$g_{2, \lambda_\mu,\tilde{\th}, \mu}(R)\geq2.$$

Furthermore, \eqref{e17} implies that $$g_{1,
\lambda_\mu, \tilde{\th}, \mu}(R)<-1.$$ Then, we can choose a
$\lambda_0>\lambda_\mu$ and $\lambda_0-\lambda_\mu$ suitably small
such that
$$g_{1, \lambda_0, \tilde{\th}, \mu}(R)<-1\ \ \text{and}\ \ g_{2, \lambda_0,
\tilde{\th}, \mu}(R)>1,$$ which implies $\lambda_0\in\Sigma_{\mu}$.
This contradicts with the definition of $\ld_\mu$.

Consequently, we obtain $\th_\mu>0$. Similarly, we can prove $\th_\mu<\pi$. Hence, the claim \eqref{e14} holds.

Moreover, we will verify the continuous
fit conditions \eqref{e11}. Indeed, suppose not, without loss of generality, we assume that
$$g_{1, \lambda_\mu, \th_\mu, \mu}(R)<-1.$$ Taking $\tilde{\th}\in(0,\th_\mu)$ with $\th_\mu-\t\th$ being suitably small, then the continuity of $g_{1,\ld_\mu, \th_\mu,
\mu}(R)$ with respect to $\th$ gives
$$g_{1, \lambda_\mu, \tilde{\th},
\mu}(R)<-1.$$

Similar to \eqref{e17}, we have $$1\leq g_{2, \lambda_\mu, \th_\mu,
\mu}(R)<g_{2, \lambda_\mu, \tilde{\th}, \mu}(R).$$

Hence, $$g_{1, \lambda_\mu, \tilde{\th}, \mu}(R)<-1\ \ \text{and}\ \ g_{2, \lambda_\mu, \tilde{\th}, \mu}(R)>1.$$

Therefore, similar to the above arguments for $\th=\t\th$, we can choose a $\ld_0>\ld_\mu$ and
$\ld_0-\ld_\mu$ being sufficiently small, and $\ld_0\in\Sigma_\mu$, it
leads a contradiction to the definition of $\ld_\mu$.

Thus, we obtain the continuous fit conditions \eqref{e11}.

Furthermore, the similar proof to the jet flow problem in
\cite{ACF3} implies that the free boundaries are $C^1$-smooth at the
end points of the nozzles $A_i$ $(i=1, 2)$, we omit it here.
\end{proof}
\subsection{Existence of the impinging outgoing jet}

In order to obtain the existence of the impinging outgoing jet, we
take a sequence $\mu=\mu_n\rightarrow+\infty$, and the corresponding
($\lambda_{\mu_n}, \th_{\mu_n}$) with $\ld_{\mu_n}>0$ and
$\th_{\mu_n}\in(0,\pi)$, $$g_{1, \ld_{\mu_n, \th_{\mu_n},
\mu_n}}(R)=-1,\ \ \text{and}\ \ g_{2, \ld_{\mu_n, \th_{\mu_n},
\mu_n}}(R)=1,$$ then there exist a $\ld\geq0$ and $\th\in[0,\pi]$
and a subsequence $\mu_{n}$, such that
$\lambda_{\mu_{n}}\rightarrow\lambda$, $\th_{\mu_{n}}\rightarrow
\th$ and $$\psi_{\lambda_{\mu_{n}}, \th_{\mu_{n}},
\mu_{n}}\rightarrow\psi_{\lambda, \th}\ \ \text{weakly
in}~~H_{loc}^1(\O)~~\text{and a.e in}~~\O.$$

 The similar arguments
as in Proposition \ref{ee1} imply that $\psi_{\lambda, \th}$ is a
local minimizer to the variational problem $J_{\lambda, \th}$, namely,
$$J_{\O_0}(\psi_{\ld, \th})\leq J_{\O_0}(v)\ \ \text{for any $\O_0\subset\subset\O$ and $v-\psi_{\ld, \th}\in H_0^1(\O_0)$},$$
where $J_{\O_0}(v)=\int_{\O_0}r\left|\f{\g\psi}{r}-\left(\sqrt{\Lambda+\ld} \chi_{\{0<\psi<m_1\}}+\sqrt{\ld} \chi_{\{m_2<\psi\leq0\}}\right)e\right|^2dX$.

Furthermore, along the similar arguments in Proposition \ref{bb1}, we can check that $\psi_{\ld,
\th}$ is a weak solution to the boundary value problem \eqref{b3}.

Since
$$m_2\leq \psi_{\ld, \th}\leq m_1\ \ \text{in}~~\O,$$ and
\be\label{e000019}\psi_{\lambda,
\th}(x,r)\geq\psi_{\lambda, \th}(\tilde{x},r)\ \ \ \
\text{for any}~~x<\tilde{x},\ee using the same arguments as before, there exist two $C^1$-smooth functions $x=g_{1, \ld,
\th}(r)$ and $x=g_{2, \ld, \th}(r)$ such that
\be\label{e019}g_{1,\ld_{\mu_{n}}, \th_{\mu_{n}},
{\mu_{n}}}(r)\rightarrow g_{1,\ld,\th}(r)\ \ \text{for any $r\in[R, +\infty)$},\ee and \be\label{e0019}
g_{2,\ld_{\mu_{n}}, \th_{\mu_{n}}, {\mu_{n}}}(r)\rightarrow
g_{2,\ld,\th}(r)\ \ \text{for any $r\in[R, +\infty)$}.\ee
  \be\label{e19} g_{1,\ld,\th}(R)=-1,\quad\text{and}\quad g_{2,\ld,\th}(R)=1.\ee

Furthermore, along the similar arguments as Lemma \ref{ee2} and
\ref{ee3}, we assert that \be\label{e021}\ld>0\ \ \text{and}\ \
0<\th<\pi,\ee and the smooth fit condition of $x=g_{i,\ld,\th}(r)$
at $A_i$ follows immediately from the arguments in Proposition
\ref{ee4} for $i=1,2$.

Using the standard elliptic estimates yields that $\psi_{\ld,
\th}\in C^{2, \sigma}(\O_1\cup\O_2)\cap C^{0}(\overline{\O_1\cup\O_2})$
for some $\sigma\in(0,1)$ and it solves the boundary value problem \eqref{b3}.

 Hence, the existence of the impinging outgoing jets in Theorem \ref{thm1} has been established.

Next, we will show the positivity of radial velocity to the
axially symmetric impinging outgoing jets.
\begin{prop}\label{ff1} Let $\psi_{\ld,\th}$ be the solution to the boundary value problem \eqref{b3}, then
\be\label{f2}m_2<\psi_{\lambda, \th}<m_1\ \ \text{in} ~~G,\ee and
\be\label{f3}V= -\f{1}{r}\f{\p\psi_{\ld,\th}}{\p x}>0\ \ \
\text{in}~~\overline{G}\setminus \left(N_0\cup\Gamma\right),\ee where $G$ is bounded by
$N_i$, $\Gamma_i$ and $N_0$ for $i=1,2$.
\end{prop}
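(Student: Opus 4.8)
The plan is to prove \eqref{f2} and \eqref{f3} separately, in each case starting from the corresponding non‑strict statement and upgrading it by a strong maximum principle / Hopf boundary point argument for the divergence–form operator $\Div\big(\tfrac1r\g\,\cdot\,\big)$, which is uniformly elliptic on every compact subset of $\overline G\setminus N_0$ and whose coefficients do not depend on $x$.

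\textbf{Step 1: the bound \eqref{f2}.} Passing to the limit $\mu\to\infty$ in Lemma \ref{cc9} along the subsequence used to construct $\psi_{\ld,\th}$ gives the non‑strict bound $m_2\le\psi_{\ld,\th}\le m_1$ in $\overline G$. Suppose $\psi_{\ld,\th}(X_0)=m_1$ at some interior point $X_0\in G$. Near $X_0$ one has $\psi_{\ld,\th}>0$, so $\psi_{\ld,\th}$ solves $\Div(\tfrac1r\g\psi_{\ld,\th})=0$ there; since $\psi_{\ld,\th}\le m_1$ attains its maximum at $X_0$, the strong maximum principle forces $\psi_{\ld,\th}\equiv m_1$ on the connected component of $G\cap\{\psi_{\ld,\th}>0\}$ containing $X_0$. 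By the monotonicity \eqref{e000019} of $\psi_{\ld,\th}$ in $x$ (and the slice structure of the level sets of $\psi_{\ld,\th}$, the analogue of \eqref{b31}--\eqref{b031}), every such component has a portion of its boundary on $\Gamma\cup N_0$, where $\psi_{\ld,\th}=0<m_1$, a contradiction. Hence $\psi_{\ld,\th}<m_1$ on $G\cap\{\psi_{\ld,\th}>0\}$; reversing signs and replacing $m_1$ by $m_2$ gives $\psi_{\ld,\th}>m_2$ on $G\cap\{\psi_{\ld,\th}<0\}$; and on the interface $\Gamma=G\cap\{\psi_{\ld,\th}=0\}$ we have $m_2<0<m_1$ since $m_1>0>m_2$. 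This establishes \eqref{f2}.

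\textbf{Step 2: interior positivity of $V$.} The monotonicity \eqref{e000019} already gives $\p_x\psi_{\ld,\th}\le0$, i.e. $V\ge0$, in $G$. Set $w=\p_x\psi_{\ld,\th}$. Since the coefficients of $\Div(\tfrac1r\g\,\cdot\,)$ are $x$–independent, differentiating the equation in $x$ shows that $w$ solves $\Div(\tfrac1r\g w)=0$ in the interior of each of $\Omega_1=G\cap\{\psi_{\ld,\th}>0\}$ and $\Omega_2=G\cap\{\psi_{\ld,\th}<0\}$, where by the interior regularity (Proposition \ref{cc8} and the elliptic estimates of Section~3) it is smooth. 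Since $w\le0$, the strong maximum principle gives, on each connected component, either $w<0$ or $w\equiv0$; in the latter case $\psi_{\ld,\th}$ would depend on $r$ only on that component, which is incompatible with the free boundary condition $\tfrac1r|\p_\nu\psi_{\ld,\th}|=\sqrt{\Lambda+\ld}>0$ on $\Gamma_1$ (respectively $=\sqrt{\ld}$ on $\Gamma_2$) together with the fact, established in Lemma \ref{lee} and by the continuous and smooth fit at $A_i$, that $\Gamma_i$ is a non–horizontal $x$–graph over $[R,+\infty)$. Hence $w<0$, i.e. $V>0$, in $\Omega_1\cup\Omega_2$.

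\textbf{Step 3: positivity up to $N_i$ and $\Gamma_i$, and the excluded sets.} On $\Gamma_i$ the free boundary condition gives $\g\psi_{\ld,\th}\neq0$, and on $N_i$ the same follows from Hopf's boundary point lemma applied to $\psi_{\ld,\th}$, which is $C^{1,\sigma}$ up to $N_i$, satisfies $\psi_{\ld,\th}<m_i$ in the interior and $\psi_{\ld,\th}=m_i$ on $N_i$, and admits an interior ball at each point of $N_i$. Since $\psi_{\ld,\th}$ is constant along each of $N_1\cup\Gamma_1$ and $N_2\cup\Gamma_2$, $\g\psi_{\ld,\th}$ is normal to these curves there; as $N_i=\{x=f_i(r)\}$ and $\Gamma_i=\{x=g_i(r)\}$ (the latter $C^1$ up to $r=R$) are graphs over $r$, their tangents are never horizontal, so $\p_x\psi_{\ld,\th}\neq0$ on $N_i\cup\Gamma_i\cup\{A_i\}$, and with $\p_x\psi_{\ld,\th}\le0$ this yields $V>0$ there. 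Finally $V=0$ on $N_0$ is exactly the imposed condition \eqref{a05}, and $V$ is in general discontinuous across the contact discontinuity $\Gamma$ (the tangential velocity jumps), which is why $N_0$ and $\Gamma$ are excluded. Combining Steps~2 and~3 gives \eqref{f3}. The main obstacle I expect is the strictness of $V$ near the free boundaries and in particular near the endpoints $A_i$ and near the branching point where $\Gamma$ meets $N_0$: one must rule out the degeneration $\g\psi_{\ld,\th}=0$ there by combining the $C^1$ fit of $\Gamma_i$, the gradient condition on $\Gamma_i$, and a Hopf–type argument at a corner, and this is also where a possible failure of connectedness of $\Omega_1$ or $\Omega_2$ must be excluded via the monotonicity in $x$; these points are handled as for jet flows in \cite{ACF3,ACF1,DW}, but the contact discontinuity $\Gamma$ and the degeneracy of the equation at $N_0$ make the bookkeeping heavier here.
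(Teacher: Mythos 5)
Your Steps 1 and 2 are essentially sound and close in spirit to the paper: the paper also gets \eqref{f2} from the strong maximum principle on subdomains of $G\setminus\Gamma$, and gets interior positivity of $w=-\p_x\psi_{\ld,\th}$ from the monotonicity \eqref{e000019} plus the maximum principle (the paper rules out the degenerate alternative by first proving $w>0$ on the solid walls $N_1,N_2$ via Hopf's lemma, rather than by your ``$\psi$ would depend on $r$ only'' argument, which as written needs the extra fact that every component of $\{0<\psi_{\ld,\th}<m_1\}$, resp.\ $\{m_2<\psi_{\ld,\th}<0\}$, actually has boundary on $\Gamma_1$, resp.\ $\Gamma_2$, with nontrivial $r$-extent).

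The genuine gap is in Step 3, on the free boundaries. You assert that because $\Gamma_i$ is a graph $x=g_{i,\ld,\th}(r)$ over $r$, ``their tangents are never horizontal,'' and hence $\p_x\psi_{\ld,\th}\neq0$ there. Being an $x$-graph in $r$ does not exclude a horizontal tangent: $g_{i,\ld,\th}'(r_0)=\pm\infty$ is perfectly compatible with the graph property (think of $x=\sqrt{r-r_0}$), and at such a point $\g\psi_{\ld,\th}$ is vertical, i.e.\ exactly $V=0$ even though $|\g\psi_{\ld,\th}|/r=\sqrt{\Lambda+\ld}$ (resp.\ $\sqrt{\ld}$) is nonzero. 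This degenerate case is precisely where the paper does the real work: assuming $w=0$ at some $(g_{1,\ld,\th}(r_0),r_0)$ with $g_{1,\ld,\th}'(r_0)=\pm\infty$, it differentiates the free boundary condition $|\g\psi_{\ld,\th}|^2/r^2=\Lambda+\ld$ along the tangential direction, which at that point is $(1,0)$, to conclude $\p_{xr}\psi_{\ld,\th}=\p_r w=0$ there; since $w>0$ inside and $w=0$ at this boundary point with normal direction $\p_r$, Hopf's lemma forces $|\p w/\p r|>0$, a contradiction. Without an argument of this type (or an equivalent a priori bound showing $g_{i,\ld,\th}'$ is finite, which is not available before this step), your Step 3 assumes what is to be proved on $\Gamma_1\cup\Gamma_2$; your closing caveat only flags the corner points $A_i$ and the branching point, not this interior-of-$\Gamma_i$ degeneracy, so the proposal as it stands does not close the proof of \eqref{f3}.
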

\begin{proof}
Noting that $$\Delta\psi_{\ld, \th}-\f{1}{r}\f{\p\psi_{\ld,\th}}{\p r}=0\ \text{in any bounded connected
smooth open subdomain $G_0\subset G\setminus\Gamma$},$$ and $m_2\leq\psi_{\ld,\th}\leq m_1$ on $\p G_0$, then, the strong maximum principle
implies that
$$m_2<\psi_{\ld, \th}<m_1\ \ \text{in}~~G_0.$$ The arbitrariness of domain
$G_0\subset G$ yields to \eqref{f2}.

Next, since $N_1\cup\Gamma_1\in C^1$, there exists
a bounded smooth subdomain $G_0\subset \{0<\psi_{\ld,\th}<m_1\}$ with
$\overline{G_0}\cap N_1=X_0$ (or $G_0\subset \{m_2<\psi_{\ld,\th}<0\}$ with
$\overline{G_0}\cap N_2=X_0$). Then, $w=-\f{\p\psi_{\ld, \th}}{\p x}$ satisfies
$$\Delta w-\f{1}{r}\f{\p w}{\p r}=0\ \ \ \text{in}~~G_0.$$

Since $\psi_{\ld, \th}=m_1$ on $N_1$, the slip boundary
condition \eqref{a5} implies that
$$\p_{x}\psi_{\ld, \th}(f_1(r), r)f_1'(r)+\p_{r}\psi_{\ld, \th}(f_1(r), r)=0.$$
This implies that the unit outward normal
derivative satisfies
$$\f{\p \psi_{\ld, \th}}{\p \nu}\left(f_1(r), r)\right)=-\p_{
x}\psi_{\ld, \th}\left(f_1(r), r)\right)\sqrt{1+f_1'(r)^{2}}.$$ On another
hand, $\psi_{\ld, \th}$ attains its maximum on $N_1$, and
Hopf's lemma gives that
$$w=-\p_{x}\psi_{\ld, \th}>0,\ \ \ \text{on}~~N_1.$$

Similarly, one gets $$w=-\p_{x}\psi_{\ld, \th}>0,\ \ \ \text{on}~~N_2.$$

Then, in view of \eqref{e000019}, one has
$$w\geq0\ \ \ \text{on}~~\p G_0,\ \ \text{and}\ \ w(X_0)>0,$$
and applying the maximum principle to $w=-\f{\p\psi_{\ld,\th}}{\p x}$
yields that
$$ w>0\ \ \
\text{in any subdomain}~~G_0\subset G.$$

Finally, we claim that \eqref{f3} holds on $\Gamma_1\cup\Gamma_2$.

Recalling the fact $$w=\f{\sqrt{\ld+\Ld} r}{\sqrt{1+(g_{1,\ld,\th}'(r))^2}}\geq0\ \ \text{on}~~\Gamma_1,\ \ \text{and}\ \ w
=\f{\sqrt{\ld} r}{\sqrt{1+(g_{2,\ld,\th}'(r))^2}}\geq0\ \ \text{on}~~\Gamma_2,$$

Suppose that the claim can not hold, then, without loss of
generality, there exists a $r_0\geq R$ such that
$g_{1,\ld,\th}'(r_0)=+\infty$ or $-\infty$,
$\p_{x}\psi_{\ld, \th}(g_{1,\ld,\th}(r_0),r_0)=0$ and $w(g_{1,\ld,\th}(r_0),r_0)=0$.

Thanks to the fact that $\f{|\g\psi_{\ld,\th}|}{r}=\sqrt{\ld+\Ld}$
on $\Gamma_1$ (or $\f{|\g\psi_{\ld,\th}|}{r}=\sqrt{\ld}$ on
$\Gamma_2$), then
$$\f{\p}{\p s}\left(\f{|\g\psi_{\ld,\th}|^2}{r^2}\right)=0\ \ \text{on $\Gamma_1\cup\Gamma_2$},$$ where $s=(1,0)$ is the tangential vector of $\Gamma_1\cup\Gamma_2$. This implies that $$\left(\f{\p}{\p x}\left(\f{|\g\psi_{\ld,\th}|^2}{r^2}\right),\f{\p}{\p r}\left(\f{|\g\psi_{\ld,\th}|^2}{r^2}\right)\right)\cdot\left(1,0\right)=0\ \ \text{on $\Gamma_1\cup\Gamma_2$}.$$

Then, one has \be\label{f03}\p_{xr}\psi_{\ld,\th}(g_{1,\ld,\th}(r_0),r_0)=\p_rw(g_{1,\ld,\th}(r_0),r_0)=0.\ee

However, the Hopf's Lemma gives that $$\left|\f{\p w}{\p \nu}\right|=\left|\f{\p w}{\p
r}\right|>0\ \ \text{at $(g_{1,\ld,\th}(r_0),r_0)$},$$ which contradicts
with \eqref{f03}. Then, we prove that \eqref{f3} holds on
$\Gamma_1\cup\Gamma_2$.

Therefore, we obtain the positivity of the vertical velocity and complete
the proof of Proposition \ref{ff1}.
\end{proof}

\subsection{The properties of the interface}In this subsection, we will show that there exists a $C^1$-smooth curve $\Gamma: \{\psi_{\lambda,\th}=0\}\cap\{r>0\}$
separating the two fluids, and the axially symmetric impinging outgoing jet
established here possesses a unique branching point on the symmetric
axis $N_0$. For $\Ld=0$, the proof is similar to Section 4.10 in \cite{DW}, we omit it here.

Next, it suffices to prove that the results hold for $\Ld>0$.

Indeed, taking subsequence $\mu_n\rightarrow+\infty$, one has
$$g_{\ld_{\mu_{n}}, \th_{\mu_{n}}, {\mu_{n}}}(r)\rightarrow
g_{\ld,\th}(r)\ \ \text{for any
$r>0$}.$$
Then, the similar arguments as Lemma \ref{dd02} implies that
$x=g_{\ld,\th}(r)\in[-\infty, +\infty]$ is generalized continuous function in
$[0,+\infty)$, we need to prove that $x=g_{\ld,\th}(r)$ is finite valued for
any $r\in[0,+\infty)$.\be\label{e00019}
g_{\ld_{\mu_{n}}, \th_{\mu_{n}}, {\mu_{n}}}(r)\rightarrow
g_{\ld,\th}(r)\ \ \text{for any $r\in(0, +\infty)$}.\ee

Since $g_{\ld,\th}(r)<g_{2,\ld,\th}(r)$ for any $r>R$ and $g_{\ld,\th}(r)>g_{1,\ld,\th}(r)$ for any
$r>R$, then it suffices to prove that $g_{\ld,\th}(r)$ is finite valued for
$0\leq r \leq M_0$, where $\max\{r_1,r_2\}\leq M_0\leq R$. Denote by
$(\alpha_i, \beta_i)\subset[0,M_0]$ ($i=1,2,...$,
$\alpha_i<\beta_{i}$ and $\beta_i\leq\alpha_{i+1}$) the maximum
intervals where $x=g_{\ld,\th}(r)$ is finite valued.

Similar arguments as Proposition 5.1 in \cite{WX}, we can prove that the number of intervals
$(\alpha_i,\beta_i)$ is one, denote $(\alpha, +\infty)$ for simplicity.

Next, we will prove that $\alpha=0$.

Suppose $\alpha>0$ and $\lim_{r\rightarrow \alpha}g_{\ld,\th}(r)=-\infty$. For
some sufficiently large $M>0$, Set
$$\begin{aligned}&R_0=\max\{x\mid x=g_{\ld,\th}(r), \alpha<r<r_1\}, r_3=\min\{r\mid
g_{\ld,\th}(r)=R_0-M\}, \\& r_4=\max\{r\mid g_{\ld,\th}(r)=R_0, \alpha<r<r_1\}, \text{such
that $R_0-M\leq g_{\ld,\th}(r)\leq R_0$\ \ with $r_3\leq r\leq
r_4$}.\end{aligned}$$It follows from Lemma 6.1 in \cite{ACF5} that
$$M\leq C(r_4-r_3)\leq Cr_1,$$ and this is impossible when $M$
sufficiently large.

Furthermore, the case $\alpha>0$ and
$\lim_{r\rightarrow\alpha}g_{\ld,\th}(r)=+\infty$, there exists a sufficiently
large $R_0>0$ and for any $M>0$, define
$$\begin{aligned}&H_{R_0}=\min\{r\mid g_{\ld,\th}(r)=R_0\},\ \
l_{R_0}=\left\{(R_0, r)\mid 0\leq r \leq H_{R_0}\right\},\\
&H_{R_0+M}=\min\{r\mid g_{\ld,\th}(r)=R_0+M\}, \ \ \text{and}\ \
l_{R_0+M}=\left\{(R_0, r)\mid 0\leq r \leq
H_{R_0+M}\right\}.\end{aligned}$$ We define a domain $G_{R_0,M}$,
which is bounded by $l_{R_0}$, $l_{R_0+M}$, $\Gamma$ and $x$-axis.

Applying Green's formula in $G_{R_0,M}$ and $|\psi_{\ld,\th}|\leq Cr^2$ in $\O\cap\{r<\min\{r_1,r_2\}\}$ (using the fact \eqref{b18}), we
find that
\be\label{dd1}\begin{aligned}-\int_{\p G_{R_0,M}}\f{x-R_0}{r}\f{\p\psi_{\ld,\th}}{\p \nu}dS&=-\int_{\p G_{R_0,M}}\f{\p(x-R_0)}{\p\nu}\f{\psi_{\ld,\th}}{r}dS\\&=\int_{\p G_{R_0,M}\cap\{x=R_0\}}\f{\psi_{\ld,\th}}{r}dS-\int_{\p G_{R_0,M}\cap\{x=R_0+M\}}\f{\psi_{\ld,\th}}{r}dS\\ &\leq CH_{R_0}^2,\end{aligned}\ee where $\nu$ is the unit normal
vector.

Indeed, in view of $\f{|\g\psi_{\ld,\th}^+|}{r}\geq \sqrt{\Ld}$ on
$\Gamma\cap\{R_0\leq x\leq R_0+M\}$, and $\f{\p\psi_{\ld,\th}}{\p
r}\geq0$ on $x$-axis with $x\geq R_0$, the left hand side of \eqref{dd1} is estimated as $$\begin{aligned}-\int_{\p G_{R_0,
M}}\f{x-R_0}{r}\f{\p\psi_{\ld,\th}}{\p \nu}dS&=-\int_{\p
G_{R_0,
M}\cap(\Gamma\cup\{r=0\}\cup\{x=R_0+M\})}\f{x-R_0}{r}\f{\p\psi_{\ld,\th}}{\p\nu}dS\\&\geq\sqrt{\Ld}\int_{\p G_{R_0,
M}\cap\Gamma}(x-R_0)dS-M\int_{\p
G_{R_0,
M}\cap\{x=R_0+M\}}\f{1}{r}\f{\p\psi_{\ld,\th}}{\p\nu}dS\\&\geq
c\sqrt{\Ld}M^2-CMH_{R_0},\end{aligned}$$ due to \eqref{b17}. Here $\nu$ is parallel to $\g\psi_{\ld,\th}$ on
free streamlines, the positive constants $c$, $C$ are independent of $M$ and $H_{R_0}$.

This together with \eqref{dd1} gives that
$$M\leq CH_{R_0},$$ then we derives a contradiction for sufficiently
large $M$.

Hence, we obtain $\alpha=0$.

Finally, it suffices to prove that $|g_{\ld,\th}(0+0)|<+\infty$.

Suppose not, without loss of generality, we assume
$g_{\ld,\th}(0+0)=+\infty$. Similar to the above arguments, we only
need to construct domain $G$ bounded by $x=R_0$ with some $R_0>0$
sufficiently large, $x=R_0$, $x=R_0+M$, $r=0$ and interface $\Gamma$,
then $$M\leq C,$$ which derives a contradiction for sufficiently
large $M$.

Similarly, we can exclude that $g_{\ld,\th}(0)=-\infty$. Therefore,
we conclude that $x=g_{\ld,\th}(r)$ is finite for any
$r\in[0,+\infty)$.

Now, collecting all results obtained above, we complete the proof
of Theorem \ref{thm1}.

\section{Uniqueness of the impinging outgoing jet}

In this section, we will investigate the uniqueness of the impinging
outgoing jet and the parameters when $\Lambda=0$.

{\bf Proof of Theorem \ref{thm2}.} Let $\psi_{\lambda,\th}$ and $\t\psi_{\lambda,\th}$ be two solutions to the boundary value problem \eqref{b3}, and
$\Gamma_{i,\lambda,\th}:x=g_{i,\ld,\th}(r)$ and
$\t\Gamma_{i,\lambda,\th}:x=\t g_{i,\ld,\th}(r)$ be the corresponding free
boundaries for $i=1,2$. Due to the continuous fit conditions, one has $$f_1(R)=g_{1,\ld,\th}(R)=\t g_{1,\ld,\th}(R)\ \ \text{and}\ \ f_2(R)=g_{2,\ld,\th}(R)=\t g_{2,\ld,\th}(R).$$

Without loss of generality, we assume $$\lim_{r\rightarrow+\infty}(g_{1,\ld,\th}(r)-\t g_{1,\ld,\th}(r))\geq0.$$ Set $\psi^\e=\psi_{\ld,
\th}(x+\e,r)$ for some $\e\geq0$ and choose a smallest $\e_0\geq0$ such
that
$$\psi^{\e_0}\leq\tilde{\psi}_{\ld, \th}\ \ \text{in $\O$ and $\psi^{\e_0}(X_0)=\tilde{\psi}_{\ld,
\th}(X_0)$},$$ for some $X_0\in\overline{\{m_2<\t\psi_{\ld,\th}<m_1\}}$.

We claim that
$$X_0\notin\{m_2<\psi^{\e_0}<m_1\}\cap\{m_2<\tilde{\psi}_{\ld,\th}<m_1\}.$$
Suppose not and there exists a point
$X_0\in\{m_2<\psi^{\e_0}<m_1\}\cap\{m_2<\tilde{\psi}_{\ld,\th}<m_1\}$, such that
$$m_2<\psi^{\e_0}(X_0)=\t\psi_{\ld, \th}(X_0)<m_1.$$

The continuity of $\psi_{\ld, \th}$ in
$\O$ implies that there exists a ball $B_r(X_0)\subset\{m_2<\psi^{\e_0}<m_1\}\cap\{m_2<\t\psi_{\ld, \th}<m_1\}$ such that
 \be\label{b48}
\left\{\ba{lll} \Delta \t\psi_{\ld, \th}-\f{1}{r}\f{\p \t\psi_{\ld,\th}}{\p r}=0,\ \ \Delta \psi^{\e_0}-\f{1}{r}\f{\p \psi^{\e_0}}{\p r}=0\ \ \ \ &\text{in}\ \  B_r(X_0), \\
\psi^{\e_0}(X)\leq\t\psi_{\ld, \th}(X)\ \ \ \ &\text{on}\  \ \p
B_r(X_0).
 \ea \right.\ee
Therefore, it follows from the strong maximum principle that
$$\psi^{\e_0}(X)=\t\psi_{\ld, \th}(X)\ \ \text{in}\ \
B_r(X_0).$$ Applying the strong maximum principle in $\O$ again, we obtain a contradiction to the boundary condition of $\t\psi_{\ld,\th}$.

Then, one has $$\psi^{\e_0}(X_0)=\tilde{\psi}_{\ld, \th}(X_0)=m_1,\
\ \text{or}\ \  \psi^{\e_0}(X_0)=\tilde{\psi}_{\ld, \th}(X_0)=m_2.$$

 Hence, the following two cases may occur.

{\bf Case 1.}  $\e_0>0$, then $X_0\in \Gamma_{1, \ld,
\th}\cap\tilde{\Gamma} _{1, \ld, \th}$ or $X_0\in \Gamma_{2, \ld,
\th}\cap\tilde{\Gamma}_{2, \ld, \th}$ and $X_0\neq A_1$, $A_2$.
Then,
$$\Delta\psi^{\e_0}-\f{1}{r}\f{\p\psi^{\e_0}}{\p r}=\Delta\tilde{\psi}_{\ld,\th}-\f{1}{r}\f{\p\t\psi_{\ld,\th}}{\p r}=0\ \
\text{in}\ \O\cap\{m_2<\psi^{\e_0}<m_1\}\cap\{m_2<\tilde{\psi}_{\ld,\th}<m_1\}.$$
The $C^1$-smoothness of the free boundaries implies that
$\Gamma_{1,\ld,\th}$ (or $\Gamma_{2,\ld,\th}$) is tangent to
$\tilde{\Gamma}_{1,\ld,\th}$ (or $\tilde{\Gamma}_{2,\ld,\th}$) at the point
$X_0$. Then, it follows from the maximum principle that
$$\sqrt{\ld}=\f1r\f{\p \psi^{\e_0}}{\p \nu}>\f1r\f{\p\tilde{\psi}_{\ld, \th}}{\p
\nu}=\sqrt{\ld} \ \ \text{or}\ \ \sqrt{\ld}=\f1r\f{\p \psi^{\e_0}}{\p
\nu}<\f1r\f{\p\tilde{\psi}_{\ld, \th}}{\p \nu}=\sqrt{\ld} \ \ \text{at}\ \ X_0,$$
where $\nu$ is outer normal vector, we derive a contradiction.

{\bf Case 2.} $\e_0=0$, then $X_0=A_1$ or $A_2$. Without loss of
generality, suppose $X_0=A_1$, similar to the proof of Proposition
\ref{ee4}, construct a domain $G_\delta$ with $\delta>0$ and
$\bar{\psi}=(1+\zeta)(m_1-\tilde{\psi}_{\ld,\th})-(m_1-\psi^{\e_0})$
as \eqref{e017}, then for some sufficiently small $\zeta>0$, which gives $$\bar\psi<0\ \ \text{in $G_\delta$},$$ we have
$$(1+\zeta)\sqrt{\ld}=\f{1+\zeta}{r}\left|\f{\p\tilde{\psi}_{\ld, \th}}
{\p \nu}\right|\leq\f1r\left|\f{\p \psi^{\e_0}}{\p
\nu}\right|=\sqrt{\ld}\ \ \text{at}\ \ A_1,\ \ \text{for small
$\zeta>0$},$$ a contradiction. Similarly, we obtain $X_0\neq A_2$.

Hence,  we obtain the uniqueness of the minimizer $\psi_{\ld,\th}$ for given $\ld$ and $\theta$.

Next, we will prove $\th=\t\th$ for given $\ld$.

Suppose not, without loss of generality, we assume $\th<\t \th$.

Let $\psi_{\ld,\th}$ and $\psi_{\ld,\tilde{\th}}$ be the two solutions
to the boundary value problem \eqref{b3} corresponding to the
pairs of the parameters $(\ld, \th)$ and $(\ld, \tilde{\th})$, respectively. Due to Proposition \ref{123}, one has $$\psi_{\ld,\th}(X)\geq\psi_{\ld, \tilde{\th}}(X)\ \ \text{in $\O$}.$$

Similar arguments as above, we take $X_0=A_1$ and derive a contradiction.

Hence, we obtain $\th=\tilde{\th}$ as desired.

\section{Asymptotic behavior of impinging outgoing jet}

In this section, we will establish the asymptotic behaviors of axially symmetric impinging outgoing jets in far fields which are
stated in Theorem \ref{thm4}.

{\bf Proof of Theorem \ref{thm4}.} Due to the standard elliptic estimates, there exists a
constant $C$ depending only on $m_1$, $m_2$ and $\ld$ such that
\be\label{f6}\|\g \psi_{\ld, \th}\|_{C^{1, \sigma}(G)}\leq C,\ \
\text{for some}\ \ 0<\sigma<1,\ee where $G\subset\subset\{0<\psi_{\ld, \th}<m_1\}\cup\{m_2<\psi_{\ld, \th}<0\}$.

Set $\psi_{n}(x, r)=\psi_{\ld, \th}(x-n, r)$ and a strip
$E=\left\{-\infty<x<+\infty\right\}\times\left\{0<r<r_1\right\}$,
there exists a subsequence still labeled as $\psi_n(x,r)$ such that
$$\psi_n(x, r)\rightarrow \psi_0(x, r),\ \
\text{uniformly in}~~C^{2, \sigma_0}(S),~~0<\sigma_0<\sigma,$$ for any
compact set $S\subset\subset E$
 and $\psi_0(x, r)$ solves the following boundary value problem in the strip $E$,
 \be\label{f7}
\left\{\ba{lll}
\Delta \psi_0(x, r)-\f{1}{r}\f{\p\psi_0}{\p r}=0,\ \ \ \ \ \ &\text{in}\ \  E, \\
\psi_0\left(x, 0\right)=0,\ \ \psi_0\left(x, r_1\right)=m_1,\ \ \ \
\ \ \ \ \
&\text{for}\ \ -\infty<x<+\infty, \\
\max\left\{\f{m_2}{r_2^2}r^2, m_2\right\}\leq\psi_0(x, r)\leq
\f{m_1}{r_1^2}r^2, &\text{in}\ \  E,
 \ea \right.\ee where we have used the Lemma \ref{cc9}. Obviously, the problem \eqref{f7} has a unique solution
 as
\be\label{f8}\psi_0(x,r)=\f{m_1}{r_1^2}r^2, \ \ r\in[0, r_1].\ee

Hence, we obtain $$\g\psi_{\ld, \th}(x, r)\rightarrow\left(0,
\f{2m_1r}{r_1^2}\right)\ \ \text{in $C^{1,\sigma_0}(S)$ as}~~x\rightarrow-\infty.$$

Using the Bernoulli's law yields the asymptotic behavior
\eqref{a14} and \eqref{a15} of flow field in the upstream.

Along the similar arguments as before, we obtain the asymptotic behavior in the upstream

 $$\g\psi_{\ld, \th}(x, r)\rightarrow\left(0,
\f{2m_2r}{r_2^2}\right)\ \ \text{in $C^{1,\sigma_0}(S')$ as}~~x\rightarrow+\infty,$$ where $S'\subset\subset E'=\left\{-\infty<x<+\infty\right\}\times\left\{0<r<r_2\right\}$.

Finally, it follows from Lemma \ref{lem3}, we obtain \eqref{a01}, \eqref{a09} and \eqref{a009}.
Hence, the proof of Theorem \ref{thm4} is done.

\section{Appendix}
The minimizer $\psi_{\ld,\th,\mu}$ satisfies the following elliptic equation in a weak sense. The similar proofs of these results can be found in Theorem 2.2-2.3 in \cite{ACF4}, we omit here.
\begin{prop}\label{bb1}
Let $\psi_{\ld, \th,\mu}$ be a minimizer to the truncated variational
problem ($P_{\ld,\th,\mu}$), and $\mathfrak{L}^2\left(\{\psi_{\ld, \th,\mu}=0\}\right)=0$ ($\mathfrak{L}^2$ is the two dimensional Lebesgue measure), then \be\label{b7}\Delta\psi_{\ld, \th,\mu}-\f{1}{r}\f{\p\psi_{\ld,\th,\mu}}{\p r}=0,\ \
\text{in}\ \ \O_\mu\cap\left\{m_2<\psi_{\ld, \th,\mu}<m_1\right\}\cap\{\psi_{\ld, \th,\mu}\neq0\},
\ee and \be\label{b07}\Delta\psi_{\ld, \th,\mu}-\f{1}{r}\f{\p\psi_{\ld,\th,\mu}}{\p r}\geq0,\ \
\text{in}\ \ D_{\mu}=\O_\mu\cap\{r<R\},
\ee in a weak sense.
\end{prop}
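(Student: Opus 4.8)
The plan is to extract the Euler--Lagrange relations of the minimizer $\psi:=\psi_{\ld,\th,\mu}$ from admissible perturbations for $(P_{\ld,\th,\mu})$. Two kinds of perturbation are needed: two--sided interior variations, supported where $\psi$ is strictly between $m_1$ and $m_2$ and off the interface $\{\psi=0\}$, produce the equation \eqref{b7}; a one--sided (downward) variation near $\{\psi=0\}$, where the gain $\Lambda\ge0$ is felt, produces the subsolution inequality \eqref{b07}. The computations rest on the expansion
\be\label{Jexp}
J_{\ld,\th,\mu}(\psi)=\int_{\O_\mu}\f{|\g\psi|^2}{r}\,dX-2\int_{\O_\mu}q(\psi)\,e\cdot\g\psi\,dX+\int_{\O_\mu}r\,q(\psi)^2\,dX,\quad q(t):=\sqrt{\Lambda+\ld}\,\chi_{\{0<t<m_1\}}+\sqrt{\ld}\,\chi_{\{m_2<t\le0\}},
\ee
together with the observation that the cross term is a null Lagrangian: with $Q(t)=\int_0^tq(s)\,ds$ one has $q(\psi)\,e\cdot\g\psi=\Div(Q(\psi)e)$, so $\int q(\psi+t\varphi)\,e\cdot\g(\psi+t\varphi)\,dX$ is independent of $t$ for $\varphi\in C_c^\infty(\O_\mu)$, and on any ball where $q(\psi)$ is locally constant it drops out of the first variation entirely.

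For \eqref{b7}, fix $X_0\in\O_\mu\cap\{m_2<\psi<m_1\}\cap\{\psi\ne0\}$. By continuity of $\psi$ (Proposition \ref{cc8}) one can pick a ball $B=B_\rho(X_0)\cin\O_\mu$ on which $\psi$ is strictly between $m_2$ and $m_1$, of fixed sign, and bounded away from the obstacles $\Phi_1,\Phi_2$ (outside $D_\mu$ the obstacles are the constants $m_1,m_2$, so this is automatic there). Then $\psi+t\varphi\in K_\mu$ for $\varphi\in C_c^\infty(B)$ and $|t|$ small, the two indicators in \eqref{Jexp} are frozen on $B$, and minimality at $t=0$ gives $\int_B(\g\psi/r)\cdot\g\varphi\,dX=0$, i.e. $\mcL\psi:=\Delta\psi-\f1r\p_r\psi=0$ in the weak sense on $B$. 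Where instead $\psi=\Phi_1$ or $\psi=\Phi_2$, $\psi$ locally coincides with a function already solving $\mcL\Phi_i=0$; using the optimal ($C^{1,1}$-type) regularity of constrained minimizers to exclude a distributional defect across the boundary of the coincidence set, this patches to \eqref{b7} on the whole open set. The weight $1/r$ is harmless near the axis, where $|\g\psi|\le Cr$ by Lemma \ref{cc9}; the axis regularity is treated exactly as in \cite{DD,XX2}.

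For \eqref{b07}, work in $D_\mu$, where $m_2<\psi<m_1$ holds automatically. By the step just proved $\mcL\psi=0$ a.e. on $D_\mu\cap\{\psi\ne0\}$, hence --- as $\mathfrak{L}^2(\{\psi=0\})=0$ --- a.e. in $D_\mu$; so the distribution $\mcL\psi$ is a measure carried by the interface $\{\psi=0\}$, namely the jump of $\p_\nu\psi$ across it ($\nu$ pointing into $\{\psi>0\}$). Off the null set $\{\Phi_1=0\}\cup\{\Phi_2=0\}$ one has $\Phi_2<0=\psi$ near $\{\psi=0\}$, so the lower obstacle is inactive there and the competitor $\psi-t\varphi$, $0\le\varphi\in C_c^\infty$ supported near the interface, is admissible for small $t>0$; it merely converts points of the phase $\{\psi>0\}$ (coefficient $\Lambda+\ld$) into the phase $\{\psi<0\}$ (coefficient $\ld\le\Lambda+\ld$), so it does not increase $\int r\,q(\psi)^2$. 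Combined with the null-Lagrangian cross term, minimality forces $\int_{D_\mu}(\g\psi/r)\cdot\g\varphi\,dX\le0$ for such $\varphi$; since $\mcL\psi=0$ already on $D_\mu\cap\{\psi\ne0\}$, this yields $\mcL\psi\ge0$ in $D_\mu$ in the weak sense. (Equivalently, the natural free boundary condition $|\g\psi^+/r|^2-|\g\psi^-/r|^2=\Lambda\ge0$, together with the vanishing of the tangential derivative along $\{\psi=0\}$, forces $|\p_\nu\psi^+|\ge|\p_\nu\psi^-|$, so the singular part of $\Delta\psi$ on the interface is non-negative, while $\f1r\p_r\psi$ contributes none.)

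The routine first variations above are not the difficulty; the delicate points are that the coincidence sets of the two--obstacle constraint $\Phi_2\le\psi\le\Phi_1$ carry no spurious distributional term --- which relies on the sharp regularity of constrained minimizers --- and that $\mcL\psi$ genuinely has the stated measure description and jump sign on the \emph{a priori} merely Lipschitz interface $\{\psi=0\}$. These are precisely the assertions of Theorems 2.2--2.3 in \cite{ACF4}, and their proofs transfer to the present axially symmetric setting once the lower order term $-\f1r\p_r\psi$ and the axis weight are accommodated as above; we therefore only record the outline and refer to \cite{ACF4} for the remaining details.
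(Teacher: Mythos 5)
Your proposal is correct in substance and follows essentially the same route as the paper: the paper gives no argument of its own for Proposition \ref{bb1}, simply citing Theorems 2.2--2.3 of \cite{ACF4}, and your sketch reproduces exactly the first-variation mechanism behind those theorems (frozen indicators plus the null-Lagrangian cross term $\Div(Q(\psi)e)$ for \eqref{b7}, one-sided downward variations using $\ld\le\Lambda+\ld$ and $\mathfrak{L}^2(\{\psi_{\ld,\th,\mu}=0\})=0$ for \eqref{b07}), deferring the same technical points (coincidence sets of the obstacles $\Phi_1,\Phi_2$, the structure of the defect measure on $\{\psi_{\ld,\th,\mu}=0\}$) to \cite{ACF4} just as the authors do. A small simplification worth noting: since $\Phi_1,\Phi_2$ themselves solve the homogeneous equation, the coincidence sets can be handled by harmonic replacement and the maximum principle rather than by invoking $C^{1,1}$ obstacle regularity.
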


\bigskip

\
{\bf Acknowledgments.} The authors would like to thank the referees
for their helpful suggestions and careful reading which has improved the presentation of this paper.
\

{\bf Conflict of interest. } The authors declare that they have no
conflict of interest.

\

\bibliographystyle{plain}

\begin{thebibliography}{10}
\bibitem{AC1}
H. W. Alt, L. A. Caffarelli, \newblock Existence and regularity for
a minimum problem with free boundary, {\it J. Reine Angew. Math.,}
{\bf 325}, 405-144, (1981).

\bibitem{ACF1}
H. W. Alt, L. A. Caffarelli, A. Friedman, \newblock Asymmetric jet
flows, {\it Comm. Pure Appl. Math.,} {\bf 35}, 29-68, (1982).

\bibitem{ACF2}
H. W. Alt, L. A. Caffarelli, A. Friedman, \newblock Jet flows with
gravity, {\it J. Reine Angew. Math.,} {\bf 35}, 58-103, (1982).

\bibitem{ACF3}
H. W. Alt, L. A. Caffarelli, A. Friedman, \newblock Axially
symmetric jet flows, {\it Arch. Rational Mech. Anal.,} {\bf 81},
97-149, (1983).


\bibitem{ACF4}
H. W. Alt, L. A. Caffarelli, A. Friedman, \newblock Variational problems with two phases and their free boundaries, {\it Trans. Amer. Math. Soc.,} {\bf 282},
431-461, (1984).

\bibitem{ACF5}
H. W. Alt, L. A. Caffarelli, A. Friedman, \newblock Jets with two
fluids. I. One free boundary, {\it Indiana Univ. Math. J.,} {\bf
33}, 213-247, (1984).

\bibitem{ACF6}
H. W. Alt, L. A. Caffarelli, A. Friedman, \newblock Jets with two
fluids. II. Two free boundaries, {\it Indiana Univ. Math. J.,} {\bf
33}, 367-391, (1984).

\bibitem{ACF7}
H. W. Alt, L. A. Caffarelli, A. Friedman, \newblock A free boundary
problem for quasilinear elliptic equations, {\it Ann. Scuola Norm.
Sup. Pisa Cl. Sci.,} {\bf 11}, 1-44, (1984).

\bibitem{BZ} G. Birkhoff, E. H. Zarantonello, {\em Jets, Wakes and
Cavities}, Academic Press, New York, (1957).

%\bibitem{CDSW}
%G.-Q. Chen, C.~M. Dafermos, M. Slemrod, D. Wang,
%\newblock On two-dimensional sonic-subsonic flow,
%{\it Comm. Math. Phys.}, {\bf 271}, 635--647, (2007).

%\bibitem{CDX}
%G.-Q. Chen, X. Deng, W. Xiang
%\newblock Global steady subsonic flows through infinitely long nozzles for the full Euler equations, {\it SIAM J. Math. Anal.},
 %{\bf 44}, no. 4, 2888--2919, (2012).

%\bibitem{CHW} G.-Q. Chen, F. M. Huang and T. Y. Wang, \newblock Subsonic-sonic limit of approximate solutions to multidimensional steady Euler equations, {\it Arch. Ration. Mech. Anal.}, {\bf 219}, 719--740,
%(2016).


\bibitem{CXF} X. F. Chen, \newblock Axially symmetric jets of compressible
fluid, {\em Nonlinear Anal. TMA.}, {\bf 16}, 1057-1087,
(1991).

\bibitem{CDW} J. F. Cheng, L. L. Du, W. Xiang, \newblock Axially symmetric jets of compressible fluid, {\em Nonlinearity}, {\bf 33}, 4627-4669, (2020).

\bibitem{CDW1} J. F. Cheng, L. L. Du, Y. F. Wang, \newblock The uniqueness of the asymmetric jet flow, {\em J. Differential Equations}, {\bf 269}, 3794-3815,
(2020).

\bibitem{CH} G. R. Cowan, A. H. Holtzman, \newblock Flow conditions in colliding plates: Exlosive bonding, {\em J. Appl. Phys.}, {\bf 34}, 928-939, (1963).
\bibitem{DET} F. Dias, A. R. Elcrat, L. N. Trefethen, \newblock Ideal jet flow in two dimensions, {\em J. Fluid Mech.}, {\bf 185}, 275-288, (1987).
\bibitem{DVB} F. Dias, J. M. Vanden-Broeck, \newblock Flows emerging from a nozzle and falling under gravity, {\em J. Fluid Mech.}, {\bf 213}, 465-477, (1990).

\bibitem{DD}
L. L. Du, B. Duan, Global subsonic Euler flows in an infinitely long
axisymmetric nozzle, {\it J. Differential Equations}, {\bf 250},
813-847, (2011).

\bibitem{DW}
L. L. Du, Y. F. Wang,
\newblock Collision of incompressible inviscid fluids effluxing
from two nozzles,
\newblock {\em Calculus of Variations and PDEs}, 56-136, (2017).


\bibitem{DWX}
L. L. Du, S. K. Weng, Z. P. Xin, Subsonic irrotational flows in a
finitely long nozzle with variable end pressure, {\it  Comm. Partial
Differential Equations}, {\bf 39}, 666-695, (2014).

\bibitem{DX}
L. L. Du, C. J. Xie, On subsonic Euler flows with stagnation points
in two dimensional nozzles, {\it Indiana Univ. Math. J.}, {\bf 63},
1499-1523, (2014).

\bibitem{DXX}
L. L. Du, C. J. Xie, Z. P. Xin, Steady subsonic ideal flows through
an infinitely long nozzle with large vorticity, {\it  Comm. Math.
Phys.}, {\bf 328}, 327-354, (2014).


\bibitem{DXY}
L. L. Du, Z. P. Xin, W. Yan,
\newblock Subsonic flows in a multi-dimensional
nozzle, {\em Arch. Rational Mech. Anal.}, {\bf 201}, 965--1012,
(2011).

\bibitem{DL}
B. Duan, Z. Luo, \newblock Subsonic non-isentropic Euler flows with
large vorticity in axisymmetric nozzles, {\it J. Math. Anal. Appl.},
{\bf 430}, 1037--1057, (2015).


\bibitem{FA1}
A. Friedman, {\it Variational Principles and Free-boundary
Problems}, Pure and Applied Mathematics, John Wiley Sons, Inc., New
York, 1982.

\bibitem{FA3}
A. Friedman, {\it Mathematics in industrial problems}, II, \newblock I.M.A. Volumes in Mathematics and its Applications, Vol.24, Springer-Verlag, New York, 1989.


\bibitem{GLS} P. R. Garabedian, H. Lewy, M. Schiffer, Axially
symmetric cavitational flow, {\em Annals of Math.}, {\bf 56},
560-602, (1952).

\bibitem{GT}
D. Gilbarg, N. S. Trudinger,
\newblock {\em Elliptic Partial Differential Equations of Second Order},
\newblock Classics in Mathematics, Springer-Verlag, Berlin, 2001.

\bibitem{Gu} M. I. Gurevich, {\em Theory of Jets in Ideal Fluids}, New
York, Academic Press, 1965.

\bibitem{HW}
J. Hureau, R. Weber,
\newblock Impinging free jets of ideal fluid,
\newblock {\em J. Fluid Mech.}, {\bf 372}, 357-374,
(1998).

\bibitem{KE}
J. B. Keller,
\newblock On unsymmetrically impinging jets,
\newblock {\em J. Fluid Mech.}, {\bf 211}, 653-655,
(1990).

\bibitem{MT} L. M. Milne-Thomson, {\em Theoretical Hydrodynamics}, London: MacMillan, 1968.

\bibitem{WX}
Y. F. Wang, W. Xiang, Two-phase fluids in collision of incompressible inviscid fluids effluxing from two nozzles, {\it J. Differential Equations}, {\bf 267},
6783-6830, (2019).

%\bibitem{Wu} T. Y. Wu, Cavity and wakes flows, {\em Annual Reviews of
%Fluid Mechnics}, {\bf 4}, George Banta Co., Palo Alto, California,
%{1972}.

\bibitem{XX1}
C. J. Xie, Z. P. Xin,
\newblock Global subsonic and subsonic-sonic flows through infinitely long
  nozzles,
\newblock {\em Indiana Univ. Math. J.}, {\bf56} (6), 2991--3023, (2007).

\bibitem{XX2}
C. J. Xie, Z. P. Xin,
\newblock Global subsonic and subsonic-sonic flows through infinitely long axially symmetric nozzles,
\newblock {\em J. Differential Equations}, {\bf 248}, 2657--2683,
(2010).

\bibitem{XX3}
C. J. Xie, Z. P. Xin, Existence of global steady subsonic Euler
flows through infinitely long nozzle, {\it SIAM J. Math. Anal.},
{\bf 42} (2), 751--784, (2010).


\end{thebibliography}

\end{document}